\documentclass[11pt]{amsart}
\usepackage[utf8]{inputenc}
\usepackage{enumitem}
\usepackage{amssymb}
\usepackage{mathtools}
\usepackage{amsthm}
\usepackage{amsmath}
\usepackage[dvipsnames]{xcolor}
\usepackage{mathtools}
\usepackage{tikz}
\usetikzlibrary{arrows,decorations.pathmorphing,backgrounds,positioning,fit,decorations.pathreplacing}
\usepackage{enumitem}
\usepackage{dsfont}
\usepackage{fullpage}
\usepackage{graphicx}
\usepackage{pdfsync}

\usepackage[T1]{fontenc}

\usepackage{hyperref}
 \hypersetup{
     colorlinks=true,
     linktocpage=true,
     linkcolor=red,
     filecolor=blue,
     citecolor = blue,
     urlcolor=cyan,
     }

\usepackage[a4paper, twoside=false, vmargin={2cm,3cm}, includehead]{geometry}

\usepackage{comment}
\usepackage[capitalize]{cleveref}
\usepackage{standalone}
\usepackage{url}
\usepackage{float}

\theoremstyle{plain}
\newtheorem{theorem}{Theorem}[section] 
\newtheorem{proposition}[theorem]{Proposition}
\newtheorem{lemma}[theorem]{Lemma}
\newtheorem{corollary}[theorem]{Corollary}
\theoremstyle{definition}
\newtheorem{remark}[theorem]{Remark}
\newtheorem{definition}[theorem]{Definition} 
\newtheorem{example}[theorem]{Example} 

\newtheorem{claim}[theorem]{Claim} 

\theoremstyle{plain}
\newtheorem{thmx}{Theorem}

\newcommand{\Cercle}{\mathbb{S}}
\newcommand{\N}{\mathbb{N}}
\newcommand{\Z}{\mathbb{Z}}

\newcommand{\F}{\mathbb{F}}
\newcommand{\Q}{\mathbb{Q}}
\newcommand{\R}{\mathbb{R}}

\newcommand{\x}{\mathtt{x}}

\newcommand{\factor}{\sqsubseteq}

\newcommand{\ND}{\textnormal{ND}}

\newcommand{\Aut}{\textnormal{Aut}}

\newcommand{\Cone}{\textnormal{HB}}

\newcommand{\topo}{\textnormal{top}}

\newcommand{\Conv}{\textnormal{Conv}}
\newcommand{\dist}{\textnormal{dist}}
\newcommand{\red}{\textnormal{red}}
\newcommand{\HB}{\textnormal{HB}}
\newcommand{\ve}{\varepsilon}

\renewcommand{\tt}[1]{\mathtt{#1}}

\renewcommand{\b}[1]{{\bf #1}}

\newcommand\numberthis{\addtocounter{equation}{1}\tag{\theequation}}

\newcommand{\act}{\curvearrowright}

\newcommand{\edit}[3]{\color{#1}{#3}\color{black}\marginpar{\textcolor{#1}{[[#2]]}}}
\newcommand{\sam}[1]{\edit{blue!50}{SP}{#1}}
\newcommand{\sd}[1]{\edit{green!60!black}{SD}{#1}}

\newcommand{\define}[1]{{\em #1}}

\author{Nicol\'as Bitar, Sebasti\'an Donoso, Samuel Petite}

\address[Nicol\'as Bitar]{Laboratoire Ami\'enois
	de Math\'ematiques Fondamentales et Appliqu\'ees, CNRS-UMR 7352, Universit\'{e} de Picardie Jules Verne, 33 rue Saint Leu, 80039   Amiens cedex 1,
	France.}
\email{nicolas.bitar@u-picardie.fr}

\address[Sebasti{\'a}n Donoso]{Departamento de Ingenier\'{\i}a Matem\'atica and Centro de Modelamiento Matem{\'a}tico, Universidad de Chile \& IRL 2807 - CNRS, Beauchef 851, Santiago, Chile} \email{sdonosof@uchile.cl}

\address[Samuel Petite]{Laboratoire Ami\'enois
	de Math\'ematiques Fondamentales et Appliqu\'ees, CNRS-UMR 7352, Universit\'{e} de Picardie Jules Verne, 33 rue Saint Leu, 80039   Amiens cedex 1,
	France.} \email{samuel.petite@u-picardie.fr}

\thanks{S. Petite and N. Bitar were supported by the ANR project IZES ANR-22-CE40-0011. S. Donoso was partially funded by ANID/Fondecyt/Regular 1241346 and by ANID PIA/BASAL FB210005 (Centro de Modelamiento Matemático). The three authors thank the support of the project MathAmSud240026, which funded research visits that made this collaboration possible.}

\keywords{asymptoticity, minimal self-joinings, horofunctions, non-determinism}

\title{Non-determinism in group actions and topological minimal self-joinings}
\subjclass[2020]{Primary: 37B05, Secondary: 54H15, 37B10, 20F65}

\begin{document}

\begin{abstract}
We study the topological and geometrical aspects of non-determinism in group actions, generalizing the notion of non-expansive direction for abelian actions. Under suitable conditions on the acting group, we establish restrictions on the collection of non-deterministic horoballs (or half-spaces) for any action. We illustrate our results in the case of free and nilpotent groups, expanding upon known cases for abelian groups. We then apply these findings to standing problems concerning minimal self-joinings in topological dynamics, which are of independent interest.
\end{abstract}

\maketitle 

\section{Introduction} 

A topological dynamical system consists of a compact phase space $X$ endowed with an action of a group $G$ by homeomorphisms. For these systems, analyzing  pairs of points  that remain ``indistinguishable'' or ``asymptotic'' (up to a given precision) along specific  orbit subsets is  a foundational challenge. This problem  is deeply connected to classical dynamical concepts such as expansiveness, complexity, and entropy. 

For $\mathbb{Z}$-actions, a seminal result by S. Schwartzman~\cite{Schwartzman_thesis:1953} states that in any infinite compact metric space there exist distinct points that stay arbitrarily close for all positive (or negative) times. Later, motivated by the study of higher-rank abelian actions, M. Boyle and D. Lind generalized this to $\mathbb{Z}^d$-actions, introducing the concept of non-expansive half-spaces. In \cite{Boyle_Lind_expansive_subdynamics:1997}, they showed that there always exists a half-space in $\mathbb{R}^d$, such that for any $\epsilon>0$, there exist two distinct points that stay at distance at most $\epsilon$ along iterations by elements within that half-space. Non-deterministic subspaces and asymptotic pairs play key roles in studying expansive maps and dimension \cite{Mane_expansive_and_dimension:1979}, topological minimal self-joinings~\cite{King_top_minimal_self_joinings:1990}, directional entropy~\cite{Park_directional_entropy:1999}, combinatorial problems such as Nivat's conjecture~\cite{Cyr_Kra_nonexp_Nivat:2015,kari2023decidability}, and even the construction of strongly aperiodic subshifts of finite type on some groups~\cite{Aubrun_Bitar_Huriot-Tattegrain_strong_SFT_BS_groups:2024}.

Because the definitions and proofs for these classical results relied heavily on the linear structure of $\mathbb{R}^d$, extending them to general group actions remained an open challenge for several years. Recently, Donoso, Maass, and Petite introduced in~\cite{Donoso_Maass_Petite_geometric_asymptotic:2024} a novel geometric framework to study this phenomenon, which is referred to as non-determinism, for groups beyond $\mathbb{Z}^d$. 
This extension was introduced for all second countable groups that admit a right invariant and proper metric. In this case, horoballs (sublevel sets of horofunctions) play the role of half-spaces. One of their main contributions is a result that simultaneously unifies Schwartzman's and Boyle-Lind's theorems, extending them to general group actions (\cref{thm:directed_RC}).  We defer the precise formulation of this result to~\cref{subsec:non-det_topo}.
It is worth noting that while \cite{Donoso_Maass_Petite_geometric_asymptotic:2024} established several results for all groups admitting a proper right-invariant metric, the consequences and the identification of non-deter\-mini\-stic horo\-balls were mostly confined to the abelian setting.

In this paper, we continue the research program initiated in \cite{Donoso_Maass_Petite_geometric_asymptotic:2024} by exploring deeper topological and geometrical aspects of non-determinism, particularly in non-abelian groups. As a consequence, we get tools to study topological minimal self-joinings. We exhibit rigidity constraints, showing that the rank of the acting group bounds the degree of topological minimal self-joinings. 

\subsection*{The topology of non-deterministic horofunctions}
To investigate non-determinism in general group actions, we examine the structure and topology of the set of non-deterministic horofunctions. 
A horofunction is a continuous, real-valued function on the metric group $G$. Geometrically, it captures the asymptotic behavior of sequences escaping to infinity—a fundamental concept in geometric group theory.
We say a horofunction $h$ is $\varepsilon$-non-deterministic if there exist two distinct points in the space that remain $\varepsilon$-close under the action of every element within the horoball $\{h < 0\}$. We denote the collection of all such horofunctions for a given system by $\ND_\varepsilon(X)$, and $\ND(X)=\bigcap_{\ve>0}\ND_{\ve}(X)$. 
An important feature of Boyle and Lind's result in $\Z^d$ is that there exists a half-space for which there exist $\ve$-asymptotic points for every $\ve>0$. That is, the half-space is independent of $\ve$. This property is very important for many of the applications this concept has found.  For the more general statement in \cite{Donoso_Maass_Petite_geometric_asymptotic:2024}, the horofunction may depend on $\ve$. For this reason, it is worth studying under which conditions the space $\ND_\ve(X)$ is closed, which guarantees that we can find a horofunction that is $\ve$-non-deterministic for all $\ve$ (that is, $\ND(X)\neq \emptyset$). By imposing additional conditions on the space of horofunctions of the group (\Cref{def:qsubadditive}), we establish the following sufficient condition.

\begin{thmx}\label{thm:A}
Let $G$ be an infinite group with a proper right-invariant metric. Then for any $\ve>0$, the set of non-deterministic horofunctions  $\ND_\ve(X)$ is non-empty for any infinite system $(X, T, G)$. 

If furthermore $G$ has uniformly quasi-subadditive horofunctions and no lower bound for the horofunctions, then  the sets  $\ND_\ve(X)$ and $\ND(X)$ are non-empty  closed sets.
\end{thmx}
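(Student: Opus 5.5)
The first part of the statement is essentially \cref{thm:directed_RC} from \cite{Donoso_Maass_Petite_geometric_asymptotic:2024}, which gives, for every infinite system and every $\ve>0$, a horofunction $h$ and two distinct points $x\neq y$ with $d(gx,gy)\le\ve$ for all $g$ with $h(g)<0$. So $\ND_\ve(X)\neq\emptyset$ follows at once, and I would simply quote it. The substance of the proof is closedness. Once $\ND_\ve(X)$ is known to be closed and non-empty for every $\ve$, the sets are nested ($\ND_{\ve'}(X)\subseteq\ND_\ve(X)$ for $\ve'<\ve$). The space of horofunctions is compact, being a closed subset of the horofunction compactification with the topology of uniform convergence on compacta, and normalized at the identity. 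Hence $\ND(X)=\bigcap_{\ve>0}\ND_\ve(X)$ is a decreasing intersection of non-empty compact sets, so it is non-empty and closed.

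For closedness, take $h_n\in\ND_\ve(X)$ with $h_n\to h$, and witnesses $x_n\neq y_n$ that are $\ve$-close on $\{h_n<0\}$. Passing to a subsequence, $x_n\to x$ and $y_n\to y$. For fixed $g$ with $h(g)<0$, we have $h_n(g)<0$ for $n$ large, so $d(gx_n,gy_n)\le\ve$, and in the limit $d(gx,gy)\le\ve$. The only problem is that $x$ might equal $y$.

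To prevent this, I would renormalize the witnesses before passing to the limit, using the group action and a translate of the horofunction. Since $x_n\neq y_n$, pick $g_n$ with $d(g_nx_n,g_ny_n)>\ve$ (otherwise a diagonal argument is easy), chosen so that $h_n(g_n)$ is close to minimal, i.e. near the boundary of the horoball. Then replace $(x_n,y_n,h_n)$ by $(g_nx_n,g_ny_n,h_n(\cdot\, g_n)-h_n(g_n))$. Here right-invariance of the metric is what makes the translated function again a horofunction; the precise form depends on the convention for translated horofunctions. The new points are separated by more than $\ve$, but uniformly so only if we control how much the horoball moved.

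This is where the two hypotheses enter. Uniform quasi-subadditivity should give $h_n(g g_n)\le h_n(g)+h_n(g_n)+C$ uniformly in $n$. It therefore controls the translated horoball: it contains $\{h_n<-C\}$ shifted appropriately, and it tends to a limit comparable with $h$. The absence of a lower bound ensures that $\{h<-C\}$ is still non-empty and unbounded, so an appropriate shift of the level $0$ set lies inside, and we can absorb the constant $C$. After this, the limit points $x,y$ satisfy $d(x,y)\ge\ve$ and are $\ve$-close on a horoball for $h$ (possibly up to an additive constant, which can be removed by applying the action once more and using that level sets remain horoballs).

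I expect the main obstacle to be this bookkeeping: choosing $g_n$ so that both the separation and the containment of horoballs survive uniformly. If a strict $\ve$ versus $>\ve$ issue appears, I would first try to handle it with the non-strict inequality.
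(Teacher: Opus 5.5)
Your outline has the right ingredients: renormalize a witness pair by a separating time $g_n$ that nearly minimizes $h_n$, control the shift with uniform quasi-subadditivity (constant $R$, your $C$), and absorb the constant using the absence of a lower bound. But the step you postpone as ``bookkeeping'' is the whole proof, and as you describe it, it fails.

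Let $F_n=\{g : d(T^gx_n,T^gy_n)>\ve\}\subseteq\{h_n\ge 0\}$. You cannot choose $g_n\in F_n$ ``near the boundary of the horoball'': $\inf h_n(F_n)$ can be arbitrarily large and need not stay bounded in $n$ (already in $\Z$ a pair may stay $\ve$-close on $\{n<0\}$ and well beyond). Separation of the shifted pair at $1_G$ is automatic; what is at risk is the region where it stays close. If the only information you carry along is closeness on $\{h_n<0\}$, then $(T^{g_n}x_n,T^{g_n}y_n)$ is only known to be close on $\{h_n<0\}g_n^{-1}\supseteq\{h_n<-h_n(g_n)-R\}$. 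When $h_n(g_n)\to\infty$, this guaranteed region eventually contains no fixed element of $G$, so nothing survives the limit. Replacing $h_n$ by $C^{g_n}(h_n)$ does not change this, and it is unnecessary, since you need a witness for $h$ itself.

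The missing observation is that the original pair is $\ve$-close at \emph{every} $g\notin F_n$, hence on the larger set $\{h_n<\inf h_n(F_n)\}$. Take $g_n\in F_n$ with $h_n(g_n)\le \inf h_n(F_n)+1$. Quasi-subadditivity then gives $h_n(kg_n)<\inf h_n(F_n)$ whenever $h_n(k)<-R-1$, i.e.\ $\{h_n<-R-1\}g_n\cap F_n=\emptyset$. So the shifted pair is $\ve$-close on $\{h_n<-R-1\}$ and more than $\ve$ apart, however large $h_n(g_n)$ is.

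With this in hand, keep $h_n$ fixed and pass to the limit. You get $x\neq y$ with $d(x,y)\ge\ve$ that are $\ve$-close on $\{h<-R-1\}$. Now choose $f$ with $h(f)\le -2R-1$, which exists because $h$ is unbounded below. Quasi-subadditivity gives $\{h<0\}f\subseteq\{h<-R-1\}$, so $(T^fx,T^fy)$ is an $(h,\ve)$-asymptotic pair. This inclusion is the correct way to absorb the constant, not ``level sets remain horoballs''; sublevel sets $\{h<t\}$ are in general not horoballs.

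The repaired argument is the claim in the proof of \cref{thm:closeness-ND}, with the limit inserted between the two shifts. The paper instead shifts by $fg'$ before taking limits, with $f\in B_L(1_G)$ chosen uniformly via \cref{lem:UnifLowerBound}. Equicontinuity of $\{T^f\}_{f\in B_L(1_G)}$ then gives a uniform separation $\delta$, after which closedness is immediate. Your ordering only needs the single limit $h$ to be unbounded below.

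Finally, quote non-emptiness from \cref{thm:robinson_crusoe} with $Y$ a point, not from \cref{thm:directed_RC}, which carries additional hypotheses.
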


The group $\Z^d$ equipped with  the $\ell^2$ metric (or euclidean distance) fulfills the hypothesis of \cref{thm:A}, so that it recovers  the  results of Boyle and Lind \cite{Boyle_Lind_expansive_subdynamics:1997}. This is also the case for the Heisenberg group equipped with the Kor\'anyi metric (see \cref{ex:heisenberg}).
It is worth noting that the $\ell^1$ metric (or word metric) on $\Z^d$ does not satisfy the hypotheses of this theorem (more precisely, the quasi-subadditive condition). Indeed, under such a metric, $\ND_{\varepsilon}(X)$ may fail to be closed (as shown by examples in \cite{Donoso_Maass_Petite_geometric_asymptotic:2024}). However, richer geometric structures do allow these conditions to be met, and we believe they provide the natural framework for investigating these phenomena, like for instance for Carnot groups.

In fact, we provide a more general statement than \cref{thm:A}, in the form of~\cref{thm:closeness-ND}, which is localized and relative to a factor map. Together with a former result from \cite{Donoso_Maass_Petite_geometric_asymptotic:2024} (see \cref{thm:robinson_crusoe}) we prove \cref{thm:A}.\\

Next, we examine structural constraints on non-deterministic horoballs by considering their intersections. Once again, under some assumptions on the boundary (\Cref{def:SymHoroball}), we establish the following.

\begin{thmx}\label{thm:B}
   Let $G$ be a discrete group that admits a metric with symmetric horoballs space, and let $(X,T,G)$ be an infinite topological dynamical system such that $\ND_{\varepsilon}(X)$ is closed for every $\ve>0$. 
Then,
    \[\bigcap_{h\in\ND(X)} \{h<0\} \subseteq {\rm Tor}(G),\]
where $\textnormal{Tor}(G)$ is the subset of torsion elements of $G$. In particular, if the group $G$ is torsion-free, the above intersection is empty.
\end{thmx}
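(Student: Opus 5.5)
Suppose, aiming for a contradiction, that some non-torsion $g\in G$ lies in $\{h<0\}$ for every $h\in\ND(X)$. The plan is to produce a horofunction $h^*\in\ND(X)$ with $g\notin\{h^*<0\}$. We will build it from the ``opposite side'' of the cyclic subgroup $\langle g\rangle$, using three ingredients: the symmetric horoball condition (\Cref{def:SymHoroball}), the closedness of $\ND_\ve(X)$, and compactness of the horofunction space.

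\textbf{Step 1: non-determinism travels along $g$.} Take $h\in\ND(X)$. For each $\ve$ there are distinct $x,y$ that are $\ve$-close along $\{h<0\}$. Since $g\in\{h<0\}$, the horoball should (by a Busemann-type estimate) contain the whole forward orbit $g^n$, $n\ge 1$, with $h(g^n)\to-\infty$ linearly or at least unboundedly; this is where infinite order of $g$ is used. Precomposing horofunctions with right multiplication, $h\mapsto h(\cdot\,g^{n})-h(g^{n})$ or similar, corresponds to moving the pair $(x,y)$ to $(T^{g^n}x,T^{g^n}y)$. I expect $\ND_\ve(X)$ to be invariant under this translation action of $G$ on horofunctions, by right-invariance of the metric. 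This yields non-deterministic horofunctions whose horoballs are pushed along the $g$-direction.

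\textbf{Step 2: use symmetry and closedness.} The symmetric horoball hypothesis should say, roughly, that for each horofunction $h$ there is an ``opposite'' horofunction $\bar h$ with $\{h<0\}$ and $\{\bar h<0\}$ essentially complementary. Consider a limit $h^*$ of suitably normalized translates $h(\cdot\, g^{-n})-c_n$ along the negative $g$-direction, chosen so that the horoballs $\{h^*<0\}$ avoid $g$. Alternatively, argue via the complementary horoball. Closedness of $\ND_\ve(X)$ for each $\ve$ guarantees $h^*\in\ND_\ve(X)$ for all $\ve$, so $h^*\in\ND(X)$. The key check is that $g\notin\{h^*<0\}$: along the sequence, $h_n(g)\ge 0$ should persist in the limit, because the pointwise limit gives $h^*(g)\ge0$.

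\textbf{Main obstacle.} The hardest step is making Step 2 rigorous: showing that some element of $\ND(X)$ has a horoball missing $g$. Translation alone tends to reproduce the same direction, and a nonempty $\ND(X)$ does not by itself give an opposite direction. I would first try the following. If every $h\in\ND(X)$ had $g\in\{h<0\}$, then by symmetry the reflected horofunctions would all have $g^{-1}$ ``far out''. One could then run the Schwartzman/Robinson–Crusoe argument (\cref{thm:robinson_crusoe}) relative to the half-orbit of $g^{-1}$, obtaining asymptotic pairs along a horoball containing $g^{-1}$ but not $g$. Symmetry of horoballs is precisely what ensures such a horoball is a genuine horoball in the boundary, and closedness passes it into $\ND(X)$. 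The torsion exception arises because for finite-order $g$ we have $g^{-1}=g^{k}$, so no horoball can separate them.
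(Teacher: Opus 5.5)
Your proposal has a genuine gap. The whole content of the theorem is that, for each infinite-order element $k$ (your $g$), there is some $h\in\ND(X)$ with $h(k)\ge 0$, and none of your steps produces one.

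\textbf{Step 1.} It relies on two claims that are not available under the hypotheses. First, $k\in\{h<0\}$ does not give $k^n\in\{h<0\}$ for all $n\ge 1$ without some subadditivity-type property. Second, $\ND_\ve(X)$ is not known to be invariant under $C^g(h)=h(\cdot\,g)-h(g)$. Since $\{C^g(h)<0\}=\{h<h(g)\}g^{-1}$ is a translate of a \emph{different} sublevel set, right-invariance alone gives nothing. The paper obtains this invariance only under conditions \ref{itemhorofct:1} and \ref{itemhorofct:2} (\cref{prop:condition_G}), which are not assumed here. Even when it holds, as for $\Z^d$ with the $\ell^2$ metric, $C^g h=h$, so translating yields nothing new.

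\textbf{Step 2.} It misreads \cref{def:SymHoroball}. The definition does not supply an opposite horofunction with complementary horoball. It says that if $h=\lim b_{g_n}$ and $h'=\lim b_{g_n^{-1}}$, then $\{h'<0\}=\{h<0\}^{-1}$. More importantly, symmetry is a property of $(G,\rho)$ alone. It cannot tell you that any particular horofunction is non-deterministic for your system; that must come from the dynamics.

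\textbf{The missing input.} What you need is the Directed Robinson Crusoe theorem (\cref{thm:directed_RC}); the undirected \cref{thm:robinson_crusoe} you invoke gives no control over direction. Your fallback points the right way but omits every substantive step, which runs as follows.

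If no horofunction is negative at $k^{-1}$, symmetry gives $h(k)\ge 0$ for all $h$, and \cref{thm:robinson_crusoe} suffices. Otherwise:
\begin{enumerate}
\item Pick $\bar h$ with $\bar h(k^{-1})<0$, and set $G_\eta=\{f:\rho(k^{-1},f)<\rho(1_G,f)-\eta\}$ for $0<\eta<-\bar h(k^{-1})$. Check that $G_\eta$ and its complement are unbounded (the latter via \cref{lem:cone_nonempty}).
\item Show that $\partial O$, with $O=X\times X\setminus\triangle_X$, is not pointwise $\{k^{-1}\}$-repulsive, by pigeonholing $m+1$ distinct points into $m$ small balls. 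This is where infinite order is really used, since $\langle k^{-1}\rangle_+$ must be infinite. Your remark that $g^{-1}$ is a positive power of $g$ for torsion $g$ is not the mechanism.
\item \cref{thm:directed_RC} then gives $h_\eta\in\partial(G\setminus G_\eta^{-1})\cap\ND_\ve(X)$.
\item Let $\eta\to 0$. Closedness of $\ND_\ve(X)$ and a diagonal extraction give $h=\lim b_{g_j}\in\ND_\ve(X)$ with $g_j^{-1}\notin G_{\eta_j}$. Hence $h'=\lim b_{g_j^{-1}}$ satisfies $h'(k^{-1})\ge 0$, and symmetry turns this into $h(k)\ge 0$. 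That is the actual role of the symmetry hypothesis.
\item Finally, $\Cone(k)\cap\ND(X)$ is a decreasing intersection of the nonempty compact sets $\Cone(k)\cap\ND_\ve(X)$, hence nonempty. This removes $k$ from $\bigcap_{h\in\ND(X)}\{h<0\}$.
\end{enumerate}
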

Of course \cref{thm:B} is meaningful when the horofunctions of $G$ do take negative values, which occurs, for instance, when $G$ is finitely generated (see \cref{lem:thick-horoballs-fg}).  Again, as for the former result, we state \cref{thm:vacia_tf} that is a more general  version of \cref{thm:B} relative to a factor map.

To illustrate the breadth of the theory, we explicitly develop these geometric concepts for free and nilpotent groups (specifically discrete Heisenberg groups), extending phenomena previously understood only for abelian groups. 
In particular  for the discrete Heisenberg group, notice  that it admits a degenerate horofunction— a zero constant function— which provides  a trivial horoball. Hence, the conclusion of \cref{thm:B} may fail to give meaningful insight.
To address this issue, we introduce a refined and stronger version of the result (\cref{thm:vacia_th_Heisenberg}), where the same conclusion as in  \cref{thm:B} holds when restricted to the intersection of all non-deterministic Busemann horofunctions (i.e. horofunctions arising from geodesics). In particular  such (at least two) non-degenerate and non-deterministic horofunctions always exist.

\subsection*{Topological minimal self-joinings and their restrictions}
The second half of the article is dedicated to strong topological minimal self-joinings (sTMSJ). This notion was introduced by Del Junco~\cite{delJunco_minimal_self_joining_top_dyn:1987} as a topological dynamical analog of the notion of minimal joinings of Rudolph from measurable dynamics~\cite{Rudolph_example_mpm_min_self_joinings:1979}. Their link to non-expansive subspaces is due to King~\cite{King_top_minimal_self_joinings:1990}, who used them to show that no $4$-fold sTMSJ exist for $\mathbb{Z}$-actions. 

We start with the study of strong doubly minimal systems, which correspond to $2$-fold sTMSJ. 
We give an example of a 2-fold sTMSJ  $\Z^d$ example (\cref{theo:KingExample}). 
We establish generalizations of their properties from their $\Z$ counterparts (\Cref{prop:expansive}), and also find novel behavior such as strong doubly minimal actions on connected spaces (Example~\ref{ex:dm_connected}).
We summarize the results in the following theorem
\begin{thmx} \label{thm:summarize_doubly_minimal}
    Consider a countable group $G$ and let $(X,T,G)$ be a strong doubly minimal system, then
    \begin{itemize}
        \item Its automorphism group consists of maps $T^g$, with $g\in Z(G)$.
        \item The system is a minimal zero-entropy expansive system.
        \item The system is prime (i.e. admits no non trivial factor) when $G$ is finitely generated and its center $Z(G)$ is torsion free and acts faithfully. 
    \end{itemize}
\end{thmx}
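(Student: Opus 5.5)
We start from the definition. A strong doubly minimal system (2-fold sTMSJ) is a minimal infinite system $(X,T,G)$ such that for every $(x,y)\in X\times X$ with $y\notin\{T^g x: g\in G\}$, the orbit of $(x,y)$ under the diagonal action is dense in $X\times X$. The three items are proved separately, each by looking at orbit closures of pairs.

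\emph{Automorphisms.} Let $\phi$ be an automorphism and consider the graph $\Gamma_\phi=\{(x,\phi(x))\}$. It is closed and diagonally invariant, and it is a proper subset of $X\times X$ because $X$ is infinite. Hence for every $x$ the pair $(x,\phi(x))$ cannot have a dense orbit, so $\phi(x)=T^{g_x}x$ for some $g_x\in G$. For each $g$ the set $F_g=\{x:\phi(x)=T^gx\}$ is closed, and $X=\bigcup_g F_g$ is a countable union. By Baire, some $F_g$ has nonempty interior $U$.
\begin{itemize}
\item The set $\{x:\phi(x)=T^gx\}$ is the equalizer of $\phi$ and $T^g$. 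We then want to show $\phi=T^g$ on all of $X$ and that $g$ is central.
\item For $x\in U$ and any $k$, we have $\phi(T^kx)=T^k\phi(x)=T^kT^gx$. On the other hand $T^kx\in F_{g'}$ for some $g'$, and we want to compare $g'$ with $kgk^{-1}$.
\item Consider the map $\psi=T^{g^{-1}}\circ\phi$. It equals the identity on $U$ and commutes with $T^k$ up to conjugation: $\psi T^k=T^{g^{-1}}T^k T^g\psi$. The subset $\{(x,\psi x)\}$ is again a closed set, invariant under a twisted action.
\end{itemize}
A cleaner route is to look at the closed invariant set $\overline{\{(T^kx,T^k T^g x)\}}$. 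For $x\in U$ this is the orbit closure of $(x,T^gx)=(x,\phi x)$, so it is contained in $\Gamma_\phi$. By minimality, $\{T^kx\}$ is dense, so $\Gamma_\phi$ equals the closure of $\{(T^kx,T^{kg}x)\}$. Now fix $k$. Using minimality again, the open set $U$ is hit by $T^k$-translates, and comparing $\phi(T^ky)=T^{kg}y=T^{kgk^{-1}}T^ky$ for $y\in U$ shows that $F_{kgk^{-1}}$ contains $T^kU$. The union of these sets covers $X$, which suggests using freeness. Here is where I expect the main obstacle: we need that $T^ax=T^bx$ on an open set forces $a=b$. I would get this from strong double minimality itself. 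If $T^{a}$ fixed an open set pointwise with $a\neq e$, then the closed invariant set $\{(x,T^ax)\}$ would contain diagonal points in a way incompatible with dense off-orbit pairs. The tool for this is nonatomic minimal systems together with the pair-orbit dichotomy. Once we have effective freeness on open sets, local constancy of $x\mapsto g_x$ and connectedness of the orbit via translates give $kgk^{-1}=g$ for all $k$, so $g\in Z(G)$ and $\phi=T^g$. Conversely, each $T^g$ with $g$ central is an automorphism.

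\emph{Zero entropy and expansiveness.} For expansiveness, suppose the system is not expansive. Then for every $\ve$ there are distinct points $x\neq y$ that are $\ve$-close along all of $G$. Pick such a pair not in the same orbit. This is possible because the off-orbit pairs are residual in an uncountable set, or because same-orbit $\ve$-asymptotic pairs force $T^g$ close to the identity. The orbit of $(x,y)$ stays within the $\ve$-neighbourhood of the diagonal, so it is not dense, a contradiction. For zero entropy, positive entropy gives asymptotic pairs, and more strongly Blanchard's entropy pairs, off the orbit relation. Their orbit closures are not dense, contradicting double minimality. Alternatively, the product $X\times X$ would have an orbit closure that is a proper joining, and one can use the fact that an off-diagonal asymptotic pair accumulates only on the diagonal.

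\emph{Primeness.} Let $\pi:X\to Y$ be a nontrivial factor map. The relation $R_\pi=\{(x,y):\pi x=\pi y\}$ is closed, invariant and proper. So every pair in $R_\pi$ lies in one orbit, and $\pi^{-1}(\pi x)\subseteq\{T^gx\}$. Applying the Baire argument as above to $R_\pi$, the fibres are given by translations by a set $H$ of elements acting as automorphisms. By the first item, these are central elements, and since $Z(G)$ acts faithfully, $H$ is a subgroup of $Z(G)$. Finite fibres come from compactness together with expansiveness. Then $H$ is a finite subgroup of the torsion-free group $Z(G)$, so $H$ is trivial and $\pi$ is an isomorphism. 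Finite generation is used to make the fibres uniformly finite through expansiveness.
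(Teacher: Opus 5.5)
\textbf{The definition is wrong, and under it the first item is false.} In \cref{def:TSMJ}, the pairs that must have dense orbits are those lying on different $Z(G)$-orbits, not different $G$-orbits. What you use is the weaker notion of \cref{rem:true_doubly_minimal}. To see that the first item fails under it, pull back the faithful doubly minimal $\Z^2$-action of \cref{theo:KingExample} to $H_3(\Z)$ through its abelianization. Pairs in distinct $H_3(\Z)$-orbits are pairs in distinct $\Z^2$-orbits, so they have dense orbits. Yet $T^{\mathtt{x}}\neq\mathrm{id}$ commutes with every $T^g$, while the center $\langle\mathtt{z}\rangle$ acts trivially. The same example shows that the ``effective freeness'' you flag as the main obstacle ($T^a=T^b$ on an open set forces $a=b$) cannot be extracted from double minimality, so your Baire argument cannot be closed. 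With the correct definition no Baire argument is needed. The pair $(x,\phi(x))$ lies in the proper closed invariant set $\Gamma_\phi$, so $\phi(x)=T^gx$ with $g\in Z(G)$. Since $\phi$ and $T^g$ both commute with the action, they agree on the dense orbit of $x$ (\cref{lem:coalescent}).

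\textbf{The other items also have gaps.} For expansiveness, a pair $(x,T^gx)$ with $g\in Z(G)$ that is $\ve$-close along $G$ only forces $\|T^g\|\le\ve$. You also need $\ve_0>0$ such that no central $g$ with $T^g\neq\mathrm{id}$ has $\|T^g\|\le\ve_0$, and ``residual in an uncountable set'' does not give this. The paper obtains $\ve_0$ from \cref{lem:uncountable_aut}: nontrivial commuting automorphisms $T^{g_n}\to\mathrm{id}$ would yield uncountably many automorphisms near the identity, contradicting the countability of $\Aut(X,T,G)=\{T^g: g\in Z(G)\}$.

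For entropy, your mechanism does not exist. Doubly minimal systems do contain off-diagonal asymptotic or proximal pairs in distinct orbits: every infinite expansive $\Z$-system has asymptotic pairs, and see also \cref{lem:asym_in_doubly_faithful}. By definition the orbits of such pairs are dense, because asymptoticity constrains only part of the orbit. Entropy pairs likewise need not have non-dense orbits. \cref{prop:doublymin_zero_entropy} (for amenable $G$) uses an entropy count instead. Hochman's approximation gives a continuous nonconstant $f$ whose factor $x\mapsto(f(T^{g^{-1}}x))_{g\in G}$ has entropy strictly below $h_\mu$. Strong double minimality puts the fibres of this factor inside $Z(G)$-orbits, so they are countable, and the factor must then carry full entropy, a contradiction.

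In the primeness part, the finiteness of fibres is only asserted. The paper gets it from \cref{thm:distal_expansive}, which is where finite generation enters. Your sketch can be repaired directly, without finite generation:
\begin{enumerate}
\item A central element fixing a point of the minimal $Y$ acts trivially, so the fibre of $x$ is $Hx$ with $H=\{g\in Z(G): S^g=\mathrm{id}_Y\}$.
\item $Hx$ is a countable compact set on which $H$ acts transitively by homeomorphisms. By Baire it has an isolated point, hence all its points are isolated and it is finite.
\item Automorphisms of a minimal system act freely, and $Z(G)$ acts faithfully, so the stabilizer of $x$ in $H$ is trivial. Hence $H$ is finite, and therefore trivial since $Z(G)$ is torsion-free.
\end{enumerate}
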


\cref{thm:summarize_doubly_minimal} follows combining \cref{prop:factor_trivial,prop:expansive,lem:min+faithfull=>free,prop:doublymin_zero_entropy}.

As an application of our theorems for non-deterministic horofunctions, and in the spirit of King's limit to foldings, we obtain new results of this type for various classes of groups. Through a black-box theorem that limits folds according to covering of the group in terms of horofunctions (\Cref{thm:king_general}), we establish limits for some abelian groups and Heisenberg groups.

\begin{thmx} \label{thm:no-tmjs-Zd}
    Let $G$ be a finitely generated abelian group.  Then, no system $(X,T,G)$ has $2(d+1)$-fold sTMSJ where $d$ is the rank of $G$.
\end{thmx}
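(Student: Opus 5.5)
We follow King's strategy for $\Z$, in the form packaged by the black-box result \cref{thm:king_general}. That theorem says, roughly, that a $k$-fold sTMSJ cannot exist once $G$ can be covered, up to a finite set, by complements of $k-1$ (or a bounded number of) non-deterministic horoballs. So the plan is to exhibit, for any infinite system $(X,T,G)$ with $G$ finitely generated abelian of rank $d$, a family of at most $2d+1$ non-deterministic horoballs whose complements cover $G$ up to finitely many elements. Then \cref{thm:king_general} rules out $2(d+1)$-fold sTMSJ.

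\textbf{Reduction.} Write $G\cong \Z^d\times F$ with $F$ finite. First dispose of finite systems: a finite system cannot carry a $k$-fold sTMSJ for large $k$, because off-diagonal products of orbits give extra minimal subsets. Next, equip $G$ with the metric pulled back from the $\ell^2$ metric on $\Z^d$, adding a bounded term on $F$. The horofunctions are then those of $\Z^d$, namely $h_v(x)=-\langle v,x\rangle$ for unit vectors $v$ and degenerate variants. This metric is uniformly quasi-subadditive with no lower bound, so \cref{thm:A} applies and gives that $\ND_\ve(X)$ is closed and $\ND(X)\neq\emptyset$. Since $\Z^d$ is torsion-free, \cref{thm:B} (or \cref{thm:vacia_tf}) applies, up to the torsion part $F$, and gives
\[\bigcap_{h\in\ND(X)}\{h<0\}\subseteq \textnormal{Tor}(G)=F .\]

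\textbf{Extracting finitely many horoballs.} In $\Z^d$ the horoballs are open half-spaces $\{x:\langle v,x\rangle>0\}$, so let $N\subseteq S^{d-1}$ be the set of non-deterministic directions. The fact that the intersection of the corresponding half-spaces is essentially empty translates, by the separation theorem, into $0$ lying in the convex hull of $N$. Carathéodory's theorem then yields $v_1,\dots,v_m\in N$ with $m\le d+1$ and $0\in\Conv\{v_i\}$.

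The $v_i$ control $G$ as follows.
\begin{itemize}
\item The half-spaces $\{\langle v_i,x\rangle>0\}$ have empty intersection.
\item Their complements $\{\langle v_i,x\rangle\le 0\}$ cover $\Z^d$.
\end{itemize}
This covering does not directly give the bounded-overlap condition that \cref{thm:king_general} needs. To get it, also use the opposite sides, i.e. the closed half-spaces where the points are asymptotic. The count $2(d+1)$ arises as two sides for each of at most $d+1$ directions, so the covering involves fewer than $2(d+1)$ sets. Finally, lift from $\Z^d$ to $G$ by taking preimages under the projection; the finite $F$ only changes sets by finite, bounded amounts.

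\textbf{Main obstacle.} The delicate step is checking that the Carathéodory family of at most $d+1$ non-deterministic directions meets \emph{exactly} the covering hypothesis of \cref{thm:king_general} with the threshold $2(d+1)$. Two things must be handled here.
\begin{itemize}
\item \emph{Boundary hyperplanes.} These are where the strict and non-strict inequalities differ. One can shift horoballs by group translations, which is allowed by invariance, to absorb them.
\item \emph{Degenerate configurations.} When $m<d+1$, the directions span a proper subspace.
\end{itemize}
I would first try to handle the degenerate case by induction on rank, restricting to the subgroup orthogonal to the span.
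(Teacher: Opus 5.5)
Your pipeline is the paper's: closedness from \cref{thm:A}, empty intersection from \cref{thm:B}, then separation and Carath\'eodory to get $m\le d+1$ non-deterministic directions $v_i$ with $0\in\Conv\{v_i\}$, which is essentially \cref{prop:descomp_Z}. The gap is in how you feed this into \cref{thm:king_general}, which you have misread.

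That theorem has no bounded-overlap condition and does not involve complements of horoballs. It takes $N$ thick sets $\Omega_i$ covering $G$ up to a finite set, each with a pair $x_i\neq y_i$ that stays $\varepsilon$-close along all of $\Omega_i$, and it excludes $2N$-fold sTMSJ. The factor $2$ counts the $2N$ points $x_1,y_1,\dots,x_N,y_N$ put into the joining, not ``two sides of each direction''. Three things follow:
\begin{itemize}
\item The covering you actually prove, by the complements $\{\langle v_i,x\rangle\le 0\}$, is useless: no asymptotic pair is known there, since nothing says $-v_i$ is non-deterministic. So opposite sides cannot be added as extra sets.
\item You never observe that the asymptotic sides alone already cover.
\item Your announced count of $2d+1$ (or ``fewer than $2(d+1)$'') sets would, through \cref{thm:king_general}, only exclude $2(2d+1)$-fold sTMSJ. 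So the argument does not reach the bound $2(d+1)$ as written, and your ``main obstacle'' is left open.
\end{itemize}

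The missing step is one line. From $0=\sum_i\lambda_i v_i$ with $\lambda_i\ge 0$ summing to $1$, no $x$ can satisfy $\langle v_i,x\rangle<0$ for every $i$. Hence the closed horoballs $\Omega_i=\{x\in\Z^d:\langle v_i,x\rangle\ge 0\}$ themselves cover $\Z^d$, and each is thick. Choose $g_i$ with $\Omega_i+g_i\subseteq\{\langle v_i,x\rangle>0\}$, and replace an asymptotic pair $(x,y)$ for $v_i$ by $(T^{g_i}x,T^{g_i}y)$. The new pair is $\varepsilon$-close along $\Omega_i$; this is your boundary-hyperplane remark, and exactly what the paper does. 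Then \cref{thm:king_general} with $N=m\le d+1$ excludes $2m$-fold sTMSJ, hence $2(d+1)$-fold sTMSJ, since the latter implies the former.

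In particular the degenerate case $m<d+1$ needs no induction on rank; it simply gives a better bound.

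For torsion, the paper keeps the $\ell^2$ metric on $\Z^d$ and uses $\Omega_i\oplus\textnormal{Tor}(G)$, relying on equicontinuity of the finite torsion action. If you keep your product metric instead, its horofunctions carry an extra bounded $F$-dependent term, so they are not exactly those of $\Z^d$. That term is absorbed by the same translation trick.
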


\begin{thmx}\label{thm:no-tmjs-Heisenberg}
Let $H_{2d+1}(\Z)$ be a discrete Heisenberg group. For every $d\geq 1$, no system $(X,T,H_{2d+1}(\Z))$ has $4d+2$-fold sTMSJ.
\end{thmx}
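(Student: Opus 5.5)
Following the strategy announced for \cref{thm:no-tmjs-Zd}, I will derive \cref{thm:no-tmjs-Heisenberg} from the black-box result \cref{thm:king_general}. That theorem should say, roughly, the following: suppose a system has $n$-fold sTMSJ, and suppose the non-deterministic horoballs of every system satisfy a covering restriction, namely that the group cannot be covered, up to a bounded set or up to torsion, by a controlled number of non-deterministic horoballs. Then $n$ is bounded in terms of that number. King's argument for $\Z$ (no $4$-fold) is the model. One takes $n$ points in pairs that are $\ve$-asymptotic along horoballs $\{h_i<0\}$ whose union is essentially all of $G$, and builds a non-diagonal, non-product minimal subset of $X^n$. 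That contradicts the sTMSJ assumption, which says that every minimal self-joining is a product of off-diagonals. So the task reduces to the following: in $H_{2d+1}(\Z)$, for every infinite system, find $2d+1$ non-deterministic horofunctions whose negative horoballs cover $G$ up to a set that is harmless for the joining construction. This gives the bound $n<2(2d+1)=4d+2$, mirroring the count $2(d+1)$ in the abelian case.

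\textbf{Step 1 (geometry).} Equip $H_{2d+1}(\Z)$ with a metric for which the hypotheses of \cref{thm:A} and \cref{thm:B} hold, namely the (discretized) Korányi metric of \cref{ex:heisenberg}. Then $\ND_\ve(X)$ and $\ND(X)$ are nonempty and closed. I will describe the Busemann horofunctions explicitly. They factor through the abelianization $\Z^{2d}$, i.e.\ they are linear functionals $v\mapsto \langle u,v\rangle$ with $u\in \mathbb{S}^{2d-1}$ applied to the horizontal coordinates, with the center invisible. There is also the degenerate zero horofunction.

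\textbf{Step 2 (dynamics).} Apply \cref{thm:vacia_th_Heisenberg}: the intersection of the horoballs $\{h<0\}$ over non-deterministic Busemann horofunctions $h$ is empty. Projecting to $\R^{2d}$, this says that the set $U\subseteq \mathbb{S}^{2d-1}$ of non-deterministic directions is closed, and that the open half-spaces $\{\langle u,\cdot\rangle<0\}$, $u\in U$, have empty intersection on the lattice. By a Helly/Carathéodory-type argument in $\R^{2d}$, $0$ lies in the convex hull of $U$, so some at most $2d+1$ directions $u_1,\dots,u_{2d+1}\in U$ already have $0$ in their convex hull. Hence the complementary closed half-spaces, or equivalently the horoballs of the opposite functionals, cover $\R^{2d}$. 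I will then pair these horoballs with their asymptotic pairs as in the $\Z^d$ proof.

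\textbf{Step 3.} Feed these $2d+1$ horoballs into \cref{thm:king_general} to conclude that no $2(2d+1)$-fold sTMSJ exists.

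The main obstacle is the center. The Busemann horoballs are unions of cosets of $Z(G)$, so in each of them the vertical direction is unconstrained and the covering happens only modulo the center. I expect to handle this in one of two ways. The first is to check that \cref{thm:king_general} only needs coverage up to a subgroup acting trivially on the relevant pairs. The second is to use \cref{thm:summarize_doubly_minimal}-type facts, namely that central elements act as automorphisms of the self-joinings, so that orbit closures in $X^n$ are invariant in the vertical direction and nothing is lost by quotienting. A secondary point is making the Carathéodory count yield exactly $2d+1$ rather than $2d+2$. For this I will use the fact that the half-spaces must have empty intersection, and not merely cover.
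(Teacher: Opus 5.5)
Your route is the paper's. The paper proves \cref{prop:descomp_H} using closedness of $\ND(X)\cap\partial_b H_{2d+1}(\Z)$, the empty intersection from \cref{thm:vacia_th_Heisenberg}, \cref{lem:pasar_continuo} and \cref{lem:Caratheodory}. It then feeds the resulting cover into \cref{thm:king_general} exactly as in the proof of \cref{thm:no-tmjs-Zd}.

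As written, however, your Step~2 hands Step~3 the wrong sets. The pair you get for $u_i$ is $\ve$-close along $\{g\mid\langle u_i,p(g)\rangle<0\}$, where $p$ is the horizontal projection, a homomorphism onto $\Z^{2d}$. You instead propose to cover by the complementary half-spaces $\{\langle u_i,\cdot\rangle\ge 0\}$, i.e.\ by the horoballs of the opposite functionals. Along those sets you have no control, so \cref{thm:king_general} cannot be applied to them.

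The correct cover follows from your own intermediate conclusion. Write $0=\sum_i\lambda_iu_i$ as a convex combination. For every $x$ some $\langle u_i,x\rangle\le 0$, so the sets $\Omega_i=\{(\vec v,\vec u,t)\mid \langle u_i,(\vec v,\vec u)\rangle\le 0\}$ cover $H_{2d+1}(\Z)$. These are the closures of the non-deterministic horoballs themselves, not their complements. Choose $g_i$ with $\langle u_i,p(g_i)\rangle<0$. Then $\Omega_ig_i\subseteq\{g\mid\langle u_i,p(g)\rangle<0\}$, so $(T^{g_i}x_i,T^{g_i}y_i)$ is $\ve$-close along $\Omega_i$. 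Each $\Omega_i$ is thick, because $B_R(g)=B_R(1_G)g$ and $\|p(f)\|_2\le\|f\|$ for the Kor\'anyi gauge.

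The two obstacles you anticipate are not actually there.
\begin{itemize}
\item \emph{The center.} Busemann horofunctions ignore $t$, so a cover of $\Z^{2d}$ by horizontal half-spaces pulls back to an exact cover of $H_{2d+1}(\Z)$. The collection $\Omega_1,\dots,\Omega_k$ is therefore thickly full with empty complement. No quotient by the center and no weakened version of \cref{thm:king_general} is needed. The genuine difficulty with the center, the degenerate zero horofunction, is already absorbed into \cref{thm:vacia_th_Heisenberg}.
\item \emph{The count.} The directions live in $\R^{2d}$, so Carath\'eodory directly gives $k\le 2d+1$. Hence there is no $2k$-fold sTMSJ, and so no $(4d+2)$-fold sTMSJ, since $n$-fold sTMSJ implies $m$-fold sTMSJ for every $m\le n$.
\end{itemize}

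What you should make explicit instead is the passage from empty intersection on the lattice to $0\in\Conv(U)$. This step genuinely needs $U$ to be closed; the paper gives a non-closed counterexample right after \cref{lem:pasar_continuo}. The argument runs as follows. If $0\notin\Conv(U)$, strict separation from the compact set $\Conv(U)$ gives $w$ with $\max_{u\in U}\langle u,w\rangle<0$. A rational point close to $w$, scaled to an integer point, then lies in every $\{\langle u,\cdot\rangle<0\}$ with $u\in U$, which contradicts the empty intersection on the lattice.
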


In contrast to discrete actions, $\R$-flows present different behaviors. For instance, Ratner's theorems imply that the horocycle flow on specific compact manifolds has $n$-fold sTMSJ for any integer $n>0$ (see  \cite{KanigowskiKasprzakLorenzo}).

\subsection*{Organization of the paper}
In \cref{sec:background}, we recall the necessary background on groups, horofunctions, and non-determinism in topological dynamics. \cref{sec:TopoHorofct} establishes the general topological properties of the set of non-deterministic horofunctions and in particular Theorems \ref{thm:A} and \ref{thm:B}. We give a geometric criterion for the closedness of non-deterministic horofunctions and explore structural constraints on their intersection and covering properties.  In \cref{sec:horo-heisenberg}, we analyze the horofunctions on the Heisenberg group equipped with the Kor\'anyi metric. To strengthen~\cref{thm:B} in this context, and thus establish~\cref{thm:vacia_th_Heisenberg}, we refine the technical result known as the directed Robinson Crusoe’s theorem (introduced in \cite{Donoso_Maass_Petite_geometric_asymptotic:2024}) and adapt the proof strategy used for Theorem \ref{thm:B}.
Finally, dynamical applications are given in \cref{sec:TMSJ}, specifically addressing strong doubly minimal systems and establishing new restrictions on the existence of $n$-fold topological minimal self-joinings (theorems \ref{thm:summarize_doubly_minimal}, \ref{thm:no-tmjs-Zd} and \ref{thm:no-tmjs-Heisenberg}). 

\subsection*{Acknowledgements}
We are deeply grateful to M. Hochman for illuminating discussions concerning the material in \cref{sec:TMSJ}, and for bringing the problems in \cref{sec:LimitnFolding} to our attention.

\section{Preliminaries and Background} \label{sec:background}

\subsection{Groups and their horofunctions}\label{sec:HorofctGroup} 
In this section, we recall several results concerning horofunctions and horoballs, to establish the framework for our main theorems.

Given a group $G$ we let $1_G$ denote its neutral element. Throughout what follows, we assume that $G$ is countable and admits a metric $\rho \colon G \times G \to \mathbb{R}$ satisfying the following two properties:
\begin{itemize}
\item it is right invariant, that is, $\rho(gf, kf) = \rho(g,k)$ for all $g,k,f \in G$;
\item it is proper, meaning that every closed ball is compact. 
\end{itemize}

Throughout this article, we restrict our attention to discrete groups. In this setting, properness means that every closed ball is actually finite. Furthermore, note that an element $k \in G$ is a torsion element if and only if the set $\{k^n : n \in \Z\}$ is bounded.

Generalizing the ideas of Busemann, Gromov defined a compactification of the group $G$ with an embedding map $b$. Denoting by $C(G)$ the collection of continuous real functions on $G$, this embedding is given by: 
\begin{eqnarray*}
b\colon G & \to & C(G)\\
g & \mapsto& b_{g} \colon x \mapsto \rho(g,x) -\rho(g,1_G). 
\end{eqnarray*}
The triangle inequality implies that all the maps $b_{g}$ are 1-Lipschitz. Moreover, by construction, we have  $b_{g}(1_G) =0$. It follows from  Arzel\`a-Ascoli's theorem and a standard diagonal argument that $b(G)$ is a relatively  compact set in $C(G)$ for the compact open topology. It is also straightforward to verify that the map $b$ is injective. 
The \define{border}  of $(G,\rho)$, denoted by $\partial (G, \rho)$ or simply  $\partial G$ when the metric is implied, is defined as the set 
$$ \partial G = \overline{b(G)} \setminus b(G),$$
where the closure is taken with respect to the compact open topology. This set is not empty whenever $G$ is unbounded. 
A function $h \in \partial G$ is called a \define{horofunction}. 
The \define{horoball} associated with a horofunction $h \in \partial G$ is the subset $H$ of $G$ given by
\[ H= \{x \in G\mid \ h(x)< 0\}.\]
For brevity, we may denote this set by $\{h<0\}$ and refer to it simply as a horoball. We refer to \cite{Bridson_Haefliger_metric_non-pos_curvature:1999} for discussion of several properties of horofunctions and horoballs.

A standard computation shows that the horofunctions of $\Z^{d}$ equipped with the Euclidean metric (or $\ell^{2}$ metric) are normalized linear functions: they are of the form $\langle \cdot,  \bf u \rangle$ for some unit vector $\bf u \in \R^{d}$. In particular, the associated horoball is an open half-space where $\bf u$ is the normal outgoing unit vector. For the $\ell^{1}$ metric (or word metric) on $\Z^{d}$, the horofunctions are piecewise linear and the horoballs are quarter or half-spaces with restricted directions.\\

\subsubsection{The no lower bound on horofunctions property}\label{sec:NoLowerBound}
In Section \ref{sec:closednessND} we will use the following notion. Let $\rho$ be a proper right-invariant metric on the group $G$. We say that  a closed subset $S \subseteq \partial G$  has \define{no lower bound for its horofunctions} if every horofunction $h \in S$ satisfies $\inf_{g\in G} h(g)= - \infty$. 
When $S = \partial G$ we simply say that $G$ has no lower bound for its horofunctions.
The compactness of the boundary $\partial G$ allows us to obtain a lower bound that goes to $-\infty$ uniformly for all horofunctions.
\begin{lemma}\label{lem:UnifLowerBound}
  If the closed set $S\subseteq \partial G$ has no lower bound for its horofunctions, then there is a uniform lower bound
   $$ \lim_{L\to \infty} \sup_{h \in S} \inf(h(B_L(1_G))) = - \infty.$$
\end{lemma}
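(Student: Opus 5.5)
The plan is a compactness argument on $S$, using that the compact-open topology on $C(G)$ for discrete $G$ is the topology of pointwise convergence.

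First, the limit exists in $[-\infty,\infty)$. Set $\phi_L(h)=\inf h(B_L(1_G))$ and $\Phi(L)=\sup_{h\in S}\phi_L(h)$. Since $B_L(1_G)$ grows with $L$, each $\phi_L(h)$ is non-increasing in $L$, and so is $\Phi(L)$. It is also finite: $1_G\in B_L(1_G)$ and $h(1_G)=0$, because $h$ is a pointwise limit of the functions $b_g$, all of which vanish at $1_G$. Hence $\Phi(L)\le 0$, and it remains to show $\Phi(L)$ is not bounded below.

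Suppose instead that there is $M>0$ with $\Phi(L)> -M$ for every $L$. Then for each $n\in\N$ we can choose $h_n\in S$ with $\inf h_n(B_n(1_G))> -M$, that is, $h_n(g)> -M$ for all $g$ with $\rho(g,1_G)\le n$.

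Next we extract a limit. The set $\partial G$ is a closed subset of the compact set $\overline{b(G)}$, hence compact, and $S$ is closed in it, so $S$ is compact. Since $G$ is countable, $C(G)$ with the compact-open topology is metrizable, so $S$ is sequentially compact. Passing to a subsequence, $h_n\to h\in S$ pointwise.

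Fix $g\in G$. For every $n\ge \rho(g,1_G)$ we have $h_n(g)>-M$, so letting $n\to\infty$ gives $h(g)\ge -M$. Thus $\inf_G h\ge -M>-\infty$, which contradicts the assumption that $S$ has no lower bound for its horofunctions. Therefore $\Phi(L)\to-\infty$.

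The only delicate point is justifying sequential compactness of $S$. This follows from metrizability of pointwise convergence on a countable set, or one can avoid it by using a subnet, and the argument goes through unchanged. Properness of $\rho$ is not needed beyond the fact that the balls exhaust $G$.
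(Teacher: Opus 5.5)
Your argument is correct and is essentially the paper's proof: both rest only on compactness of $S$ in the topology of pointwise convergence. The paper takes a finite subcover of the open sets $\{h\in S \mid h(g)<-R\}$ and chooses $L$ so that $B_L(1_G)$ contains the finitely many $g_i$; you run the same compactness in sequential form by contradiction. The one step you assert without proof, that $\partial G$ is closed in $\overline{b(G)}$, is exactly the compactness of $\partial G$ that the paper also invokes without comment, and it is not needed if $S$ is read as closed in $\overline{b(G)}$.
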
    
\begin{proof} For any $g\in G $ and $R>0$, set $U_g = \{ h \in S \mid h(g) <-R\}$. Then $U_g$ is an open set and $S = \bigcup_{g\in G} U_g$ because $S$ has no lower bound for its horofunctions. By compactness of $\partial G$, there are finitely many $g_1, \ldots, g_\ell \in G$ such that $S = \bigcup_{i=1}^\ell U_{g_i}$. Take $L>0$ such that $g_1,\ldots,g_{\ell}$ belong to $B_{L}(1_G)$. Then, for any horofunction $h$ in $S$ we have $\inf(h(B_L(1_G)))<-R$.  The conclusion follows.
\end{proof}

\begin{remark} \label{rmk:finitary} 
The conclusion of \cref{lem:UnifLowerBound} can be formulated in finitary terms using the group's metric as follows: for any $R>0$ there exists $L>0$ such that for all sufficiently large $g\in G$, one has \[ \min_{f\in B_L(1_G)} \rho(f,g) +R \leq \rho(1_G,g). \]
\end{remark}

A finitely generated group endowed with the $\ell^1$ (or word) metric has no lower bound for its horofunctions (see \cite[Proposition 1.5]{Auslander_Glasner_Weiss_recurrencer_zerodimensional:2007} or \cite[Proposition 2.1]{Donoso_Maass_Petite_geometric_asymptotic:2024}). This is also the case for $\Z^{d}$ equipped with the Euclidean norm, and for the discrete Heisenberg group with an explicit, geometrically meaningful metric (see \cref{ex:heisenberg}). 
A strategy for obtaining a metric with such a property consists of finding an embedding of the group into a Lie group with good geometric properties that ensure the conditions are satisfied. The reason we do not restrict ourselves to the word metric, and instead look for more geometrical metrics, is that under the word metric $\ND_{\ve}(X)$ may fail to be closed. As we shall discuss in \cref{sec:closednessND}, closedness is a useful property and we will show implied by suitable geometric conditions on the metric. \\

A subset $G'\subseteq G$ is \define{thick} if it contains arbitrarily large balls \footnote{The classical definition of thick set is slightly different (see for instance \cite[Chapter 1]{Glasner_ergodic_theory_joinings:2003}). However, in the context of a proper metric, this is equivalent to the definition given here.}. That is, for any $R>0$ there exists $g\in G'$ such that $B_{R}(g)\subseteq G'$. A subset $S\subseteq G$ is \define{(left) syndetic} if there exists a compact (in our setting, finite) set $K\subseteq G$ such that $KS=G$. The notions of thickness and syndeticity are ``dual'', in the sense that $G'\cap S\neq \emptyset$ for any thick subset $G'$ and any syndetic subset $S$.   

Let $G$ be a group with a right invariant and proper metric $\rho$. We say that $(G,\rho)$ has \define{thick horoballs} if $\{h<0\}$ is thick for any $h\in \partial G$.  We recall a result linking thickness and no lower bound for the horofunctions
\begin{proposition}[{\cite[Proposition 2.1]{Donoso_Maass_Petite_geometric_asymptotic:2024}}] \label{prop:nobound_imply_thick}
Let $G$ be a group with a right invariant and proper metric $\rho$. If $h\in \partial G$, satisfies $\inf_{g\in G} h(g)=-\infty$, then $\{h<0\}$ is thick. Consequently, if $(G, \rho)$ has no lower bound on its horofunctions, then it has thick horoballs.
\end{proposition}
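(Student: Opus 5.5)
The strategy is to produce, for a given horofunction $h$ with $\inf_G h=-\infty$ and a given radius $R>0$, a point $g$ with $h(g)\le -R-1$. The $1$-Lipschitz property of $h$ then places the whole ball $B_R(g)$ inside $\{h<0\}$. The first step is therefore to note that every horofunction is $1$-Lipschitz: each $b_g$ satisfies $|b_g(x)-b_g(y)|=|\rho(g,x)-\rho(g,y)|\le \rho(x,y)$ by the triangle inequality. Since $h$ is a pointwise limit of such functions (the compact open topology on the discrete space $G$ is the topology of pointwise convergence), $h$ inherits the same inequality.

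Next we choose $g$. Given $R>0$, since $\inf_{g\in G}h(g)=-\infty$ there is $g\in G$ with $h(g)<-R-1$, or more simply $h(g)\le -R-1$. For any $x\in B_R(g)$, meaning $\rho(x,g)\le R$, the Lipschitz bound gives
\[ h(x)\le h(g)+\rho(x,g)\le -R-1+R=-1<0. \]
Hence $B_R(g)\subseteq\{h<0\}$, and $g$ itself lies in $\{h<0\}$. As $R$ was arbitrary, $\{h<0\}$ contains arbitrarily large balls, which is exactly the paper's definition of thickness.

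The second assertion is immediate: if $(G,\rho)$ has no lower bound on its horofunctions, then every $h\in\partial G$ satisfies the hypothesis, so every horoball is thick, i.e. $(G,\rho)$ has thick horoballs.

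No step is a real obstacle. The only point requiring care is justifying that the Lipschitz property passes to the closure. The inequality $|f(x)-f(y)|\le\rho(x,y)$ is a closed condition under pointwise convergence for each fixed pair $(x,y)$, so it holds for all $f\in\overline{b(G)}$, and in particular on $\partial G$.
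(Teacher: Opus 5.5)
Your proof is correct. Horofunctions are $1$-Lipschitz, being pointwise limits of the $1$-Lipschitz functions $b_g$, so any $g$ with $h(g)\le -R-1$ satisfies $B_R(g)\subseteq\{h<0\}$; this gives thickness, and the second assertion follows immediately. The paper does not reprove the statement but cites \cite[Proposition 2.1]{Donoso_Maass_Petite_geometric_asymptotic:2024}, and your Lipschitz argument is the natural one for it.
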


We can characterize this property as follows:
\begin{lemma} \label{lem:thick-horoballs-fg}
Let $G$ be a countable group. The following are equivalent:
\begin{enumerate}
    \item $G$ is finitely generated.
    \item There exists a right-invariant and proper metric $\rho$ such that $(G,\rho)$ has thick horoballs.
    \item  There exists a right-invariant and proper metric $\rho$ such that each horoball is nonempty.
\end{enumerate}
    
\end{lemma}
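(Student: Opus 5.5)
The plan is to prove the cycle of implications $(1)\Rightarrow(2)\Rightarrow(3)\Rightarrow(1)$.

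\emph{$(1)\Rightarrow(2)$.} Suppose $G$ is finitely generated. If $G$ is finite, there are no horofunctions at all, since $\partial G=\emptyset$ for bounded $G$, so the thick-horoballs condition holds vacuously. If $G$ is infinite, take $\rho$ to be the word metric associated with a finite symmetric generating set, which is proper and right invariant. As recalled above, a finitely generated group with its word metric has no lower bound for its horofunctions. \cref{prop:nobound_imply_thick} then gives that every horoball is thick.

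\emph{$(2)\Rightarrow(3)$.} This is immediate, because a thick set contains balls and is therefore nonempty.

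\emph{$(3)\Rightarrow(1)$.} This is the main step, and I would argue by contraposition. Suppose $G$ is countable and not finitely generated, and let $\rho$ be any proper right-invariant metric. Write $G=\bigcup_n G_n$ as a strictly increasing union of finitely generated subgroups. The goal is to produce a horofunction $h\geq 0$, which has an empty horoball.

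For this, fix $n$. Since $B_R(1_G)$ is finite for every $R$, the union of these balls is not contained in $G_n$, so there are elements outside $G_n$. I would choose $g_n\notin G_n$ so that $\rho(g_n,1_G)$ is minimal among elements outside $G_n$. Then I want $\rho(g_n,x)\ge \rho(g_n,1_G)$ for all $x\in G_n$. Right invariance gives $\rho(g_n,x)=\rho(g_nx^{-1},1_G)$, and $g_nx^{-1}\notin G_n$ because $x\in G_n$ and $g_n\notin G_n$. By minimality, $\rho(g_nx^{-1},1_G)\ge\rho(g_n,1_G)$. Hence $b_{g_n}(x)\ge 0$ for all $x\in G_n$.

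Next I pass to the limit. Since $G_n$ increases and each ball is finite, the minimal distance tends to infinity, so $g_n\to\infty$. By compactness of $\overline{b(G)}$, a subsequence $b_{g_n}$ converges in the compact-open topology, which is pointwise convergence here, to some $h$. Since $g_n\to\infty$ and $\rho$ is proper, $h\in\partial G$ and not in $b(G)$. Every $x\in G$ lies in $G_n$ for all large $n$, so $h(x)\ge 0$. Thus $\{h<0\}=\emptyset$, contradicting (3).

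The main obstacle I expect is the minimal-distance choice in this last implication. One has to check that a minimum exists; it does, because the norms form a discrete set by properness. The key point is then the coset trick $g_nx^{-1}\notin G_n$, which makes the resulting $b_{g_n}$ nonnegative on the whole subgroup $G_n$ rather than only near the identity.
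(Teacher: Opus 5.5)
\paragraph{The gap.} It is your sentence ``since $g_n\to\infty$ and $\rho$ is proper, $h\in\partial G$ and not in $b(G)$''. With $\partial G=\overline{b(G)}\setminus b(G)$ and an arbitrary proper right-invariant metric, this is false, because $b(G)$ need not be open in $\overline{b(G)}$. Since $b_g(g)=-\rho(g,1_G)<0$ for $g\neq 1_G$, your nonnegative limit $h$ is either a horofunction or equal to $b_{1_G}$, and the second case does occur.

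Take $G=\bigoplus_{k\ge 1}\Z$ with $\rho(u,v)=\sum_k k\,|u_k-v_k|$, which is right-invariant and proper, and let $G_n=\bigoplus_{k\le n}\Z$. Every element outside $G_n$ has norm at least $n+1$, so your choice is $g_n=\pm e_{n+1}$. For every $x$ supported in $\{1,\dots,n\}$ one computes $b_{g_n}(x)=\sum_k k\,|x_k|=b_{1_G}(x)$. Hence $h=b_{1_G}\in b(G)$: it is not a horofunction, and nothing contradicts (3). The obstacle you flagged, existence of the minimum, is harmless; this is the real one.

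\paragraph{Why no other choice of $g_n$ helps.} In this example (3) holds while (1) fails. Working coordinatewise, every $h\in\overline{b(G)}$ has the form $h(x)=\sum_k k\,f_k(x_k)$, where each $f_k$ is $t\mapsto -t$, or $t\mapsto t$, or $t\mapsto |c_k-t|-|c_k|$ for some $c_k\in\Z$. Moreover $h\notin b(G)$ exactly when some $f_k$ is of one of the first two types, or $(c_k)_k$ has infinitely many nonzero terms. In those cases $h(\pm e_k)=-k$ or $h(c_ke_k)=-k|c_k|$, so every horoball is nonempty.

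\paragraph{Comparison with the paper.} The paper argues differently. It uses compactness of $\partial G$ to get $g_1,\dots,g_\ell$ with $\partial G=\bigcup_i\{h: h(g_i)<0\}$, then takes almost-minimal coset representatives of $G/\langle g_1,\dots,g_\ell\rangle$. But it relies on the same two unjustified facts: that $\partial G$ is compact, and that any limit of $b_{f_i}$ with $f_i\to\infty$ is a horofunction. Both fail here. For instance, taking $c_k=1$ for $k\ge n$ gives horofunctions that agree with $b_{1_G}$ on $\bigoplus_{k<n}\Z$, so $\partial G$ is not closed.

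Your exhaustion argument is actually cleaner than the paper's: it needs no compactness and yields $h\ge 0$ on all of $G$. It is a complete proof of (3)$\Rightarrow$(1) in either of two settings:
\begin{itemize}
\item horofunctions are defined as all limits of $b_{g_n}$ along $g_n\to\infty$, i.e.\ the compact set $\bigcap_R\overline{b(G\setminus B_R(1_G))}$;
\item an extra hypothesis on $\rho$ guarantees that such limits never lie in $b(G)$, as holds for word metrics.
\end{itemize}
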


\begin{proof}
$(1) \implies (2)$ is the content of \cite[Proposition 1.5]{Auslander_Glasner_Weiss_recurrencer_zerodimensional:2007} (see also \cite[Proposition 2.1]{Donoso_Maass_Petite_geometric_asymptotic:2024} for a strengthening). It is worth noting that in this case it suffices to take $\rho$ to be the word metric. As $(2)\implies (3)$ is obvious, we only justify $(3) \implies (1)$. 
Let $\rho$ be a right-invariant and proper metric such that each horoball is nonempty. For $g\in G$, set $U_g=\{h \in \partial G\mid h(g)<0\}$. Then $U_g$  is an open subset of $\partial G$ and since horoballs are not empty, we have $\partial G =\bigcup_{g\in G}U_g$. By compactness, there exist finitely many $g_1,\ldots,g_d$ such that $\partial G =\bigcup_{i=1}^\ell U_{g_i}$. We claim that $G$ is a finite extension of $A=\langle g_1,\ldots,g_\ell\rangle$, and is hence finitely generated. Assume for contradiction that it is not, and let $(f_i)_{i\in \mathbb{N}}$ be a sequence of representatives of cosets of $G/A$.  We may choose the representatives $f_i$ so that they satisfy 
\[\rho(f_i,1_G)\leq \inf_{\substack{f\in G\\ f_iA=fA}} \rho(f,1_G) + \frac1i,\]
that is, $f_i$ almost minimizes the distance to the identity within its coset. As $f_ig_k^{-1}$ is in the same coset as $f_i$, we get $\rho(f_{i}g_{k}^{-1},1_G)\geq \rho(f_i,1_G)-1/i$ , for all $k\in\{1,\ldots,\ell\}$. Since $G$ is countable and the metric is proper, we can pass to a subsequence and assume $\rho(f_i,1_G)\to \infty$.  Taking any limit of the $b_{f_i}$ gives a horofunction $h$ with $h(g_k)\geq 0$ for all $k$. A contradiction. 
\end{proof}

\begin{remark}
    The same proof above can be used in the non-discrete setting, yielding that $G/A$ is bounded. 
    Furthermore, it shows that if $G'\subseteq G$ is not of finite index, then there exists a horofunction $h$ such that $h(g)\geq 0$ for all $g\in G'$.   
    
Also note that being finitely generated is not enough to ensure the existence of thick horoballs for a proper-right invariant metric. For instance, in $\Z$, the metric defined by $\rho(n,m)=\sqrt{|n-m|}$ has empty horoballs. 
\end{remark}

\subsection{Non-determinism in topological dynamics}
\label{subsec:non-det_topo}

A topological dynamical system is a tuple $(X,T,G)$ where $X$ is a compact metric space, and $T\colon G\times X \to X$ is an action. That means that $G$ is a group (in this paper we will restrict ourselves to countable groups), the function $T$ is continuous, and satisfies $T(1_G,x)=x$ for all $x\in X$ (where $1_G$ is the identity of $G$) and $T(g,T(g',x))=T(gg',x)$ for all $g,g'\in G$ and $x\in X$. We write $T(g,x)$ as $T^g(x)$, and the conditions stated above can be concisely rewritten as $T^{1_G}={\rm id}_X$ and $T^{g}\circ T^{g'}=T^{gg'}$ for all $g,g'\in G$. We adopt this notation throughout the whole paper. 
We say that $(X,T,G)$ is \define{faithful} (or simply that the action of $G$ is faithful) if $T^g\neq {\rm id}_X$ for all $g\neq 1_G$. The system $(X,T,G)$ is \define{free} (or the action of $G$ is free) if $T^g(x)\neq x$ for all $x\in X$ and all $g\neq 1_G$. 

The concept of non-determinism has a rich development in the theory. For integer actions, we refer to \cite{Hochman_determinism_top_dyn:2012} for interesting connections with entropy. We continue the study of non-determinism as started in \cite{Donoso_Maass_Petite_geometric_asymptotic:2024}, where a form of the following definition was proposed \footnote{In that paper, the notion of a horoball of non-determinism was introduced, but here we choose to take a functional approach and name the horofunctions of non-determinism instead of their level sets.}.  

\begin{definition}
    Let $(X,T,G)$ be a topological dynamical system and $\varepsilon >0$. We say that a horofunction $h \in  \partial G$ is $\varepsilon$-\define{deterministic} if $d(T^gx,T^gy)\leq\varepsilon$ for all $g\in\{h<0\}$ implies that $x=y$,  for all $x,y\in X$.
\end{definition}

Conversely, a horofunction that is not deterministic will be called non-deterministic.
\begin{definition}
    Let $(X,T,G)$ be a topological dynamical system and $\varepsilon >0$. A horofunction $h \in  \partial G$ is said to be $\varepsilon$-\define{non-deterministic} if there exist distinct points $x,y\in X$ such that $d(T^gx, T^gy)\leq \varepsilon$ for all $g\in \{h<0\}$. The pair $(x,y)$ is called an $(h,\varepsilon)$-\define{asymptotic pair}.
\end{definition}

We let $\ND_\varepsilon(X,T,G)$, or simply $\ND_\varepsilon(X)$ when there is no harm of confusion, denote the collection of all $\varepsilon$ non-deterministic horofunctions. That is,
$$\ND_\varepsilon (X)= \{ h \in \partial G \mid h \textrm{ is } \varepsilon\textrm{-non-deterministic} \}.$$
More generally, for a factor map $\pi\colon X\to Y$ between topological dynamical systems $(X,T,G)$ and $(Y,T,G)$, we let $\ND_{\varepsilon,\pi} (X)$ denote the set of $\ve$-non-deterministic horofunctions, where the $\ve$-asymptotic pair can be chosen in the same fiber. That is, $h\in \ND_{\ve,\pi}(X)$ if there exist distinct points $x,y\in X$ with $\pi(x)=\pi(y)$ and $d(T^gx, T^gy)\leq \varepsilon$ for all $g\in \{h<0\}$. We say $\ND_{\ve , \pi}(X)$ is a relative version of $\ND_\ve (X)$, and satisfies
\[
\ND_{\ve, \pi}(X) \subseteq \ND_{\ve}(X) \textrm{ and } \ND_{\ve}(X) \setminus \ND_{\ve, \pi}(X) \subseteq \ND_{\ve}(Y).
\]

The following is one of the main results of \cite{Donoso_Maass_Petite_geometric_asymptotic:2024}.
\begin{theorem}[{\cite[Theorem 4.3]{Donoso_Maass_Petite_geometric_asymptotic:2024}}]
\label{thm:robinson_crusoe}
    Let $(X,T,G)$ and $(Y,S,G)$ be two topological dynamical systems, where $G$ is an infinite group with a proper right-invariant metric, and let $\pi\colon X\to Y$ be a factor map. Then, either
    \begin{itemize}
        \item $\pi$ is bounded-to-1,
        \item or for any $\varepsilon>0$ there exists a horofunction $h \in \ND_{\ve,\pi}(X)$. 
    \end{itemize}
\end{theorem}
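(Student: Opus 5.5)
The plan is a Schwartzman-type argument: pigeonhole inside a large fiber produces pairs that stay $\varepsilon$-close on large balls, the pair is re-centred at its first separation time, and a limit is taken in the Gromov compactification. Assume $\pi$ is not bounded-to-1 and fix $\varepsilon>0$. Since $G$ is infinite and $\rho$ is proper, $G$ is unbounded, so $\partial G\neq\emptyset$. If some pair of distinct points $x,y$ with $\pi(x)=\pi(y)$ satisfies $d(T^gx,T^gy)\le\varepsilon$ for all $g\in G$, then every horofunction lies in $\ND_{\varepsilon,\pi}(X)$ and we are done. So assume from now on that every such pair is $\varepsilon$-separated at some group element.

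\emph{Step 1 (pigeonhole on balls).} Fix $R>0$. The ball $B_R(1_G)$ is finite, and each $T^f$ is uniformly continuous. Hence $X$ can be covered by finitely many sets $C_1,\dots,C_{N(R)}$ such that any two points in the same $C_i$ satisfy $d(T^fx,T^fy)\le\varepsilon$ for all $f\in B_R(1_G)$. To build this cover, cover the product $X^{B_R(1_G)}$ by finitely many sets of diameter $\le\varepsilon$ in the max metric and pull back by $x\mapsto(T^fx)_f$. Since $\pi$ is not bounded-to-1, some fiber contains more than $N(R)$ points. By pigeonhole it contains distinct $x_R,y_R$ that are $\varepsilon$-close along all of $B_R(1_G)$.

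\emph{Step 2 (re-centring at the first separation).} By our standing assumption, the set of $f$ with $d(T^fx_R,T^fy_R)>\varepsilon$ is nonempty. Properness makes closed balls finite, so we can pick $f_R$ in this set minimizing $s_R=\rho(f_R,1_G)$. By Step 1, $s_R\ge R$. Put $x_R'=T^{f_R}x_R$ and $y_R'=T^{f_R}y_R$, which still lie in a common fiber by equivariance of $\pi$. By right invariance, $\rho(gf_R,1_G)=\rho(g,f_R^{-1})$. By minimality of $s_R$, $d(T^gx_R',T^gy_R')\le\varepsilon$ whenever $\rho(g,f_R^{-1})<\rho(1_G,f_R^{-1})$, that is, whenever $b_{f_R^{-1}}(g)<0$. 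Moreover $d(x_R',y_R')>\varepsilon$.

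\emph{Step 3 (limit).} Let $R\to\infty$. Then $\rho(f_R^{-1},1_G)=s_R\to\infty$. By relative compactness of $b(G)$ and compactness of $X$, pass to a subsequence along which $b_{f_R^{-1}}\to h$ in the compact-open topology (so $h\in\partial G$) and $(x_R',y_R')\to(x,y)$. The limit pair satisfies $\pi(x)=\pi(y)$ by continuity and $d(x,y)\ge\varepsilon>0$, so $x\ne y$. If $h(g)<0$, pointwise convergence gives $b_{f_R^{-1}}(g)<0$ eventually, hence $d(T^gx_R',T^gy_R')\le\varepsilon$ eventually; continuity of $T^g$ then yields $d(T^gx,T^gy)\le\varepsilon$. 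Thus $h\in\ND_{\varepsilon,\pi}(X)$.

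The main obstacle is Step 2, namely arranging that the limit pair is genuinely distinct while the region of closeness becomes a horoball. Choosing the first separation time (in $\rho$-distance) handles both at once. The separation at $1_G$ survives the limit because it is a strict inequality $>\varepsilon$ that passes to $\ge\varepsilon$. The closeness region becomes exactly a sublevel set of $b_{f_R^{-1}}$, thanks to right invariance. The only other point to check is that $s_R\to\infty$, which is what makes the limit a horofunction rather than an element of $b(G)$.
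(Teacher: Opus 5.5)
Your proof is correct and is essentially the argument behind the cited result. The paper obtains the statement from the Robinson Crusoe theorem of \cite{Donoso_Maass_Petite_geometric_asymptotic:2024} applied to the open invariant set $O=R_\pi\setminus\triangle_X$ of the fibred product $(R_\pi,T^{(2)},G)$ (compare the proof of \cref{theo:factors_general_cone}), and your pigeonhole step, re-centring at the first separation and compactness limit carry out that argument directly for this $O$ and the neighbourhood $\{d(x,y)\le\varepsilon\}$ of the diagonal.

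One caveat, which you share with the paper's own conventions (see e.g.\ the proof of \cref{lem:thick-horoballs-fg}): for an arbitrary proper right-invariant metric, $s_R\to\infty$ does not by itself force the limit out of $b(G)$. For instance, take the weighted word metric on $\Z$ with generators $\pm1$ of weight $1$ and $\pm m_n$ of weight $n+1$, where $m_n\ge 100n\,m_{n-1}$; one checks that $b_{m_n}\to b_0$. So your last step tacitly identifies $\partial G$ with the set of limits of $b_{g_n}$ along $g_n\to\infty$. This is harmless for word metrics of finite generating sets and for the Euclidean and Kor\'anyi metrics.
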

In particular, when $(Y,S,G)$ is the trivial one point system and $X$ is infinite, only the second point occurs and there must exist a $\varepsilon$-non deterministic horofunction. So any infinite system has at least one non-deterministic horofunction.  

As we shall see in \cref{sec:TopoHorofct}, we can show the more general \Cref{theo:factors_general_cone} which provides a directed version of \Cref{thm:robinson_crusoe}, in the sense that the horofunction considered is non-negative at a prescribed element. To prove it, we have to introduce the notion of {\it repulsion} of a set for a semi-group. 
For a set $S \subseteq G$, we denote by $\langle S \rangle_+$ the semi-group generated by the elements of $S$, i.e., the set of elements of the form $s_1\cdots s_n$, where each $s_i$ belongs to $S$.
 
\begin{definition} \label{def:repulsion} Let $(X,T,G)$ be a topological dynamical system, $S \subseteq G$ and $Y \subseteq X$ be a closed $T$-invariant subset. We say that $Y$ is pointwise $S$-repulsive if there exists $\delta>0$ such that for any finite set $F\subseteq X\setminus Y $, 
$$
\textrm{dist}(T^gF,Y) \le \delta \textrm{ for only finitely many } g \in \langle S \rangle_+.$$
\end{definition}
When $\langle S \rangle$ is infinite, this property  means that a finite set of points close to $Y$ can be separated from $Y$ under the iteration of $T^g$ for some $g\in \langle S \rangle_+$.

For instance, consider a $\Z$-action on the circle with a north-south dynamics; the (repelling) north fixed point is pointwise $\{ 1\}$- repulsive but not pointwise $\{ - 1\}$- repulsive. Note that in \cref{def:repulsion} restricting $F$ to being a singleton yields the same property. 
Actually, we will use the converse of this notion, that is,  when the set $Y$ is not $S$-repulsive.  Notice that non $S$-repulsiveness is meaningful when $\langle S \rangle$ is infinite.

With this notion of repulsion, we will use a general topological theorem, coined the Directed Robinson Crusoe theorem (\cite[Theorem 3.4]{Donoso_Maass_Petite_geometric_asymptotic:2024}), that we recall here. Below, for an unbounded subset $G'\subseteq G$, we use the notation $\partial G'$ to denote $\overline{b(G')} \setminus b(G')$, that is, the horofunctions that can be obtained along $G'$.

\begin{theorem}[Directed Robinson Crusoe theorem \cite{Donoso_Maass_Petite_geometric_asymptotic:2024}] \label{thm:directed_RC} 

Let $(X,T,G)$ be a topological dynamical system where $G$ is an infinite  group with a proper right invariant distance. 
Let $G_0 \subseteq G$ be an unbounded subset with an unbounded  complementary $G \setminus G_0$.

Assume that 
\begin{itemize}
    \item $O \subsetneq X$ is an open, not closed,  $G$-invariant  subset of $X$;
    \item the set  $\partial O$ is non pointwise $\tilde{S}$-repulsive for some  finite subset $\tilde{S} \subseteq \bigcap_{h \in {\partial G_0}} \{h<0\}$.
\end{itemize}
Then, for any neighborhood  $U$ of the border $\partial O$  there exists a horofunction $h\in \partial [G\setminus  G_0^{-1}]$ such that 
$$O \cap \bigcap_{g \in \{h<0\} }T^{g^{-1}}(U)\neq \emptyset.$$
\end{theorem}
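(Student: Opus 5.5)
The plan is a ``first exit'' (Robinson Crusoe) argument. The direction constraint $h\in\partial[G\setminus G_0^{-1}]$ is forced by using the non-repulsiveness of $\partial O$ along $\langle\tilde S\rangle_+$.

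\textbf{Preliminary reductions.} Since $O$ is open, $G$-invariant and not closed, the set $\partial O=\overline O\setminus O$ is a non-empty, closed, $G$-invariant set. Shrinking $U$, we may assume $U$ is open, with $\overline{U'}\subseteq U$ for a smaller open neighbourhood $U'$ of $\partial O$ that we will use in place of $U$ (this handles closedness in limits). If some $x\in O$ has $T^gx\in U$ for all $g\in G$, then $x$ works for every horofunction, and $\partial[G\setminus G_0^{-1}]\neq\emptyset$ because $G\setminus G_0$ is unbounded. So we may assume every $x\in O$ has a nonempty exit set $E(x)=\{k\in G: T^kx\notin U'\}$. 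Let $k(x)\in E(x)$ be an element nearly minimizing $\rho(k,1_G)$ (exact minimization is possible by properness in the discrete case).

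\textbf{The basic mechanism.} Take $x_n\in O$ with $x_n\to\partial O$. By compactness, uniform continuity of each $T^g$ on finite balls, and invariance of $\partial O$, we get $\rho(k(x_n),1_G)\to\infty$. Put $k_n=k(x_n)$ and $y_n=T^{k_n}x_n\in O\setminus U'$. For $g$ with $\rho(gk_n,1_G)<\rho(k_n,1_G)$ we have $T^gy_n=T^{gk_n}x_n\in U'$. By right invariance this condition reads $b_{k_n^{-1}}(g)<0$. Passing to a subsequence, $b_{k_n^{-1}}\to h\in\partial G$ and $y_n\to y$. Since $y\in\overline O\setminus U'$ and $U'\supseteq\partial O$, we get $y\in O$. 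For each $g$ with $h(g)<0$ we have $b_{k_n^{-1}}(g)<0$ eventually, so $T^gy\in\overline{U'}\subseteq U$. Hence $y\in O\cap\bigcap_{h(g)<0}T^{g^{-1}}U$. It remains to arrange $k_n\notin G_0$ for infinitely many $n$, which gives $h\in\partial[G\setminus G_0^{-1}]$.

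\textbf{Using non-repulsion (the main obstacle).} This is where $\tilde S$ enters, and it is the delicate step. Non-repulsiveness gives, for each $\delta_n\to0$, a point $z_n\in X\setminus\partial O$ with $\operatorname{dist}(T^sz_n,\partial O)\le\delta_n$ for infinitely many $s\in\langle\tilde S\rangle_+$. We take $z_n\in O$, treating the case $z_n\notin\overline O$ the same way with $X\setminus\overline O$, or reducing to it. Among the points $x=T^sz_n$ we choose one with $\rho(k(x),1_G)$ as large as possible, relative to a scale at which $k(x)$ is controlled. Suppose, for contradiction, that all such exit elements lie in $G_0$ for large $n$. Then limits of $b_{k^{-1}}$ along them correspond, after the inversion bookkeeping, to horofunctions in $\partial G_0$. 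Every element of the finite set $\tilde S$ is strictly negative for all of these, and by compactness of $\partial G_0$ and finiteness of $\tilde S$ the bound is uniform, say $\le-\eta<0$. Passing from $T^sz_n$ to $T^{s'}T^sz_n$ with $s'\in\tilde S$ shifts the exit element by $s'^{-1}$ on the right. In the horofunction picture, the return to $\delta_n$-closeness along the semigroup then forces the exit distance to grow by roughly $\eta$ at each step. This contradicts the maximal choice, or the uniform bound provided by the earlier closeness. I expect the real work to be in getting the left/right inversions exactly right and in making this ``gain $\eta$ per step'' estimate uniform in $n$; I would try first to prove it in the finitary form of \cref{rmk:finitary}. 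With infinitely many $k_n\notin G_0$ in hand, the mechanism above finishes the proof.
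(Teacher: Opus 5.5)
\textbf{The gap.} Your reduction and your ``basic mechanism'' are correct. Note that the mechanism only needs exit distances $\rho(k_n,1_G)\to\infty$, not $x_n\to\partial O$. The gap is the step that carries all the directional content: you never actually produce such points with nearest exit $k_n\notin G_0$, and your heuristic runs the wrong way.

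The relevant estimate involves $b_k$ itself, not limits of $b_{k^{-1}}$. Without a symmetry hypothesis such as \cref{def:SymHoroball}, no ``inversion bookkeeping'' relates $\partial G_0^{-1}$ to $\partial G_0$. Since $E(T^{\tilde s}x)=E(x)\tilde s^{-1}$, a nearest exit $k$ of $x$ gives the exit $k\tilde s^{-1}$ of $T^{\tilde s}x$, at distance $\rho(k,\tilde s)=\rho(k,1_G)+b_k(\tilde s)$. Compactness of $\partial G_0$ and finiteness of $\tilde S$ give $\eta>0$ and $n_1$ such that $b_k(\tilde s)\le-\eta$ for all $k\in G_0$ with $\rho(k,1_G)\ge n_1$ and all $\tilde s\in\tilde S$.

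So when the nearest exit lies in $G_0$, a step along $\tilde S$ \emph{decreases} the exit distance by at least $\eta$; nothing grows. Also, ``choose $T^sz_n$ with exit distance as large as possible'' is not well defined, since along infinitely many returns these distances may be unbounded. No contradiction is actually derived.

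\textbf{How to close it.} The missing idea is to run this decrease backwards along one long word. Let $r(x)=\min_{k\in E(x)}\rho(k,1_G)$ and fix $z\in O$ witnessing non-repulsion for $\delta$. Among the infinitely many good $s\in\langle\tilde S\rangle_+$, pick $s=\tilde s_n\cdots\tilde s_1$ with $n>r(z)/\eta$; this is possible because only finitely many elements have short words. Put $x_j=T^{\tilde s_j\cdots\tilde s_1}z$, $r_j=r(x_j)$, and let $k_j$ be a nearest exit of $x_j$. Then:
\begin{itemize}
\item $|r_{j+1}-r_j|\le c:=\max_{\tilde s\in\tilde S}\rho(\tilde s,1_G)$;
\item $r_{j+1}\le r_j-\eta$ whenever $k_j\in G_0$ and $r_j\ge n_1$.
\end{itemize}
The conditions ``$k_j\in G_0$, $r_j\ge n_1$'' cannot hold for all $j<n$, since that would give $r(z)=r_0\ge r_n+n\eta$. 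If $j^*$ is the last index where they fail, then $r_{j^*+1}\ge r_n$, hence $r_{j^*}\ge r_n-c$. So once $r_n\ge n_1+c$, which holds for small $\delta$, you get $k_{j^*}\notin G_0$. Letting $\delta\to0$ gives points of $O$ with exit distance $\to\infty$ and nearest exit outside $G_0$, and your mechanism concludes.

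\textbf{Comparison with the paper.} Completed this way, your route differs from, and is more direct than, the strategy the paper uses for the refinement \cref{thm:directed_RC_heisenberg}. There one argues by contradiction: a compactness constant $M$ gives property (P), and the analogues of \cref{lem:geomHoroball2_tube,lem:geomHoroball3_tube} show that $B_R(\bar g)\subseteq C(x)$ forces $B_{R+\eta}(s\bar g)\subseteq C(x)$ for $s\in\tilde S^{-1}$. Iterating gives $C(x)=G$. Your nearest-exit horofunction replaces (P), the first lemma and the ball induction; only the estimate $b_k(\tilde s)\le-\eta$ survives.

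\textbf{The witness must lie in $O$.} Your treatment of $z_n\notin\overline O$ (``the same way with $X\setminus\overline O$'') cannot work, because it outputs a point outside $O$. Nor can this be repaired. Take $G=\Z$, $G_0=\{1,2,\dots\}$, $\tilde S=\{1\}$. Let $X$ be the countable compact set made of fixed points $p,c,r$ and two orbits, with
\begin{itemize}
\item $a_n\to p$ as $n\to-\infty$ and $a_n\to c$ as $n\to+\infty$;
\item $b_n\to p$ as $n\to+\infty$ and $b_n\to r$ as $n\to-\infty$.
\end{itemize}
For $O=\{a_n\}\cup\{c\}$ one has $\partial O=\{p\}$, and $b_0\notin\overline O$ witnesses non-repulsion. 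Yet no point of $O$ keeps its forward orbit in a small ball around $p$, which is exactly what the conclusion requires here.

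So the hypothesis must be read with the witness in $O$, not reduced to that case. This is how the paper uses it ($F\subseteq O$ in \cref{theo:factors_general_cone}, $x\notin F$ in \cref{thm:directed_RC_heisenberg}). The contradiction argument also needs it, both in (P) and in the final step.
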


For the applications of this theorem to the structure of the set of non-deterministic horofunctions of Heisenberg groups, we need to find horofunctions with additional constraints. We do this in the following subsection.

\subsubsection{Additional constraints on the directions for the directed Robinson Crusoe Theorem }

Let $G$ be a group with a right invariant and proper metric $\rho$. We say that a subgroup  $Z\leqslant Z(G)$ is {\em horostable} if $\{h<0\}Z=\{h<0\}$ for all horofunction  $h \in \partial G$. We say that the pair $(L,Z)$ is a {\it decomposition of} $G$ if $Z$ is horostable and $L$ is a left transversal of $Z$. That is, any $g\in G$ can be uniquely written as $g=az$ where $a\in L$ and $z\in Z$. We use the notation $g_L$ and $g_Z$ for the elements in $L$ and $Z$ such that $g=g_Lg_Z$. \\

To simplify notation, let us introduce the following distances. We define the distance to the center $N(g) = \inf_{z\in Z}\rho(zg,1_G)$, and then the distance $\dist_Z(g,f) = N(gf^{-1})$. For $R>0$ define a \define{tube} as $T_R(g)=ZB_{R}(g)$. Then, $T_R(f) = \{g\in G\mid \dist_Z(g,f)\leq R\}$.\\

Because $G$ is discrete and its metric $\rho$ is proper, there exists $\nu>0$ such that $B_{\nu}(1_G) = \{1_G\}$. This discreteness constant will be used for the results that follow. Finally, for a subset $A\subseteq L$ we consider the set $\partial A^{-1}$ of limits $b_{g_L}$ where $g\in A^{-1}Z$. If $A^{-1}\subseteq L$ this coincides with the limits along $A^{-1}$.\\

The decomposition is \define{asymptotically orthogonal} 
if for any sequence $g_n\in L$ with $g_n\to \infty$ we have  $\rho(g_n,1_G)-N(g_n)\to 0$.
This is the case, for instance, if we choose representatives of the cosets of $G/Z$ that asymptotically achieve the infimum distance to the origin within the coset. 

\begin{remark}
\label{rem:quant_asy_decomp}
    Consider a decomposition $(L,Z)$. Notice that this decomposition is asymptotically orthogonal if and only if for every $\delta>0$, there exists $R>0$ such that $0\leq \rho(g,1_G) - N(g)\leq \delta$ for every $g\in L$ such that $\rho(g,1_G)\geq R$.
\end{remark}

We present below two geometrical lemmas on the horofunctions arising from asymptotically orthogonal decompositions.  The first one states that a large enough tube $T_R$ contains locally a horoball.

\begin{lemma}\label{lem:geomHoroball2_tube}
Let $G$ be a discrete group with a right-invariant and proper metric $\rho$, let $(L,Z)$ be an asymptotically orthogonal decomposition of $G$ and $A \subseteq L$ an unbounded subset. Then, for any $M>0$ there exists an integer $n_{0} \in \N$ such that for any $\bar{g}\in G$, and $g \in ZA\bar{g}$ with $\dist_Z(g,\bar{g}) \ge n_{0}$ one can find a horofunction $h\in \partial A^{-1}$ such that
$$ \left[\overline{\{h<0\}} \cap \overline{T_{M}(1_G)}  \right] g \subseteq \{k\in G \mid \dist_Z(k,\bar{g})<\dist_Z(g,\bar{g})\}.$$   
\end{lemma}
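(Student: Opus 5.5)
The plan is to argue by contradiction and use compactness of $\partial G$. First I reduce the statement to a comparison of distances to the origin. Write $g\bar g^{-1}=za$ with $z\in Z$ and $a\in A$. Since $Z\leqslant Z(G)$, the function $N$ is $Z$-invariant: $N(zf)=N(f)$. Hence
\[
\dist_Z(g,\bar g)=N(a), \qquad \dist_Z(tg,\bar g)=N(tza)=N(ta)\quad\text{for } t\in G.
\]
Any $t\in T_M(1_G)$ can be written $t=z's$ with $z'\in Z$ and $s\in B_M(1_G)$, so $N(ta)=N(sa)$. By horostability, $\{h<0\}Z=\{h<0\}$, so $t\in\{h<0\}$ if and only if $s\in\{h<0\}$. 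Since $G$ is discrete, the closures in the statement are the sets themselves.

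It therefore suffices to prove the following. For every $M$ there is $n_0$ such that, whenever $a\in A$ with $N(a)\ge n_0$, some $h\in\partial A^{-1}$ satisfies $N(sa)<N(a)$ for all $s\in B_M(1_G)\cap\{h<0\}$. The set $B_M(1_G)$ is finite by properness, which is what gives the uniformity.

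The key identity comes from right invariance:
\[
\rho(sa,1_G)-\rho(a,1_G)=\rho(s,a^{-1})-\rho(a^{-1},1_G)=b_{a^{-1}}(s).
\]
Suppose the reduced claim fails for some $M$. Then there are $a_n\in A$ with $N(a_n)\to\infty$, hence $\rho(a_n,1_G)\to\infty$, such that for every $h\in\partial A^{-1}$ some $s\in B_M(1_G)$ has $h(s)<0$ and $N(sa_n)\ge N(a_n)$.

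By compactness of $\overline{b(G)}$, pass to a subsequence along which $b_{a_n^{-1}}\to h$ uniformly on compacta. Then $h\in\partial A^{-1}$, after checking that this limit is the one used in the definition of $\partial A^{-1}$ (limits of $b_{g_L}$ with $g\in A^{-1}Z$). Let
\[
\eta=\min\{-h(s)\mid s\in B_M(1_G),\ h(s)<0\}>0,
\]
a minimum over a finite set; if no such $s$ exists the claim holds vacuously for $h$. For $n$ large we have $|b_{a_n^{-1}}(s)-h(s)|<\eta/4$ for all $s\in B_M(1_G)$. Hence $\rho(sa_n,1_G)\le\rho(a_n,1_G)-3\eta/4$ whenever $h(s)<0$.

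Now use asymptotic orthogonality, in the quantitative form of \cref{rem:quant_asy_decomp} with $\delta=\eta/2$, applied to $a_n\in L$ with $\rho(a_n,1_G)$ large. It gives $\rho(a_n,1_G)\le N(a_n)+\eta/2$. Therefore
\[
N(sa_n)\le\rho(sa_n,1_G)\le N(a_n)-\eta/4<N(a_n),
\]
which contradicts the choice of $a_n$ for this particular $h$. This yields $n_0$ for the given $M$.

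The main obstacle is bookkeeping rather than analysis. One must make sure the horofunction produced is the same $h$ for all $n$ in the subsequence, which the contradiction setup guarantees since it quantifies over all $h$. One must also reconcile $b_{a_n^{-1}}$ with the paper's convention for $\partial A^{-1}$ via $g_L$. If that convention differs, I would use horostability and the centrality of $Z$ to show that $b_{a^{-1}}$ and $b_{(a^{-1})_L}$ have the same negative set up to a vanishing error on $B_M(1_G)$.
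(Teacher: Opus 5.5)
Your reduction to the claim ``$N(sa)<N(a)$ for $a\in A$ and $s\in B_M(1_G)\cap\{h<0\}$'' is correct. Your overall scheme (contradiction, compactness, finiteness of $B_M(1_G)$, asymptotic orthogonality) is also the paper's. The gap is at the step you defer. The horofunction you build is $h=\lim b_{a_n^{-1}}$, a limit \emph{along} $A^{-1}$. In this lemma, however, $\partial A^{-1}$ is defined as the set of limits of $b_{(a^{-1})_L}$ with $a\in A$.

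From $a_n\in L$ you do not get $a_n^{-1}\in L$; already in the Heisenberg application $L^{-1}\neq L$. The central component $(a_n^{-1})_Z$ may even be unbounded, so nothing places $h$ in $\partial A^{-1}$. Without that, your contradiction hypothesis, which is quantified over $\partial A^{-1}$, cannot be applied to $h$.

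The patch you sketch is not justified. Write $a^{-1}=\ell z$ with $\ell=(a^{-1})_L$. Then $b_{\ell z}(s)=b_\ell(sz^{-1})-b_\ell(z^{-1})$, which involves values of $b_\ell$ far outside $B_M(1_G)$ when $z$ is large. Even for fixed $z$, the limit is $C^{z^{-1}}(h')$ with $h'=\lim b_\ell$, and its negative set is $\{h'<h'(z^{-1})\}z$. Horostability only gives $h'(z^{-1})\ge 0$, so this set contains $\{h'<0\}$ but need not equal it.

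The repair is to apply asymptotic orthogonality to $\ell_n=(a_n^{-1})_L$ rather than to $a_n$. Write $a_n^{-1}=\ell_n z_n$ with $z_n\in Z$. Centrality and right invariance give $N(sa_n)\le \rho(z_n s a_n,1_G)=\rho(s\ell_n^{-1},1_G)=\rho(\ell_n,s)$ and $N(a_n)=N(\ell_n^{-1})=N(\ell_n)$. Hence
\[
N(sa_n)-N(a_n)\le b_{\ell_n}(s)+\bigl(\rho(\ell_n,1_G)-N(\ell_n)\bigr).
\]
Since $\ell_n\in L$ and $\rho(\ell_n,1_G)\ge N(a_n)\to\infty$, the bracket tends to $0$. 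Any limit of $b_{\ell_n}$ lies in $\partial A^{-1}$ by definition.

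With $b_{\ell_n}$ in place of $b_{a_n^{-1}}$, the rest of your argument goes through verbatim, and this inequality is exactly the one at the heart of the paper's proof. As written, your argument proves the variant in which $\partial A^{-1}$ is read as $\overline{b(A^{-1})}\setminus b(A^{-1})$. That variant coincides with the lemma when $A^{-1}\subseteq L$.
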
  

\begin{proof}
By contradiction, suppose that there exist constants $M>0$, and sequences $(\bar{g}_{n})_{n\in \N}$ and $(g_{n})_{n\in \N}$ such that $g_n\in ZA\bar{g}_n$, $\dist_Z(g_n,\bar{g}_n)$ goes to infinity, and such that for any horofunction $h \in \partial A^{-1}$ we have that
\begin{align}\label{eq:nonEmpty_tube}
\left[ \overline{\{h<0\}} \cap \overline{T_{M}(1_G)}  \right]g_n  \not\subseteq  \{k\in G \mid \dist_Z(k,\bar{g}_n)<\dist_Z(g_n,\bar{g}_n)\}. 
\end{align} 
Define the elements $w_n = g_n\bar{g}_n^{-1}\in ZA$, and $\ell_n = (w_n^{-1})_{L}\in L$. Then, $w_n^{-1} = \ell_nz_n$ with $z_n\in Z$. By the centrality of $z_n$, we obtain $w_n^{-1}f^{-1} = \ell_n f^{-1} z_n$ for any $f\in G$. Now, because the distance to $Z$ is constant along $Z$-cosets, we have 
\[N(fw_n) = \dist(Z,\ell_nf^{-1})\leq \rho(\ell_n,f), \]
and $N(w_n) = N(\ell_n)$. Define $\delta_n = \rho(\ell_n,1_G)-N(\ell_n)$. Then,
\begin{align}\label{eq:desigualdad_horo}
    N(fw_n) - N(w_n) \leq \rho(\ell_n, f) - \rho(\ell_n,1_G) + \delta_n = b_{\ell_n}(f) + \delta_n.
\end{align}
We may assume that  $(b_{\ell_n})_{n\in \N}$ converges to a horofunction $h \in \partial A^{-1}$. 

Now, by \eqref{eq:nonEmpty_tube}, for each $n\in\N$ there exists $f_n\in\{h<0\}\cap T_M(1_G)$ such that $\dist_Z(f_ng_n, \bar{g}_n)\geq\dist_Z(g_n,\bar{g}_n)$. In particular, this implies $N(f_nw_n)\geq N(w_n)$. By horostability, the horoball $\{h<0\}$ can be expressed as a union of $Z$-cosets. Because $T_M(1_G) = ZB_M(1_G)$ where $B_M(1_G)$ is finite, and $N$ is invariant under $Z$, up to taking a subsequence we suppose $f_n = f$ such that $h(f)<0$.

Consider $\delta =-\frac13h(f)$. By~\cref{rem:quant_asy_decomp}, there exists $R >0$ such that
\[\delta_n =  \rho(\ell_n,1_G)-N(\ell_n) \leq \delta,\]
when $N(w_n) = N(\ell_n)\geq R$. Because $N(w_n) = \dist_Z(g_n,\bar{g}_n)\to\infty$, the inequality holds for sufficiently big $n$. Since $b_{\ell_n}(f)\to h(f)$, for sufficiently big $n$, we have $b_{\ell_n}(f)\leq h(f) + \delta$. Finally, and once again for sufficiently big $n$, \eqref{eq:desigualdad_horo} holds for $f$ and we obtain
\[0 \leq N(fw_n) - N(w_n) \leq h(f) + 2\delta = \frac{1}{3}h(f) < 0,\]
which is a contradiction.
\end{proof}

The next lemma roughly states that some translation of a "cone" $G_0$ has a larger intersection with a tube $T_R$ than an untranslated $G_0$.
\begin{lemma}\label{lem:geomHoroball3_tube}
Let $G$ be a discrete group with a right-invariant and proper metric $\rho$, $(L,Z)$ an asymptotically orthogonal decomposition of $G$, let $G_0 \subseteq L$ be an unbounded subset and let $\eta >0$. Assume that $g \in G$ is such that $h(g^{-1})<-\eta $ for all $h \in \partial G_0$. Then, there exists an integer $n_{1}\in \N$ such that for any real number $r\ge n_1$
\begin{equation} \label{eq:tubecontainment}
\left[ ZG_0 \cap T_{r+\eta}(1_G)\right] g  \subseteq  T_{r}(1_G).\end{equation}
\end{lemma}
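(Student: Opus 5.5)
The idea is to reduce the containment to an estimate on the $L$-component of an element of $ZG_0$. We then treat far-away and nearby components separately: far away we use the horofunction hypothesis together with asymptotic orthogonality, and nearby we use the trivial bound.

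\emph{Reduction.} Let $k\in ZG_0\cap T_{r+\eta}(1_G)$ and write $k=az$ with $a\in G_0\subseteq L$ and $z\in Z$. Since $z$ is central and $N$ is invariant under left multiplication by $Z$, we have $N(k)=N(a)\le r+\eta$ and $N(kg)=N(zag)=N(ag)$. So it suffices to show $N(ag)\le r$. Taking the trivial element of $Z$ in the infimum and using right invariance gives
\[ N(ag)\le \rho(ag,1_G)=\rho(a,g^{-1})=b_a(g^{-1})+\rho(a,1_G). \]

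\emph{Uniform margin from the hypothesis.} The set $\partial G_0$ is closed in the compact space $\overline{b(G)}$, hence compact. Evaluation at $g^{-1}$ is continuous for the compact-open topology. Therefore $\max_{h\in\partial G_0}h(g^{-1})=-\eta-2\kappa$ for some $\kappa>0$; if $G_0$ is unbounded, $\partial G_0\neq\emptyset$, so the maximum exists.

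We claim there is $R_1$ such that $b_a(g^{-1})\le -\eta-\kappa$ for all $a\in G_0$ with $\rho(a,1_G)\ge R_1$. If not, there is a sequence $a_n\in G_0$ with $a_n\to\infty$ and $b_{a_n}(g^{-1})>-\eta-\kappa$. A subsequence of $b_{a_n}$ converges to some $h\in\partial G_0$; the limit is not in $b(G)$ by properness and injectivity of $b$. Then $h(g^{-1})\ge-\eta-\kappa$, a contradiction.

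By \cref{rem:quant_asy_decomp}, there is also $R_2$ such that $\rho(a,1_G)\le N(a)+\kappa$ for all $a\in L$ with $\rho(a,1_G)\ge R_2$.

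\emph{Conclusion.} Set $R=\max(R_1,R_2)$ and choose an integer $n_1\ge R+\rho(g,1_G)$. Let $r\ge n_1$.
\begin{itemize}
\item If $\rho(a,1_G)\ge R$, then
\[ N(ag)\le \rho(a,1_G)-\eta-\kappa\le N(a)-\eta\le r. \]
\item If $\rho(a,1_G)<R$, then by right invariance and the triangle inequality
\[ N(ag)\le\rho(ag,1_G)\le\rho(ag,g)+\rho(g,1_G)=\rho(a,1_G)+\rho(g,1_G)<R+\rho(g,1_G)\le n_1\le r. \]
\end{itemize}
In both cases $N(kg)=N(ag)\le r$, that is, $kg\in T_r(1_G)$, which is \eqref{eq:tubecontainment}.

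The only delicate step is obtaining a strict margin $\kappa$ that is uniform over $\partial G_0$. The pointwise strict inequality $h(g^{-1})<-\eta$ must be upgraded by compactness of $\partial G_0$, and $\kappa$ is then needed to absorb the asymptotic-orthogonality error $\rho(a,1_G)-N(a)$.
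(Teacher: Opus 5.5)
Your proof is correct and is essentially the paper's argument run directly instead of by contradiction. The paper takes a violating sequence $g_n\in G_0$ with $r_n\to\infty$ and uses $\rho(g_n,1_G)\ge r_n-\rho(g,1_G)$ (your bounded case) to force $g_n\to\infty$. Asymptotic orthogonality with an arbitrary $\delta>0$ then gives $h(g^{-1})\ge-\eta$ for the limit horofunction, so the paper needs only the pointwise strict inequality; your direct version must first extract the uniform margin $\kappa$, but in exchange it yields an explicit $n_1$.

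One caveat on your justification: the fact that limits of $b_{a_n}$ with $a_n\to\infty$ avoid $b(G)$ does not follow from properness and injectivity of $b$ alone. The paper relies on the same fact without comment, implicitly reading $\partial G_0$ as the set of such limits, so this is not a gap relative to the paper.
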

\begin{proof}
Take $g\in G$ with $h(g^{-1})<-\eta$ for all $h\in \partial G_0$. If the result does not hold, there exists a sequence $(g_n)_{n\in \N}$ in $ZG_0$ and a sequence $(r_n)_{n\in \N}$ of positive real numbers going to infinity such that $g_n \in T_{r_n+\eta}(1_G)$ but $g_n\notin T_{r_n}(g^{-1})$ for all $n\in \N$. As the inclusion is invariant under multiplying by $Z$, we may assume $g_n\in G_0\cap T_{r_n+\eta}(1_G)\setminus T_{r_n}(g^{-1})$. In particular, $g_n\in L$.
Up to a subsequence, we can assume that $b_{g_n}$ converges to some $h\in\partial G_0$. Fix $\delta>0$. Since $Z$ is central, and $\rho$ is right invariant, we have $\inf_{z\in Z}\rho(g_n,z) = N(g_n)$. Then, $g_n\in T_{r_n + \eta}(1_G)$ is equivalent to $N(g_n) \leq r_n+\eta$. As $g_ng\notin  T_{r_n}(1_G)$, we have in particular that $\rho( g_ng,1_G)\geq r_n$. Furthermore, $\rho(g_n,1_G)\geq \rho(g_ng,1_G) - \rho(g,1_G)>r_n - \rho(g,1_G)$. So, by~\Cref{rem:quant_asy_decomp}, for sufficiently large $n$, $\rho(g_n,1_G)-N(g_n)\leq\delta$. Therefore,
\[\rho(g_n,1_G) = N(g_n) + (\rho(g_n,1_G) - N(g_n))\leq r_n + \eta +\delta.\]
Finally, by right invariance $\rho(g_n,g^{-1}) = \rho(g_ng,1_G)$ so
\[b_{g_n}(g^{-1}) = \rho(g_n,g^{-1})-\rho(g_n,1_G)\geq r_n - (r_n+\eta+\delta) = -\eta-\delta.\]
Therefore, $h(g^{-1})\geq -\eta -\delta$. Because $\delta$ was arbitrary, we have $h(g^{-1})\geq-\eta$. This is a contradiction.    
\end{proof}

\begin{theorem}[Directed subspace Robinson Crusoe theorem] \label{thm:directed_RC_heisenberg} Let $(X,T,G)$ be a topological dynamical system, where $G$ is a discrete group with a right-invariant and proper metric $\rho$, and let $(L,Z)$ be an asymptotically orthogonal decomposition of $G$. Consider $G_0 \subseteq L$ an unbounded subset where $L \setminus G_0$ is unbounded.

Assume that 
\begin{itemize}
\item $F \subseteq X$ is a closed, non-isolated, $G$-invariant subset of $X$,
\item There exists a finite subset $\tilde{S} \subseteq \cap_{ h \in {\partial G_0}} \{h<0\}$ such that for any $\delta>0$ there exists $x\notin F$ and infinitely many $g\in \langle \tilde{S}\rangle_+$ with $d(T^{cg}x,F)\leq \delta$ for all $c\in Z$.   
\end{itemize}
Then, for any $\varepsilon>0$, there exists a horofunction $h  \in \partial [L\setminus  G_0]^{-1}$ and $x\notin F$ such that 
\[ d(T^gx,F)< \varepsilon \text{ for all } g\in \{h<0\}.\]

\end{theorem}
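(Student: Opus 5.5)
The plan follows the original proof of the directed Robinson Crusoe theorem (\cref{thm:directed_RC}), with balls replaced by tubes $T_R$ and $\rho$ replaced by $\dist_Z$. \cref{lem:geomHoroball2_tube} plays the role of the "local horoball inside a large ball" step, and \cref{lem:geomHoroball3_tube} plays the role of the "cone translation" step.

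\emph{Setup.} Fix $\varepsilon>0$. For $y\notin F$ set
\[ E(y)=\{k\in G \mid d(T^ky,F)\geq \varepsilon\}. \]
If some $y\notin F$ has $E(y)=\emptyset$, any horofunction in $\partial[L\setminus G_0]^{-1}$ works, so assume $E(y)\neq\emptyset$ for all $y\notin F$. Define the escape radius $r(y)=\inf_{k\in E(y)}\dist_Z(k,1_G)$. Since $\dist_Z$ takes values in a discrete-like set of distances to finitely many coset representatives, the infimum is attained by some $k_y$. By the second hypothesis applied with $\delta\to 0$, we get points $x_\delta\notin F$ and infinitely many $g\in\langle\tilde S\rangle_+$ with $T^{cg}x_\delta$ $\delta$-close to $F$ for every $c\in Z$. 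By uniform continuity of finitely many maps $T^f$, $f\in B_R(1_G)$, and $F$ being invariant, the closeness along the whole coset $Zg$ forces $r(T^{g}x_\delta)\to\infty$ as $\delta\to0$. This is exactly why the hypothesis is required for all $c\in Z$: tubes are infinite, and control is needed uniformly along the center.

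\emph{Choosing the escape direction.} Fix a large $r$, and take $y=T^{g}x_\delta$ with $r(y)\geq r$ and an escape element $k_y$ realizing $r(y)$. We want $k_y$, or rather its $L$-part, to lie in $L\setminus G_0$. Suppose instead that for all such choices the escapes lie in $ZG_0$. Apply \cref{lem:geomHoroball3_tube} with some $s\in\langle\tilde S\rangle_+$ (possible since $\tilde S\subseteq\bigcap_{h\in\partial G_0}\{h<0\}$ and $\tilde S$ is finite, which gives a uniform $\eta$). This shows that translating by $s$ pushes $ZG_0\cap T_{r+\eta}(1_G)$ into $T_r(1_G)$. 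Moving along $\langle\tilde S\rangle_+$ toward the times where the orbit returns $\delta$-close to $F$ therefore makes the escape radius decrease by $\eta$ at each step. After finitely many steps this contradicts the fact that infinitely many $g\in\langle\tilde S\rangle_+$ give points whose escape radius is unbounded. I expect this to be the main obstacle. It requires carefully comparing $r(T^{gs}x)$ with $r(T^{g}x)$ through $E(T^{s}y)=E(y)s^{-1}$ and applying \cref{lem:geomHoroball3_tube} in the right direction, with $g$ versus $g^{-1}$ and right-invariance bookkeeping. It may be necessary to choose, among the escape points, one in $ZA\bar g$ with $A=L\setminus G_0$ and $\dist_Z$ maximal among near-minimizers.

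\emph{Building the horofunction.} Once an escape element with $L$-part in $A=L\setminus G_0$ is available, apply \cref{lem:geomHoroball2_tube} with this $A$, a given $M$, and $\bar g=k_y$, $g=1_G$ (suitably translated), with $\dist_Z\geq n_0$. This yields $h_M\in\partial A^{-1}$ such that every element of $\overline{\{h_M<0\}}\cap\overline{T_M(1_G)}$ moves the base point into a strictly smaller tube around the escape point. Such elements are therefore not in $E$, so the corresponding orbit points are $\varepsilon$-close to $F$. Recenter by setting $z_M=T^{k'}y$ for the appropriate translate. Then $d(T^{f}z_M,F)<\varepsilon$ for $f\in\{h_M<0\}\cap T_M(1_G)$, while by construction $z_M$ is at distance roughly $\varepsilon$ from $F$ along a bounded element.

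Let $M\to\infty$ and pass to subsequences so that $h_M\to h\in\partial A^{-1}$ (a closed set) and $z_M\to x$. For $f\in\{h<0\}$ we have $h_M(f)<0$ eventually, and $f\in T_M(1_G)$ eventually, so the desired inequality passes to the limit (with $\leq$, then shrink $\varepsilon$). Finally, $x\notin F$ because $d(T^{k}x,F)\geq\varepsilon$ for a fixed bounded $k$ retained in the limit.
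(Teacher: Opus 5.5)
Your setup and your final limiting step are fine. The scheme itself (take a minimal escape element, use \cref{lem:geomHoroball2_tube} to build $h_M$, then let $M\to\infty$) is a legitimate direct counterpart of the paper's proof by contradiction. The paper instead gets a uniform $M$ by compactness and shows that the good set $C(x)$ absorbs tubes whose radius grows by $\eta$ along inverted generators, until $C(x)=G$. In your building step you should take $\bar g=1_G$ and $g=k^*$ (the minimal escape element) in \cref{lem:geomHoroball2_tube}, not $\bar g=k_y$, $g=1_G$.

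The genuine gap is the step you flag yourself, and it is the core of the theorem. You never show that, at arbitrarily large escape radius, some minimal escape element lies in $Z(L\setminus G_0)$, and your sketch would not give this as written.
\begin{itemize}
\item \cref{lem:geomHoroball3_tube} has to be applied with $g=s\in\tilde S^{-1}$, a single inverted generator. Only there do you have a uniform $\eta$ with $h(s^{-1})<-\eta$ on $\partial G_0$; a general element of $\langle\tilde S\rangle_+$ has neither the right sign nor a uniform margin.
\item The comparison only works starting from a point whose radius already exceeds $n_1$. So it cannot be run forward from $x_\delta$, whose radius may be tiny, and the contradiction is not a radius that drops ``after finitely many steps''.
\item Choosing a near-minimizer with maximal $\dist_Z$ is a red herring. \cref{lem:geomHoroball2_tube} only lands you in an escape-free region when $k^*$ is an exact minimizer of $N$ on $E(y)$.
\end{itemize}

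Here is the missing argument.
\begin{itemize}
\item \emph{Constants.} Let $S=\tilde S^{-1}$ and $c=\max_{s\in S}\rho(s^{-1},1_G)$. Take $n_1$ from \cref{lem:geomHoroball3_tube} valid for all $s\in S$, and $n_0=n_0(M)$. Choose $\delta$ so that every admissible $g$ gives $r(T^gx_\delta)\geq R_0$, with $R_0\geq \max(n_1+\eta,\,n_0)+c$.
\item \emph{Walking back.} For such a $g=\tilde s_1\cdots\tilde s_n$, set $y_0=T^gx_\delta$ and $y_j=T^{s_j}y_{j-1}$ with $s_j=\tilde s_j^{-1}$. Then $y_n=x_\delta$ and $E(y_j)=E(y_{j-1})s_j^{-1}$.
\item \emph{Radius cannot drop much.} Right-invariance gives $N(ks)\leq N(k)+\rho(s^{-1},1_G)$, hence $r(y_j)\geq r(y_{j-1})-c$.
\item \emph{Dichotomy.} Suppose $r(y_j)<r(y_{j-1})+\eta$ and the minimal escape $k^*$ of $y_j$ lay in $ZG_0$. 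Then \cref{lem:geomHoroball3_tube} with $r=N(k^*)-\eta\geq n_1$ gives $N(k^*s_j)\leq r<r(y_{j-1})$. But $k^*s_j\in E(y_{j-1})$, which is impossible.
\item \emph{Conclusion of each step.} As long as $r(y_{j-1})\geq R_0$, each step either raises the radius by $\eta$, or produces a minimal escape element in $Z(L\setminus G_0)$ at radius at least $R_0-c\geq n_0$.
\item \emph{Contradiction.} Suppose the first alternative held at every step, for every admissible $g$. Then $r(x_\delta)\geq R_0+n\eta$ with $n$ unbounded, since there are infinitely many $g$ and only finitely many of each word length. So $E(x_\delta)=\emptyset$, contradicting your standing assumption.
\end{itemize}
Hence for some $g$ the second alternative occurs at a first step. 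That is exactly the input your horofunction-building step needs.
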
 

\begin{proof} 
For the sake of contradiction, assume that for any horofunction $h \in \partial [L\setminus  G_0]^{-1}$ and $x\notin F$ there exists $g \in \{h<0\}$ with  $d(T^gx,F)\geq \varepsilon$. Set $\ve_1 = \ve/2$. Using compactness arguments, there exists $M>0$ such that for any $x\in X$, with $d(x,F)\geq \varepsilon_1$ and any $h \in \partial [L\setminus  G_0]^{-1}$, there exists some $g\in \{h<0\}\cap B_{M}(1_G)$ such that for any $a \in B_\nu (1_G)$, $d(T^{ag}x,F)> \varepsilon_1$.   
If there was some $h\in\partial[L\setminus G_0]^{-1}$ such that $\{h<0\}= \emptyset$, the conclusion of the theorem would hold by vacuity. Therefore, we assume that the horoball of every $h\in\partial[L\setminus G_0]^{-1}$ is non-empty, and we may assume $B_M(1_G)$ intersects every horoball. 

Let $n_0$  be the integer given by \cref{lem:geomHoroball2_tube} associated with the constants $M+1$ and the set $A= L \setminus G_0$. Let $S = \tilde{S}^{-1}$. Then, by assumption $S$ is a finite subset of $\bigcap_{h \in \partial G_0}\{g:  h(g^{-1}) <0 \}$. Since $\partial G_0$ is compact, and $\tilde{S}$ is finite, we can define
\[\eta = \frac12\min_{s\in\tilde{S}}(-\max_{h\in\partial G_0}h(s)) > 0.\]
Then, $h(s^{-1})<-\eta$ for all $s\in S$ and $h\in\partial G_0$. Thanks to \cref{lem:geomHoroball3_tube}, there exists $n_1$ such that \eqref{eq:tubecontainment} holds for each $g\in S$. We also may assume $\overline{T_{n_0}(s)} \subseteq  T_{n_1}(1_G)$ for any $s \in S$.

As $F$ is invariant, we can take $0<\delta\leq \ve_1$ such that $d(x,F)< \delta$ implies $d(T^gx,F)\leq\varepsilon_1$ for all $g\in B_{n_1}(1_G)$. By assumption, for this $\delta$ there exists $x\notin F$ such that $d(T^{cg}x,F)< \delta$ for infinitely many $g\in \langle \tilde{S}\rangle_+ $ and all $c\in Z$. This implies that $d(T^{ckg}x,F)\leq \varepsilon_1$  
for infinitely many $g\in \langle \tilde{S}\rangle_+ $, for all $k\in  B_{n_1}(1_G)$ and all $c\in Z$. 
That is 
\begin{equation} \label{eq:initial_tube}
    d(T^{k}x,F)\leq \varepsilon_1  \text{ for all  } k\in T_{n_1}(g)
\end{equation}
for infinitely many $g\in \langle \tilde{S}\rangle_+ $. 

Define
\[
C(x)  =\{g \in G: d(T^{cg}(x),F) \leq \varepsilon_1 \quad \forall c \in Z\}.
\]
By the choice of $\delta$, $C(x)$ contains $T_{n_1}(g)$ for infinitely many $g\in \langle \tilde{S}\rangle_+ $. 
First, we claim it satisfies the following property:
\begin{equation}\label{eq:propStable} 
 \tag{P}
  \parbox{\dimexpr\linewidth-7em}{
If for  $g \in G$, there exists $h \in \partial[ L \setminus G_0]^{-1}$ such that \[\left[\{h<0\}\cap T_{M}(1_G)\right]g \subseteq B_\nu(1_G)C(x),\]  then $g \in C(x)$}
\end{equation}
Indeed, for every $k \in \{h<0\}\cap T_{M} (1_G)$ (nonempty by the definition of $M$) there exists $a \in B_\nu(1_G)$ such that $d(T^{akgc}x,F) \leq \varepsilon_1 $, $\forall c \in Z$.  The definition of $M$ implies that $d(T^{gc}(x),F) \leq \varepsilon_1$ for all $c\in Z$. That is $g \in C(x)$. 

Assume that $C(x)$ contains $T_{R} (\bar{g})$ of radius $R \ge n_1$ for some $\bar g\in C(x) $. We claim that for any $s \in S$ the  set $C(x)$ also contains $T_{R+\eta}(s\bar{g})$. 
Otherwise, there is $k^* \in T_{R+\eta}(s\bar{g}) \setminus C(x)$ minimizing  the distance $\dist_Z(k^*, s\bar{g})$. We may assume that $k^*(s\bar{g})^{-1}$ lies in $L$ by the $Z$ invariance of the tube and the set $C(x)$.
The minimality condition implies that every $k$ with $\dist_Z(k,s\bar{g}) < \dist_Z(k^*,s\bar{g})$ lies in $C(x)$. Also, notice that by the choice of the constant $n_1$,  $\overline{T_{n_0}(s\bar{g})}$ is included in $T_{R}(\bar{g})$, hence in $C(x)$. So we have  $\dist_Z(k^*, s\bar{g}) > n_0$.  
If  $k^*$ belongs to $[L\setminus G_0]s\bar{g}$, \cref{lem:geomHoroball2_tube} ensures the existence of a horofunction $h \in \partial [L\setminus G_0]^{-1}$ such that 
$$\left[\{h<0\} \cap T_{M+1}(1_G)\right] k^*  \subseteq\{k\in G \mid \dist_Z(k,s\bar{g})<\dist_Z(k^*,s\bar{g})\} \subseteq  C(x)\subseteq B_\nu(1_G)C(x).$$ 

This is impossible by Property \eqref{eq:propStable}.
It follows that $k^*$ belongs to $ G_0s\bar{g}$. Then, \cref{lem:geomHoroball3_tube} applied to $g=s$,  implies that $k^*$ belongs to $T_{R}(1_G)\bar{g}\subseteq C(x)$: indeed, if $f = k^*(s\bar{g})^{-1}\in G_0$, we have that $N(f) = \dist_Z(k^*, s\bar{g})\leq R+\eta$. Then, $f\in ZG_0\cap T_{R+\eta}(1_G)$ and $fs\in T_R(1_G)$. Thus, $k^*= (fs)\bar{g}$. Again a contradiction. 

Iterating the argument, we get for any $s=s_n\cdots s_1$ in $\langle S\rangle_+$ that $C(x)$ contains the set
$\bigcup_{n\ge 1} \bigcup_{s_1, \ldots s_n \in S} T_{n_1+n\eta}(s_n\cdots s_1 g)$ for any $g$ such that $T_{n_1}(g)\subseteq C(x)$. Taking $g=\tilde s_1\cdots\tilde s_n$ among the infinitely many elements of $\langle\tilde S\rangle_+$ and choosing $s_j=\tilde s_{n-j+1}^{-1}$ such that $s_n\cdots s_1g=1_G$, we get $T_{n_1+n\eta}(1_G)\subseteq C(x)$. Indeed, only finitely many elements of $\langle\tilde S\rangle_+$ are products of at most $m$ generators, and $n$ may be taken arbitrarily large. As this happens for infinitely many $g\in \langle \tilde{S}\rangle_+ $, we get that $C(x)$ contains $T_{l}(1_G)$ for arbitrarily large $l$. Hence $C(x)=G$, that is, $d(T^gx,F)\leq\ve_1<\ve$ for every $g\in G$. This is a contradiction. 
\end{proof}

\section{Generalities and topological properties of the set of non-deterministic horofunctions}\label{sec:TopoHorofct}
In this section, we provide geometrical conditions on the metric of the group $G$ to get restrictions on the set of non-deterministic horofunctions of any topological dynamical system $(X,T,G)$. In \cref{sec:closednessND} we provide sufficient conditions to ensure the closedness of the set of such horofunctions (\cref{thm:closeness-ND}). These conditions apply in particular to nilpotent group actions. Then the  next section \ref{sec:IntersectionsND}   sets restrictions on the non-deterministic horoballs for an action: their intersection should only contain torsion elements (\cref{thm:vacia_tf}.)  The last subsection concerns non abelian groups. We show that the set of non-deterministic horofunctions is invariant under the action of $G$ on its border. We deduce a sufficient condition so that for any topological dynamical system $(X,T,G)$,  the intersection of its  non-deterministic horoballs is trivial (\cref{thm:Vacia_ConditionG}). 

\subsection{On the closedness of $\ND_\varepsilon(X)$}\label{sec:closednessND}
An important property of the half-spaces of non-determinism for $\Z^d$ action, in \cite{Boyle_Lind_expansive_subdynamics:1997} is that they form a nonempty closed subset (when endowed with the Grasmannian topology). 
For a general group action, we have only stated in \cref{thm:robinson_crusoe} that the set of non-deterministic horofunctions is always nonempty. However, examples in \cite{Donoso_Maass_Petite_geometric_asymptotic:2024} show that closedness does not always hold. In fact, this property fails even in $\Z^2$, using the $\ell^1$ norm. So, it is natural to ask under which conditions on the metric, the set $\ND_\varepsilon (X)$ is closed for all $\varepsilon>0$ (see also \cite[Question 5.10]{Donoso_Maass_Petite_geometric_asymptotic:2024}).  In this section, we find conditions on the distance on $G$ that guarantee that the set $\ND_\varepsilon(X)$ is a closed set in $\partial G$ for all $\varepsilon>0$. In such a case, since $\ND_\varepsilon (X) \subseteq \ND_{\varepsilon'} (X)$ when $\varepsilon \leq \varepsilon'$, the nested sequence of compact sets 
\[\ND(X)=\bigcap_{\varepsilon>0 } \ND_\varepsilon (X),\]
is non-empty. This implies that the horofunction provided by~\Cref{thm:robinson_crusoe} can be chosen uniformly in $\varepsilon$. The same conclusion can be drawn considering a factor map $\pi \colon X \to Y$ between $G$-systems, where we define \[\ND_{\pi}(X)=\bigcap_{\varepsilon>0 } \ND_{\varepsilon,\pi}(X).\] 
This is summarized in \Cref{thm:closeness-ND}.
We also deduce a geometrical restriction on the non-deterministic horofunctions (\Cref{thm:vacia_tf}). 

The key geometrical condition that ensures that $\ND_{\epsilon}(X)$ is closed for any system $(X,T, G)$ is captured by the following properties (see \cref{thm:closeness-ND}). 

\begin{definition}\label{def:qsubadditive}
Let $G$ be a group with a right invariant and proper metric $\rho$, and consider a closed subset $V\subseteq \partial (G, \rho)$.
\begin{enumerate}[label=C\arabic*),ref=C\arabic*)]
   \item\label{itemhorofct:1} We say that  $V$ 
   has \define{uniformly quasi-subadditive  horofunctions} if there exists $R>0$ such that \[h(gg') \le h(g) + h(g') +R\] for all $h\in V \cap \partial G$ and all $g,g' \in G$. 

    \item\label{itemhorofct:2} The set $V$ has no lower bound for its horofunctions (see Section \ref{sec:NoLowerBound}).
  
\end{enumerate} 

\end{definition}
{When  the set $V$ is the whole space $\partial G$, we say that the group $G$ has \define{uniformly quasi-subadditive horofunctions} and has \define{no lower bound for its horofunctions} when \ref{itemhorofct:1} and \ref{itemhorofct:2} occur respectively.}  

\begin{theorem} \label{thm:closeness-ND}
Let $G$ be a group with a proper right-invariant metric {and let $V \subseteq \partial (G, \rho)$  be a closed set with uniformly quasi subadditive horofunctions and no lower bound for its horofunctions.} Let $\pi\colon X\to Y$ a factor map between the topological dynamical systems $(X,T,G)$ and $(Y,T,G)$ that is not bounded-to-one. Then for any $\ve >0$, the sets $\ND_{\ve,\pi} (X)\cap V$ and  $\ND_{\pi}(X)\cap V$ are closed. 
\end{theorem}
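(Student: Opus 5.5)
Fix $\ve>0$, a sequence $h_n\in \ND_{\ve,\pi}(X)\cap V$ converging to $h\in\partial G$, and $(h_n,\ve)$-asymptotic pairs $(x_n,y_n)$ with $x_n\neq y_n$ and $\pi(x_n)=\pi(y_n)$. Since $V$ is closed, $h\in V$, so it suffices to show $h\in \ND_{\ve,\pi}(X)$. Closedness of $\ND_\pi(X)\cap V=\bigcap_{\ve>0}(\ND_{\ve,\pi}(X)\cap V)$ then follows, being an intersection of closed sets. The obstacle is that a limit of $(x_n,y_n)$ may collapse to a diagonal point. The plan is to recentre each pair so that it is separated by more than $\ve$ at $1_G$ before passing to the limit. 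The two hypotheses of \cref{def:qsubadditive} are what allow the recentring to cost only a bounded shift of the horoball, and allow that shift to be undone afterwards.

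\emph{Step 1: recentring.} Let $A_n=\{g\in G \mid d(T^gx_n,T^gy_n)>\ve\}$. If some $A_n$ is empty, then $(x_n,y_n)$ is $\ve$-close along all of $G$, hence an $(h,\ve)$-asymptotic pair in a common fiber, and we are done. Otherwise $A_n\cap\{h_n<0\}=\emptyset$, so $h_n\geq 0$ on $A_n$. Choose $g_n\in A_n$ with $h_n(g_n)\le \inf_{A_n}h_n+1$, and set $x_n'=T^{g_n}x_n$, $y_n'=T^{g_n}y_n$. Then $d(x_n',y_n')>\ve$ and $\pi(x_n')=\pi(y_n')$. Take $R$ as in condition \ref{itemhorofct:1}. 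If $h_n(g)<-R-1$, quasi-subadditivity gives
\[h_n(gg_n)\le h_n(g)+h_n(g_n)+R< h_n(g_n)-1\le \inf_{A_n}h_n,\]
so $gg_n\notin A_n$, and hence $d(T^{g}x_n',T^{g}y_n')\le\ve$. Thus $(x_n',y_n')$ is $\ve$-close on $\{h_n<-R-1\}$ and $\ve$-separated at $1_G$.

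\emph{Step 2: limit.} Pass to a subsequence with $x_n'\to x$ and $y_n'\to y$. Then $d(x,y)\ge\ve>0$, and $\pi(x)=\pi(y)$ by continuity of $\pi$. Let $g$ satisfy $h(g)<-R-1$. Pointwise convergence gives $h_n(g)<-R-1$ for all large $n$, so by continuity of $T^g$ we get $d(T^gx,T^gy)\le\ve$. Hence $(x,y)$ is $\ve$-close on $\{h<-R-1\}$.

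\emph{Step 3: undoing the shift.} By condition \ref{itemhorofct:2} applied to $h\in V$, there is $f\in G$ with $h(f)<-2R-1$. Put $\bar x=T^fx$ and $\bar y=T^fy$; these are distinct since $T^f$ is a homeomorphism, and they lie in a common fiber by equivariance. If $h(g)<0$, then
\[h(gf)\le h(g)+h(f)+R<-R-1,\]
so $d(T^g\bar x,T^g\bar y)=d(T^{gf}x,T^{gf}y)\le\ve$. Therefore $(\bar x,\bar y)$ is an $(h,\ve)$-asymptotic pair in one fiber, i.e.\ $h\in\ND_{\ve,\pi}(X)$.

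I expect Step 1 to be the main point. The near-minimiser $g_n$ is needed so that quasi-subadditivity converts "$g$ deep in the horoball of $h_n$" into "$gg_n$ lies below every separation point". Note that \cref{lem:UnifLowerBound} is not needed here: pointwise no lower bound for the limit $h$ suffices in Step 3.
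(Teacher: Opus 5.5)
Your proof is correct. Its core mechanism is the same as the paper's: pick a near-minimiser of the horofunction on the $\ve$-separation set, then use quasi-subadditivity to push a horoball off that set. The difference is where the shifts happen.

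The paper proves a uniform claim before taking any limit. It recentres each pair at $fg'$, where $g'$ is the near-minimiser and $f$ lies in a fixed ball $B_L(1_G)$ with $h(f)\le -2R-1$. Keeping $f$ in a fixed ball requires the uniform form of the no-lower-bound hypothesis, \cref{lem:UnifLowerBound}, which uses compactness of $V$. Equicontinuity of $\{T^g\}_{g\in B_L(1_G)}$ then gives a constant $\delta>0$, independent of $h\in V\cap\ND_{\ve,\pi}(X)$, such that the asymptotic pairs can be taken $\delta$-separated and asymptotic along the whole horoball $\{h<0\}$. Closedness is then a direct limit.

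You recentre only at the near-minimiser $g_n$. This gives separation $>\ve$ at $1_G$ for free, at the cost of restricting to $\{h_n<-R-1\}$. You then pass to the limit, and only afterwards apply a single $f$ chosen for the limit $h$, which lies in $V$ because $V$ is closed. Your route therefore avoids both \cref{lem:UnifLowerBound} and the equicontinuity step, and uses the lower-bound hypothesis only at the limit point. The paper's route, in exchange, gives the uniform-separation statement, which is slightly more information.

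Your explicit handling of the case $A_n=\emptyset$ is also a clean way to dispose of the degenerate situation. The paper deals with it in its opening reduction.
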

{This theorem is meaningful when  $\ND_{\ve,\pi} (X)\cap V$ is not empty for any small enough $\ve$, for instance when $V= \partial G$ (\cref{thm:robinson_crusoe}). This implies in particular that $\ND_{\pi}(X)$ is non-empty. The localized version of this result (that is, when $V$ is a strict subset of the border) will be applied for the Heisenberg group (see Section~\ref{sec:horo-heisenberg}).}   

Note that taking $Y$ to be the trivial system, 
\cref{thm:robinson_crusoe} provides that the set of non-deterministic horofunctions $\ND_\ve (X)$ is not empty for any $\ve >0$ and infinite topological dynamical system $(X, T, G)$. 
If, in addition the horofunctions on $G$ are quasi subbaditive and have no lower bound, \cref{thm:closeness-ND} gives that the sets $\ND_\ve (X)$ and $\ND(X)$ are non-empty closed sets.  This shows \cref{thm:A}.\\

Before proving \cref{thm:closeness-ND}, let us make some comments on the hypothesis  \ref{itemhorofct:1} and \ref{itemhorofct:2}.  The group $\Z^{d}$ equipped with the Euclidean metric satisfies both conditions \ref{itemhorofct:1}-\ref{itemhorofct:2} for $V = \partial \Z^d$, since its horofunctions are linear. Thus, \Cref{thm:closeness-ND} serves as a natural generalization of the closedness of $\ND(X)$ for $\mathbb{Z}^d$ with the $\ell^2$ norm.

\begin{proof}[Proof of \cref{thm:closeness-ND}]
Let $\ve>0$. We may assume that for any pair of distinct points $x,y\in X$ with $\pi(x)=\pi(y)$ there exists $g\in \cap_{h\in V} \{h<0 \}$ such that $d(T^gx,T^gy)>\ve$, since otherwise $\ND_{\ve,\pi}(X)\cap V=V$ and there is nothing to prove. 

\begin{claim}  \label{claim:delta-sep}
There exists $\delta>0$ such that for any $h\in V \cap \ND_{\ve,\pi}(X)$, there exists $x,y\in X$ with $d(x,y)\geq \delta$, $\pi(x)=\pi(y)$ and $d(T^{g}x,T^{g}y)\leq \ve$ for all $g\in \{h<0\}$. 
\end{claim}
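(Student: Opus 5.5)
The plan is to argue by translation along the horoball, combining the quasi-subadditivity \ref{itemhorofct:1} with the uniform lower bound of \cref{lem:UnifLowerBound}, so that the horofunction $h$ itself never has to be changed (which matters, since $V$ need not be invariant). The key step is checked by an explicit minimality argument only in the one-step case $g_0\in\{h\ge 0\}$ below; the general case is where I expect the proof to be harder, and I describe the hardest step and my first attempt there.

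\emph{Step 1: translation preserves asymptoticity.} Let $R$ be the constant of \ref{itemhorofct:1}. If $(x,y)$ is an $(h,\ve)$-asymptotic pair in a fiber of $\pi$ and $k\in G$ satisfies $h(k)\le -R$, then $(T^kx,T^ky)$ is again $(h,\ve)$-asymptotic. Indeed, for $g\in\{h<0\}$,
\[h(gk)\le h(g)+h(k)+R<0,\]
so $d(T^{g}T^kx,T^gT^ky)=d(T^{gk}x,T^{gk}y)\le\ve$. The new pair is still distinct and in the same fiber, since $T^k$ is a homeomorphism commuting with $\pi$. So it suffices to find such a $k$ with $d(T^kx,T^ky)\ge\delta$, for a $\delta$ independent of $h$.

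\emph{Step 2: choice of $\delta$.} By \cref{lem:UnifLowerBound}, applied to the closed set $V$ which has no lower bound, there is $L>0$ such that every $h\in V$ takes a value $<-2R-1$ somewhere on $B_L(1_G)$. The ball $B_L(1_G)$ is finite, so by uniform continuity there is $\delta>0$ such that $d(u,v)<\delta$ implies $d(T^fu,T^fv)\le\ve$ for all $f\in B_L(1_G)$. This $\delta$ depends only on $\ve$, $R$ and $L$, hence is uniform in $h\in V$.

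\emph{Step 3: a deep point with $\delta$-separation.} Fix $h\in V\cap\ND_{\ve,\pi}(X)$ and an asymptotic pair $(x,y)$ in one fiber. The set
\[D=\{g\in G \mid d(T^gx,T^gy)>\ve\}\]
is nonempty because $x\neq y$ (by the standing reduction it even meets $\bigcap_{h'\in V}\{h'<0\}$), and $D\subseteq\{h\ge0\}$. Take $g_0\in D$ with $h(g_0)\le m+1$, where $m=\inf_D h\ge 0$. Pick $f\in B_L(1_G)$ with $h(f)<-2R-1$ and put $k=f^{-1}g_0$. Since $T^fT^k=T^{g_0}$, the choice of $\delta$ forces $d(T^kx,T^ky)\ge\delta$. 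It remains to show $h(k)\le -R$.

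\emph{Main obstacle: bounding $h(k)$.} Quasi-subadditivity only gives upper bounds of the form $h(ab)\le h(a)+h(b)+R$, while $h(g_0)$ is not bounded a priori, so $h(k)\le -R$ is the delicate point. Minimality settles it in the case $h(g_0)<0$: for every $g$ with $h(g)<0$ we have $h(gk)\le h(g)+h(k)+R<h(k)+R$, so $h(k)>-R$ would contradict either minimality of $g_0$ or the inclusion $D\subseteq\{h\ge 0\}$. But $D\subseteq\{h\ge0\}$, so $h(g_0)\ge0$, and this case is only a heuristic. I do not have a complete argument for the real case $h(g_0)\ge 0$.

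The first thing I would try there is an iteration. If $h(k)>-R$, replace $g_0$ by $k$ and repeat with a new $f'\in B_L(1_G)$. Each step lowers $h$ by a definite amount (at least $R+1$, from the bound $h(f)<-2R-1$), so after finitely many steps one should reach $h\le -R$. Meanwhile one has to keep separation $\ge\delta$ at the current point. That holds only at the first step as set up, because $T^{f}$ applied to the current pair must land in $D$. I would therefore keep the pair only $\ve$-separated at intermediate steps and shrink $\delta$ to depend on a fixed number of steps. That number is bounded in terms of $m+1$ and $R$, but this requires a uniform bound on $m$, i.e. on $\inf_D h$ over all $h\in V\cap\ND_{\ve,\pi}(X)$. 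I expect that bound to come from the standing reduction together with compactness of $V$, and that is the step I would work out first.
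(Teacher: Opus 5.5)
Your proposal has a genuine gap, and it sits exactly where you placed it. The bound $h(k)\le -R$ cannot be proved for your $k=f^{-1}g_0$, because it is false. Already for $\Z^d$ with the Euclidean metric, where $h$ is linear,
\[ h(f^{-1}g_0)=h(g_0)-h(f)>m+2R+1>0, \]
so you are translating the pair out of the horoball rather than into it. Switching to $k=fg_0$ does not rescue Step 1 either. Quasi-subadditivity then only gives $h(k)\le h(f)+h(g_0)+R\le m-R$, and $m=\inf_D h$ is not bounded for an arbitrary asymptotic pair. Indeed, replacing $(x,y)$ by $(T^{k'}x,T^{k'}y)$ with $h(k')\ll 0$ keeps the pair asymptotic (by your Step 1) but, in the linear case, changes $m$ into $m-h(k')$. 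So the requirement $\{h<0\}k\subseteq\{h<0\}$ cannot be met with $\delta$ independent of $h$. The iteration you sketch needs exactly the uniform bound on $m$ that is unavailable, and producing a pair for which it holds is essentially the content of the claim.

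The missing idea is that Step 1 asks for too much. For the translated pair you only need $\{h<0\}k\cap D=\emptyset$. This follows from $h(gk)<\inf_D h$ for $g\in\{h<0\}$, and that is the kind of upper bound quasi-subadditivity does provide. Take $k=fg_0$ and apply \ref{itemhorofct:1} twice: for every $g\in\{h<0\}$,
\[ h(gfg_0)\le h(g)+h(f)+h(g_0)+2R<0-(2R+1)+(m+1)+2R=m, \]
so $gk\notin D$, that is, $d(T^gT^kx,T^gT^ky)\le\ve$.

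Separation also follows with your own $\delta$. By right invariance, $\rho(f^{-1},1_G)=\rho(1_G,f)$, so $f^{-1}\in B_L(1_G)$. If $d(T^kx,T^ky)<\delta$, then
\[ d(T^{g_0}x,T^{g_0}y)=d(T^{f^{-1}}T^kx,T^{f^{-1}}T^ky)\le\ve, \]
contradicting $g_0\in D$. Hence $(T^{fg_0}x,T^{fg_0}y)$ is a $\delta$-separated pair in one fiber that is $\ve$-close along $\{h<0\}$. This is the paper's argument, and Step 1 together with the iteration can be dropped.

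One minor point: $D\neq\emptyset$ does not follow from $x\neq y$ alone. Also, $D$ cannot meet $\bigcap_{h'\in V}\{h'<0\}\subseteq\{h<0\}$, so your parenthetical contradicts $D\subseteq\{h\ge 0\}$. What you actually need is that every distinct fiber pair is separated by more than $\ve$ at some $g\in G$. Otherwise a single pair that is $\ve$-close along all of $G$ proves the claim with $\delta=d(x,y)$.
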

\begin{proof}[Proof of claim]
Let $h\in \ND_{\ve,\pi}(X)\cap V$ and let $x\neq y$ such that $\pi(x)=\pi(y)$ and $d(T^gx,T^gy)\leq \ve$ for all $g\in \{h<0\}$. Note that the set $F \coloneqq \{g \in G \mid d(T^{g}x,T^{g}y)>\ve\}$ satisfies that $F\neq \emptyset$ and $F\subseteq \{h\geq 0\}$. Let $R$ be the (uniform) constant of uniform quasi subadditivity for the horofunctions. Since $V$ has no lower bound for the horofunctions, there exist $L >0$ (independent of $h$) and $f \in B_L (1_G)$ such that $h(f) \le -2R-1$ (\cref{lem:UnifLowerBound}). Let $g' \in F$ such that $h(g') \le \inf h(F) +1$. It follows that for any $g$ in $\{h<0\}$,  \[h(gfg') \le h(g) + h(f) + h(g') + 2R < \inf h(F).\] In particular, this implies that $\{h<0\}fg'$ and $F$ are disjoint.

Since $B_L(1_G)$ is compact, the family of transformations $\{T^g \}_{g\in B_L(1_G)}$ is equicontinuous and we may take $0<\delta<\ve$ such that for any  $x', y' \in X$, $d(x',y')>\ve$ implies $d(T^{g}x',T^{g}y')>\delta$ for all $g\in B_L(1_G)$. Notice that since $d(T^{g'}x, T^{g'}y) > \ve$, we have $ d(T^{fg'} x, T^{fg'}y)> \delta $ 
and because $\left(\{h <0\}fg' \right)\cap F= \emptyset$, we have  $ d(T^{gfg'} x, T^{gfg'}y) \le  \epsilon $ for any $g \in \{ h<0 \}$. The pair of points $T^{fg'} x$ and $T^{fg'} y$ verify the claim.\end{proof}
By \Cref{claim:delta-sep}, it is straightforward to verify that $V\cap \ND_{\ve,\pi}(X)$ is closed. The closedness of $V\cap\ND_{\pi}(X)$ follows immediately. 
\end{proof}

\begin{remark}
From the proof of \Cref{thm:closeness-ND}, observe that when any horofunction of $(G,\rho)$ is subadditive, that is, for every $g,g'\in G$ and $h\in \partial G$, $h(gg')\leq h(g) + h(g')$, there is no need for $G$ to have no lower bound for the horofunctions. Also, given the constant $R$ in Condition \ref{itemhorofct:1}, condition \ref{itemhorofct:2} can be relaxed to $\sup_{h \in \partial G} \inf(h(B_L(1_G)))<-2R$, by setting, in the proof of  \cref{thm:closeness-ND}, $\gamma=-2R-\sup_{h \in \partial G} \inf(h(B_L(1_G)))>0$ and choosing $g'\in F$ with $h(g')\leq \inf h(F) + \gamma$.

\end{remark}

\subsection{The intersection of non-deterministic horoballs}\label{sec:IntersectionsND}
In this section, we exhibit restrictions on the horoballs defined by non-deterministic horofunctions. We address Question 5.8 raised in \cite{Donoso_Maass_Petite_geometric_asymptotic:2024} and provide conditions that yield a positive answer. We provide a geometrical condition ensuring that for any topological dynamical system $(X,T,G)$, the intersection of its non-deterministic horoballs contains only torsion elements (\cref{thm:vacia_tf}). In particular, it is trivial if $G$ is torsion-free.

\begin{definition}\label{def:SymHoroball}
    A group $G$ with a right invariant and proper metric $\rho$ has a \define{symmetric horoballs space} if for any sequence $(g_n)_{n\in\N}$ such that both the limits $h = \lim_{n} h_{g_n}$ and $h'= \lim_{n} h_{g_{n}^{-1}}$ exist, the set $\{g \in G \mid  h(g^{-1})<0\}$ coincides with $\{g \in G \mid h'(g)<0\}$.
\end{definition}
 
 It is simple to check that the horoballs space of the abelian group $\Z^d$ is symmetric for the Euclidean metric, but fails to be symmetric for the $\ell^1$-metric. 

\begin{definition}
    Let $k\in G$. The \define{half-horoboundary associated with $k$} is the set of horofunctions $h\in\partial G$ such that $h(k)\geq 0$. We denote this set by $\Cone(k)$.
\end{definition}

For instance, if $G= \Z^2$, the set of $\ell^2$ horofunctions in $\Cone(k)$ is a half-circle of directions (here we identify a horofunction $h$ with the unit vector $v_h$ such that $h(x)=\langle x, v_h\rangle$). 
\begin{center}
\begin{figure}[H] 
\begin{tikzpicture}[>=latex, scale=1]

    \def\R{1} 
    \def\kangle{45} 
    \def\kdist{1.5} 

    \draw[very thin, ->, gray] (-\R*1.3, 0) -- (\R*1.3, 0);
    \draw[very thin, ->, gray] (0, -\R*1.3) -- (0, \R*1.3);

    \draw[black] (0,0) circle (\R);

    \fill[black] (\kangle:\kdist) circle (1.5pt) node[anchor=south west, inner sep=2pt] {$k$};
    
    \draw[dashed, thin] (0,0) -- (\kangle:\kdist);

    \draw[thin, black] ({\kangle+90}:\R) -- ({\kangle-90}:\R);
    
    \draw[blue, thick] ({\kangle-90}:\R) arc ({\kangle-90}:{\kangle+90}:\R);

\end{tikzpicture}
\caption{Half-horoboundary associated with $k$, for the Euclidean metric. }
\end{figure} 
\end{center} 

\begin{lemma} \label{lem:cone_nonempty}
  Let $G$ be an infinite discrete group with a right invariant and proper metric $\rho$.  Then for any $k \in G$, $\Cone(k)$ is a closed nonempty subset of $\partial G$.
\end{lemma}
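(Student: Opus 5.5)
Closedness is immediate. For fixed $k$, the evaluation map $h\mapsto h(k)$ is continuous on $C(G)$ for the compact open topology, which on a discrete $G$ is pointwise convergence. Hence $\Cone(k)=\{h\in\partial G\mid h(k)\ge 0\}$ is the preimage of $[0,\infty)$ intersected with the closed set $\partial G$, and so it is closed.

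For non-emptiness, I will produce a horofunction as a limit of $b_{g_n}$ along a sequence $g_n\to\infty$ with $b_{g_n}(k)\ge 0$ for every $n$. The condition $b_g(k)\ge 0$ means $\rho(g,k)\ge\rho(g,1_G)$. By right invariance $\rho(g,k)=\rho(gk^{-1},1_G)$, so the condition says that $gk^{-1}$ is at least as far from $1_G$ as $g$ is.

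The key step is to show that infinitely many $g$ satisfy this. Since $G$ is infinite and $\rho$ is proper, $\rho(g,1_G)$ is unbounded. I will argue by contradiction: suppose that for all $g$ outside a finite set $K$ we have $\rho(gk^{-1},1_G)<\rho(g,1_G)$.

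If $k$ has infinite order, take $g\notin K'$, where $K'$ is a large ball containing $K\cdot\langle k\rangle$-relevant points. Iterating, the sequence $\rho(gk^{-m},1_G)$ strictly decreases as long as the points $gk^{-m}$ stay outside $K$. It takes values in the finite set of distances to points of the ball $B_{\rho(g,1_G)}(1_G)$, so the orbit $gk^{-m}$ must enter $K$ for some $m\ge 0$. Hence $g\in K\langle k\rangle_+$. Doing the same with $k^{-1}$ would be cleaner: simply choose $g=g_0k^{m}$ along the coset.

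A cleaner route is the following. For any $g$, consider the finite orbit segment, or the cycle if $k$ has finite order $p$. In the finite-order case, $\rho(gk^{-j},1_G)$ cannot strictly decrease along the whole cycle $j=0,\dots,p$, because $gk^{-p}=g$. So for every $g$ there is some $j$ with $\rho(gk^{-j-1},1_G)\ge \rho(gk^{-j},1_G)$. Setting $g'=gk^{-j}$ gives $b_{g'}(k)\ge 0$, and $\rho(g',1_G)\ge \rho(g,1_G)-p\,\rho(k,1_G)$, so $g'$ can be made arbitrarily large.

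In the infinite-order case, $\{k^n\}$ is unbounded, and I take $g=k^{n}$ with $n\to\infty$ chosen along a subsequence where $\rho(k^{n-1},1_G)\ge \rho(k^{n},1_G)$. Such $n$ exist in abundance if $\rho(k^{-n},1_G)\to\infty$ in the negative direction, since I can use $g=k^{-n}$ instead. I would rather handle it uniformly: for arbitrary large $g_0$, look at $g_0k^{j}$ for $0\le j\le J$. Either some consecutive pair fails to decrease, giving the desired $g'$ within bounded distance $J\rho(k,1_G)$ of $g_0$ (with $J$ chosen to depend on the size of $g_0$), or the distances strictly decrease. The decreasing alternative is bounded by finiteness of balls: strict decrease over more than $|B_{\rho(g_0,1_G)}(1_G)|$ steps is impossible. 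So within that many steps there is a non-decreasing step at some $g'=g_0k^{j}$ with $\rho(g'k^{-1},1_G)\ge\rho(g',1_G)$ read appropriately. Getting the direction of $k$ versus $k^{-1}$ right here, and keeping $g'$ large, is the main obstacle. I would handle the direction by running the orbit $g_0k^{j}$ backwards so that consecutive comparisons are exactly $b_{g'}(k)\ge0$. I would keep $g'$ large by starting from a $g_0$ whose distance is a near-minimum on a large sphere, for example the "first failure" from the far end.

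Once there is a sequence $g_n\to\infty$ with $b_{g_n}(k)\ge 0$, relative compactness of $b(G)$ gives a convergent subsequence. The limit $h$ is not of the form $b_g$, since $g_n\to\infty$ and properness forbid it, so $h\in\partial G$, and pointwise convergence gives $h(k)\ge 0$. Hence $h\in\Cone(k)$, which is therefore non-empty.
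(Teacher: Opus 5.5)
Your closedness argument and your finite-order case are correct and match the paper. The observation that $\rho(gk^{-j},1_G)$ cannot strictly decrease around the cycle $g,gk^{-1},\dots,gk^{-p}=g$ is the paper's telescoping identity $\sum_{j=0}^{p-1}\bigl(\rho(g,k^{j+1})-\rho(g,k^{j})\bigr)=0$ in disguise. Your bound $\rho(g',1_G)\ge\rho(g,1_G)-p\,\rho(k,1_G)$ replaces the paper's device of freezing $j$ along a subsequence.

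The genuine gap is the infinite-order case, which the proposal does not prove. There are three problems:
\begin{enumerate}
\item Taking $g=k^{n}$ with $n\to+\infty$ can fail outright. In $\Z$ with $k=1$ and the usual metric, $\rho(k^{n-1},1_G)<\rho(k^n,1_G)$ for every $n>0$.
\item The switch to $g=k^{-n}$ is only asserted to give good indices ``in abundance'', conditionally on $\rho(k^{-n},1_G)\to\infty$. You neither check that hypothesis nor produce the indices.
\item The ``uniform'' walk from an arbitrary large $g_0$ to the first non-decreasing step can end near the identity. In $\Z$ with $k=1$ and $g_0=N$, the walk $N,N-1,\dots$ stops at $g'=0$.
\end{enumerate}
You correctly name keeping $g'$ large as the obstacle, but the remedy you sketch (``near-minimum on a large sphere'', ``first failure from the far end'') is never made precise.

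The missing idea is to stay on the cyclic subgroup and go in the negative direction. Set $\varphi(n)=\rho(k^n,1_G)$. By right invariance, $b_{k^n}(k)=\varphi(n-1)-\varphi(n)$. Since the $k^n$ are pairwise distinct and balls are finite, $\varphi(n)\to\infty$ as $n\to-\infty$. So, whatever $n_0$ is, $\varphi(n-1)<\varphi(n)$ cannot hold for all $n\le n_0$. Hence there are indices $n\to-\infty$ with $\varphi(n-1)\ge\varphi(n)$, and $\varphi(n)\to\infty$ along them. The paper makes this quantitative: it takes the last $m\in[N,0]$ with $\varphi(m)\ge i$ and sets $n=m+1$, which gives $\varphi(n-1)\ge\varphi(n)\ge i-\rho(k,1_G)$.

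Alternatively, your first contradiction argument finishes the whole lemma in one more line, with no case split. You reached $G=\bigcup_{m\ge0}Kk^m$. If $k$ is torsion, the right-hand side is finite, contradicting that $G$ is infinite. If $k$ has infinite order, writing $k^{-n}=ck^{m}$ puts $k^{-(n+m)}$ in $K$ for every $n\ge0$, with unbounded exponents, contradicting finiteness of $K$. So $\{g\mid b_g(k)\ge 0\}$ is infinite, hence unbounded, which is all you need.

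One caveat on your last paragraph. For a general proper right-invariant metric, properness alone does not prevent a limit of $b_{g_n}$ with $g_n\to\infty$ from equalling some $b_g$. For example, on $\Z$ with the weighted word metric giving $\pm1$ weight $1$ and $\pm 2^{2^j}$ weight $j$, one gets $b_{2^{2^n}+g}\to b_g$. The paper makes the same tacit identification of such limits with horofunctions, so this is a convention of the framework rather than a flaw specific to your proof.
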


\begin{proof}
 The closedness of $\Cone(k)$ is clear. We show that it is nonempty. 
 First, assume that $k$ is torsion and let $m\in \N$ with $k^m=1_{G}$. Let $g_i$ be a sequence that goes to infinity. Because $\sum_{j=0}^{m-1} \rho(g_i,k^{j+1})-\rho(g_i,k^j)=0$, we have that for each $i$ there exists $j(i)\in\{0,\ldots,m-1\}$ with $\rho(g_i,k^{j+1})-\rho(g_i,k^j)\geq 0$. After taking a subsequence, we may assume $j(i)=j$ is constant and define $g'_i=g_ik^{-j}$. Then $\rho(g_i',k)-\rho(g_i',1_G)\geq 0$ and then any limit point of $b_{g_i'}$ belongs to $\Cone(k)$. 
If $k$ is not torsion, then consider $\varphi(n) = \rho(k^n,1_G)$, and $c = \rho(k,1_G) = \varphi(1)$. Notice that $|\varphi(n)- \varphi(n-1)|\leq c$. Next, the set $\{k^n\mid n\leq 0\}$ is unbounded by properness as $k$ is not torsion. Then, consider $i>0$ and $N\leq 0$ such that $\varphi(N) \geq i$. Define $m_i= \textnormal{argmax}\{\varphi(n) \mid n\in[N,0], \varphi(n)\geq i\}$. Because $\varphi(0) = 0$, we have that $m_i+1\in[N,0]$. Thus, $\varphi(m_i+1)\leq\varphi(m_i)$ and 
\[\varphi(m_i+1)\geq \varphi(m_i)-c\geq i-c.\]
Taking the limit $i\to\infty$, and defining the sequence $(n_i)_{i\in\N}$ by $n_i = m_i+1$, we have that $\rho(k^{n_i-1},1_G) \geq \rho(k^{n_i},1_G)\to\infty$. It is straightforward to check that any limit point of $b_{k^{n_i}}$ belongs to $\Cone(k)$.  
\end{proof}

The next theorem is a refinement of \cite[Theorem 4.3]{Donoso_Maass_Petite_geometric_asymptotic:2024}. It ensures the existence of a non-deterministic horofunction in $\Cone(k)$.\footnote{A version of this result already appeared in an early draft of \cite{Donoso_Maass_Petite_geometric_asymptotic:2024}, but was not included in the final publication because it had no application there. The proof is included here for completeness and follows the original one, with minor modifications.}

{
\begin{theorem}\label{theo:factors_general_cone}
	Let $(X,T,G)$ and  $(Y,S,G)$ be topological dynamical systems, where $G$ has a metric $\rho$ with symmetric horoballs space. Let $\pi \colon X \to Y$ be a factor map that is not bounded-to-one. 
	Let $k \in G$ be an element of infinite order. Assume that $\ND_{\ve,\pi}(X)$ is closed for some $\ve>0$. Then there exists a horofunction $h \in \Cone(k) \cap \ND_{\ve,\pi}(X)$. 
\end{theorem}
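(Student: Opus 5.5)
The plan is to apply the Directed Robinson Crusoe theorem (\cref{thm:directed_RC}) to the fibered system and to choose the set $G_0$ so that every limit obtained along $G\setminus G_0^{-1}$ lies almost in $\Cone(k)$. The closedness of $\ND_{\ve,\pi}(X)$ then removes the word "almost".

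\textbf{Setting.} Consider the relation $R_\pi=\{(x,y)\in X\times X : \pi(x)=\pi(y)\}$ with the diagonal action of $G$. Let $\Delta\subseteq R_\pi$ be the diagonal, and set $O=R_\pi\setminus\Delta$. Then $O$ is open and $G$-invariant in $R_\pi$. Since $\pi$ is not bounded-to-one, $\Delta$ is not isolated, so $O$ is not closed and $\partial O\subseteq \Delta$. Let $U$ be the $\ve$-neighbourhood of $\Delta$.

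\textbf{Choice of $G_0$.} For $\eta>0$, put
\[G_0^{-1}=\{g\in G : \rho(g,k)-\rho(g,1_G)<-\eta\},\]
so that $G\setminus G_0^{-1}=\{g : b_g(k)\ge -\eta\}$. Every $h\in\partial[G\setminus G_0^{-1}]$ then satisfies $h(k)\ge -\eta$.

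Both $G_0$ and its complement must be unbounded. As in \cref{lem:cone_nonempty}, I would produce this from powers of $k$, using that $k$ has infinite order, so $\{k^n\}$ is unbounded in both directions. Along suitable subsequences of powers of $k$, the quantity $b_g(k)$ is, respectively, $\le -\eta$ and $\ge 0$, at least once $\eta$ is below $\rho(k,1_G)$. Some care is needed here.

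Next I determine $\tilde S$. Let $h\in\partial G_0$, say $h=\lim b_{g_n}$ with $g_n\in G_0$. Passing to a subsequence, $h'=\lim b_{g_n^{-1}}$ exists. Since $g_n^{-1}\in G_0^{-1}$, we get $h'(k)\le -\eta<0$. The symmetric horoballs hypothesis states $\{g : h(g^{-1})<0\}=\{g : h'(g)<0\}$, and $k$ lies in the right-hand set. Hence $h(k^{-1})<0$, so $\tilde S=\{k^{-1}\}$ lies in $\bigcap_{h\in\partial G_0}\{h<0\}$.

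\textbf{Applying the theorem and letting $\eta\to 0$.} Provided $\partial O$ is non pointwise $\{k^{-1}\}$-repulsive, \cref{thm:directed_RC} gives $h_\eta\in\partial[G\setminus G_0^{-1}]$ and a pair $(x,y)\in O$ with $d(T^gx,T^gy)\le\ve$ for all $g\in\{h_\eta<0\}$. The theorem is stated with $T^{g^{-1}}$, so I need to check that the inversion convention matches the horoball. This gives $h_\eta\in \ND_{\ve,\pi}(X)$ with $h_\eta(k)\ge -\eta$. The sets $\ND_{\ve,\pi}(X)\cap\{h : h(k)\ge-\eta\}$ are nonempty, compact (this is where closedness of $\ND_{\ve,\pi}(X)$ is used) and nested. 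Their intersection is therefore nonempty, and any point in it is an $h\in\Cone(k)\cap\ND_{\ve,\pi}(X)$.

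\textbf{Main obstacle: non-repulsiveness.} I need to show that $\Delta$ is non pointwise $\{k^{-1}\}$-repulsive in $R_\pi$, that is, for each $\delta>0$ there is an off-diagonal pair whose orbit under $k^{-n}$, $n\ge 1$, comes $\delta$-close to $\Delta$ infinitely often.

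My first attempt is to restrict to the $\Z$-action generated by $k$ and apply \cref{thm:robinson_crusoe} (Schwartzman's theorem) to $\pi$ viewed as a $\langle k\rangle$-factor map. This map is still not bounded-to-one, so it gives, for each $\delta$, a fibered pair that is $\delta$-asymptotic either along all positive powers of $k$ or along all negative powers. Negative asymptoticity yields exactly the required non-repulsiveness.

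The remaining case is when only $k$-forward asymptotic pairs occur. Rerunning the argument with $k$ replaced by $k^{-1}$ lands in $\Cone(k^{-1})$ instead, which is not what we want. So I would try to show directly that, in that case, the relevant horofunction can be obtained from the forward pairs. The idea is to compose with the closedness argument: push forward by $T^{k^{m}}$ with $m\to\infty$, and exploit the fact that the sets $\{h : h(k)\ge -\eta\}$ are closed. I expect the sign bookkeeping in this case to be the genuinely delicate part of the proof.
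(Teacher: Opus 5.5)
\textbf{Verdict.} Your overall architecture is the paper's: the fibered system $R_\pi$ with $O=R_\pi\setminus\Delta$, \cref{thm:directed_RC} with $\tilde S=\{k^{-1}\}$, and a limit $\eta\to 0$ that uses closedness of $\ND_{\ve,\pi}(X)$. Two variations are fine and even tidy up the last step compared with the paper's diagonal argument. You define $G_0$ through $b_g(k)$ on $G_0^{-1}$, and you use the symmetric horoballs hypothesis to check $\tilde S\subseteq\bigcap_{h\in\partial G_0}\{h<0\}$ rather than at the end. However, the proposal has a genuine gap, which is the step you flagged yourself, and a second, smaller one.

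\textbf{Main gap: non-repulsiveness.} You need $\partial O$ to be non pointwise $\{k^{-1}\}$-repulsive. Your route through one-sided asymptotic pairs of the $\langle k\rangle$-subaction cannot be completed in the case where only forward-asymptotic fibered pairs exist. Forward asymptoticity says nothing about $T^{k^{-n}}$, and no sign bookkeeping will change that. The missing observation is that non-repulsiveness is much weaker than asymptoticity. It only asks, for each $\delta>0$, for a finite $F\subseteq O$ with $\dist(T^gF,\partial O)\le\delta$ for infinitely many $g\in\langle k^{-1}\rangle_+$. This follows from a pigeonhole argument:
\begin{itemize}
\item By compactness, choose $\ve'>0$ such that every $(x,y)\in O$ with $d(x,y)\le\ve'$ is $\delta$-close to $\partial O$. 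This also covers the passage from $\Delta$ to $\partial O$, which you skipped; $\partial O$ can be a proper subset of $\Delta$.
\item Cover $X$ by $m$ balls of diameter $\ve'/2$.
\item Since $\pi$ is not bounded-to-one, pick $m+1$ distinct points $x_1,\dots,x_{m+1}$ in a single fiber, and set $F=\{(x_i,x_j): i\neq j\}$.
\item For every $g\in G$, two of the points $T^gx_i$ lie in a common ball. So $T^gF$ meets the $\delta$-neighbourhood of $\partial O$ for every $g$, in particular for the infinitely many $g\in\langle k^{-1}\rangle_+$; this is where infinite order of $k$ is used.
\item Since $F$ is finite, one fixed pair of $F$ is $\delta$-close to $\partial O$ for infinitely many such $g$, which is exactly your single-pair formulation.
\end{itemize}

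\textbf{Secondary gap: unboundedness of $G_0$.} You claim that powers of $k$ give $b_g(k)<-\eta$ along an unbounded set; this is false in general. Take $\Z$ with $\rho(n,m)=\sqrt{|n-m|}$, which has symmetric horoballs (only the zero horofunction). There $b_g(k)\to 0$ for every fixed $k$, so $G_0^{-1}$, and hence $G_0$, is finite for each $\eta>0$, and \cref{thm:directed_RC} cannot be applied. The complement is fine, by the argument of \cref{lem:cone_nonempty}. You need a case split, as in the paper:
\begin{itemize}
\item If $h(k)\ge 0$ for all $h\in\partial G$, then $\Cone(k)=\partial G$, and \cref{thm:robinson_crusoe} already gives the conclusion.
\item Otherwise, fix $\bar h$ with $\bar h(k)<0$ and restrict to $0<\eta<-\bar h(k)$. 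Then $G_0^{-1}$ contains a tail of any sequence $g_n$ with $b_{g_n}\to\bar h$, so it is unbounded for all small $\eta$. Your nested-compact-sets argument then goes through.
\end{itemize}
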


In particular, under the assumptions of \cref{theo:factors_general_cone}, if $X$ is infinite, and taking $Y$ to be the trivial system, we obtain that $\Cone(k)\cap \ND_{\varepsilon}(X)\neq \emptyset$ for all $\varepsilon>0$. 

\begin{proof}
The proof follows a similar strategy to that of \cite[Theorems 4.3 and 4.4]{Donoso_Maass_Petite_geometric_asymptotic:2024}. We will apply \cref{thm:directed_RC} to a product system.  We keep the same notations as in \cite{Donoso_Maass_Petite_geometric_asymptotic:2024}: i.e, 
we consider the system $(R_{\pi}, T^{(2)},G)$, where $R_\pi=\{(x,y)\in X\times X: \pi(x)=\pi(y)\}$ and $T^{(2)}$ is the diagonal action $T^{(2)}_g(x,y)= (T_g(x),T_g(y))$.
The set  $O$ denotes $R_\pi\setminus\triangle_X$, where $\triangle_X$ is the diagonal on $X$.
The set $O$ is clearly open and $G$-invariant. It is also not closed. To see this, note that since $\pi$ is not bounded to one, then there exists $x\in X$ and a sequence of distinct points $x_n$ with $(x,x_n)\in R_{\pi}$ for all $n$. That means that  $\triangle_X$ cannot be open in $R_{\pi}$, hence $O$ is not closed. Moreover, by a compactness argument, for any $\delta > 0$ there exists $\varepsilon' >0$ such that if $(x,y)\in O$ and ${\rm dist}(x,y)\leq \varepsilon'$ then ${\rm dist}((x,y),\partial O)\leq \delta$. 

We distinguish 2 cases.

\textbf{Case 1.} For any $h\in \partial G$, $h(k^{-1})\geq 0$. By the symmetric horoballs property, $h(k)\geq 0$ for all $h\in \partial G$ as well.  In this case,  \cref{thm:robinson_crusoe} is enough for our purposes. 

\textbf{Case 2.} There exists $\bar{h}\in \partial G$ such that $\bar{h}(k^{-1})<0$. Define, for $0<\eta<-\bar{h}(k^{-1})$, the set $G_\eta=\{ g\in G:  k^{-1}\in B_{\rho(1_G,g)-\eta}(g) \}$ and let $g_n$ such that $b_{g_n}\to \bar{h}$. Note that $G_\eta$ is unbounded since it contains all but finitely many $g_n$'s, and by definition $k^{-1}\in \bigcap_{h\in \partial G_{\eta}} \{h\leq -\eta\} \subseteq \bigcap_{h\in \partial G_{\eta}} \{h<0\}$. Also note that $G\setminus G_{\eta}$ is unbounded, otherwise, we would have $\partial G=\partial G_{\eta}$, so $h(k^{-1}) < 0$ for every $h\in\partial G$, and by \cref{lem:cone_nonempty} applied to $k^{-1}$ we would obtain $h\in\Cone(k^{-1})$, contradicting that $h(k^{-1}) < 0 $.  

We claim that  $\partial O$ is not pointwise $\{k^{-1}\}$-repulsive. 
Recall that for $\delta>0$, $\varepsilon'  >0$ is such that $(x,y)\in O$ and $d(x,y)\leq \varepsilon'$ implies $d((x,y),\partial O)\leq \delta$.  Let $B_1,\ldots, B_m$ be distinct open balls of diameter $\varepsilon'/2$ whose union covers $X$. 
Let $x_1,\ldots,x_{m+1}$ be distinct points with $\pi(x_1)=\pi(x_2)=\cdots=\pi(x_{m+1})$. Let $F=\{ (x_i,x_j): i\neq j \} \subseteq O$. Then, by the pigeonhole principle, for any $g\in \langle k^{-1} \rangle_+ \leqslant G$ there exist $i$ and $j$ such that $T^{g}(x_i)$ and $T^{g}(x_j)$ belong to a same ball $B_\ell$. This means that $d(T^g(x_i),T^g(x_j))\leq \varepsilon'$, which implies that $(T^g(x_i),T^g(x_j))$ is $\delta$-close to $\partial O$. The claim is proved. 
	
So the set $\partial O$ is non pointwise $\{k^{-1}\}$-repulsive. 
Applying the directed Robinson Crusoe Theorem \ref{thm:directed_RC} to $O$ and $G_0=G_{\eta}$, we obtain a horofunction in $\partial (G\setminus G_{\eta}^{-1})\cap \ND_{\ve,\pi}(X)$. By compactness, there exists $h\in \partial G$ and a sequence $\eta_j\to 0$ with $h_{\eta_j}\in \partial (G\setminus G_{\eta_j}^{-1})\cap \ND_{\ve,\pi}(X)$ such that $h_{\eta_j}\to  h$. By assumption $\ND_{\ve,\pi}(X)$ is closed and then $h\in \ND_{\ve,\pi}(X)$. Noting that the sets $(G\setminus G_{\eta}^{-1})$ decrease as $\eta\to 0$, we conclude that $h\in \bigcap_{\eta>0} \partial(G\setminus G_{\eta}^{-1}) \cap \ND_{\ve,\pi}(X)$. 
	
By a diagonal argument, we may write  $h=\lim_{j\to \infty} b_{g_j}$ for a sequence $(g_j)_{j\in \N}$ going to infinity and such that  $g_j\in G\setminus G_{\eta_j}^{-1}$. Let $h'=\lim b_{g_j^{-1}}$, which exists after taking a subsequence, if needed. Since $g_j^{-1} \notin G_{\eta_j}$ we have that for all $j\in \N$, $\rho(k^{-1},g_j^{-1})\geq \rho(1_G,g_j^{-1})-\eta_j$, which means that $h'(k^{-1})\geq 0$. 
By the symmetric horoballs space assumption, we obtain $h(k)\geq 0$. That is, $h\in \Cone(k)\cap \ND_{\ve,\pi}(X)$, as desired.
\end{proof}

\begin{theorem}
\label{thm:vacia_tf}
    Let $(X,T,G)$ and  $(Y,S,G)$ be topological dynamical systems, where $G$ has a metric $\rho$ with symmetric horoballs space. Let $\pi \colon X \to Y$ be a factor map that  is not bounded-to-one and such that $\ND_{\varepsilon, \pi}(X)$ is closed for every $\ve>0$. 
Then, it holds
    \[\bigcap_{h\in\ND_\pi(X)} \{h<0\} \subseteq {\rm Tor}(G).\]
In particular, if the group $G$ is torsion-free, the above intersection is empty.
\end{theorem}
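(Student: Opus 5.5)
The argument is by contraposition: we show that any element of infinite order lies outside some non-deterministic horoball. Fix $k\in G\setminus {\rm Tor}(G)$. It suffices to produce $h\in\ND_\pi(X)$ with $h(k)\geq 0$, since then $k\notin\{h<0\}$ and hence $k\notin\bigcap_{h\in\ND_\pi(X)}\{h<0\}$.

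First, for each fixed $\ve>0$ we apply \cref{theo:factors_general_cone}. Its hypotheses hold: $\rho$ has symmetric horoballs space, $\pi$ is not bounded-to-one, $k$ has infinite order, and $\ND_{\ve,\pi}(X)$ is closed by assumption. This gives
\[
\Cone(k)\cap\ND_{\ve,\pi}(X)\neq\emptyset \quad \text{for every } \ve>0 .
\]

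Next we pass to the intersection over $\ve$. By \cref{lem:cone_nonempty}, $\Cone(k)$ is closed in the compact space $\partial G$, and each $\ND_{\ve,\pi}(X)$ is closed by hypothesis. Both families are monotone in the right direction: $\ND_{\ve,\pi}(X)\subseteq\ND_{\ve',\pi}(X)$ for $\ve\le\ve'$, since an $(h,\ve)$-asymptotic pair in a common fiber is also $(h,\ve')$-asymptotic. Hence the sets $K_\ve=\Cone(k)\cap\ND_{\ve,\pi}(X)$ form a nested family of nonempty compact sets as $\ve\downarrow 0$. Taking $\ve=1/n$, their intersection is nonempty, and it equals $\Cone(k)\cap\ND_\pi(X)$. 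Any $h$ in this intersection satisfies $h(k)\ge 0$ and $h\in\ND_\pi(X)$, which is what we needed. The torsion-free statement follows immediately, since then ${\rm Tor}(G)=\{1_G\}$. Moreover $1_G$ never lies in any horoball, because $h(1_G)=0$ for every horofunction, so the intersection is empty.

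The only real content is the directed existence result \cref{theo:factors_general_cone}, which we are allowed to assume. The remaining step that needs care is the monotonicity and closedness bookkeeping, which guarantees that the compactness argument produces a single horofunction working for all $\ve$ simultaneously. One should also note that $\ND_\pi(X)$ is nonempty here, so the intersection over it is not over an empty family; this follows from the same argument, or from \cref{thm:robinson_crusoe} combined with closedness.
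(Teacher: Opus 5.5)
Your proposal is correct and follows essentially the paper's argument. For each $\ve>0$ you get $\Cone(k)\cap\ND_{\ve,\pi}(X)\neq\emptyset$ from Theorem~\ref{theo:factors_general_cone}, then use closedness and monotonicity in $\ve$ to find a horofunction in the nested intersection $\Cone(k)\cap\ND_\pi(X)$, which excludes every infinite-order $k$ from $\bigcap_{h\in\ND_\pi(X)}\{h<0\}$; your remarks that $h(1_G)=0$ settles the torsion-free case and that $\ND_\pi(X)\neq\emptyset$ are correct details the paper leaves implicit.
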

Under the assumptions of~\cref{thm:vacia_tf}, if $X$ is infinite, and taking $Y$ to be the trivial system, we obtain that $\ND(X)$ is not empty. This provides \cref{thm:B}. 
\begin{proof}
    Note that by definition, if  $k\in \bigcap_{h\in\ND_{\varepsilon,\pi}(X)}\{h<0\}$, and $k$ of infinite order,  then $\ND_{\varepsilon,\pi}(X)\subseteq\Cone(k)^c$, which contradicts $\Cone(k)\cap \ND_{\varepsilon,\pi}(X)\neq \emptyset$.
    If $\ND_{\varepsilon,\pi}(X)$ is closed for all $\varepsilon>0$ sufficiently small, and if $k$ is of infinite order, as $\Cone(k)$ is also closed, we get that the compact set $\Cone(k)\cap \ND_{\pi}(X)= \bigcap_{\varepsilon>0} \Cone(k)\cap \ND_{\varepsilon,\pi}(X)$ is nonempty. Repeating the above argument yields $\bigcap_{h\in\ND_{\pi}(X)}\{h<0\}\subseteq {\rm Tor}(G)$. 
\end{proof}

In what follows, we explore conditions so that the intersection in \cref{thm:vacia_tf} is actually empty. This will be the content of \cref{thm:Vacia_ConditionG}. 

\subsection{The action on the border and invariance of non-deterministic horofunctions}\label{sec:InvBorderAction}

In this section, we give a sufficient condition ensuring  the set of non-deterministic horofunctions to be invariant under the group action on its boundary (\cref{prop:condition_G}). This  phenomenon is particularly relevant for non-abelian groups. We then strengthen \cref{thm:vacia_tf}, by proving that for any topological dynamical system $(X,T,G)$, the intersection of its non-deterministic horoballs is empty (\cref{thm:Vacia_ConditionG}).

For   $G$  a countable group with a right-invariant and proper metric $\rho$, there is a natural topological dynamical system we can consider on the border: this is given by a left continuous action of $G$. That is, we may consider $(\partial G, C, G)$ where $C$\footnote{We choose to use the letter $C$ and not $T$ to distinguish the action from other topological dynamical systems. The letter $C$ stands for cocycle.} is given by 
\[C^g(h)(k) = h(kg) - h(g),\]
for all $k,g \in G$, and $h \in \partial G$. 

Note that $C^g(h)$ can also be described as follows: if $h = \lim_{n\to\infty} b_{g_n}$, then $C^g(h)=\lim_{n\to\infty}b_{g_ng^{-1}}$.

In \cref{prop:condition_G}, we provide a sufficient condition to ensure that the set of non-deterministic horofunctions is $C$-invariant for any topological dynamical system. That is, if $(X,T,G)$ is a topological dynamical system, $C^g(\ND_{\ve}(X))=\ND_{\ve}(X)$ for all $g\in G$ and all $\ve>0$. 

\begin{proposition}
\label{prop:condition_G}
    Let $(X,T,G)$ be a topological dynamical system with $(G,\rho)$ satisfying conditions \ref{itemhorofct:1} and \ref{itemhorofct:2}. Then, $C^g(\ND_{\varepsilon}(X)) = \ND_{\varepsilon}(X)$ for all $\varepsilon>0$ and all $g\in G$. 
\end{proposition}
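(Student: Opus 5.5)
The plan is to prove the inclusion $C^g(\ND_\varepsilon(X))\subseteq \ND_\varepsilon(X)$ for every $g\in G$. Equality then follows by applying the inclusion to $g^{-1}$: the cocycle relation gives $C^{g}\circ C^{g^{-1}}=\mathrm{id}$ on $\partial G$, so $\ND_\varepsilon(X)=C^g(C^{g^{-1}}(\ND_\varepsilon(X)))\subseteq C^g(\ND_\varepsilon(X))$.

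First I will compute the horoball of $C^g(h)$. By definition, $k\in\{C^g(h)<0\}$ if and only if $h(kg)<h(g)$. Hence
\[\{C^g(h)<0\}\,g=\{f\in G\mid h(f)<h(g)\}.\]
The problem therefore reduces to the following. Given $h\in\ND_\varepsilon(X)$ and an arbitrary level $c\in\R$ (we will take $c=h(g)$), we want distinct $x',y'$ with $d(T^fx',T^fy')\le\varepsilon$ for all $f\in\{h<c\}$. Once we have them, the pair $(T^{g}x',T^{g}y')$ is distinct, because $T^g$ is a homeomorphism. For $k\in\{C^g(h)<0\}$ we have $kg\in\{h<c\}$, so $d(T^{k}T^gx',T^kT^gy')=d(T^{kg}x',T^{kg}y')\le\varepsilon$. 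This shows $C^g(h)\in\ND_\varepsilon(X)$.

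To produce the pair at level $c$, I will reuse the shifting trick from the proof of \cref{claim:delta-sep}. Let $(x,y)$ be an $(h,\varepsilon)$-asymptotic pair, and put $F=\{f\in G\mid d(T^fx,T^fy)>\varepsilon\}$. If $F=\emptyset$, the pair $(x,y)$ already works at every level. Otherwise $F\subseteq\{h\ge 0\}$, so $m:=\inf h(F)\ge 0$ is finite, and I pick $g'\in F$ with $h(g')\le m+1$. Let $R$ be the constant from \ref{itemhorofct:1}. By \ref{itemhorofct:2}, $\inf h=-\infty$, so there is $f_0\in G$ with $h(f_0)\le -c-2R-1$. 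Set $u=f_0g'$. For every $k\in\{h<c\}$, two applications of quasi-subadditivity give
\[h(ku)\le h(k)+h(f_0)+h(g')+2R< c+(-c-2R-1)+(m+1)+2R=m.\]
Thus $\{h<c\}u\cap F=\emptyset$. Consequently $x'=T^ux$ and $y'=T^uy$ are distinct and satisfy $d(T^kx',T^ky')=d(T^{ku}x,T^{ku}y)\le\varepsilon$ for all $k\in\{h<c\}$, as required.

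There is no real obstacle in this argument. The only points requiring care are the bookkeeping of the action convention, namely that the shift must be applied on the right ($ku$, not $uk$) to be compatible with $T^{k}T^{u}=T^{ku}$, and the choice of the level $c=h(g)$. Note also that, unlike \cref{claim:delta-sep}, this argument does not need the uniform version \cref{lem:UnifLowerBound}: the proposition concerns one horofunction at a time, so pointwise unboundedness from below suffices.
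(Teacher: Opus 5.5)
Your proof is correct and follows essentially the paper's route. The paper derives from C1--C2 the shift property $\{h<0\}\subseteq\{h<t\}g'$ and applies it to $C^g(h)$ with $t=-h(g)$, using $\{C^g(h)<-h(g)\}=\{h<0\}g^{-1}$; this is the same as your right-translating $\{h<h(g)\}$ into the region where the pair stays $\varepsilon$-close, though your detour through the set $F$ is unnecessary (taking $u$ with $h(u)\le -c-R$ already gives $\{h<c\}u\subseteq\{h<0\}$ by a single use of C1), and your derivation of the reverse inclusion from $C^g\circ C^{g^{-1}}=\mathrm{id}$ makes explicit a step the paper leaves implicit.
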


\begin{proof}

We claim that the following property, about horoball shifts, holds:
\begin{equation}
 \tag{HS}\label{eq:horoball_shift}
  \parbox{\dimexpr\linewidth-4em}{
For every horofunction $h\in\partial G$ and every $t \in \R$, there exists $g\in G$ such that
    \[ \{h<0\} \subseteq \{h < t\}g.  \]
  }    
\end{equation}
Indeed, let $R$ given by \ref{itemhorofct:1}, and for $t\in \R$, using \ref{itemhorofct:2}, take $g\in G$ such that $h(g^{-1})<-R+t$. By \ref{itemhorofct:1} we have for $x\in \{h<0\}g^{-1}$, $h(x)< 0 +h(g^{-1})+R=t$. 

Let $\ve>0$. Note that by definition, for every $h\in\partial G$, and $g\in G$
    \[\{ C^g(h)<-h(g)\} = \{h<0\}g^{-1}.\]
 Now fix some $h\in \ND_{\ve}(X)$. As $G$ satisfies the property \eqref{eq:horoball_shift}, we may take $g'\in G$ such that
    \[\{C^g(h)<0\} \subseteq \{C^g(h)<-h(g)\}g'.\]
It follows that \begin{equation}\label{eq:good_inclusion}\{C^g(h)<0\}\subseteq \{h<0\}g^{-1}g'.\end{equation} As $h\in \ND_{\ve}(X)$, there exist $x\neq y$ such that $d(T_kx,T_ky)\leq \varepsilon$ for all $k\in \{h<0\}$. Setting $x'=T_{g'^{-1}g}x$ and $y'=T_{g'^{-1}g}y$, we have $x'\neq y'$, and using \eqref{eq:good_inclusion}, we get that $d(T_{k}x',T_{k}y')\leq \ve$ for all $k\in \{ C^g(h)<0 \}$. Thus, $C^g(h)\in \ND_{\ve}(X)$, as desired. 
\end{proof}

As we have seen, \cref{prop:condition_G} holds assuming only \eqref{eq:horoball_shift}. We do not currently know if this property has independent significance beyond abelian groups, or groups that already satisfy conditions \ref{itemhorofct:1} and \ref{itemhorofct:2}.

\begin{lemma} \label{lem:torsion_cocycle}
For any torsion element $k$, and any $h\in \partial G$, we have that $k\in \{C^g(h)\geq 0\}$ for some $g\in G$. 
\end{lemma}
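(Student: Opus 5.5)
We need: for torsion $k$ with $k^m=1_G$ and $h\in\partial G$, there exists $g\in G$ with $C^g(h)(k)\geq 0$, i.e. $h(kg)-h(g)\geq 0$. The plan is a telescoping argument along the finite cyclic orbit of $1_G$ under left multiplication by $k$, in the same spirit as the torsion case of \cref{lem:cone_nonempty}.

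Consider the elements $g_j=k^j$ for $j=0,\dots,m-1$. By the definition of the cocycle action,
\[
C^{k^j}(h)(k)=h(k^{j+1})-h(k^j).
\]
Summing over $j=0,\dots,m-1$ gives a telescoping sum equal to $h(k^m)-h(1_G)=h(1_G)-h(1_G)=0$. All terms are finite real numbers, since $h$ is a real-valued function on $G$, so at least one of them is nonnegative. Hence there exists $j\in\{0,\dots,m-1\}$ with $C^{k^j}(h)(k)\geq 0$, and taking $g=k^j$ gives $k\in\{C^g(h)\geq 0\}$.

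The only points to check are the sign and order conventions in $C^g(h)(k)=h(kg)-h(g)$: with $g=k^j$ we have $kg=k^{j+1}$, so the sum does telescope. We also use $h(1_G)=0$ for horofunctions, which is not strictly needed because the sum vanishes regardless. There is no real obstacle; the argument is a one-line pigeonhole on a zero sum, and the chosen $g$ can even be taken in $\langle k\rangle$.
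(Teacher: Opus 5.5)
Your proof is correct and rests on the same idea as the paper's: the increments along $1_G,k,\dots,k^{m-1}$ telescope to zero, so one of them is nonnegative, and you obtain the same witness $g=k^j$. The only difference is where the telescoping is applied. The paper runs it on the approximants $b_{g_i}$, passes to a subsequence on which $j$ is constant, and uses $C^{k^j}(h)=\lim b_{g_ik^{-j}}$. You apply it directly to $h$ through $C^{k^j}(h)(k)=h(k^{j+1})-h(k^j)$, which removes the limiting step and gives a cleaner one-line version.
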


\begin{proof} The proof follows a similar idea to that of \cref{lem:cone_nonempty}. Assume $k^m=1_{G}$. Let $g_i$ be a sequence that goes to infinity with $h=\lim b_{g_i}$. Because $\sum_{j=0}^{m-1} \rho(g_i,k^{j+1})-\rho(g_i,k^j)=0$, we have that for each $i$ there exists $j=j(i)\in\{0,\ldots,m-1\}$ with $\rho(g_i,k^{j+1})-\rho(g_i,k^j)\geq 0$. After taking a subsequence, we may assume $j(i)=j$ is constant and define $g'_i=g_ik^{-j}$. Then $\rho(g_i',k)-\rho(g_i',1_G)\geq 0$. It follows that $k\in \{C^{k^j}(h)\geq 0\}$.
\end{proof}

Joining the previous results together with Theorem~\ref{thm:vacia_tf}, we obtain the following.
\begin{theorem}\label{thm:Vacia_ConditionG}
    Let $G$ be a group that admits a metric $\rho$ with symmetric horoballs space, and suppose that 
    \begin{itemize}
         \item $G$ is torsion-free, or
        \item $(G,\rho)$ satisfies conditions \ref{itemhorofct:1} and \ref{itemhorofct:2}.
        \end{itemize}
Then, for any infinite topological dynamical system $(X,T,G)$ and any $\ve>0$ such that $\ND_{\ve}(X)$ is closed, 
    $$\bigcap_{h\in\ND_{\ve}(X)}\{h<0\} = \emptyset.$$
Furthermore, if $\ND_{\ve}(X)$ is closed for all sufficiently small $\ve>0$, then we can write $\ND(X)$ in place of $\ND_{\ve}(X)$ in the previous intersection.    
\end{theorem}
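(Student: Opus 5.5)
The argument splits according to whether the intersection can contain elements of infinite order or only torsion elements. Throughout, fix an infinite system $(X,T,G)$ and $\ve>0$ with $\ND_\ve(X)$ closed, and apply the results above with $Y$ the trivial one-point system. Since $X$ is infinite, the factor map $\pi\colon X\to Y$ is not bounded-to-one, and $\ND_{\ve,\pi}(X)=\ND_\ve(X)$.

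\textbf{Infinite order elements.} Let $k\in\bigcap_{h\in\ND_\ve(X)}\{h<0\}$ have infinite order. By \cref{theo:factors_general_cone}, which uses only the symmetric horoballs space and the closedness of $\ND_\ve(X)$, there is some $h\in\Cone(k)\cap\ND_\ve(X)$. This gives $h(k)\geq 0$, which contradicts $h(k)<0$. So the intersection contains no element of infinite order. In the first case of the statement, where $G$ is torsion-free, this already proves that the intersection is empty.

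\textbf{Torsion elements.} In the second case we assume \ref{itemhorofct:1} and \ref{itemhorofct:2}, and we must also exclude torsion elements $k$. First pick any $h\in\ND_\ve(X)$, which is nonempty by \cref{thm:robinson_crusoe}. By \cref{lem:torsion_cocycle}, there is $g\in G$ with $C^g(h)(k)\geq 0$. By \cref{prop:condition_G}, $C^g(h)\in\ND_\ve(X)$. Hence $k\notin\{C^g(h)<0\}$, so $k$ is not in the intersection. Together with the infinite order case, this shows the intersection is empty.

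\textbf{Passing to $\ND(X)$.} Now assume $\ND_\ve(X)$ is closed for all small $\ve$. For $k$ of infinite order, the sets $\Cone(k)\cap\ND_\ve(X)$ are nonempty, compact and nested as $\ve$ decreases. Their intersection $\Cone(k)\cap\ND(X)$ is therefore nonempty, exactly as in the proof of \cref{thm:vacia_tf}. For $k$ torsion, take $h\in\ND(X)$, which is nonempty by compactness. The set $\ND(X)$ is $C$-invariant, because each $\ND_\ve(X)$ is (\cref{prop:condition_G}). Then \cref{lem:torsion_cocycle} supplies $C^g(h)\in\ND(X)$ with $C^g(h)(k)\geq0$.

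The only delicate point is the torsion case. There one must check that \cref{prop:condition_G} applies, which requires both \ref{itemhorofct:1} and \ref{itemhorofct:2}, and that \cref{lem:torsion_cocycle} produces a genuine horofunction $C^g(h)\in\partial G$. The latter holds by the description $C^g(h)=\lim b_{g_ng^{-1}}$.
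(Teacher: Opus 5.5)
Your proposal is correct and follows the paper's proof. Infinite-order elements are excluded via \cref{theo:factors_general_cone}, which is exactly the argument inside \cref{thm:vacia_tf}, and torsion elements via \cref{lem:torsion_cocycle} together with the $C$-invariance from \cref{prop:condition_G}. Citing the cone theorem directly has a small advantage: it needs closedness of $\ND_\ve(X)$ only at the given $\ve$, whereas \cref{thm:vacia_tf} as cited by the paper nominally assumes it for all $\ve$. Your explicit passage to $\ND(X)$ also fills in a step the paper leaves implicit.
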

\begin{proof}
The conclusion when $G$ is torsion-free follows directly from \cref{thm:vacia_tf}. 
Now assume that $(G,\rho)$ satisfies conditions \ref{itemhorofct:1} and \ref{itemhorofct:2}. By \cref{thm:vacia_tf}, we only need to show that the intersection $\bigcap_{h\in\ND_{\ve}(X)}\{h<0\}$ does not contain torsion elements. For a torsion element $k$,  \cref{lem:torsion_cocycle} tells us that for any $h\in \ND_{\ve}(X)$, there exists $g\in G$ with $k\in \{C^g(h)\geq 0\}$. Thanks to \cref{prop:condition_G},  $C^g(h)\in \ND_{\ve}(X)$ and therefore $k\notin \bigcap_{h\in\ND_{\ve}(X)}\{h<0\}$.  
\end{proof}


\subsection{Expansiveness and distality}\label{sec:Distal}
In this section we comment on the relation between expansiveness and distality for group actions. We start recalling classical definitions. 
A pair $(x,y)$ is \define{proximal} if there exists $z\in X$ and a sequence $(g_n)_{n\in \N}$ in $G$ such that $(T^{g_n}x,T^{g_n}y)\to (z,z)$. We let $P$ denote the set of proximal pairs. A pair $(x,y)$ which is not proximal is said to be \define{distal}. 
A topological dynamical system $(X,T,G)$ is \define{distal} if $(x,y)$ is distal whenever $x\neq y$.

The following result, which is of independent interest, follows from \cite{Donoso_Maass_Petite_geometric_asymptotic:2024} using rather standard compactness arguments.
\begin{theorem} \label{thm:distal_expansive}
 Let $G$ be a finitely generated group. Let $(X,T,G)$ and $(Y,T,G)$ be topological dynamical systems, and let $\pi\colon X\to Y$ be a factor map which is not bounded-to-1. Assume that $(X,T,G)$ is expansive. Then there exist $x\neq y$ with $(x,y)\in P$ and $\pi(x)=\pi(y)$. 
 
 In particular, taking $Y$ to be the trivial system, we get that any expansive and distal topological dynamical $(X,T,G)$ system (with $G$ finitely generated) is finite.     
\end{theorem}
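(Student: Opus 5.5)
Since $G$ is finitely generated, I would equip it with a word metric $\rho$. It is proper and right-invariant, and by \cref{lem:thick-horoballs-fg} (via \cite[Proposition 1.5]{Auslander_Glasner_Weiss_recurrencer_zerodimensional:2007}) it has no lower bound for its horofunctions. In particular every horoball $\{h<0\}$ is thick by \cref{prop:nobound_imply_thick}. Let $\ve_0>0$ be an expansivity constant for $(X,T,G)$. Because $\pi$ is not bounded-to-one, \cref{thm:robinson_crusoe} gives, for $\ve=\ve_0/2$ (or any smaller value), a horofunction $h\in \ND_{\ve,\pi}(X)$. So there are $x\neq y$ with $\pi(x)=\pi(y)$ and $d(T^gx,T^gy)\le \ve$ for every $g\in\{h<0\}$. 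I will show that this pair is proximal.

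\textbf{Step 1: thick horoballs give long runs of closeness.} Since $\{h<0\}$ is thick, for each $n$ there is $g_n$ with $B_n(g_n)\subseteq\{h<0\}$. By right-invariance, $B_n(g_n)=B_n(1_G)g_n$. Hence the pair $(x_n,y_n)=(T^{g_n}x,T^{g_n}y)$ satisfies $d(T^kx_n,T^ky_n)\le\ve$ for all $k\in B_n(1_G)$. Passing to a subsequence, $(x_n,y_n)\to(x^*,y^*)$. By continuity of each $T^k$, we get $d(T^kx^*,T^ky^*)\le \ve<\ve_0$ for all $k\in G$. Expansiveness then forces $x^*=y^*$.

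\textbf{Step 2: conclude proximality.} We have $(T^{g_n}x,T^{g_n}y)\to(x^*,x^*)$, which is exactly the statement $(x,y)\in P$. Together with $x\neq y$ and $\pi(x)=\pi(y)$, this proves the main claim. For the "in particular" part, let $Y$ be a point. If $X$ were infinite, the factor map to $Y$ would not be bounded-to-one, so a distal system would contain a proximal pair $x\neq y$, which is a contradiction. Hence $X$ is finite.

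\textbf{Main obstacle.} The only delicate point is securing thickness of the non-deterministic horoball, since Step 1 relies on it. It is guaranteed by choosing the word metric, through \cref{lem:thick-horoballs-fg} and \cref{prop:nobound_imply_thick}; this is where finite generation is used. With that in place, the rest is the routine compactness-plus-expansivity argument above. We also need $\ve<\ve_0$, which we can always arrange because \cref{thm:robinson_crusoe} applies to every $\ve>0$.
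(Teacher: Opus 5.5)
Your proof is correct and is essentially the paper's argument: you obtain thick horoballs from finite generation, take an asymptotic pair in a common fiber from \cref{thm:robinson_crusoe}, and pass to a limit along centers $g_n$ of balls $B_n(g_n)\subseteq\{h<0\}$ so that expansiveness forces the two limit points to coincide. The differences are cosmetic: you reach thickness through the no-lower-bound property and \cref{prop:nobound_imply_thick}, and you take $\ve<\ve_0$ to avoid any ambiguity between strict and non-strict inequality in the definition of expansiveness.
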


\begin{proof}
In view of \cref{lem:thick-horoballs-fg}, we may take a proper and right-invariant metric such that $(G,\rho)$ has thick horoballs. Let $c>0$ be an expansiveness constant of $(X,T,G)$. As $\pi$ is not bounded-to-1, by \cref{thm:robinson_crusoe} there exist $x\neq y$ with $\pi(x)=\pi(y)$ and $h\in \partial G$ such that $d(T^gx,T^gy)\leq c$ for all $g\in \{h<0\}$. Since $(G,\rho)$ has thick horoballs, we may take a sequence $g_n$ in $\{h<0\}$ such that $B_n(g_n)\subseteq \{h<0\}$. Taking a subsequence, we may assume that $T^{g_n}x\to x'$, $T^{g_n}y\to y'$. Then $d(T^gx',T^gy')\leq c$ for all $g\in G$. By expansiveness $x'=y'$, which means that $(x,y)\in P$.  

The impossibility of having infinite, expansive, and distal action for a group with thick horoballs (or equivalently, finitely generated) follows immediately. 
\end{proof}

Note that it is possible to have (infinitely generated) groups with a minimal, expansive and distal action on an infinite set. To the best of our knowledge, the first example of this type appeared in \cite[page 265]{McMahon_Wu_conectedness_homo_top_dyn:1976}. Although they did not prove it, it is not difficult to show that this example is indeed expansive. It is also worth noting that the sequence that makes the pair $(x,y)$ to be proximal lies inside the horoball $\{h<0\}$. 

\subsection{Expansiveness and subactions}\label{sec:ExpSub} 

In this section, we show that certain subactions of a given group action cannot be expansive. Our result relies on commutator computations. As we shall see, this applies in particular to the center of nilpotent groups, a result that will be needed for a different purpose in \cref{sec:limits_fold_tmsj}.

Let $G$ be a group, $K\leqslant G$ be a finitely generated abelian subgroup and denote 
\[Z_{K}=\{g\in G\mid  [g,k]\in Z(G) \text{ for all } k\in K\}.\] 
Then, standard computations show that $Z_K$ is a subgroup of $G$, and the function $\phi\colon K\times Z_K \to Z(G)$, $(k,g)\mapsto [k,g]$ is a bi-homomorphism. Set $A_{K}=\phi(K\times Z_K)$.
Note that the subgroup $\langle K,A_K\rangle$ generated by  $K$ and $A_K$ is abelian. To avoid trivial cases, we assume $K$ contains at least one element not belonging to the center of $G$, that $A_K$ is not trivial and that $G$ is torsion free.

The main result of this section is that for an action of the group $G$, the sub-action  of the subgroup $\langle K,A_K\rangle$ generated by  a commutative subgroup $K$ of $G$ with its associated $A_K$, always has a non-deterministic component along the directions given by the subgroup $K$. In particular, the $\langle A_K\rangle$-action  is never expansive.

To achieve this, let us introduce some notations. 
The abelian group  $\langle K,A_K\rangle$ will be denoted $H_K$.
Let $r_A \in \N$ denote the rank of any abelian group $A$, i.e. the $\Q$-dimension of the $\Q$-vectorial space $\Q \otimes A$. 

Since $K$ is finitely generated and $G$ is torsion free, $H_K=\langle K,A_K\rangle$ is a finitely generated torsion-free abelian group, hence free of finite rank $r_{H_K}$. We fix a basis of $H_K$, so that $H_K$ is isomorphic to $\mathbb Z^{r_{H_K}}$, and we endow $H_K$ with the inner product obtained by transporting the usual one on $\mathbb Z^{r_{H_K}}$ through this identification.
 
So the set of non-deterministic horofunctions for the subaction of $H_K$ (that are linear forms), may be identified with unit vectors in $\R^{r_{H_K}}$. 
Moreover we denote by  $\pi_A \colon \R^{r_{H_K}} \to \R^{r_{\langle A_K \rangle}}$  the orthogonal projection onto the $\mathbb R$-span of $A_K$ inside $\mathbb R^{r_{H_K}}$, so that $\ker\pi_A$ is its orthogonal complement.
For such an action, the next theorem guarantees the existence of such a non-deterministic horofunction in a restricted direction.

\begin{theorem} \label{thm:nonexpansive_subgroup}
 Let $(X,T,G)$ and $(Y,T,G)$ be topological dynamical systems, and let $\pi\colon X\to Y$ be a factor map which is not bounded-to-1. Assume that $G$ is torsion-free and let $K\leqslant G$ be a finitely generated abelian subgroup with non trivial $A_K$. 
 Then, for any non zero vector $\vec{w} \in \ker \pi_A \subseteq \mathbb{R}^{r_{H_K}}$, there is an element $\vec v \in 
 \ND_{\pi}(X,T, H_K) \cap \ker \pi_A $ such that $\langle \vec{w}, \vec{v}\rangle \le 0$.
\end{theorem}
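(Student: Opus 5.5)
The plan is to combine three ingredients: closedness of the non-deterministic set for the Euclidean subaction, an emptiness-of-intersection argument in the spirit of Boyle–Lind, and a ``shearing'' of non-deterministic directions obtained by conjugating with elements of $Z_K$. Throughout I work with the subaction $(X,T,H_K)$ and the factor map $\pi$, which is still not bounded-to-one.

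I identify $H_K\cong\Z^{r_{H_K}}$ with the Euclidean metric. Horofunctions are then $h_{\vec v}(x)=\langle x,\vec v\rangle$ for unit vectors $\vec v$, with the sign convention fixed as in \cref{sec:HorofctGroup}. This metric satisfies \ref{itemhorofct:1} and \ref{itemhorofct:2} and has symmetric horoballs. So \cref{thm:closeness-ND} gives that $\ND_{\ve,\pi}(X,T,H_K)$ and $\ND_\pi(X,T,H_K)$ are closed, and \cref{thm:vacia_tf} (with $H_K$ torsion-free) gives
\[\bigcap_{\vec v\in \ND_\pi}\{h_{\vec v}<0\}=\emptyset .\]

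\textbf{Step 1 (shearing).} Fix $g\in Z_K$. For $k\in K$ we have $g^nkg^{-n}=k\,\phi(k,g)^{\pm n}$, where the sign depends on the commutator convention. Elements of $A_K$ are central and hence fixed. Thus conjugation by $g^n$ preserves $H_K$ and acts on it by the linear map $M_n=I+n\Psi_g$. Here $\Psi_g$ is the linear map with $\Psi_g(A_K)=0$ and $\Psi_g(K)\subseteq \textnormal{span}A_K$, so $\Psi_g^2=0$. If $(x,y)$ is an $(h_{\vec v},\ve)$-asymptotic pair in a $\pi$-fiber, then the points $T^{g^{n}}x$ and $T^{g^{n}}y$ are asymptotic along the horoball $g^{n}\{h_{\vec v}<0\}g^{-n}$. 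That set is the half-space with normal $M_n^{-T}\vec v=(I-n\Psi_g^T)\vec v$, up to the sign convention.

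Hence $\ND_{\ve,\pi}$ is invariant under the normalized maps $\vec v\mapsto (I+ m\Psi_g^T)\vec v/\|\cdot\|$ for all $m\in\Z$. Since $\Psi_g^T$ kills $(\textnormal{span}A_K)^\perp=\ker\pi_A$, we have $\Psi_g^T\vec v=\Psi_g^T\pi_A\vec v$. Its image lies in $(\ker\Psi_g)^\perp\subseteq A_K^\perp=\ker\pi_A$. Letting $m\to\pm\infty$ and using closedness, I conclude: whenever $\Psi_g^T\vec v\neq 0$, both unit vectors $\pm\Psi_g^T\vec v/\|\Psi_g^T\vec v\|$ lie in $\ND_{\ve,\pi}\cap\ker\pi_A$. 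I then intersect over $\ve$.

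\textbf{Step 2 (case split).} \emph{Case (a):} some $\vec v\in\ND_\pi$ has $\pi_A\vec v\neq0$. Because $A_K$ is generated by the values $\phi(k,g)$, the images of the maps $\Psi_g$, $g\in Z_K$, span $\textnormal{span}A_K$. So some $g$ and some $k$ satisfy $\langle\Psi_g k,\pi_A\vec v\rangle\neq0$, that is, $\Psi_g^T\vec v\neq0$. Step 1 then yields a pair $\pm\vec u\in\ND_\pi\cap\ker\pi_A$. One of the two signs satisfies $\langle\vec w,\pm\vec u\rangle\le0$, which proves the theorem in this case.

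\emph{Case (b):} $\ND_\pi\subseteq\ker\pi_A$. Suppose, for contradiction, that $\langle\vec w,\vec v\rangle>0$ for all $\vec v\in\ND_\pi$. By compactness $\langle \vec w,\vec v\rangle\ge c>0$ on $\ND_\pi$. Take a lattice point $k\in H_K$ close to $-t\vec w$ for $t$ large, with error bounded independently of $t$. For $t$ large this $k$ lies in every horoball $\{h_{\vec v}<0\}$ with $\vec v\in\ND_\pi$, contradicting the emptiness from \cref{thm:vacia_tf}. Signs must be matched to the paper's convention; if needed, I replace $-t\vec w$ by $t\vec w$.

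The main obstacle I expect is Step 1. One point is bookkeeping: I must verify that conjugation by $g\in Z_K$ really acts linearly on all of $H_K$, including on products of $K$ and $A_K$, via the bi-homomorphism $\phi$. The other is getting the signs of the dual map right. Closedness is essential here, since the sheared limit direction is only reached as a limit of non-deterministic directions.
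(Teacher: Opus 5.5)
Your proposal is correct and follows the paper's proof essentially step by step. The case $\ND_\pi(X,T,H_K)\subseteq\ker\pi_A$ is handled by closedness together with \cref{thm:vacia_tf}; your uniform-$c$ lattice-point argument replaces the paper's appeal to \cref{lem:pasar_continuo}. The other case uses the unipotent conjugation $M_g=I+\Psi_g$ for some $g\in Z_K$, whose iterates push a non-deterministic direction to $\pm\Psi_g^T\vec v/\|\Psi_g^T\vec v\|\in\ker\pi_A$.

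One small correction: conjugation invariance holds for $\ND_\pi(X,T,H_K)$, but not for each fixed $\ND_{\ve,\pi}$. The map $T^{g^n}$ distorts distances, so an $\ve$-asymptotic pair only becomes $\ve'$-asymptotic, with $\ve'$ depending on the modulus of continuity of $T^{g^n}$. Since $\ND_\pi$ is itself closed, your limiting argument goes through unchanged once you run it there, as the paper does.
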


\begin{proof}
The main idea is to start with a nondeterministic vector $v \in \R^{r_{H_K}}$, and using symmetric maps, obtain another nondeterministic vector which is closer to the half-space $\{ \langle \vec w, \cdot \rangle \leq 0 \}$. A limiting argument and the closedness of the set of nondeterministic vectors will allow us to obtain a nondeterministic vector in $\{ \langle \vec w, \cdot \rangle \le 0 \}$. 

From \cref{thm:closeness-ND} and \cref{thm:robinson_crusoe}, we know the set 
$\ND_{\pi}(X, T, H_K) \subseteq \R^{r_{H_K}}$ is a closed non-empty set.
If $\ND_{\pi}(X,T, H_K) \subseteq \ker \pi_A$, it suffices to find an element $\vec{v}\in\ND_{\pi}(X,T,H_{K})$ such that $\langle\vec{w},\vec{v}\rangle\leq0$. Suppose there is no such element, that is, $\langle\vec{w},\vec{v}\rangle>0$ for every $\vec{v}\in\ND_{\pi}(X,T,H_{K})$. Then, we have that $-\vec{w}\in\bigcap_{\vec{v}}\{x\in\R^{r_{H_K}}\mid \langle\vec{v},x\rangle < 0\}$. But, as $H_K$ is free abelian and endowed with the Euclidean metric, it has a symmetric horoball space. Thus, by~\cref{thm:vacia_tf},
\[\bigcap_{\vec{v}\in\ND_{\pi}(X,T,H_{K})}\{g\in H_K \mid \langle \vec{v},g\rangle <0\}=\emptyset.\]
Since $\ND_{\pi}(X,T,H_{K})$ is closed, we can extend the above identity to $\R^{r_{H_K}}$, leading to a contradiction.

We now prove the general case, that is, we assume there is a vector $\vec v_*\in \ND_{\pi}(X,T, H_K)$ that is not in $\ker \pi_A$.
Denote $H_{v}=\{x\in  H_K: \langle x,v\rangle <0\}$.

We will use the fact that the subaction of the group $H_K$ admits several symmetries. 
To see this, notice that the group $H_K$ is invariant under the conjugacy by any element $g\in Z_K$. Indeed this follows from the relations
\begin{align}
gag^{-1} &= a \quad \forall a \in A_K \quad (\leqslant Z(G)) \label{eq:nil1}\\
gkg^{-1} & = [g,k]k \in \langle K, A_K \rangle, \quad \forall k \in K.  \label{eq:nil2}
\end{align}
We denote then by $M_g$  the homomorphism of $ H_K$  $M_g \colon x \mapsto gxg^{-1}$. 
Moreover this homomorphism induces a (normalized)  map $M_g^*$ on the set of unit vectors in $\R^{r_{H_K}}$ by  $M_g^* v= \left(M_g^{-1}\right)^t v /\| \left(M_g^{-1}\right)^t v \|$, for all unit vector $v$, where  $M^t$ denotes the transpose (in the dual space) of the map $M$. In particular, we have that $M_g H_{\vec v}= H_{M_g^* \vec v}$.       

With these notations, the self-homeomorphism $T_g$ of $X$ satisfies the relation
\[ T^g \circ T^\ell = T^{M_g \ell} \circ T^g \quad \forall \ell \in \langle K, A_K \rangle.
\]
It is then simple to check that $M_g^* \vec{v} \in \ND_\pi(X, T, H_K)$ for any  $\vec v \in \ND_\pi(X, T, H_K)$. 

From the equation \eqref{eq:nil1}, notice that $M_g|_{\langle A_K\rangle } = {\rm Id}|_{\langle A_K\rangle }$ and from \eqref{eq:nil2} that $\left(M_g-{\rm Id}\right) (K) \subseteq A_K$. This implies that $(M_g-\rm Id_K)$ is nilpotent of degree 2 and $M_g$ is unipotent. Since $\vec v_*\notin\ker\pi_A$, there is $a\in A_K$ with $\langle\vec v_*,a\rangle\neq0$.
 Because $A_K=\phi(K\times Z_K)$, we write $a=[g,k]$ with $k\in K$ and $g\in Z_K$. By~\eqref{eq:nil2}, $(M_g-\mathrm{Id})(k)=[g,k]=a$, so $\langle\vec v_*,(M_g-\mathrm{Id})(k)\rangle=\langle\vec v_*,a\rangle\neq0$.
For this specific $g$, the matrix $M_g$ is not the identity. Then, standard computations about unipotent matrices show that there is a unit vector $\vec v_+$, that is fixed by $M_g^*$, such that $\{(M_g^*)^n\vec v_*\}$ converges to $\pm\vec v_+$ when $n$ goes
to $\pm\infty$\footnote{Recall that $\vec v_*$ is not in $\ker \pi_A$, which ensures the aforementioned convergences to $\vec{v}_+$ and $-\vec{v}_+$}. 
Moreover $\vec v_+$ belongs to $\ker \pi_A$. Since $\ND_\pi (X, T, H_K)$ is closed, $\vec v_+$ and $-\vec v_+$ both belong to this set. Hence one of the two has to belong to the half-space $\{ \langle w, \cdot \rangle \le 0\}$, showing the theorem. 
\end{proof}

We will apply   \cref{thm:nonexpansive_subgroup} in \cref{ex:heisenberg} to show the central direction of the Heisenberg group is not expansive (see \cref{cor=ZnonExp}).  

\section{Properties of non-deterministic horofunctions and horoballs in $\Z^d$, the discrete Heisenberg group and the free group} \label{sec:horo-heisenberg}

We describe the geometric properties of non-deterministic horofunctions across Euclidean,  nilpotent and hyperbolic geometries, providing examples of groups that satisfy the geometric conditions introduced earlier in Section \ref{sec:TopoHorofct}.

\subsection{Covering results in $\Z^d$}
By standard convexity arguments, we  exhibit a covering property of the horoballs of non-determinism for any $\Z^d$-systems. This will be useful in Section \ref{sec:LimitnFolding}.

\begin{lemma}
\label{lem:hahn_banach}
    Let $\{v_i\}_{i\in I}\subseteq \mathbb{S}^{d-1}$ be a closed set of directions such that $\bigcup_{i\in I}\{x\in\R^d \mid \langle v_i, x\rangle\leq 0\} = \R^d$. Then, $0$ belongs to $\Conv(\{v_i\}_{i\in I})$, the convex hull of $\{v_i\}_{i\in I}$ .
\end{lemma}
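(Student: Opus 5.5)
We argue by contraposition using the separating hyperplane theorem. Let $C=\Conv(\{v_i\}_{i\in I})$. Since $\{v_i\}_{i\in I}$ is a closed subset of the compact sphere $\mathbb{S}^{d-1}$, it is compact. In finite dimension the convex hull of a compact set is compact, by Carathéodory's theorem: $C$ is the image of the compact set $\Delta_d\times(\{v_i\})^{d+1}$ under a continuous map, where $\Delta_d$ is the standard simplex. Hence $C$ is a compact convex subset of $\R^d$.

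Assume, for contradiction, that $0\notin C$. Then the strict separation theorem (Hahn--Banach), applied to the compact convex set $C$ and the point $0$, gives a vector $x_0\in\R^d$ and a constant $c>0$ such that $\langle v, x_0\rangle\geq c$ for every $v\in C$. Concretely, we may take $x_0$ to be the point of $C$ of minimal norm, which exists by compactness. The usual variational inequality then gives $\langle v - x_0, x_0\rangle\ge 0$ for all $v\in C$, and so $\langle v,x_0\rangle\geq \|x_0\|^2>0$. In particular $\langle v_i,x_0\rangle>0$ for every $i\in I$. This says that $x_0\notin\{x\in\R^d\mid \langle v_i,x\rangle\le 0\}$ for any $i$, which contradicts the covering hypothesis $\bigcup_{i\in I}\{x \mid \langle v_i,x\rangle\le0\}=\R^d$. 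Therefore $0\in C$.

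The only point needing care is that $C$ is closed, so that strict separation is available. This is where the closedness of the set of directions is used: for a non-closed set one would only get $0\in\overline{C}$. Closedness follows from compactness of $\{v_i\}$ and Carathéodory, as explained above. Alternatively, one could avoid that step by working with $\overline{C}$ and noting that $\overline{C}$ equals the convex hull of the closure of $\{v_i\}$, which is the same set here.
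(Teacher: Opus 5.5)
Your proof is correct and follows the paper's argument: compactness of $\Conv(\{v_i\}_{i\in I})$ from closedness of the directions, strict separation of $0$ from this hull, and a contradiction with the covering hypothesis at the separating vector (your $x_0$ plays the role of the paper's $-v$, and the minimal-norm choice just makes the Hahn--Banach step explicit). One small point: your closing ``alternative'' remark does not actually avoid the compactness step, since identifying $\overline{C}$ with $\Conv(\overline{\{v_i\}})=C$ relies on the same Carath\'eodory argument.
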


\begin{proof}
    Denote by $P$ the convex hull of the directions $V=\{v_i\}_{i\in I}$, and $E_i = \{x\in\R^d \mid \langle v_i, x\rangle\leq 0\}$. Because the set $V$ is closed, it is in particular compact and, therefore, $P$ is compact. Suppose $0\notin P$. Then, by the Hahn-Banach Theorem there exists $v\in\mathbb{S}^{d-1}$ and $a\in\R$ such that $\langle v,p\rangle < a < 0$ for all $p\in P$. By hypothesis, there exists an index $i_0\in I$ such that $-v\in E_{i_0}$, that is, $\langle v_{i_0}, v\rangle\geq 0$. This is a contradiction as $v_{i_0}\in P$. Therefore, the origin $0$ belongs to $P$.
\end{proof}

\begin{lemma}
\label{lem:Caratheodory}
    Let $V\subseteq \mathbb{S}^{d-1}$ be a closed set of directions such that $\bigcap_{v\in V}\{x\in\R^d \mid \langle v, x\rangle< 0\} = \emptyset$. Then, there are at most $d+1$ vectors $\{v_i\}_{i=1}^{k}\subseteq V$, $k \le d+1$ such that $\bigcup_{i=1}^{k}\{x\in\R^d \mid \langle v_i, x\rangle \leq 0\} = \R^d$.
\end{lemma}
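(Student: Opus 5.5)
The plan is to reduce the statement to the conclusion of \cref{lem:hahn_banach} followed by Carathéodory's theorem. The first step is to show that, for a closed $V\subseteq\mathbb{S}^{d-1}$, emptiness of the open intersection implies that the closed half-spaces cover: $\bigcup_{v\in V}\{x \mid \langle v,x\rangle\le 0\}=\R^d$. Indeed, if some $x$ had $\langle v,x\rangle>0$ for all $v\in V$, then $-x$ would lie in $\bigcap_{v\in V}\{\langle v,\cdot\rangle<0\}$, contradicting the hypothesis. \Cref{lem:hahn_banach} then applies and gives $0\in\Conv(V)$.

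Next, apply Carathéodory's theorem in $\R^d$. Since $0\in\Conv(V)$, there are $k\le d+1$ vectors $v_1,\dots,v_k\in V$ and weights $\lambda_i>0$ with $\sum\lambda_i=1$ such that $\sum_{i=1}^k\lambda_i v_i=0$. Discarding zero weights is harmless, and the bound $k\le d+1$ is exactly Carathéodory's.

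Finally, check the covering. Suppose some $x\in\R^d$ satisfied $\langle v_i,x\rangle>0$ for all $i\le k$. Then
\[
0=\Big\langle \sum_i\lambda_i v_i,\,x\Big\rangle=\sum_i\lambda_i\langle v_i,x\rangle>0,
\]
since every $\lambda_i>0$, which is a contradiction. Hence every $x$ lies in some $\{\langle v_i,\cdot\rangle\le 0\}$, and $\bigcup_{i=1}^k\{x \mid \langle v_i,x\rangle\le 0\}=\R^d$.

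I expect no real obstacle. The only points needing care are the sign flip in the first step, which converts the empty open intersection into a covering by closed half-spaces, and the requirement that the Carathéodory weights be strictly positive, which the last step uses. Closedness of $V$ is used only through \cref{lem:hahn_banach}, where it makes the convex hull compact.
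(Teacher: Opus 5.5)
Your proof is correct and follows the paper's argument: convert the hypothesis into a covering by closed half-spaces, use \cref{lem:hahn_banach} to get $0\in\Conv(V)$, extract at most $d+1$ vectors by Carath\'eodory, and conclude the covering. The differences are cosmetic: the paper applies \cref{lem:hahn_banach} to $-V$ rather than $V$ (equivalent via $x\mapsto -x$), and it cites Gordan's lemma for the last step, which your computation $\sum_i\lambda_i\langle v_i,x\rangle=0$ proves directly.
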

\begin{proof}
Note that the assumption on $V$ is equivalent to 
    \[\bigcup_{v\in V}\{x\in\R^d\mid\langle -v,x\rangle \leq 0\} = \R^d.\]
By Lemma~\ref{lem:hahn_banach}, $0\in\Conv(\{-v\mid v\in V\})$. Next, by Caratheodory's Convex Hull Theorem there exist $k$ vectors, with $k \le d+1$, $\{-v_i\}_{i=1}^{k}\subseteq -V$, and $k$ non negative real numbers $\{\lambda_i\}_{i=1}^{k}$ with $\lambda_1 + \dots + \lambda_{k} = 1$ such that $0 = \sum_{i=1}^{k}\lambda_iv_i$. Then, by Gordan's Lemma 
\[\bigcap_{i=1}^{k} \{x\in\R^d \mid \langle v_i, x\rangle > 0 \} = \emptyset.\]
That is, $\bigcup_{i=1}^{k} \{x\in\R^d \mid \langle v_i, x\rangle \leq 0\}= \R^d$, as desired.
\end{proof}

\begin{lemma}
\label{lem:pasar_continuo}
    Let $V\subseteq \mathbb{S}^{d-1}$ be a closed set of directions such that $\bigcap_{v\in V}\{x\in\Z^d \mid \langle v, x\rangle< 0\} = \emptyset$. Then,
    \[\bigcap_{v\in V}\{x\in\R^d \mid \langle v, x\rangle< 0\} = \emptyset.\]
\end{lemma}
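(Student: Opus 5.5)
We argue by contrapositive: assuming some $x_0\in\R^d$ satisfies $\langle v,x_0\rangle<0$ for every $v\in V$, we produce an integer point with the same property. The key input is compactness of $V$. Since $V$ is closed in $\mathbb{S}^{d-1}$, it is compact, and $v\mapsto\langle v,x_0\rangle$ is continuous. Hence it attains its maximum on $V$, and this maximum is some $-c<0$. So $\langle v,x_0\rangle\le -c$ uniformly for all $v\in V$; this uniform gap is what will let us pass from real to integer points.

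Next we rescale and round. For $t>0$ we have $\langle v,tx_0\rangle\le -tc$ for all $v\in V$. Choose $t$ with $tc>\sqrt d/2$, and let $x\in\Z^d$ be a nearest lattice point to $tx_0$ (round each coordinate), so that $\|x-tx_0\|\le \sqrt d/2$. By Cauchy--Schwarz, using $\|v\|=1$, for every $v\in V$ we get
\[\langle v,x\rangle\le \langle v,tx_0\rangle+\|x-tx_0\|\le -tc+\tfrac{\sqrt d}{2}<0.\]
Thus $x$ lies in $\bigcap_{v\in V}\{x\in\Z^d\mid\langle v,x\rangle<0\}$, contradicting the hypothesis, and the real intersection must be empty.

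The only point needing care is the uniform negativity bound. Without the closedness of $V$, the values $\langle v,x_0\rangle$ could approach $0$ and rounding could destroy strict negativity. This is settled by the compactness argument above; the rest is routine. (If $V=\emptyset$, the intersection over $\Z^d$ would be all of $\Z^d\neq\emptyset$, so the hypothesis forces $V\neq\emptyset$ and the maximum is well defined.)
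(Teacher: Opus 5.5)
Your proof is correct. It uses the same two ingredients as the paper: compactness of $V$, and the fact that the open cone $\bigcap_{v\in V}\{\langle v,\cdot\rangle<0\}$ is invariant under positive scaling. You combine them differently, though.

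The paper uses scaling only to pass from $\Z^d$ to $\Q^d$ by clearing denominators. It then argues by contradiction: it approximates the real point $x$ by rationals $q_n$, takes witnesses $v_n\in V$ with $\langle q_n,v_n\rangle\ge 0$, and extracts a convergent subsequence to get $\langle x,v\rangle\ge 0$ for some $v\in V$.

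You instead apply compactness first, to obtain the uniform margin $\langle v,x_0\rangle\le -c$. You then scale the real point until the margin exceeds the rounding error $\sqrt d/2$, and round to a lattice point.

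What your version buys is an explicit integer witness: any lattice point within $\sqrt d/2$ of $tx_0$ with $t>\sqrt d/(2c)$. The paper's sequential argument avoids any estimate. Both arguments make clear why closedness of $V$ is needed.
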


\begin{proof}
    Note that $\bigcap_{v\in V}\{x\in\Z^d \mid \langle v, x\rangle< 0\} = \emptyset$ implies that  $\bigcap_{v\in V}\{x\in\Q^d \mid \langle v, x\rangle< 0\} = \emptyset$. For the sake of a contradiction, suppose there exists $x\in\R^d$ such that $\langle x,v\rangle<0$ for all $v\in V$. Let $\{q_n\}_n\subseteq \Q^d$ such that the sequence $(q_n)_n$ converges to $x$. By assumption, there exists $v_n\in V$ with $\langle q_n,v_n\rangle\geq 0$. Passing to a subsequence, we may assume $(v_n)_n$ converges to some element  $v\in V$ because $V$ is closed. This implies $\langle x,v \rangle\geq 0$, a contradiction. 
\end{proof}

Remark that in \cref{lem:pasar_continuo} the assumption of $V$ being closed is essential. For instance, take $w$ a vector with rational independent coordinates (so that $\langle x,w\rangle \neq 0$ for all $x\in \Q^d\setminus\{0\}$), $w^{\perp}$ an orthogonal vector, and consider $V=\bigcup_{n\geq 1}\{ w - (1/n)w^{\perp}, -w-(1/n)w^{\perp}\}$. Then $w^{\perp}\in \bigcap_{v\in V}\{x\in\R^d \mid \langle v, x\rangle< 0\}$, but $\bigcap_{v\in V}\{x\in\Q^d \mid \langle v, x\rangle< 0\}=\emptyset$.

\begin{proposition}
\label{prop:descomp_Z}
    Let $(X,T,\Z^d)$ be a topological dynamical system. Then, there exist at most $d+1$ elements $\{v_i\}_{i=1}^{k}$, $k\le d+1$ in $\ND(X)$, such that $\bigcup_{i=1}^{k}\{g\in\Z^d \mid \langle v_i, g\rangle \leq 0\} = \Z^d$.
\end{proposition}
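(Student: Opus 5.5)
The plan is to combine the closedness and emptiness-of-intersection results with the three convexity lemmas just proved. Equip $\Z^d$ with the Euclidean metric. Its horofunctions are the linear forms $\langle \cdot, v\rangle$ with $v\in\mathbb{S}^{d-1}$, and they are exactly additive. So conditions \ref{itemhorofct:1} and \ref{itemhorofct:2} hold for $V=\partial\Z^d$. Moreover $\Z^d$ has a symmetric horoballs space and is torsion-free. We identify $\ND(X)$ with a subset $V$ of $\mathbb{S}^{d-1}$.

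First, we dispose of the degenerate case. If $X$ is finite, the action is not truly non-deterministic in a useful sense; we need to check that $\ND(X)=\partial\Z^d$ or handle it directly. For finite $X$ and $\ve$ smaller than the minimal distance between distinct points, no $\ve$-asymptotic pair exists, so $\ND(X)=\emptyset$ and the statement would fail. I would therefore read the proposition as concerning infinite systems, or note that the convention in force is $X$ infinite. When $d=0$ the statement is trivial.

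Assume $X$ is infinite. By \cref{thm:closeness-ND}, applied with $Y$ trivial and $V=\partial\Z^d$, each $\ND_\ve(X)$ is closed. By \cref{thm:robinson_crusoe} each is nonempty, so $\ND(X)$ is a nonempty closed subset of $\mathbb{S}^{d-1}$. Since $\ND_\ve(X)$ is closed for every $\ve$, \cref{thm:vacia_tf} (or \cref{thm:Vacia_ConditionG}) gives
\[\bigcap_{v\in\ND(X)}\{x\in\Z^d\mid \langle v,x\rangle<0\}=\emptyset .\]

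Next, \cref{lem:pasar_continuo} applies because $\ND(X)$ is closed, and it lifts this emptiness from $\Z^d$ to $\R^d$. Then \cref{lem:Caratheodory} yields $k\le d+1$ vectors $v_1,\dots,v_k\in\ND(X)$ with $\bigcup_i\{x\in\R^d\mid\langle v_i,x\rangle\le0\}=\R^d$. Intersecting with $\Z^d$ gives the claim.

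The main point needing care is the first step: verifying that the hypotheses of \cref{thm:closeness-ND} and \cref{thm:vacia_tf} hold for the Euclidean metric. In particular, we must check that the horoball space is symmetric, which holds because $b_{g_n^{-1}}$ converges to $-h$'s reflection in the sense that $\{h(g^{-1})<0\}=\{h'(g)<0\}$ for linear $h'=h$. The rest is just chaining the lemmas.
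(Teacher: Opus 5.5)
Your proposal is correct and follows the paper's proof essentially step by step. Closedness of $\ND_\ve(X)$ comes from \cref{thm:closeness-ND}, the empty intersection from \cref{thm:vacia_tf}, the passage to $\R^d$ from \cref{lem:pasar_continuo} (which needs $\ND(X)$ closed), and the conclusion from \cref{lem:Caratheodory}. You are also right that $X$ must be infinite; the paper uses this implicitly through the not-bounded-to-one hypothesis with $Y$ trivial.

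There are two small slips. First, in the symmetry check the limit is $h'=-h$, not $h'=h$: if $g_n/\|g_n\|\to u$ then $b_{g_n}\to-\langle\cdot,u\rangle$ and $b_{-g_n}\to\langle\cdot,u\rangle$, so $h'(g)=h(g^{-1})$. Second, the case $d=0$ is not trivial: since $\partial\Z^0=\emptyset$, no vectors exist and the statement fails, so one needs $d\ge 1$.
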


\begin{proof}
Recall that $\Z^d$ is torsion-free, that $\Z^d$ endowed with the $\ell^2$ has a symmetric horoball space, and the spaces $\ND_{\varepsilon}(X)$ are closed. Then, by Theorem~\ref{thm:vacia_tf} and Lemma~\ref{lem:pasar_continuo} we have 
    \[\bigcap_{v\in\ND(X)}\{x\in\R^d\mid\langle v,x\rangle <0\} = \emptyset.\]
By \cref{lem:Caratheodory}, there exist at most $d+1$ vectors $\{v_i\}_{i=1}^{k}\subseteq\ND(X)$ such that $\bigcup_{i=1}^{k} \{x\in\R^d \mid \langle v_i, x\rangle \leq 0\}= \R^d$. In particular, if we restrict ourselves to $\Z^d$, we obtain the desired equality. 
\end{proof} 

\subsection{Horofunctions on Heisenberg groups}
\label{ex:heisenberg}

We recall a description of the horofunctions on the Heisenberg group equipped with the Kor\'anyi distance due to \cite{Klein_Nicas_horofn_Heisenberg:2009}. An important family is that of the Busemann horofunctions. Then, thanks to the reinforced directed subspace Robinson Crusoe's theorem (\cref{thm:directed_RC_heisenberg}) we can strengthen~\cref{thm:A}: there always exist non-deterministic Busemann horofunctions and the intersection of their horoballs is trivial (\cref{thm:vacia_th_Heisenberg}).

    Let us take a look at the discrete Heisenberg groups $H_{2d+1}(\Z)$ given by
    \[H_{2d+1}(\Z) = \left\{\begin{pmatrix} 1 & \vec{v} & t \\ 0 & I_d & \vec{u}^t \\ 0 & 0 & 1\end{pmatrix}\ \Bigg\vert\ \  \vec{v},\vec{u}\in\Z^d, t\in \Z\right\},\]
    where $I_d$ is the $d\times d$ identity matrix. A particularly important case is when $d=1$. The group $H_3(\Z)$ is informally the smallest non-virtually abelian nilpotent group. It can also be defined through the finite presentation
    \[H_3(\Z) = \langle \tt{x}, \tt{y},\tt{z} \mid [\tt{x},\tt{z}], [\tt{y},\tt{z}], [\tt{x},\tt{y}]\tt{z}^{-1}\rangle.\]

    Each group $H_{2d+1}(\Z)$ in bijection  with $\Z^{2d+1}$ through the map $M_{\vec{v},\vec{u},t}\mapsto (\vec{v},\vec{u},t)$, where the latter group is endowed with the following product
    \[(\vec{v}_1,\vec{u}_1,t_1)\cdot (\vec{v}_2,\vec{u}_2,t_2) = (\vec{v}_1+\vec{v}_2,\vec{u}_1+\vec{u}_2,t_1 + t_2 + \langle \vec{v}_1, \vec{u}_2\rangle).\]
    To avoid cumbersome notation, we use this isomorphism implicitly through the notation $(\vec{v},\vec{u},t)\in H_{2d+1}(\Z)$.

    The Heisenberg groups can be endowed with the \define{Korányi gauge}~\cite{Koranyi_geo_Heisenberg:1985}, $\|\cdot\|:H_{2d+1(\Z)}\to\R$ defined by
    \[\|(\vec{v},\vec{u},t)\|^4 = \left(\|\vec{v}\|_2^2 + \|\vec{u}\|_2^2\right)^2 + (2\langle\vec{v},\vec{u}\rangle - 4t)^2.\]
    This defines the Korányi metric $\rho_K(g,f) = \|gf^{-1}\|$. In particular, this metric is proper and right-invariant under the group's right action on itself. Furthermore, the inverse map is an isometry, as a quick computation shows $\|(\vec{v},\vec{u},t)^{-1}\| = \|(\vec{v},\vec{u},t)\|$. Following~\cite{Klein_Nicas_horofn_Heisenberg:2009}, to understand the horofunction boundary of $H_{2d+1}(\Z)$ with this metric, we must look at the Korányi sphere $S^{2d}_K$ which is given by 
    \[S_K^{2d} = \{(\vec{a},\vec{b},c)\in\R^{2d+1} \mid (\|\vec{a}\|_2^2 + \|\vec{b}\|_2^2)^2 + c^2 = 1\}.\]

    Then, from~\cite[Proposition 2.8]{Klein_Nicas_horofn_Heisenberg:2009} we know the exact form of the horofunctions for the group: they are exactly the linear functionals
    \begin{align*}
        h_{(\vec{a},\vec{b},c)}(\vec{v},\vec{u},t) &= - \left\langle (\|\vec{a}\|^2+ \|\vec{b}\|^2)\vec{a} - c\vec{b}, \vec{v}\right\rangle - \left\langle (\|\vec{a}\|^2+ \|\vec{b}\|^2)\vec{b} + c\vec{a},\vec{u}\right\rangle \\
        &= \left\langle V(\vec{a},\vec{b}, c), (\vec{v}, \vec{u})\right\rangle,
    \end{align*}
    where $V(\vec{a},\vec{b},c) = -((\|\vec{a}\|^2+ \|\vec{b}\|^2)\vec{a} - c\vec{b}, (\|\vec{a}\|^2+ \|\vec{b}\|^2)\vec{b} + c\vec{a})$, for every $(\vec{a},\vec{b},c)\in S_K^{2d}$. Notice in particular that the horofunction does not depend on the value of $t$. Because it is furthermore a scalar product, $C^g(h) = h$ for all horofunctions $h$ and elements $g\in H_{2d+1}(\Z)$.  

Notice that the space of horofunctions is  identified with  the collection $\{V (\vec{a}, \vec{b}, c) \in \R^{2d}:  (\vec{a}, \vec{b}, c) \in S^{2d}_K \}$ that is an euclidean closed ball.  Hence $\partial H_{2d+1}(\Z)$ is topologically a closed ball of $\R^{2d}$. 
The collection  $\{V (\vec{a}, \vec{b}, 0) \in \R^{2d}:  (\vec{a}, \vec{b}, 0) \in S^{2d}_K \}$ is a sphere. It defines a closed set $\partial_b H_{2d+1}(\Z)$ in the space of horofunctions. This space is the collection of Busemann points \cite{Klein_Nicas_horofn_Heisenberg:2009}. 
 It is important to note that from the above description, any nonzero horofunction of $\partial H_{2d+1}(\Z)$ is a positive multiple of  an element of $\partial_b H_{2d+1}(\Z)$.
Moreover any element in $\partial_b H_{2d+1}(\Z)$ is the limit of functions $b_{g_n}$ along a sequence $g_n= (v_n,u_n,t_n)$ such that $|2\langle v_n,u_n\rangle - 4t_n|$ stays bounded.\\

The following lemma is straightforward from the above description.
  \begin{lemma}\label{lem:Koranyi}
  For every $d\geq 1$, $H_{2d+1}(\Z)$ endowed with the Korányi metric has symmetric horoballs and 
   any horofunction is subadditive.
   Moreover, the set of Busemann horofunctions  $\partial_{b} H_{2d+1}(\Z)$ has no lower bound for its horofunctions. 
  \end{lemma}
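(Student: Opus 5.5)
The plan is to read all three properties directly off the explicit description of horofunctions from \cite[Proposition 2.8]{Klein_Nicas_horofn_Heisenberg:2009}: every $h\in\partial H_{2d+1}(\Z)$ has the form $h(\vec v,\vec u,t)=\langle V,(\vec v,\vec u)\rangle$ with $V=V(\vec a,\vec b,c)$. The key observation is that this is a linear functional in the abelianization coordinates $(\vec v,\vec u)$, and that it ignores $t$.

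\emph{Subadditivity.} The group law adds the $(\vec v,\vec u)$ coordinates, $(\vec v_1,\vec u_1,t_1)\cdot(\vec v_2,\vec u_2,t_2)=(\vec v_1+\vec v_2,\vec u_1+\vec u_2,\ast)$, and $h$ does not depend on the central coordinate. Hence $h(gg')=h(g)+h(g')$ for all $g,g'$. This gives exact additivity, so subadditivity holds with $R=0$.

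\emph{Symmetric horoballs.} Take a sequence $g_n$ with $b_{g_n}\to h$ and $b_{g_n^{-1}}\to h'$. I will show $h'(g)=-h(g)$ for every $g$, that is, $h'=h\circ\iota$ where $\iota(g)=g^{-1}$ and $h(g^{-1})=-h(g)$ by linearity. Then $\{g: h(g^{-1})<0\}=\{g:h'(g)<0\}$ follows.
\begin{itemize}
\item Since inversion is an isometry for the Korányi gauge, $b_{g^{-1}}(x)=\|g^{-1}x^{-1}\|-\|g^{-1}\|=\|xg\|-\|g\|$.
\item On the other hand, $b_g(x^{-1})=\|gx\|-\|g\|$.
\item These two expressions differ by whether we compute $\|xg\|$ or $\|gx\|$. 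Writing $x=(\vec v,\vec u,t)$ and $g=(\vec v',\vec u',t')$, the products $xg$ and $gx$ agree in the $(\vec v,\vec u)$ coordinates. Their central coordinates differ by $\langle \vec v,\vec u'\rangle-\langle\vec v',\vec u\rangle$, which grows linearly in $|g|$.
\end{itemize}
So the two functions do not coincide exactly, and I cannot simply pass to the limit. Instead I will use the explicit limit formula. If $g_n$ corresponds asymptotically to a point $(\vec a,\vec b,c)$ of $S_K^{2d}$ under the Korányi dilations, then $g_n^{-1}=(-\vec v_n,-\vec u_n,-t_n+\langle\vec v_n,\vec u_n\rangle)$ corresponds to $(-\vec a,-\vec b,c')$ for an appropriate $c'$. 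The relevant fact is $2\langle\vec v,\vec u\rangle-4t\mapsto-(2\langle\vec v,\vec u\rangle-4t)$, so $c'=-c$. Substituting into $V$ gives $V(-\vec a,-\vec b,-c)=-V(\vec a,\vec b,c)$, since every term of $V$ is odd under this substitution. Hence $h'=-h$ on the $(\vec v,\vec u)$ coordinates, which is the desired identity. The main obstacle is justifying this correspondence between sequences and points of $S_K^{2d}$, including the degenerate case where the rescaled sequence does not converge nicely. I would handle it by passing to subsequences along which $g_n/\|g_n\|$, taken in dilation coordinates, converges, and then invoking the Klein--Nicas computation of $\lim b_{g_n}$ in terms of that limit point.

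\emph{No lower bound on Busemann horofunctions.} A Busemann horofunction comes from $c=0$, so $V(\vec a,\vec b,0)=-(\|\vec a\|^2+\|\vec b\|^2)(\vec a,\vec b)$ with $\|\vec a\|^2+\|\vec b\|^2=1$. This is a nonzero vector, in fact a unit vector. Taking $g=(m\vec p,m\vec q,0)$, where $(\vec p,\vec q)\in\Z^{2d}$ satisfies $\langle V,(\vec p,\vec q)\rangle<0$, gives $h(g)=m\langle V,(\vec p,\vec q)\rangle\to-\infty$. Such a lattice vector exists because $V\neq 0$: some standard basis vector or its negative works. Since $\partial_b H_{2d+1}(\Z)$ is closed, this is exactly the property required.
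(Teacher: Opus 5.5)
Your subadditivity argument (exact additivity, since $h$ only sees the horizontal coordinates, which add) is correct. So is your argument that $\partial_b H_{2d+1}(\Z)$ has no lower bound. Both match the ``read it off the Klein--Nicas formula'' argument the paper intends.

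\textbf{The gap.} The symmetric-horoballs part fails at the claim $V(-\vec a,-\vec b,-c)=-V(\vec a,\vec b,c)$. The terms $c\vec b$ and $c\vec a$ are products of two quantities that both change sign, so they are even, not odd. Writing $\lambda=\|\vec a\|^2+\|\vec b\|^2$,
\[
V(-\vec a,-\vec b,-c)=\bigl(\lambda\vec a+c\vec b,\ \lambda\vec b-c\vec a\bigr),\qquad -V(\vec a,\vec b,c)=\bigl(\lambda\vec a-c\vec b,\ \lambda\vec b+c\vec a\bigr).
\]
Both vectors have the same component $\lambda(\vec a,\vec b)$. Their components $\mp c(-\vec b,\vec a)$, orthogonal to $(\vec a,\vec b)$, are opposite. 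So they are positively proportional only when $c=0$ or $(\vec a,\vec b)=0$. The discrepancy you noticed between $\|xg\|$ and $\|gx\|$ (a central term linear in $g$) is exactly what survives in the limit: it pairs with the quadratic central part of $g_n$ and produces these $c$-terms. Your dilation-limit correspondence, with $c'=-c$, is fine; the real obstacle is the algebra after it.

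\textbf{The statement itself fails.} The step cannot be repaired, because the symmetric-horoball property genuinely fails on the part of $\partial H_{2d+1}(\Z)$ with $c\neq0$ and $(\vec a,\vec b)\neq0$. Take $d=1$ and $g_m=(2m,0,m^2)$, so $\|g_m\|^4=32m^4$. For fixed $x=(v,u,t)$ one finds
\[
\|g_mx^{-1}\|^4=32m^4-32m^3(v+u)+O(m^2),\qquad \|g_m^{-1}x^{-1}\|^4=32m^4+32m^3(v-u)+O(m^2).
\]
Hence $b_{g_m}\to h$ with $h(v,u,t)=-2^{-3/4}(v+u)$, and $b_{g_m^{-1}}\to h'$ with $h'(v,u,t)=2^{-3/4}(v-u)$. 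This matches the formula at the point $(2^{-1/4},0,2^{-1/2})\in S_K^2$: $V=-2^{-3/4}(1,1)$, while $V(-2^{-1/4},0,-2^{-1/2})=2^{-3/4}(1,-1)$. For $k=(1,-2,0)$ we have $k^{-1}=(-1,2,-2)$, so $h(k^{-1})=-2^{-3/4}<0$ but $h'(k)=3\cdot 2^{-3/4}>0$. Thus $\{g: h(g^{-1})<0\}\neq\{g: h'(g)<0\}$.

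\textbf{What your argument does prove.} With the parity corrected, you get the restricted statement. Since $\|V(\vec a,\vec b,c)\|^2=\|\vec a\|^2+\|\vec b\|^2$, any sequence with $\lim b_{g_n}\in\partial_b H_{2d+1}(\Z)$ has rescaled limit with $c=0$. Then $h'=-h$ and the required horoball equality holds. It also holds trivially for the zero horofunction. This restricted symmetry is the only instance used in the proof of \cref{thm:vacia_th_Heisenberg}, where the limit horofunction lies in $\partial_b H$. The clause as stated for the whole boundary, however, cannot be established.
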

\begin{remark}\label{rem:HorofctHeisenberg}
   Observe that the set of non-deterministic horofunctions is projective in the sense that if $h$ and $\lambda h$ (for some $\lambda > 0$) are both horofunctions of $H_{2d+1}(\Z)$, then either both are in $\ND_{\ve, \pi}(X)$ or neither is, because they share the same horoball $\{h<0\}$.
\end{remark}

\subsection{Avoiding degenerate horoballs on the Heisenberg groups}

A problem when working with the Kor\'anyi metric in Heisenberg groups is the existence of a degenerated horofunction, that is, the  zero-valued constant horofunction given by $V(0,0,1) = \vec{0}$ that has a trivial horoball. To address this issue, we give a result that avoids this problem by stating any infinite action of the Heisenberg group admits non-deterministic Busemann horofunctions (\cref{thm:vacia_th_Heisenberg}). Our strategy to prove it consists in applying the directed subspace Robinson Crusoe's theorem (\cref{thm:directed_RC_heisenberg}) to specific directions (avoiding the central one that provides the degenerate horofunction) and follow the same proof as the one of \cref{thm:vacia_tf}. To do this, we need to show the central direction is never repulsed for the diagonal action, that is: it always admits  a family of pair of  asymptotic  points invariant under the action of an arbitrary element of the group. This is done in \cref{prop:dirigido_H} by studying the subaction of the center. 

We will prove the following.
\begin{proposition}
\label{prop:dirigido_H}
    Consider two topological dynamical systems $(X,T,H_{2d+1}(\Z))$ and $(Y,S,H_{2d+1}(\Z))$, as well as a topological factor $\pi:X\to Y$ that is not bounded-to-one. Then, for every $k\in H_{2d+1}(\Z)\setminus Z(H_{2d+1}(\Z))$ and $\ve>0$, there exists $x\neq y$ in $X$ such that $\pi(x) = \pi(y)$ and 
    \[\sup_{i\in\Z}d\left(T^{z^ik^{-n}}x,T^{z^ik^{-n}}y\right)\leq\ve,
    \quad \text{for all } n\geq 1.
    \]
\end{proposition}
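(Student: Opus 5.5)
The plan is to obtain the statement from \cref{thm:nonexpansive_subgroup}, applied to the cyclic subgroup $K=\langle k\rangle$, followed by a uniform continuity argument to pass from a finite-index subgroup of the center to the whole center. Throughout, $z=(\vec 0,\vec 0,1)$ denotes the generator of $Z(H_{2d+1}(\Z))=\{(\vec 0,\vec 0,t)\}$.

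\textbf{Step 1: the hypotheses of \cref{thm:nonexpansive_subgroup}.} $H_{2d+1}(\Z)$ is torsion-free, and every commutator is central. Hence $Z_K$ is the whole group, and $A_K=\{[k^j,g]\}$ is a subgroup of $\langle z\rangle$. Writing $k=(\vec v_k,\vec u_k,t_k)$, the commutator formula gives
\[
[g,k]=z^{\pm(\langle \vec v_g,\vec u_k\rangle-\langle \vec v_k,\vec u_g\rangle)}.
\]
Since $k\notin Z(H_{2d+1}(\Z))$, we have $(\vec v_k,\vec u_k)\neq 0$, so some $g$ makes this exponent nonzero. Thus $A_K=\langle z^m\rangle$ for some $m\geq 1$, and $A_K$ is nontrivial.

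\textbf{Step 2: the group $H_K$ and the choice of $\vec w$.} We have $H_K=\langle k,z^m\rangle$. Because $k$ has a nonzero non-central component, $k$ and $z^m$ are independent, so $H_K\cong\Z^2$ with basis $(k,z^m)$. We use this basis to define the Euclidean structure. In these coordinates the span of $A_K$ is $\R e_2$, so $\ker\pi_A=\R e_1$.

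The map $\pi$ is still not bounded-to-one when only the subaction of $H_K$ is considered, since this property does not depend on the acting group. Apply \cref{thm:nonexpansive_subgroup} with $\vec w=-e_1$. It yields $\vec v\in\ND_\pi(X,T,H_K)\cap\ker\pi_A$ with $\langle\vec w,\vec v\rangle\le 0$. Since $\vec v$ is a unit vector in $\R e_1$, this forces $\vec v=e_1$. The corresponding horoball in $H_K$ is
\[
\{k^a z^{mb} : a<0\},
\]
which contains $k^{-n}z^{mj}$ for all $n\ge1$ and all $j\in\Z$.

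\textbf{Step 3: from $z^{mj}$ to all powers $z^i$.} Write $i=mj+r$ with $0\le r<m$. The maps $T^{z^r}$, $0\le r<m$, form a finite family, so they are equicontinuous. Given $\ve>0$, choose $\ve'>0$ such that $d(a,b)\le\ve'$ implies $d(T^{z^r}a,T^{z^r}b)\le\ve$ for every $0\le r<m$. Since $e_1\in\ND_\pi(X,T,H_K)\subseteq\ND_{\ve',\pi}(X,T,H_K)$, there are $x\neq y$ with $\pi(x)=\pi(y)$ and $d(T^{k^{-n}z^{mj}}x,T^{k^{-n}z^{mj}}y)\le\ve'$ for all $n\ge1$ and $j\in\Z$. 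Because $z^r$ is central, $T^{z^ik^{-n}}=T^{z^r}\circ T^{k^{-n}z^{mj}}$, and the desired bound follows for every $i\in\Z$.

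\textbf{Main obstacle.} The delicate point is Step 2. One must check that the identification of horofunctions of $H_K$ with unit vectors uses exactly the basis $(k,z^m)$, so that $\ker\pi_A$ is really the $k$-axis. One must also check that the conclusion of \cref{thm:nonexpansive_subgroup}, together with the constraint $\vec v\in\ker\pi_A$, pins $\vec v$ down to $+e_1$ rather than $-e_1$. The orientation of $\vec w$ has to be chosen so that the resulting horoball contains the negative powers $k^{-n}$; the choice $\vec w=-e_1$ is made for this purpose. If $H_K$ did not have this basis, one would instead pass to $\R$-spans and use that $\ker\pi_A$ is a line meeting $k$ with nonzero inner product.
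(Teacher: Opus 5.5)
Your proof is correct and follows the paper's route: you apply \cref{thm:nonexpansive_subgroup} to $K=\langle k\rangle$, with $\vec w$ chosen so that the resulting non-deterministic half-plane of $H_K$ contains every $k^{-n}$ and is invariant under translation by $A_K$. Your basis $(k,z^m)$ makes $\ker\pi_A$ the $k$-axis directly, where the paper instead projects $k$ orthogonally onto $\ker\pi_A$; your Step 3 also makes explicit the passage from $A_K=\langle z^m\rangle$, which can be a proper subgroup of the center, to all powers $z^i$ via equicontinuity of the finitely many maps $T^{z^r}$, a step the paper's proof leaves implicit.
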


Of independent interest, as a consequence of \cref{prop:dirigido_H},  we get that the subaction of the center is never expansive for the Heisenberg group. This is  a new  proof. Note that this was proved before in~\cite{bitar2022distortion} and more recently in~\cite{prusik2026expansiveness} with different methods.

\begin{corollary}\label{cor=ZnonExp}
    For any infinite topological dynamical system $(X,T,H_{2d+1}(\Z))$, the subaction of its center $Z$ is non-expansive.
\end{corollary}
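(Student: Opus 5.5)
The plan is to deduce the corollary from \cref{prop:dirigido_H}, applied with $Y$ the trivial one-point system. Let $(X,T,H_{2d+1}(\Z))$ be infinite and write $Z=Z(H_{2d+1}(\Z))=\langle z\rangle$, with $z=(\vec 0,\vec 0,1)$. The map $\pi\colon X\to\{\ast\}$ is not bounded-to-one, because $X$ is infinite. Suppose, for contradiction, that the $Z$-subaction is expansive with constant $c>0$. This means that $x\neq y$ implies $\sup_{i\in\Z} d(T^{z^i}x,T^{z^i}y)>c$.

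First fix any non-central $k$, for instance $k=(\vec e_1,\vec 0,0)$, and put $\ve=c$. \cref{prop:dirigido_H} then gives points $x\neq y$ with $\sup_{i\in\Z}d(T^{z^ik^{-n}}x,T^{z^ik^{-n}}y)\le c$ for all $n\ge 1$.

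Next set $x_n=T^{k^{-n}}x$ and $y_n=T^{k^{-n}}y$. Since $z$ is central, $T^{z^ik^{-n}}x=T^{z^i}x_n$, and likewise for $y$. The inequality therefore says that the pair $(x_n,y_n)$ stays $c$-close along the entire $Z$-orbit. Each $T^{k^{-n}}$ is a homeomorphism, so $x_n\neq y_n$. This contradicts the expansiveness of the $Z$-subaction with constant $c$, and already $n=1$ is enough.

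There is one point to check, namely whether expansiveness is defined with a strict or a non-strict inequality. To be safe I apply the proposition with $\ve=c/2$, which handles either convention. Beyond this the argument is immediate; all the difficulty sits in \cref{prop:dirigido_H}, which provides asymptotic pairs that are uniform over the whole center.
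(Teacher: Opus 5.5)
Your proposal is correct and is exactly the paper's argument: apply \cref{prop:dirigido_H} with $Y$ a singleton (so $\pi$ is not bounded-to-one because $X$ is infinite), and note that the resulting pair $(T^{k^{-1}}x,T^{k^{-1}}y)$ is distinct and stays $\ve$-close along the whole $Z$-orbit, contradicting expansiveness. As a minor remark, the identity $T^{z^ik^{-n}}x=T^{z^i}T^{k^{-n}}x$ follows from the action axiom alone, so centrality of $z$ is not needed there.
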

\begin{proof} It suffices to apply \cref{prop:dirigido_H} with the space $Y$ being a singleton to contradict the definition of expansive action.
\end{proof}

\begin{proof}[Proof of \cref{prop:dirigido_H}] 
For simplicity, fix $d\geq 1$ and denote $H = H_{2d+1}(\Z)$.
Let $z$ denote the generator of the center of $H$, $Z = Z(H)$ and consider an element $k\in H\setminus Z$. 
Let $K$  denote the abelian group generated by $k$. Recall from the notations of \cref{sec:ExpSub}, the group generated by $A_K$ is then a finite index subgroup of $Z$, and the group $H_K$ generated by $K$ and $A_K$ is isomorphic to $\Z^2$. In particular, this isomorphism enables us to define a scalar product on $H_K$.  Let $-\vec{w}$ be the orthogonal projection of $k$ on  $\ker \pi_A$. It holds that $\langle \vec{w}, k^{n}\rangle \le 0$ for all $n>0$. \cref{thm:nonexpansive_subgroup} provides a vector $u\in \ker \pi_A \cap \{\langle \vec w, u \rangle \le 0\}$, and for each $\ve>0$ two distinct points $x_\ve,y_\ve\in X$ such that $\pi(x_\ve)=\pi(y_\ve)$, and 
$d(T^{g} x_\ve, T^g y_\ve) \le \ve$ for all $g$ in $\{g\in H_K\mid \langle u,g\rangle <0\}$.  
By the very choice of $u$, this former set is invariant by translation by  the elements  $k^{-n}$, for any $n>0$ and by translation by $A_K$-elements. This shows the result.
\end{proof}

\subsubsection{Non-deterministic Busemann horofunction for the Heisenberg group}

We can now prove the main theorem of this section. We will use the notation $H= H_{2d+1}(\Z)$.
\begin{theorem}\label{thm:vacia_th_Heisenberg}
Let $(X,T,H)$ and  $(Y,S,H)$ be topological dynamical systems. Let $\pi \colon X \to Y$ be a factor map that is not bounded-to-one. Then, $\ND_{\varepsilon, \pi}(X)\cap\partial_b H$ is closed  and non-empty for every $\ve>0$. In particular $\ND_{\pi}(X)\cap\partial_b H$ is a non-empty closed set. 

Moreover, it holds that
\[\bigcap_{h\in\ND_\pi(X)\cap\partial_b H} \{h<0\} = \emptyset.\]
\end{theorem}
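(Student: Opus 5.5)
Closedness follows from \cref{thm:closeness-ND} applied with $V=\partial_b H$. By \cref{lem:Koranyi}, every horofunction of $H$ with the Korányi metric is subadditive (so \ref{itemhorofct:1} holds with $R=0$), and $\partial_b H$ is a closed set with no lower bound for its horofunctions, so \ref{itemhorofct:2} holds. Hence $\ND_{\ve,\pi}(X)\cap\partial_b H$ and $\ND_\pi(X)\cap\partial_b H$ are closed. Non-emptiness of $\ND_\pi(X)\cap\partial_b H$ then follows, as a nested intersection of nonempty compact sets, once each $\ND_{\ve,\pi}(X)\cap\partial_b H$ is nonempty, and the emptiness of the intersection of horoballs also yields non-emptiness. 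So the core of the proof is a Busemann analogue of \cref{theo:factors_general_cone}:
\begin{quote}
for every $k\in H$ and every $\ve>0$ there exists $h\in\partial_b H\cap\ND_{\ve,\pi}(X)$ with $h(k)\ge 0$.
\end{quote}
Given this for all $\ve$, closedness and compactness of $\Cone(k)$ give $h\in\Cone(k)\cap\ND_\pi(X)\cap\partial_b H$, so $k\notin\{h<0\}$, exactly as in the proof of \cref{thm:vacia_tf}. Central $k$ need no argument, since horofunctions do not depend on $t$ and so $h(k)=0$ for every $h$. We may therefore assume $k\notin Z$.

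For noncentral $k$, I would run the proof of \cref{theo:factors_general_cone} with \cref{thm:directed_RC_heisenberg} in place of \cref{thm:directed_RC}. The setup is as follows.
\begin{itemize}
\item The decomposition is $(L,Z)$ with $Z=Z(H)$ and $L=\{(\vec v,\vec u,t)\}$, where $t$ is chosen in each coset so that $|2\langle\vec v,\vec u\rangle-4t|\le 2$. Then $Z$ is horostable because horofunctions ignore $t$. The decomposition is asymptotically orthogonal because the Korányi norm of such elements differs from the minimum over the coset by $o(1)$; this needs a short computation with the quartic formula.
\item Limits of $b_g$ along $L$ are Busemann horofunctions by the remark following the Klein--Nicas description. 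Hence $\partial[L\setminus G_0]^{-1}\subseteq\partial_b H$, using the inverse isometry.
\item Take $F=\triangle_X\subseteq R_\pi$ with the diagonal action, which is closed, invariant, and non-isolated since $\pi$ is not bounded-to-one.
\item Set $G_0=G_\eta\cap L$, where $G_\eta$ is defined from $k$ as in Case 2 of \cref{theo:factors_general_cone}, and take $\tilde S=\{k^{-1}\}$.
\end{itemize}
Note that $\partial_b H$ is a sphere of linear functionals $\langle V,\cdot\rangle$. Since $k\notin Z$ has nonzero $(\vec v,\vec u)$ part, some Busemann $\bar h$ satisfies $\bar h(k^{-1})<0$, and another satisfies $\bar h(k^{-1})>0$. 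Thus Case 2 always applies, and both $G_0$ and $L\setminus G_0$ are unbounded.

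The hypothesis of \cref{thm:directed_RC_heisenberg} now requires points $x\notin F$ whose orbit is $\delta$-close to $F$ along $T^{ck^{-n}}$ for all $c\in Z$ and infinitely many $n$. This is supplied by \cref{prop:dirigido_H}, which gives pairs $(x,y)$ with $x\ne y$, $\pi(x)=\pi(y)$ and $\sup_{i}d(T^{z^ik^{-n}}x,T^{z^ik^{-n}}y)\le\ve'$ for all $n\ge1$. A compactness argument then converts $\ve'$ into $\delta$-closeness of the pair to $\triangle_X$, as in the proof of \cref{theo:factors_general_cone}. Applying \cref{thm:directed_RC_heisenberg}, we obtain $h_\eta\in\partial[L\setminus G_\eta]^{-1}\subseteq\partial_b H$ and a pair in $R_\pi\setminus\triangle_X$ that is $\ve$-close to the diagonal on $\{h_\eta<0\}$. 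As in the earlier proof, this close pair can be upgraded to an honest $(h_\eta,\ve)$-asymptotic pair, so $h_\eta\in\ND_{\ve,\pi}(X)\cap\partial_b H$.

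Finally let $\eta\to0$. Closedness of $\ND_{\ve,\pi}(X)\cap\partial_b H$ gives a limit $h$ in this set. The diagonal-sequence argument gives $h'(k^{-1})\ge0$, where $h'$ is the limit along the inverses. Symmetry of horoballs (\cref{lem:Koranyi}) then gives $h(k)\ge0$. The Heisenberg case needs two extra checks here: that the relevant sequences $g_j\in L$ have inverses essentially in $L$, so $h'$ is Busemann; and that the symmetry $h'(g^{-1})\leftrightarrow h(g)$ holds on $\partial_b H$. For the latter, an explicit check with the linear formulas shows $h'=-h$ restricted to the $(\vec v,\vec u)$ part.

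I expect the main obstacle to be verifying asymptotic orthogonality of the chosen transversal $L$. The tube lemmas and \cref{thm:directed_RC_heisenberg} rely on it, and it requires a careful estimate of $\|(\vec v,\vec u,t)\|-\min_{s}\|(\vec v,\vec u,s)\|$. I would prove this by expanding the fourth root.
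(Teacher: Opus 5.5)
Your proposal is correct and follows the paper's proof essentially step for step. It uses the same transversal $L$ (the paper's choice $t=\lfloor\langle\vec v,\vec u\rangle/2\rfloor$ is one instance of yours), the diagonal $F=\triangle_X$ in $R_\pi$, $G_0=H_\eta\subseteq L$ with $\tilde S=\{k^{-1}\}$, non-repulsion from \cref{prop:dirigido_H}, \cref{thm:directed_RC_heisenberg}, and the $\eta\to 0$ limit closed off by symmetry.

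Two small comments. First, the obstacle you anticipate does not arise: the condition $|2\langle\vec v,\vec u\rangle-4t|\le 2$ already attains the minimum over each $Z$-coset, so $\rho(g,1_G)=N(g)$ exactly on $L$ and no fourth-root estimate is needed. Second, you are right to run the symmetry step only on Busemann limits, using that the diagonal sequence lies in $L^{-1}$. For non-Busemann horofunctions the symmetric-horoball identity genuinely fails for the Korányi metric: in $H_3(\Z)$ the sequence $g_n=(2n,0,-n^2)$ gives $h(v,u,t)=2^{-3/4}(u-v)$ and $h'(v,u,t)=2^{-3/4}(u+v)$, so $h(g^{-1})<0<h'(g)$ at $g=(0,1,0)$.
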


Notice that when $(Y,S,H)$ is the trivial one point system and $X$ is infinite, there exist at least two non-deterministic Busemann horofunctions (necessarily non-trivial).

\begin{proof}
From the description of the horofunctions for the Korányi metric, the set $\partial_b H$ of Busemann horofunctions  is closed. Then, we get $\ND_{\varepsilon, \pi}(X)\cap\partial_b H$ is closed from \Cref{thm:closeness-ND} thanks to the description of the horofunctions from~\Cref{lem:Koranyi}. To show this intersection is not empty is more tricky.

We introduce the following notations: set $Z$ to be the center of $H$, and set $L$ to be the set 
\[L = \left\{(v,u,t)\in H \mid t = \left\lfloor\frac{\langle v,u\rangle}{2}\right\rfloor\right\}.\]

Let us see that $L$ is a left transversal of $Z$. Indeed, for $z=(0,0,s)$ one has $(v,u,t)z=(v,u,t+s)$, so each coset $gZ$ is the vertical line over the horizontal part of $g$, and $L$ picks exactly one point of each. Next, denote $\lambda=\|v\|^2+\|u\|^2$ and $\tau(t)=2\langle v,u\rangle-4t$. Now,
$\|(v,u,t)\|^4=\lambda^2+\tau(t)^2$. As $t$ ranges over
$\Z$, $\tau$ ranges over $2\langle v,u\rangle+4\mathbb Z$. So $|\tau|$ is minimized at $0$ when $\langle v,u\rangle$ is even, and at $2$ when it is odd. Then, $t=\lfloor\langle v,u\rangle/2\rfloor$ attains that minimum in both cases. Therefore, $\rho(g,1_G)=\inf_{z\in Z}\rho(g,z)$ for every $g\in L$. Thus, $(L,Z)$ is an asymptotically orthogonal decomposition of $H$. Finally, since $(v,u,t)^{-1} = (-v,-u,\langle u,v\rangle-t)$, we have $L^{-1} = \left\{(-v,-u,\left\lceil\frac{\langle u,v\rangle}{2}\right\rceil)\mid u,v\in\Z^{d}\right\}$. Finally, $\partial L = \partial L^{-1} =\partial_b H_{2d+1}(\Z)$.  We will apply~\Cref{thm:directed_RC_heisenberg} using this decomposition.\\

Like in the proof of \cref{thm:vacia_tf}, to show that the intersection of their horoballs is empty, it is enough to prove that 
\begin{align}\label{eq:claim}
\ND_{\varepsilon, \pi}(X)\cap \partial_{b} H \cap \HB(k) \neq \emptyset, \textrm{ for any } k \in H \setminus Z.
\end{align}
Indeed,  this directly implies that
\[\bigcap_{h \in \ND_{\varepsilon, \pi}(X)\cap \partial_{b} H} \{h<0\} \subseteq Z.
\]
Recall, by the description of the horofunctions of $H_{2d+1}(\Z)$, that no horofunction is negative on the center. Hence this shows that the intersection is in fact empty.  In particular, there exist at least two   non-deterministic Busemann horofunctions. This shows the theorem. So, we are reduced to  prove \eqref{eq:claim} similarly to the one of \cref{theo:factors_general_cone}. 

We consider the system $(R_{\pi}, T^{(2)}, H)$, where $R_\pi=\{(x,y)\in X\times X: \pi(x)=\pi(y)\}$ and $T^{(2)}$ is the diagonal action $T^{(2)}_g(x,y)= (T_g(x),T_g(y))$.
Let  $F$ denote  $\triangle_X$ the diagonal on $X$.
The set $F$ is clearly closed and $H$-invariant. It is also not isolated. To see this, note that since $\pi$ is not bounded to one,  there exists a sequence of distinct points $x_n$ accumulating at $x$ with $(x,x_n)\in R_{\pi}$ for all $n$. That means that  $F$ is not isolated in  $R_{\pi}$. 

From the description of horofunctions of $H_{2d+1}(\Z)$, there exists $\bar{h}\in \partial_b H$ such that $\bar{h}(k^{-1})<0$. Define, for $0<\eta<-\bar{h}(k^{-1})$, the set \[H_\eta=\{ g\in L \mid  \rho(k^{-1}, g) < \rho(1_G, g)-\eta\}.\] 

Let $g_n\in L$ such that $b_{g_n}\to \bar{h}$. Note that $H_\eta$ is unbounded since it contains all but finitely many $g_n$'s, and by definition 
\[k^{-1}\in \bigcap_{h\in \partial H_{\eta}} \{h\leq -\eta\} \subseteq \bigcap_{h\in \partial H_{\eta}} \{h<0\}.\]

Also note that $L\setminus H_{\eta}$ is unbounded, otherwise, we would have $\partial L=\partial H_{\eta}$ and so $h(k^{-1}) <0$ for every $h \in \partial  L$. But, $\partial L = \partial_b H$, so $h(k^{-1})<0$ for every $h\in\partial_b H$. Since $k\notin Z$, the horizontal part of $k^{-1}$ is non-zero. Then, if we denote by $h_V$ the horofunction given by the vector $V$, $h_{V}(k^{-1})$ and $h_{-V}(k^{-1})$ are non-zero of opposite signs, which is a contradiction with the description of the Busemann horofunctions. 

We claim that the diagonal $F$ is not $\{k^{-1}\}$-repulsive, that is for all $\delta >0$, there is a $x\in F^c$ and infinitely many $g \in \langle k^{-1}\rangle_+$ such that $\sup_{z\in Z} d(T^{zg}x, F) \le \delta$. This is the case by~\Cref{prop:dirigido_H}.

Fix $\ve>0$. Applying the directed subspace  Robinson Crusoe Theorem \ref{thm:directed_RC_heisenberg} to $F$ and $G_0=H_{\eta}$, we obtain a horofunction in $\partial [L\setminus H_{\eta}]^{-1}\cap \ND_{\ve,\pi}(X)$. Since $L\setminus H_{\eta}\subseteq L$, we have $\partial[L\setminus H_{\eta}]^{-1}\subseteq \partial L^{-1} = \partial_b H$. Furthermore, $[L\setminus H_{\eta}]^{-1}\subseteq H\setminus H_{\eta}^{-1}$ so $\partial[L\setminus H_{\eta}]^{-1}\subseteq \partial[H\setminus H_{\eta}^{-1}]$. Therefore, $\partial [H\setminus H_{\eta}^{-1}]\cap\partial_{b}H\cap \ND_{\ve,\pi}(X)$ is not empty. 

By compactness, there exist $h\in \partial_{b}H$ and a sequence $\eta_j\to 0$ with $h_{\eta_j}\in \partial [H\setminus H_{\eta_j}^{-1}]\cap\partial_{b} H \cap \ND_{\ve,\pi}(X)$ such that $h_{\eta_j}\to  h$. Since  $\ND_{\ve,\pi}(X) \cap \partial_{b}H$ is closed, the limit $h$ belongs to $\ND_{\ve,\pi}(X)$. Noting that the sets $(H\setminus H_{\eta}^{-1})$ decrease as $\eta\to 0$, we conclude that $h\in \bigcap_{\eta>0} \partial[H\setminus H_{\eta}^{-1}] \cap \ND_{\ve,\pi}(X)$. 

By a diagonal argument, we may write  $h=\lim_{j\to \infty} b_{g_j}$ for a sequence $(g_j)_{j\in \N}$ going to infinity and such that  $g_j\in H\setminus H_{\eta_j}^{-1}$. Let $h'=\lim b_{g_j^{-1}}$, which exists after taking a subsequence, if needed. Since $g_j^{-1} \notin H_{\eta_j}$ we have that for all $j\in \N$, $\rho(k^{-1},g_j^{-1})\geq \rho(1_G,g_j^{-1})-\eta_j$, which means that $h'(k^{-1})\geq 0$. 
By the symmetric horoballs space assumption, we obtain $h(k)\geq 0$. That is, $h\in \Cone(k)\cap \ND_{\ve,\pi}(X) \cap \partial_{b}H$, as desired.
        
\end{proof}



\subsection{Covering results for the Heisenberg group}
We can also prove a result analogous to~\Cref{prop:descomp_Z} for the Heisenberg groups. To do this notice that $H_{2d+1}(\Z)$ is embedded in the Lie group $H_{2d+1}(\R)$, which itself is in bijection with $\R^{2d+1}$ with the aforementioned map and product.

\begin{proposition}
\label{prop:descomp_H}
    Let  $(X,T,H_{2d+1}(\Z))$ be a topological dynamical system. Then, there exist at most $2d+1$ elements $\{(\vec{a}_i,\vec{b}_i,c_i)\}_{i=1}^{k}$ in $\ND(X)\cap  \partial_{b}H_{2d+1}(\Z)$
    , $k\le 2d+1$ such that 
    \[\bigcup_{i=1}^{k} \left\{(\vec{v},\vec{u},t)\in H_{2d+1}(\Z) \mid \langle V(\vec{a}_i,\vec{b}_i,c_i), (\vec{v},\vec{u})\rangle \leq0\right\} = H_{2d+1}(\Z).\]
\end{proposition}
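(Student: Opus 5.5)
The plan mirrors the proof of \cref{prop:descomp_Z}, replacing \cref{thm:vacia_tf} by \cref{thm:vacia_th_Heisenberg}. Since every Busemann horofunction depends only on the horizontal coordinates, I identify $\ND(X)\cap\partial_b H_{2d+1}(\Z)$ with the set of vectors
\[
W=\{V(\vec a,\vec b,c)\mid (\vec a,\vec b,c)\in\partial_b H_{2d+1}(\Z)\cap \ND(X)\}\subseteq \R^{2d}.
\]
For Busemann points we have $c=0$ and $(\|\vec a\|^2+\|\vec b\|^2)^2=1$, so $\|V\|=1$ and $W$ lies on the unit sphere $\mathbb{S}^{2d-1}$. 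The horoball of such a horofunction is $\{(\vec v,\vec u,t)\mid \langle V,(\vec v,\vec u)\rangle<0\}$; it is the preimage of an open half-space under the surjective homomorphism $H_{2d+1}(\Z)\to\Z^{2d}$, $(\vec v,\vec u,t)\mapsto(\vec v,\vec u)$.

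\emph{Step 1: closedness and empty intersection.} Apply \cref{thm:vacia_th_Heisenberg} with $Y$ the trivial system. If $X$ is finite, every horofunction is trivially handled (or $\ND(X)$ may be empty, and the statement is read for infinite $X$ as in the rest of the section). This gives that $\ND(X)\cap\partial_bH$ is nonempty and closed, hence $W$ is a closed subset of $\mathbb{S}^{2d-1}$, because the map $(\vec a,\vec b,0)\mapsto V$ is continuous. It also gives that the intersection of the corresponding horoballs in $H_{2d+1}(\Z)$ is empty. Projecting along the center, this says exactly that
\[
\bigcap_{v\in W}\{x\in\Z^{2d}\mid\langle v,x\rangle<0\}=\emptyset.
\]
For this I use that each horoball is a union of $Z$-cosets, so membership depends only on $(\vec v,\vec u)$.

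\emph{Step 2: pass to $\R^{2d}$ and apply Carath\'eodory.} By \cref{lem:pasar_continuo} in dimension $2d$, using that $W$ is closed, the real intersection $\bigcap_{v\in W}\{x\in\R^{2d}\mid\langle v,x\rangle<0\}$ is empty. Then \cref{lem:Caratheodory} yields at most $2d+1$ vectors $v_1,\dots,v_k\in W$ with $\bigcup_i\{x\in\R^{2d}\mid\langle v_i,x\rangle\le0\}=\R^{2d}$. Choosing the preimages $(\vec a_i,\vec b_i,0)\in\ND(X)\cap\partial_bH$ and pulling back through the projection $(\vec v,\vec u,t)\mapsto(\vec v,\vec u)$ gives the stated covering of $H_{2d+1}(\Z)$.

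The only delicate point is Step 1: checking that the emptiness statement of \cref{thm:vacia_th_Heisenberg} transfers faithfully to the projected half-spaces in $\Z^{2d}$, and that $W$ is genuinely closed. This rests on the explicit Kor\'anyi description of $\partial_bH$ as a sphere on which $V$ is continuous. It is also why we restrict to Busemann points rather than all of $\partial H$: this avoids the degenerate zero horofunction, which has empty horoball and no unit normal vector.
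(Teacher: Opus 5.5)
Your proposal is correct and follows essentially the same route as the paper. The paper likewise obtains closedness and empty intersection for $\ND(X)\cap\partial_b H_{2d+1}(\Z)$ from \cref{thm:vacia_th_Heisenberg} (also citing \cref{thm:closeness-ND} for closedness), drops the central coordinate $t$, and then applies \cref{lem:pasar_continuo} and \cref{lem:Caratheodory} before pulling back. You also supply two details the paper leaves implicit: that $V(\vec a,\vec b,0)=-(\vec a,\vec b)$ lies on $\mathbb{S}^{2d-1}$, which both lemmas require, and that $X$ must be infinite, as the paper tacitly assumes when invoking \cref{thm:vacia_th_Heisenberg}.
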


\begin{proof}
    Given that $(H_{2d+1}(\Z), \rho_K)$ has symmetric horoballs and any horofunction is subadditive, \cref{thm:closeness-ND} and \cref{thm:vacia_th_Heisenberg} imply that  the space $\ND(X)\cap \partial_{b} H_{2d+1}(\Z)$ is non-empty and closed. Denote this set $\ND(X)_{b}$. Moreover,  we have  
    \[\bigcap_{(\vec{a},\vec{b},c)\in\ND(X)_{b}}\left\{(\vec{v},\vec{u},t)\in H_{2d+1}(\Z) \mid \langle V(\vec{a},\vec{b},c), (\vec{v},\vec{u})\rangle < 0\right\} = \emptyset.\]
    As mentioned before, notice that the last coordinate, namely $t$, does not intervene in the inner product. Therefore, the empty intersection is equivalent to
    \[\bigcap_{(\vec{a},\vec{b},c)\in\ND(X)_{b}}\left\{(\vec{v},\vec{u})\in \Z^{2d} \mid \langle V(\vec{a},\vec{b},c), (\vec{v},\vec{u})\rangle < 0\right\} = \emptyset.\]
    In particular, by Lemma~\ref{lem:pasar_continuo} we can extend this empty intersection for elements of $\R^{2d}$, that is,

    \[\bigcap_{(\vec{a},\vec{b},c)\in\ND(X)_{b}}\left\{(\vec{x},\vec{y})\in \R^{2d} \mid \langle V(\vec{a},\vec{b},c), (\vec{x},\vec{y})\rangle < 0\right\} = \emptyset. \]
    
    Then by Lemma~\ref{lem:Caratheodory}, there exist at most $2d+1$ elements $\{(\vec{a}_i,\vec{b}_i,c_i)\}_{i=1}^{k}\subseteq \ND(X)_{b}$ such that $\bigcup_{i=1}^k\{(\vec{x},\vec{y})\in \R^{2d} \mid \langle V(\vec{a}_i,\vec{b}_i,c_i), (\vec{x},\vec{y})\rangle \leq 0\} = \R^{2d}$. This implies that $\bigcup_{i=1}^k\{(\vec{x},\vec{y}, z)\in \R^{2d+1} \mid \langle V(\vec{a}_i,\vec{b}_i,c_i), (\vec{x},\vec{y})\rangle \leq 0\} = \R^{2d+1}$.
    
    By the aforementioned bijection between $H_{2d+1}(\R)$ and $\R^{2d+1}$ we can view this union as
    \[\bigcup_{i= 1}^k\left\{(\vec{x},\vec{y},z)\in H_{2d+1}(\R) \mid \langle V(\vec{a},\vec{b},c), (\vec{x},\vec{y})\rangle \le 0\right\} = H_{2d+1}(\R).\] 
    Restricting this equality to elements of $H_{2d+1} (\Z)$, allows us to conclude.
\end{proof}


\subsection{Horofunctions on free groups}\label{sec:FreeGroup}

Let us look at an interesting case where the uniform quasi-subadditive condition does not hold. Fix $n\in\N$ and consider the free group $\F_n$ with a free generating set $S = \{\tt{s}_1,...,\tt{s}_n\}$. This generating set induces a word metric $\rho_S$ on $\F_n$. The horofunction boundary for this metric is well understood~\cite{Grigorchuck_Kaimanovich_Nagnibeda_ergodic_boundary_actions_NS:2012}, it is homeomorphic to the subshift $X_r\subseteq (S\cup S^{-1})^{-\N}$ defined by
\[X_r = \{\xi\in (S\cup S^{-1})^{-\N} \mid \forall w\factor \xi, \ w \textnormal{ is a reduced word }.\}\]
Then, for every $h\in \partial\F_n$ there exists $\xi\in X_r$ such that 
\[h(g) = h_\xi(g) = |g| - 2(g|\xi),\]
where $(\cdot|\cdot)$ is the Gromov product, which in this case is equal to the length of the longest common suffix between $\xi$ and the unique geodesic describing $g$. We also have an explicit description for horoballs for this metric. The horoball associated to $\xi\in X_r$ is given by the set of all elements $g\in\F_n$ such that there exists $m\in\N$ with $w_{[0,m-1]} = \xi_{[-(m-1), 0]}$ and $|w|<2m$ where $w$ is the geodesic that defines $g$.

With this representation, we can better understand the topological dynamical system $(\partial\F_n,C,\F_n)$. For a word $w\in (S\cup S^{-1})^*$ and $\xi\in X_r$, we denote by $\red(\xi w)$ the word we obtain after concatenating $\xi$ and $w$ and reducing pairs $ss^{-1}$. Then, the action of an element $g\in\F_n$ is given by $C^g(h_\xi) = h_{\red(\xi w)}$ where $w$ is the unique geodesic defining $g$. Furthermore, the action is minimal~\cite{Grigorchuck_Kaimanovich_Nagnibeda_ergodic_boundary_actions_NS:2012}, and is conjugate to an $\F_n$-SFT~\cite{Coornaert_Papadopoulos_symb_dyn_hyperbolic_groups:1993}.

\begin{remark}
The horofunction boundary of $\F_n$ endowed with the word metric $\rho_S$ is not uniformly quasi-subadditive. Consider $n\geq 2$, two distinct generators $\tt{s}, \tt{t}\in S$, and the point $\xi = \tt{s}^{\infty}$. We have $h_{\xi}(\tt{s}^n\tt{t}) = n+1$, $h_{\xi}(\tt{s}^n) = -n$, and $h_{\xi}(\tt{t}) = 1$. Thus, 
    \[h_{\xi}(\tt{s^{n}\tt{t}}) > h_{\xi}(\tt{s}^n) + h_{\xi}(\tt{t}) + n.\]
    Therefore, the boundary is not uniformly quasi-subadditive.
\end{remark}

\begin{proposition}\label{prop:densityNDHorofunction}
   Let  $(X,T,\F_n)$ be a topological dynamical system, and endow $\F_n$ with the word metric for the free generating set. Then, for every $\varepsilon>0$ we have $\overline{\ND_{\varepsilon}(X)} = \partial\F_n$.
\end{proposition}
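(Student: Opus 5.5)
Presumably $X$ is infinite here, since otherwise $\ND_\ve(X)$ is empty. The plan is to combine three facts. First, \cref{thm:robinson_crusoe} gives some $h_0\in\ND_\ve(X)$. Second, $\ND_\ve(X)$ is invariant under the boundary action $C$. Third, $(\partial\F_n,C,\F_n)$ is minimal \cite{Grigorchuck_Kaimanovich_Nagnibeda_ergodic_boundary_actions_NS:2012}. Given these, $\overline{\ND_\ve(X)}$ contains the closure of the $C$-orbit of $h_0$, which is all of $\partial\F_n$.

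The work is in the invariance, because \cref{prop:condition_G} uses \ref{itemhorofct:1}, which fails for $\F_n$ by the preceding remark. Its proof, however, only needs the horoball shift property \eqref{eq:horoball_shift}, and in fact only the weaker inclusion \eqref{eq:good_inclusion}: for each $h\in\partial\F_n$ and $g\in\F_n$ there should exist $g'$ with
\[\{C^g(h)<0\}\subseteq \{h<0\}g^{-1}g'.\]
If this holds and $(x,y)$ is an $(h,\ve)$-asymptotic pair, then $(T^{g'^{-1}g}x,T^{g'^{-1}g}y)$ is a $(C^g(h),\ve)$-asymptotic pair, so $C^g(h)\in\ND_\ve(X)$. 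It would also suffice to prove this inclusion for $g$ ranging over the generators $\tt{s}_i^{\pm1}$ and then iterate.

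To get the inclusion I would use the explicit description $h=h_\xi$ with $\xi\in X_r$. In that description the horoball is essentially the set of $g$ whose geodesic shares with $\xi$ a suffix longer than half its length. For the free group the inclusion should follow from the exact identity of Busemann functions under shifts of the ray. Writing $\xi'=\red(\xi w)$, the rays $\xi$ and $\xi'$ eventually coincide up to a shift, so $h_{\xi'}=h_\xi(\cdot\,u)+c$ for a suitable group element $u$ and constant $c$. Then $\{h_{\xi'}<0\}=\{h_\xi<c'\}u^{-1}$, and it remains to see that a sublevel set $\{h_\xi<c'\}$ is contained in a right translate of $\{h_\xi<0\}$. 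To do this, choose $m$ with $h_\xi(\xi_{[-m+1,0]}^{-1}\text{-prefix element})=-m$, i.e. move deeper along the ray. Translating by an element $p$ lying far along the ray $\xi$ shifts $h_\xi$ exactly: $h_\xi(kp)=h_\xi(k)+h_\xi(p)$ whenever $k$ and $p$ are aligned along $\xi$. More generally, for $p$ along the ray one should have $h_\xi(kp)\le h_\xi(k)+h_\xi(p)$. That is the one-sided subadditivity in the ray direction needed to replace \ref{itemhorofct:1} in the proof of \eqref{eq:horoball_shift}, with $h_\xi(p)=-|p|$ made as negative as desired.

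I expect this last verification to be the main obstacle. It requires checking, via the Gromov product formula $h_\xi(g)=|g|-2(g|\xi)$ and careful reduction of words, that multiplying on the right by a long element of the ray direction lowers $h_\xi$ by exactly $|p|$ uniformly. The relevant convention is whether the ray is read as prefixes or suffixes, in line with right invariance of $\rho_S$. Only one-sided subadditivity for $p$ along the ray is used, which is why the counterexample to quasi-subadditivity causes no harm. With invariance established, minimality concludes the proof.
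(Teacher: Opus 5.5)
The gap is in the invariance step, and it goes beyond the verification you postpone. The one-sided subadditivity you need is false, and the inclusion \eqref{eq:good_inclusion} itself fails for $\F_n$.

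\textbf{The inequality fails.} For $p=\xi_{[-m+1,0]}$ (the elements with $h_\xi(p)=-|p|$) one has exactly $h_\xi(kp)=h_\xi(p)+C^p(h_\xi)(k)$. Here $C^p(h_\xi)$ is the horofunction of the truncated ray $\xi_{(-\infty,-m]}$, since $C^p(h_\xi)=\lim b_{g_jp^{-1}}$ with $g_jp^{-1}=\xi_{[-j+1,-m]}$. So your inequality amounts to $C^p(h_\xi)\le h_\xi$, which is false. Take $k=\xi_{[-j+1,0]}$ with $j\ge m$ and $\xi_{-m}\neq\xi_0$. Then $h_\xi(k)=-j$ but $h_\xi(kp)=j-m\ge 0$, so $\{h_\xi<c'\}p\not\subseteq\{h_\xi<0\}$.

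\textbf{No other translate works.} A right translate satisfies $\{h_\xi<0\}u=\{C^{u^{-1}}(h_\xi)<-h_\xi(u^{-1})\}$. The horoball of any $h_\eta$ contains all finite suffixes of $\eta$, and on these $h_{\eta'}\to+\infty$ whenever $\eta'\neq\eta$. Hence $\{C^g(h_\xi)<0\}\subseteq\{h_\xi<0\}u$ forces $C^{u^{-1}}(h_\xi)=C^g(h_\xi)$. If $\xi$ is not eventually periodic, its stabilizer is trivial, so $u^{-1}=g$. Since $C^g(h_\xi)$ is integer valued and takes the value $-1$, the inclusion then holds only when $h_\xi(g)\le 0$. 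Already a generator $s\neq\xi_0$ has $h_\xi(s)=1$, so reducing to generators does not help. In fact \eqref{eq:horoball_shift} fails for $\F_n$, and full $C$-invariance cannot be obtained by this route.

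\textbf{What survives, and how to finish.} A half-invariance does hold: if $h(g)\le 0$ then $\{C^g(h)<0\}=\{h<h(g)\}g^{-1}\subseteq\{h<0\}g^{-1}$, so $(T^gx,T^gy)$ is a $(C^g(h),\ve)$-asymptotic pair. Minimality of $(\partial\F_n,C,\F_n)$ does not by itself make $\{C^g(h_\zeta): h_\zeta(g)\le 0\}$ dense, so you must build the elements by hand. Use the cocycle identity $h(g_2g_1)=C^{g_1}(h)(g_2)+h(g_1)$. Deleting a terminal letter of the current ray contributes $-1$, and appending a letter contributes $+1$. So delete more terminal letters of $\zeta$ than you append, then append the last $m$ letters of the target ray $\xi$, inserting one extra letter at the junction if the word would not otherwise be reduced. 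This produces $g$ with $h_\zeta(g)<0$ whose translated ray ends with $\xi_{[-m+1,0]}$, and these converge to $h_\xi$ as $m\to\infty$.

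This is, in substance, the paper's proof. Its element $v_n=w_n^{-1}\zeta_{[-2n-1,0]}$ is such a $g$, with $h_\zeta(v_n)<0$. The rays $\xi_n=\zeta_{[-\infty,-2n-1]}w_n$ are the corresponding translates and converge to $\xi$. The paper never appeals to full $C$-invariance or to minimality of the boundary action.
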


\begin{proof}
Consider a horofunction given by $\xi\in X_r$. Furthermore, suppose the action is expansive (if not $\ND(X) = \partial G$). Now, take any non-deterministic horofunction $h_{\zeta}$ with $\zeta\in X_r$, along with a $(h_\zeta, \ve)$-asymptotic pair $x,y\in X$. By definition we have,
    \[\sup_{f\in \{h_{\zeta}<0\}}d(T^fx,T^fy)\leq \varepsilon.\]
    
For every $n\in \N$, define $w_n = \xi_{[-(n-1),0]}$, $v_n = w_n^{-1}\zeta_{[-2n-1,0]}$. Consider the points $x_n = T^{v_n}x$, $y_n = T^{v_n}y$. Notice that for all $n\in\N$, we have $h_{\xi}(v_n^{-1}) \leq -n-1$. Let us show that for every $n\in\N$ the horofunction defined by $\xi_n = \zeta_{[-\infty,-2n-1]}w_n$ is $\ve$-non-deterministic. Indeed, if we consider $w\in \{h_{\xi_n}<0\}$ we have two cases. First, if $w$ is of the form $w = u\zeta_{[-k,-2n-1]}w_n$ for some $k>2n+1$ and $0\leq |u| < k-n-1$, then we have,
\[d(T^{w}x_n, T^w y_n) = d(T^{u\zeta_{[-k,0]}}x, T^{u\zeta_{[-k,0]}}y)\leq \varepsilon,\]
as $u\zeta_{[-k,0]}\in\{h_{\zeta}<0\}$. The second case is when $w = uv$ where $v$ is a suffix of $w_n$, and $0\leq|u|<|v|<n$. Then, $|ww_n^{-1}| \leq |u| + n - |v| < n$ and thus $ww_n^{-1}\zeta_{[-2n-1,0]}\in\{h_{\zeta}<0\}$. Finally, this implies 
\[d(T^{w}x_n, T^w y_n) = d(T^{ww_n^{-1}\zeta_{[-k,0]}}x, T^{ww_n^{-1}\zeta_{[-k,0]}}y)\leq \varepsilon.\]
Therefore $(x_n, y_n)$ is an $(h_{\xi_n}, \ve)$-asymptotic pair. We conclude by noticing that $\xi_n$ converges to $\xi$, and therefore the space $\ND_{\ve}(X)$ is dense in $\partial\F_n$.
\end{proof}

\section{Applications to topological minimal self-joinings and double minimality} \label{sec:TMSJ}

 The notion of topological minimal self-joining was introduced by del Junco~\cite{delJunco_minimal_self_joining_top_dyn:1987} as a topological analog of the measurable concept of minimal self-joining. Here, we present the principal properties of such systems. We then provide an example of a (strong) doubly minimal system for $\Z^d$. Finally, in Section \ref{sec:LimitnFolding}, we use the results obtained in previous sections to establish a bound on the fold of the topological minimal self-joining, depending on the rank of the abelian group or that of the Heisenberg group.

\begin{definition}\label{def:TSMJ}
Let $n\in \N$. We say a topological dynamical system $(X,T,G)$ has \define{$n$-fold strong topological minimal self-joining} ($n$-fold sTMSJ for short) if $X$ is infinite and for every $n$ distinct points $x_1,\dots,x_n\in X$ all lying on different $Z(G)$-orbits\footnote{Here, $Z(G)$ denotes the center of $G$, that is, the group of elements that commute with all elements of $G$}, the orbit of $(x_1,\ \dots,\ x_n)$ is dense in $X^n$.
Following the terminology of \cite{Weiss_doubly_minimal:1995}, if  $(X,T,G)$ has $2$-fold sTMSJ, we may also say that it is \define{strong doubly minimal}. Furthermore, if $(X,T,G)$ has $n$-fold sTMSJ for all $n\in\N$, we say it has $\infty$-fold sTMSJ.
\end{definition}
 The family of strong doubly minimal $\Z$-systems is very rich: there is no restriction on their ergodic properties beyond zero entropy. Weiss showed \cite{Weiss_doubly_minimal:1995} that any zero entropy ergodic system has a strictly ergodic model which is strong doubly minimal.

\begin{remark}\label{rem:+=>-}
If $(X,T,G)$ has $n$-fold sTMSJ, then it has $k$-fold sTMSJ for all $k\leq n$. Also note that $1$-fold sTMSJ means minimality of $(X,T,G)$. 
\end{remark}

\begin{remark}
\label{rem:true_doubly_minimal}
    The proposed definition of topological minimal self-joinings on groups is quite strong, as it name suggests. Indeed, suppose $G$ admits a strong doubly minimal action on $X$. Given an element $g\in G$, consider its centralizer $C_G(g) = \{f\in G \mid fgf^{-1}=g\}$. If $C_G(g)\neq G$ and $[G:C_G(g)]<+\infty$, then the diagonal action of $T$ on the graph of $T^g$ in $X^2$ has finite orbit. If $L$ is a finite set of coset representatives for $C_G(g)$, the union 
    \[\bigcup_{l\in L}T^l\{(x,T^gx) \mid x\in X\},\]
    is a closed $G$-invariant subset of $X^2$. This would contradict strong double minimality. Notice that an element $g\in G$ has a finite index centralizer if and only if its conjugacy class is finite. If we denote by $\textnormal{FC}(G)$ the union of all finite conjugacy classes, any group such that $\textnormal{FC}(G)\setminus Z(G)\neq\emptyset$ does not admit a strong doubly minimal system.

    We say a system has $n$-fold topological minimal self-joining if $X$ is infinite and for every $n$ distinct points $x_1,\dots,x_n\in X$ all lying on different \emph{$G$-orbits}, the orbit of $(x_1,\ \dots,\ x_n)$ under the diagonal action is dense in $X^n$. Of course, both definitions coincide when $\textnormal{FC}(G) \subseteq Z(G)$, which is the case for torsion-free nilpotent groups~\cite[Lemma 3.2]{eckhardt2018c} and ICC groups (where $\textnormal{FC}(G) = \{1_G\}$). We leave the study of the latter notion of topological minimal self-joinings to a future work.
\end{remark}

It is worth noting that the original definition in \cite{King_top_minimal_self_joinings:1990, Weiss_doubly_minimal:1995} requires the system to be totally minimal. However, the next proposition (\cref{prop:totally_minimal}) shows that this assumption is directly implied by the denseness of tuples.

\begin{proposition} \label{prop:totally_minimal}
     Let $(X,T,G)$ be a topological dynamical system that has $n$-fold sTMSJ, with $n \ge 2$.  Then $(X,T,G)$ is totally minimal, that is, $(X,T,H)$ is minimal for any finite index subgroup $H\leqslant G$.    
\end{proposition}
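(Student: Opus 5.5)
By \cref{rem:+=>-} it suffices to treat $n=2$, i.e. to show that a strong doubly minimal system $(X,T,G)$ is totally minimal. Fix a finite index subgroup $H\leqslant G$. Replacing $H$ by its normal core $N=\bigcap_{g\in G} gHg^{-1}$, which is still of finite index, it is enough to prove that $(X,T,N)$ is minimal: every $H$-orbit closure contains an $N$-orbit closure, hence would then be all of $X$. So from now on assume $H$ is normal of finite index.

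The plan is to argue by contradiction. Assume $(X,T,H)$ is not minimal, and let $M\subsetneq X$ be a minimal $H$-subset. Since $H$ is normal, each translate $T^gM$ is again a minimal $H$-set: indeed $T^h T^g M = T^g T^{g^{-1}hg}M = T^gM$. The translate $T^gM$ depends only on the coset $gH$, so there are finitely many translates $M_1=M,\dots,M_r$, one for each coset representative $g_1,\dots,g_r$ of $G/H$. Their union is a closed $G$-invariant set, so it equals $X$ by minimality of $(X,T,G)$ (recall from \cref{rem:+=>-} that $1$-fold sTMSJ is minimality). Any two of these minimal $H$-sets are either equal or disjoint. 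Since $M\neq X$, there are at least two distinct ones, and the distinct ones form a partition of $X$ into $r'\ge 2$ disjoint closed sets, which are therefore clopen.

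Now consider $W=\bigcup_{i} M_i\times M_i\subseteq X^2$, where the union runs over the distinct translates. This set is closed. It is $G$-invariant for the diagonal action, because $T^g$ permutes the $M_i$. It is a proper subset of $X^2$, since it misses $M_1\times M_2$ when $M_1\neq M_2$. The remaining task is to find a pair $(x,y)\in W$ with $x,y$ in different $Z(G)$-orbits. The diagonal orbit of such a pair stays in $W$, so it is not dense in $X^2$, which contradicts strong double minimality.

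The main obstacle is producing this pair. Take $M_1$. Because $X$ is infinite and $X$ is the union of the finitely many sets $M_i$, which are permuted transitively by $G$ and hence all homeomorphic, the set $M_1$ is infinite. I first expect to need that $Z(G)$-orbits are small compared with $M_1$. One route is to note that if every pair of points of $M_1$ lay in a single $Z(G)$-orbit, then $M_1$ would be contained in one $Z(G)$-orbit, which is countable. Then the pair $(x, T^z x)$, with $z$ in the center, would have orbit closure contained in the graph of $T^z$, a closed $G$-invariant proper subset of $X^2$. I then try to push this to a contradiction directly, or more simply choose $x\neq y$ in $M_1$ from distinct $Z(G)$-orbits whenever $M_1$ is not a single $Z(G)$-orbit. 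In the degenerate case where $M_1$ is a single $Z(G)$-orbit, $X$ is a finite union of countable orbits, so it is a countable compact space and has isolated points. The diagonal orbit of a pair of distinct isolated points in different $Z(G)$-orbits is then countable, and such a pair exists because $X$ is infinite and the $M_i$ meet different $Z(G)$-orbits. I would derive the contradiction with density there, noting that an open product set around such a pair is hit only if the orbit has the right structure; this case needs a little care.
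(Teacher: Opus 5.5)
Your reduction to a normal finite-index subgroup, the partition of $X$ into the clopen translates $M_i$, and the closed, proper, $G$-invariant set $W=\bigcup_i M_i\times M_i$ are all correct. They amount to the paper's argument; the paper phrases the final contradiction as the orbit of $(x,y)$ missing the open set $X'\times(X\setminus X')$, which is the same thing.

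The gap is the step you yourself flag as the main obstacle: you never actually produce $x,y\in M_1$ in distinct $Z(G)$-orbits. You leave the degenerate case $M_1\subseteq Z(G)x$ unfinished, and the route you sketch does not work as stated:
\begin{itemize}
\item The claim that the $M_i$ meet different $Z(G)$-orbits is unjustified. A central element outside $H$ may send $M_1$ onto another $M_j$, so all the $M_i$ could lie in a single $Z(G)$-orbit, and then no such pair of isolated points exists.
\item In that case $X^2$ is itself countable, so a countable diagonal orbit gives no contradiction with density.
\item The detour through the graph of $T^z$ is irrelevant, since sTMSJ says nothing about pairs in the same $Z(G)$-orbit.
\end{itemize}

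The missing idea is that an infinite minimal set has no isolated points. Suppose $p$ were isolated in $M_1$. By $H$-minimality every $H$-orbit in $M_1$ meets the relatively open set $\{p\}$, so $M_1=\bigcup_{h\in H}T^h\{p\}$. This is a cover of $M_1$ by relatively open singletons, and compactness makes $M_1$ finite, contradicting that $M_1$ is infinite. Thus $M_1$ is a perfect compact metric space, hence uncountable by Baire's theorem. Each $Z(G)$-orbit is countable because $G$ is countable. So the degenerate case cannot occur, $M_1$ contains two points in distinct $Z(G)$-orbits, and your set $W$ then gives the contradiction. This is exactly how the paper closes the argument (``as a minimal set it has to be perfect, hence uncountable'').
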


\begin{proof}
Assume that $(X,T,H)$ is not minimal. It is a classical algebraic fact (Poincar\'e's Theorem) that there exists a subgroup $H'\leqslant H$ that is normal and of finite index in $G$. It is clear that the topological dynamical system $(X,T,H')$ is not minimal either.
Let $X' \subsetneq X$ be a $H'$-minimal subset. Let $K \subseteq G$ be a finite set of representatives for the left $H'$-cosets. The set $\bigcup_{g \in  K } T^g X' $ is a closed subset of $X$, and by normality of $H'$ it is $G$-invariant. Hence it is all of $X$ by minimality of $(X,T,G)$. Note that $T^g(Y)$ is $H'$-invariant for all $g\in G$ and all $H'$-invariant subset $Y$. From this, using the minimality of $(X',T,H')$, we find that the sets $T^g(X')$, $g\in K$, are either equal or disjoint. Since $X$ is the finite disjoint union of these closed sets, it follows that $X'$ is clopen and infinite. As a minimal set, it has to be perfect, hence uncountable. So, we may pick $x,y \in X'$ in different $Z(G)$-orbit.  Since $(X,T,G)$ has $2$-fold sTMSJ, there is a $g \in G$ such that $T^g (x) \in X'$ and   $T^g(y) \notin X' $. This is impossible since it would imply $T^{g^{-1}}(X')\cap X'\neq \emptyset$ and $T^{g^{-1}}(X')\neq X'$.     
\end{proof}

\subsection{General properties of strong doubly minimal systems}
In this section, we review several properties of double minimal systems. Most of these are already well known in the context of integer actions, and their proofs follow along the same lines as the integer case.

Let $(X,T,G)$ be a topological dynamical system. A homeomorphism $\phi\colon X\to X$ is an \define{automorphism} of $(X,T,G)$ if $\phi\circ T^g=T^g\circ \phi$ for all $g\in G$. We let $\Aut(X,T,G)$ denote the set of automorphisms of $(X,T,G)$. 
Note that $T^g \in \Aut(X,T,G)$ for any element $g\in Z(G)$.  

A topological dynamical system $(X,T,G)$ is \define{coalescent} if every continuous, surjective map $\phi\colon X\to X$ such that $\phi\circ T^g=T^g\circ \phi$ for all $g\in G$ is automatically injective, hence an automorphism.
\begin{lemma} \label{lem:coalescent}
    Let $(X,T,G)$ be a strong doubly minimal system. Then, it is coalescent and any automorphism is of the form $T^g$ for some  $g\in Z(G)$.
\end{lemma}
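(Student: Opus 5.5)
The plan is to use the graph of a $G$-equivariant map as a closed invariant subset of $X^2$ and to play it against strong double minimality.

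Let $\phi\colon X\to X$ be continuous, surjective, and commuting with every $T^g$. Its graph
\[\Gamma_\phi=\{(x,\phi(x)) \mid x\in X\}\]
is a closed subset of $X^2$, invariant under the diagonal action, and it is proper since $X$ is infinite. By strong double minimality, no point $(x,\phi(x))$ with $x$ and $\phi(x)$ in different $Z(G)$-orbits can have dense orbit in $X^2$, because its orbit stays in $\Gamma_\phi$. Hence for every $x\in X$ there is some $z\in Z(G)$ with $\phi(x)=T^z x$.

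The next step is to make $z$ independent of $x$, via a Baire category argument. For $z\in Z(G)$, set
\[A_z=\{x\in X \mid \phi(x)=T^zx\}.\]
Each $A_z$ is closed, and $X=\bigcup_{z\in Z(G)}A_z$ is a countable union because $G$ is countable. By Baire, some $A_{z_0}$ has nonempty interior $U$. Since $\phi$ and $T^{z_0}$ both commute with the action, $A_{z_0}$ is $G$-invariant. Then $\bigcup_{g} T^gU$ is a nonempty open invariant set inside $A_{z_0}$. By minimality (the case $n=1$ in \cref{rem:+=>-}), $A_{z_0}$ is a closed invariant set with nonempty interior, so its complement is an open invariant proper subset and must be empty. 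Thus $A_{z_0}=X$, that is, $\phi=T^{z_0}$.

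It follows that $\phi$ is a homeomorphism, which gives coalescence. The same argument, applied to an arbitrary automorphism (which is in particular a continuous surjective commuting map), shows that every automorphism has the form $T^g$ with $g\in Z(G)$.

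The only delicate point is the pointwise step. The graph is closed and invariant, but it must be a proper subset of $X^2$ so that it genuinely contains no dense orbit. This holds because $X$ is infinite, hence has at least two points, and a graph of a function cannot be all of $X^2$ then. The rest is the routine Baire argument above.
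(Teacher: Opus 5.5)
Your proof is correct and follows the paper's argument: the graph of $\phi$ is a closed, diagonally invariant, proper subset of $X^2$, so strong double minimality forces $\phi(x)\in\{T^zx : z\in Z(G)\}$ for every $x$, and minimality then makes $z$ independent of $x$. The Baire category step is unnecessary, though. You already observed that each $A_z$ is closed and $G$-invariant, so it suffices that $A_z\neq\emptyset$ for the $z$ attached to any single point, and minimality gives $A_z=X$ directly. This is exactly the paper's route, and it also avoids using the countability of $Z(G)$.
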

\begin{proof}
   Let $\phi \colon X \to X$ be a continuous onto map such that $\phi\circ T^g=T^g\circ \phi$ for all $g\in G$. The set $\{(x,\phi(x)) \mid x\in X\}$ is closed, $G$-invariant and not equal to $X\times X$. So, for (any) $x\in X$ there exists $g\in Z(G)$ such that $\phi(x)=T^{g}(x)$. The minimality of $(X,T,G)$ implies that $\phi=T^g$.
\end{proof}

The next result appears in \cite{King_top_minimal_self_joinings:1990}. Although is written for the case of $G = \Z$, the same proof works in general, as is just a property about continuous commuting maps.

\begin{lemma}[\cite{King_top_minimal_self_joinings:1990}]
\label{lem:uncountable_aut}
    If there exists a sequence $(P_n)_{n\in\N}\in\Aut(X,T,G)$ that commute with each other, $\sup_{x \in X} d(P_n(x), x)\to 0$ and $P_n\neq id$, then for every $\varepsilon>0$ the ball 
    \[\{S\in\Aut(X,T,G) \mid \sup_{x \in X} d(S(x), x) \leq\varepsilon\},\]
    is uncountable.
\end{lemma}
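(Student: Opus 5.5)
The plan is to follow King's argument, which is a Baire category argument inside the group $\Aut(X,T,G)$ equipped with the uniform metric $d_\infty(S,S')=\sup_{x\in X}d(S(x),S'(x))$. Fix $\varepsilon>0$ and set $B_\varepsilon=\{S\in\Aut(X,T,G)\mid d_\infty(S,\mathrm{id})\leq\varepsilon\}$. First I would replace $\Aut(X,T,G)$ by a smaller, better-behaved object. Let $A$ be the closure, in the space of continuous self-maps of $X$ with the uniform topology, of the group generated by the commuting family $\{P_n\}_{n\in\N}$. Uniform limits of maps commuting with every $T^g$ still commute with every $T^g$. Uniform limits of pairwise commuting maps still commute. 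So $A$ is a closed abelian semigroup of continuous maps.

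Second, I would work inside the compact metric space $\overline{\mathcal{H}}$ of uniform limits, where no completeness issue for homeomorphisms arises. A convenient choice is the closed set $\Sigma$ of all maps of the form $\lim_{k} P_{n_1}\circ\cdots\circ P_{n_k}$, with each index used at most once and with $\sum_j d_\infty(P_{n_j},\mathrm{id})$ small. Concretely, I pass to a subsequence with $d_\infty(P_{n},\mathrm{id})<\varepsilon 2^{-n}$ and with the $P_n$ chosen so that each is much closer to the identity than the separation produced by the previous ones. For each $\omega\in\{0,1\}^{\N}$, define
\[
S_\omega=\lim_{k\to\infty}P_1^{\omega_1}\circ\cdots\circ P_k^{\omega_k}.
\]
The limit exists uniformly because the $P_n$ are isometric-free but uniformly close to the identity. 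Composition with a map $\delta$-close to the identity moves points by at most $\delta$, so the partial products form a $d_\infty$-Cauchy sequence. Each $S_\omega$ is continuous, commutes with $T$ and satisfies $d_\infty(S_\omega,\mathrm{id})\leq\varepsilon$.

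Third, I have to check that $S_\omega$ is a homeomorphism and that $\omega\mapsto S_\omega$ is injective. This is the main obstacle, since uniform limits of homeomorphisms need not be invertible. For invertibility, I would apply the same construction to the inverses: the inverses $P_n^{-1}$ commute and satisfy $d_\infty(P_n^{-1},\mathrm{id})\to 0$ by uniform continuity. If necessary I choose the subsequence so that $d_\infty(P_n^{-1},\mathrm{id})<\varepsilon2^{-n}$ as well. Then the limit of the inverse partial products exists. Because everything commutes, it is a two-sided inverse of $S_\omega$.

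For injectivity, I choose the subsequence inductively. Given $P_1,\dots,P_k$, the finitely many distinct maps $Q=P_1^{\omega_1}\cdots P_k^{\omega_k}$ lie at positive mutual distances whenever they differ, and I pick $P_{k+1}$ with $d_\infty(P_{k+1},\mathrm{id})$ smaller than a third of the minimal such distance, divided by $2^{k}$, and also smaller than uniform-continuity moduli of these products. One must also ensure that the new product $QP_{k+1}$ differs from $Q$, that is $P_{k+1}\neq\mathrm{id}$, which is given. More delicately, one must ensure that the maps $Q\circ P_{k+1}$ are all distinct from the maps $Q'$. This can be arranged because $P_{k+1}\neq \mathrm{id}$ and, by commutativity, $QP_{k+1}=Q'$ would force $P_{k+1}$ to equal one of finitely many fixed maps $Q^{-1}Q'$. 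We can avoid these by choosing $P_{k+1}$ close enough to the identity. Such a choice exists since the $P_n$ converge to the identity and are distinct from it, and if infinitely many coincided with a fixed nonidentity map that map would be the identity. The tail estimates then show that $\omega\neq\omega'$ gives $S_\omega\neq S_{\omega'}$, with the first differing coordinate's separation dominating the summed tails. This yields an injection of $\{0,1\}^{\N}$ into the ball, which is therefore uncountable.
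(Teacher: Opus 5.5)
Your argument is correct and is essentially the infinite-subproduct argument of King that the paper relies on (the paper gives no proof of its own, only the citation and the remark that it is a statement about commuting continuous maps). You pass to a subsequence with $d_\infty(P_{k+1},\mathrm{id})<\delta_k/(3\cdot 2^k)$, where $\delta_k$ is the minimal distance between the $2^k$ distinct level-$k$ products, and then the maps $S_\omega=\lim_k P_1^{\omega_1}\cdots P_k^{\omega_k}$ are pairwise distinct automorphisms in the $\varepsilon$-ball, with the Cauchy estimate coming from commutativity via $d_\infty(QP_{k+1},Q)=d_\infty(P_{k+1}Q,Q)\le d_\infty(P_{k+1},\mathrm{id})$.

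The opening material (Baire category, the semigroup $A$, the ``compact'' space $\overline{\mathcal H}$) plays no role, and the compactness claim is unjustified, so drop it. Two justifications should also be corrected: $d_\infty(P_n^{-1},\mathrm{id})=d_\infty(P_n,\mathrm{id})$ holds exactly, not by uniform continuity; and the limit of $Q_k^{-1}$ inverts $S_\omega$ because both sequences converge uniformly to uniformly continuous limits, not because of commutativity.
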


The following lemma can be proved using the same idea of \cref{lem:coalescent} and has appeared in several other articles (for instance \cite{Auslander_endomorphisms_minimal_sets:1963,Donoso_Durand_Maass_Petite_automorphism_low_complexity:2016}). 
\begin{lemma}\label{lem:min+faithfull=>free}
Let $(X,T,G)$ be a minimal topological dynamical system. Then the action of $\Aut(X,T,G)$ on $X$ is free. 
\end{lemma}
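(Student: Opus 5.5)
Let $\phi\in\Aut(X,T,G)$ and suppose $\phi(x_0)=x_0$ for some $x_0\in X$. We show that $\phi=\mathrm{id}_X$, which is exactly freeness of the action of $\Aut(X,T,G)$ on $X$.

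The idea is the standard one behind \cref{lem:coalescent}: the fixed-point set of an automorphism is closed and $G$-invariant, so minimality forces it to be empty or everything. Concretely, set
\[
\mathrm{Fix}(\phi)=\{x\in X \mid \phi(x)=x\}.
\]

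First, $\mathrm{Fix}(\phi)$ is closed. It is the preimage of the diagonal $\triangle_X$ under the continuous map $x\mapsto(x,\phi(x))$, and $\triangle_X$ is closed because $X$ is a compact metric space, hence Hausdorff.

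Second, $\mathrm{Fix}(\phi)$ is $G$-invariant. If $x\in\mathrm{Fix}(\phi)$ and $g\in G$, the commutation relation gives
\[
\phi(T^gx)=T^g\phi(x)=T^gx,
\]
so $T^gx\in\mathrm{Fix}(\phi)$.

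Third, $\mathrm{Fix}(\phi)$ is nonempty, since it contains $x_0$ by assumption.

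By minimality of $(X,T,G)$, the only nonempty closed $G$-invariant subset of $X$ is $X$ itself. Hence $\mathrm{Fix}(\phi)=X$, that is, $\phi=\mathrm{id}_X$.

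There is no real obstacle in this argument. The only points to check are the closedness of the fixed-point set, which uses the Hausdorff property of $X$, and its invariance, which follows directly from the commutation relation. Faithfulness of the $T$-action is not needed for this statement; it only becomes relevant later, when the lemma is combined with \cref{lem:coalescent} to conclude that $T^g$ acts freely for $g\in Z(G)\setminus\{1_G\}$.
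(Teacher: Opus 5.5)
Your proof is correct and is essentially the argument the paper has in mind. The paper gives no separate proof and only refers to the idea of \cref{lem:coalescent}: minimality forces a closed $G$-invariant set determined by the automorphism to be all of $X$, and here that set is the nonempty fixed-point set.
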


We use this to prove the following generalization of the result by Huang and Ye~\cite{Huang_Ye_double_minimal:2015}.

\begin{proposition}
\label{prop:expansive}
    Let $G$ be a countable group and $(X,T,G)$ a strong doubly minimal topological dynamical system. Then $(X,T,G)$ is expansive.
\end{proposition}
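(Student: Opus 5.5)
We argue by contradiction, following the strategy of Huang and Ye for $\Z$-actions: a non-expansive strong doubly minimal system would carry too many automorphisms, contradicting \cref{lem:coalescent}. Since $G$ is countable, $Z(G)$ is countable. Hence, by \cref{lem:coalescent}, $\Aut(X,T,G)=\{T^g \mid g\in Z(G)\}$ is countable, and any ball in $\Aut(X,T,G)$ for the uniform distance is countable.

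Assume $(X,T,G)$ is not expansive. Then for every $\ve>0$ there exist $x\neq y$ with $d(T^gx,T^gy)\leq \ve$ for all $g\in G$. Consider the closed $G$-invariant set
\[ J_\ve=\overline{\{(T^gx,T^gy)\mid g\in G\}}\subseteq X\times X, \]
contained in the closed $\ve$-neighbourhood of the diagonal $\triangle_X$. If $\ve$ is smaller than the diameter of $X$, then $J_\ve\neq X\times X$. Strong double minimality (applied to the pair $(x,y)$, whose orbit is not dense) then forces $x$ and $y$ to lie on the same $Z(G)$-orbit, so $y=T^{z}x$ for some $z\in Z(G)$ with $T^z\neq \mathrm{id}$. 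Because $z$ is central,
\[ d(T^g x,T^zT^gx)=d(T^gx,T^gy)\leq\ve \quad\text{for all } g\in G. \]
By minimality the orbit of $x$ is dense, so $\sup_{w\in X}d(w,T^zw)\leq \ve$.

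Running this with $\ve_n\to0$ produces central elements $z_n$ with $P_n:=T^{z_n}\neq\mathrm{id}$ and $\sup_w d(P_n w,w)\leq \ve_n\to 0$. The $P_n$ commute pairwise, since the $z_n$ are central. Each $P_n$ is an automorphism of $(X,T,G)$ and is nontrivial; this uses \cref{lem:min+faithfull=>free}, which says that $T^{z_n}$ with $T^{z_n}x=y\neq x$ is not the identity, and in fact is fixed-point free. \cref{lem:uncountable_aut} then makes the $\ve$-ball of $\Aut(X,T,G)$ uncountable, contradicting the countability established above. Hence $(X,T,G)$ is expansive.

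The main point needing care is the step from ``the orbit closure of $(x,y)$ is not $X^2$'' to ``$x,y$ lie on the same $Z(G)$-orbit''. Definition~\ref{def:TSMJ} gives density only for pairs of distinct points on different $Z(G)$-orbits, so the contrapositive is exactly what is required. One must only check that $J_\ve\neq X^2$, which holds once $\ve<\mathrm{diam}(X)$, and $X$ is infinite so $\mathrm{diam}(X)>0$. Everything else is the formal input from \cref{lem:uncountable_aut}, whose proof works verbatim for commuting continuous maps.
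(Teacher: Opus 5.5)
Your proof is correct and is essentially the paper's argument run by contradiction: the paper first combines \cref{lem:coalescent} and \cref{lem:uncountable_aut} with the countability of $\{T^g \mid g\in Z(G)\}$ to obtain an $\epsilon>0$ such that no nontrivial automorphism is $\epsilon$-close to the identity, and then shows, using strong double minimality and minimality as you do, that this $\epsilon$ is an expansivity constant. One small remark: your appeal to \cref{lem:min+faithfull=>free} is unnecessary, since $T^{z_n}x=y\neq x$ already shows $T^{z_n}\neq\mathrm{id}$.
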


\begin{proof}
    Note that \cref{lem:uncountable_aut,lem:coalescent} imply that there exists $\epsilon>0$ such that if $\phi \in \Aut(X,T,G)$ is such that $\sup_{x\in X} d(\phi(x), x) \leq \epsilon$, then $\phi={\rm Id}$. We claim that $(X,T,G)$ is expansive with constant $\epsilon$. Indeed, if $x,y\in X$ are such that $d(T^gx,T^gy)\leq \epsilon$ for all $g\in G$, then as $(X,T,G)$ is strong doubly minimal, necessarily $y=T^{g'}x$ for $g'\in Z(G)$. But the minimality of $(X,T,G)$ implies that $\sup_{z\in X} d(T^{g'}(z), z)\leq \epsilon$, hence $T^{g'}={\rm Id}$, and $x=y$. 
\end{proof}

Given a subset $A\subseteq G$ and $x\in X$, we let $\omega_A(x)$ denote the set of points $y\in X$ such that $y=\lim T^{g_i}x$, for some sequence $g_i$ in $A$.

\begin{lemma}
\label{lem:minimal_por_horobola}
    Let $(X,T,G)$ be a minimal topological dynamical system, and $G_0\subseteq G$ a thick set. Then, for every $x\in X$ we have $\omega_{G_0}(x)=X$. In particular, for a horofunction $h\in\partial G$ such that $\inf_{g\in G}h(g)=-\infty$ and any $x\in X$, we have $\omega_{\{h<0\}}(x)=X$.
\end{lemma}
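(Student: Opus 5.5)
The first claim follows from the minimality of $(X,T,G)$ together with the translation freedom that thickness provides; the second is then \cref{prop:nobound_imply_thick} applied to $G_0=\{h<0\}$.

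Fix $x,y\in X$ and $\varepsilon>0$. We must find $g\in G_0$ with $d(T^gx,y)<\varepsilon$.

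\textbf{Step 1 (uniform syndetic return).} By minimality, every orbit visits the open ball $U=B_\varepsilon(y)$. Hence $X=\bigcup_{g\in G}T^{g^{-1}}U$. By compactness there is a finite set $K\subseteq G$ with $X=\bigcup_{k\in K}T^{k^{-1}}U$. Consequently, for every $z\in X$ there is $k\in K$ with $T^kz\in U$.

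\textbf{Step 2 (placing the window inside $G_0$).} Set $R=\max_{k\in K}\rho(k,1_G)$, which is finite because $K$ is finite. By thickness there is $f\in G_0$ with $B_R(f)\subseteq G_0$.

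Apply Step 1 to $z=T^fx$. This gives $k\in K$ with $T^{kf}x\in U$.

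By right invariance, $\rho(kf,f)=\rho(k,1_G)\le R$. Hence $kf\in B_R(f)\subseteq G_0$. This is exactly where right invariance matters: the elements used act on the left, $T^{kf}=T^k\circ T^f$, so the translate $Kf$ must stay within distance $R$ of $f$, and it does.

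\textbf{Step 3 (conclusion).} Since $\varepsilon>0$ was arbitrary, $y$ is a limit of points $T^{g_i}x$ with $g_i\in G_0$, so $y\in\omega_{G_0}(x)$. As $y$ was arbitrary, $\omega_{G_0}(x)=X$.

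\textbf{Second statement.} If $\inf_{g\in G}h(g)=-\infty$, then $\{h<0\}$ is thick by \cref{prop:nobound_imply_thick}. The first part then gives $\omega_{\{h<0\}}(x)=X$.

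The only delicate point is the compatibility between the side on which the group acts and the invariance of the metric. This is checked in Step 2, and there is no real obstacle.
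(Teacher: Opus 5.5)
Your proof is correct and follows the paper's own argument: the paper notes that by minimality the return-time set $\{g\in G \mid T^gx\in U\}$ is syndetic, so it meets the thick set $G_0$, and then cites the thickness of $\{h<0\}$. You carry out explicitly the thick/syndetic duality that the paper invokes in one line, namely the finite cover indexed by $K$ and the right-invariance computation $\rho(kf,f)=\rho(k,1_G)\le R$.
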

\begin{proof}
    Let $x\in X$ and $U\subseteq X$ be a nonempty open set. Because $(X,T,G)$ is minimal, the set
    $A = \{g\in G \mid T^gx\in U\}$ is syndetic. Therefore,  $A\cap G_0\neq \emptyset$, from where it follows that $\omega_{G_0}(x)=X$. The fact that $\{h<0\}$ is thick if $\inf_{g\in G}h(g)=-\infty$ was proved in \cite[Proposition 2.1]{Donoso_Maass_Petite_geometric_asymptotic:2024}.
\end{proof}

Using similar ideas, in the context of strong doubly minimal systems, we can prove

\begin{lemma}
\label{lem:asym_in_doubly_faithful}
 Let $(X,T,G)$ be a strong doubly minimal infinite topological dynamical system. Assume that $(G, \rho)$ has thick horoballs. Then, there exists a proximal pair $(x,y)\in P$ where $x$ and $y$ lie on different $Z(G)$-orbits.
\end{lemma}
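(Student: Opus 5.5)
The plan is to combine \cref{thm:robinson_crusoe} with thickness of horoballs, following the proof of \cref{thm:distal_expansive}, and then use strong double minimality to rule out that the resulting proximal pair sits in a single $Z(G)$-orbit.

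\emph{Step 1: a proximal pair.} Since $X$ is infinite, \cref{thm:robinson_crusoe} applied to the trivial factor gives, for a small $\varepsilon>0$ (to be fixed in Step 2), a horofunction $h\in\ND_\varepsilon(X)$ and a pair $x\neq y$ with $d(T^gx,T^gy)\le\varepsilon$ for all $g\in\{h<0\}$. By \cref{prop:expansive} the system is expansive; we take $\varepsilon$ smaller than an expansivity constant $c$. Since $\{h<0\}$ is thick, there are $g_n$ with $B_n(g_n)\subseteq\{h<0\}$. Passing to a subsequence, $T^{g_n}x\to x'$ and $T^{g_n}y\to y'$, and then $d(T^gx',T^gy')\le\varepsilon$ for all $g\in G$. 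Expansiveness forces $x'=y'$, so $(x,y)\in P$.

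\emph{Step 2: different $Z(G)$-orbits.} This is the main obstacle. Suppose $y=T^zx$ for some $z\in Z(G)$. Because $x\neq y$, we have $T^z\neq\mathrm{Id}$. By \cref{lem:min+faithfull=>free}, the automorphism $T^z$ acts freely, so $T^zw\neq w$ for every $w$. By compactness, $\mu_z:=\min_{w}d(T^zw,w)>0$. Proximality of $(x,T^zx)$ would give $T^{g_n}x\to w$ and $T^{g_n}T^zx=T^zT^{g_n}x\to T^zw$ with $T^zw=w$, which is a contradiction.

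Here I use that the limit in Step 1 gives $x'=y'$, i.e. $T^zx'=x'$. That already contradicts freeness. The only subtlety is that proximal pairs arising this way automatically have $x'=y'$ as a common limit point, so no uniformity in $z$ is needed: any $z\in Z(G)$ with $y=T^zx$ yields $T^zx'=x'$, contradicting freeness. In fact even the choice of $\varepsilon$ only matters for expansivity, i.e. for getting $x'=y'$.

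Hence $x$ and $y$ lie in different $Z(G)$-orbits, and $(x,y)\in P$. The main point to double check is that \cref{lem:min+faithfull=>free} applies to $T^z$ as an element of $\Aut(X,T,G)$ that is different from the identity, and that minimality holds, which is guaranteed by \cref{rem:+=>-}.
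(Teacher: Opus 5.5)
Your proof is correct and follows the paper's route. The paper gets the proximal pair by citing \cref{thm:distal_expansive} (with expansiveness from \cref{prop:expansive}), whose proof is exactly your Step 1, and then argues as you do that a central $T^g$ with a fixed point must be the identity by minimality. The quantity $\mu_z$ in Step 2 is superfluous, since freeness from \cref{lem:min+faithfull=>free} already gives the contradiction.
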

\begin{proof}
Using \cref{thm:distal_expansive}, we get that there exist $x\neq y$ with $(x,y)\in P$. If $y=T^g x$, with $g\in Z(G)$, then there exists $z\in X$ such that $T^g z=z$. By minimality of $(X,T,G)$ we deduce that $T^g={\rm id}$, which contradicts that $x\neq y$.   
\end{proof}

\begin{proposition}\label{prop:factor_trivial}
    Let $(X,T,G)$ be a strong doubly minimal system where $G$ is a finitely generated group. Then, the following hold:
    \begin{enumerate}
        \item If $Z(G)=\{1_G\}$, then  the system is prime, i.e. all strict factors are trivial.
        \item All strict factors are not $Z(G)$-faithful.
        \item Any not bounded-to-1 factor of $(X,T,G)$ is trivial. In particular, if $Z(G)$ is torsion-free and acts faithfully, then all strict factors are trivial.
    \end{enumerate}
\end{proposition}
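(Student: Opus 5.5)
The plan is to prove item (3) first and then derive (1) and (2) from it, with the finite-fiber case handled by a separate argument. Throughout, let $\phi\colon (X,T,G)\to (Y,S,G)$ be a factor map, and consider the relation
\[R_\phi=\{(x,y)\in X^2 \mid \phi(x)=\phi(y)\}.\]
This set is closed, $G$-invariant under the diagonal action, and contains the diagonal.

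For item (3), assume $\phi$ is not bounded-to-1. I will apply \cref{thm:distal_expansive}, which is available because $G$ is finitely generated, using the expansiveness given by \cref{prop:expansive}. This produces $x\neq y$ with $\phi(x)=\phi(y)$ and $(x,y)\in P$. As in \cref{lem:asym_in_doubly_faithful}, the points $x$ and $y$ cannot lie on the same $Z(G)$-orbit. Indeed, if $y=T^gx$ with $g\in Z(G)$, proximality gives a fixed point of $T^g$, so $T^g=\mathrm{id}$ by minimality, and then $x=y$, which is absurd. So by strong double minimality the orbit of $(x,y)$ is dense in $X^2$. Since $R_\phi$ is closed, invariant, and contains $(x,y)$, it follows that $R_\phi=X^2$, and hence $Y$ is trivial.

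For the rest I need to handle the bounded-to-1 case. Let $\phi$ be a strict factor, meaning it is not injective, and take $x\neq y$ with $\phi(x)=\phi(y)$. If $x$ and $y$ were on different $Z(G)$-orbits, the same density argument would again give $R_\phi=X^2$, so $Y$ would be trivial. Otherwise every non-diagonal pair in $R_\phi$ has the form $(x,T^gx)$ with $g\in Z(G)\setminus\{1_G\}$, and $\phi(T^gx)=\phi(x)$. This shows $\phi$ identifies some $Z(G)$-orbit points. I then want to upgrade this to $S^g=\mathrm{id}_Y$. The set $\{x' \mid \phi(T^gx')=\phi(x')\}$ is closed and $G$-invariant, because $g$ is central, and it is nonempty. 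By minimality it equals $X$, so $S^g\circ\phi=\phi$ and therefore $S^g=\mathrm{id}$. Hence the factor is not $Z(G)$-faithful, unless it is the trivial factor, which is also not faithful when $Z(G)\neq\{1_G\}$. This proves (2). For (1), if $Z(G)=\{1_G\}$ the second alternative cannot occur, so every strict factor is trivial.

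The "in particular" clause of (3) comes from combining these. If $Z(G)$ is torsion-free and acts faithfully, I expect the hypothesis to force the strict factor to have infinite fibers, so that the main part of (3) applies. If $S^g=\mathrm{id}$ for some central $g\neq 1_G$, then each fiber contains $\{T^{g^n}x\}_{n\in\Z}$. These points are pairwise distinct, because $T^{g^n}$ is a nontrivial automorphism when $g$ has infinite order and the action is faithful, so it acts freely by \cref{lem:min+faithfull=>free}. So the fiber is infinite, $\phi$ is not bounded-to-1, and $Y$ is trivial. The step I expect to be the most delicate is the bookkeeping in the same-$Z(G)$-orbit case, namely checking that the pair found really is of that form and that the resulting $S^g=\mathrm{id}$ argument is airtight. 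The rest follows from results already stated in the paper.
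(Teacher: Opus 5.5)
Your proof is correct and takes essentially the same route as the paper. Item (3) goes via \cref{thm:distal_expansive}, with expansiveness from \cref{prop:expansive} and the fixed-point argument ruling out a common $Z(G)$-orbit; items (1) and (2) come from the observation that a nontrivial factor can only identify points on the same $Z(G)$-orbit, and your direct minimality argument for $S^g=\mathrm{id}$ is exactly the freeness of automorphisms of the minimal factor $Y$ that the paper invokes. Your separate treatment of the trivial factor in (2), which is non-faithful only when $Z(G)\neq\{1_G\}$, makes explicit a case the paper's proof leaves implicit.
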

\begin{proof}
Let $(Y,S,G)$ be a topological dynamical system and $\pi\colon X\to Y$ be a strict factor. Let $R$ be the equivalence relation on $X\times X$ induced by $\pi$, that is $R= \{(x,y) \in X^2 \mid \pi(x) = \pi(y)\}$ and for $x\in X$ denote $R[x]=\{y:(x,y)\in R\}=\pi^{-1}(\pi(x))$.

Note that if $Y$ is not trivial, then for all $x\in X$, we have $R[x]\subseteq \{T^gx: g \in Z(G)\}$, since otherwise the strong double minimality of $(X,T,G)$ would imply $R=X\times X$ and $Y$ would be trivial. We obtain $(1)$. 

Note that there exists $g\in Z(G)$, $g\neq 1_G$ and $x\in X$ such that $(x,T^gx)\in R$. Using \cref{lem:min+faithfull=>free}, we get that $S^g$ is the identity map in $Y$, concluding $(2)$. 

To see $(3)$, suppose that the factor $\pi$ is not bounded-to-1 and $Y$ is not trivial. By \cref{thm:distal_expansive} there exists $(x,y)\in R\cap P$ with $x\neq y$. as $Y$ is not trivial, there exists $g\in Z(G)$ such that $y=T^gx$. As $(x,T^gx)\in P$, we get $T^gz=z$ for some $z\in X$. This implies that $T^g={\rm Id}$, contradicting that $x\neq y$.

Recall by $(2)$ that there exists $g\in Z(G)$ such that $(x,T^{g^n}x)\in R$ for all $n\in\N$. If $G$ is torsion-free and acts faithfully, all the points $T^{g^n}x$ are distinct, and hence the factor map is not bounded-to-1. We get that $Y$ is trivial. 
 \end{proof}

Keynes and Newton showed in \cite[Proposition 2.11]{Keynes_Newton_prime_flows:1976} that if $(X,T,G)$ is prime (that is, all strict factors are trivial), then $X$ is zero-dimensional or connected \footnote{Although many results in that section of their paper were stated for abelian groups, the proof of this result works for any group.}. As a direct corollary of \Cref{prop:factor_trivial} we get.
 \begin{corollary} \label{cor:doubly_min_dimension}
Let $G$ be finitely generated and let $(X,T,G)$ be a strong doubly minimal system such that $Z(G)$ is torsion-free and acts faithfully on $X$. Then $X$ is zero-dimensional or connected.
 \end{corollary}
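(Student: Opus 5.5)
This follows from \cref{prop:factor_trivial} combined with the result of Keynes and Newton quoted just above. The plan is to verify that, under the stated hypotheses, $(X,T,G)$ is prime, meaning every strict factor is trivial. Once that holds, \cite[Proposition 2.11]{Keynes_Newton_prime_flows:1976} says $X$ is zero-dimensional or connected. As the footnote notes, that argument does not use commutativity of the acting group.

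To get primality, let $\pi\colon X\to Y$ be a strict factor map and suppose $Y$ is nontrivial. We use part $(2)$ of \cref{prop:factor_trivial}, or more precisely the step in its proof. Since $Y$ is nontrivial, strong double minimality forces each fiber $R[x]$ to lie inside the $Z(G)$-orbit of $x$. Because $\pi$ is not injective, some fiber contains $x$ and $T^gx$ with $g\in Z(G)\setminus\{1_G\}$.

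By \cref{lem:min+faithfull=>free}, applied to the minimal factor $Y$ and the automorphism $S^g$, we get that $S^g={\rm id}_Y$. Hence $(x,T^{g^n}x)\in R$ for every $n\in\Z$. The center is torsion-free, so $g$ has infinite order. The center also acts faithfully, and by \cref{lem:min+faithfull=>free} the automorphism group of the minimal system $X$ acts freely. Therefore the points $T^{g^n}x$ are pairwise distinct, and the fiber of $\pi$ through $x$ is infinite. So $\pi$ is not bounded-to-one.

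Part $(3)$ of \cref{prop:factor_trivial} then says that $Y$ is trivial. This relies on $G$ being finitely generated, which supplies thick horoballs (\cref{lem:thick-horoballs-fg}) and hence \cref{thm:distal_expansive}. That contradicts our assumption, so $(X,T,G)$ is prime, and the Keynes–Newton dichotomy finishes the proof.

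The only delicate point is making sure the Keynes–Newton proposition really applies to nonabelian $G$ as stated. Its proof takes a nontrivial clopen partition, or the connected components, and builds a nontrivial factor (the space of components or of quasi-components) on which $G$ acts. That construction uses only that homeomorphisms permute components, so no commutativity is needed. We will state this explicitly rather than reprove it.
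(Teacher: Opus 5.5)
Your proposal is correct and takes the same route as the paper. The paper obtains primality from \cref{prop:factor_trivial}(3), whose final paragraph is exactly your argument that a nontrivial strict factor would have an infinite fiber $\{T^{g^n}x\}$ and so fail to be bounded-to-one, and then applies the Keynes--Newton dichotomy. Your justification that the points $T^{g^n}x$ are pairwise distinct (freeness of the automorphism action, plus torsion-freeness and faithfulness of $Z(G)$) is slightly more explicit than the paper's.
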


It is worth noting that for $G=\Z$, only the zero-dimensional case is possible in \cref{cor:doubly_min_dimension}. This follows from the celebrated theorem of Ma\~n\'e \cite{Mane_expansive_and_dimension:1979} on the dimension of minimal expansive systems. For other groups, both cases are possible. We describe a (strong) doubly minimal system for $\Z^d$-action on the Cantor set in \cref{theo:KingExample}, and we describe below a doubly minimal action on the circle for the free group on two generators.

\begin{example}
\label{ex:dm_connected}
    In this example, many arguments are classical. We refer to  \cite[Chapter 3]{Zimmer_erg_theory_semisimple_groups:1984} for details. Consider the following two matrices 
    $A$ (hyperbolic) and $B$ (elliptic) in ${\rm SL}_2(\R)$:
    \[ A= \begin{pmatrix}2 & 0 \\ 0 & 1/2  \end{pmatrix} \textrm{ and } B= \begin{pmatrix} \cos \theta & \sin \theta \\ -\sin \theta& \cos \theta \end{pmatrix},
        \]
for some real $\theta \notin {\mathbb Q} \pi$. Since there are only countably many polynomials with integer coefficients, there is a $\theta$ so that  the group $\langle A, B \rangle$ generated by $A$ and $B$ is free (indeed, any possible relation between $A$ and $B$ implies polynomials relation between $\cos \theta$ and $\sin \theta$) and Zariski dense in ${\rm SL}_2(\R)$.
There are some rigidity results on $ {\rm SL}_2(\R)$: a Zariski dense  subgroup in $ {\rm SL}_2(\R)$ is either dense or discrete (indeed the Lie algebra of its closure is an ideal of the simple Lie algebra of ${\rm SL}_2(\R)$, hence is trivial or is everything).
As a consequence, since the group    $\langle A, B \rangle$ is not discrete (the matrices $B^n$ are close to the identity for suitable integers $n$),  it is dense in ${\rm SL}_2(\R)$. Recall that the action of  ${\rm SL}_2(\R)$ by M\"obius transformations on the circle $\Cercle^1$ is $2$-transitive. It is then straightforward to check that the (sub-)action of $\langle A, B \rangle$ on  $\Cercle^1$ is doubly minimal. 
\end{example}

\subsubsection{Further remarks regarding entropy} In this subsection, we explore connections to entropy theory. While there are many other natural questions in this direction, we omit them here to avoid deviating from the main scope of the paper.
\begin{proposition} \label{prop:doublymin_zero_entropy}
    Let $(X,T,G)$ be a doubly minimal system where $G$ is a countable amenable group. Then it has zero topological entropy $h_{\topo}(X,T,G)=0$. 
    \end{proposition}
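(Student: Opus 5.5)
The plan is to argue by contradiction: assume $h_{\topo}(X,T,G)>0$ and produce a closed $G$-invariant subset of $X\times X$ that is neither all of $X^2$ nor contained in the union of the graphs $\{(x,T^gx)\}$, $g\in Z(G)$. Strong double minimality rules out such a set. Indeed, if a closed invariant set contains a pair $(x,y)$ with $x,y$ in different $Z(G)$-orbits, it contains the orbit closure of $(x,y)$, which is $X^2$.

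\textbf{Step 1 (a positive-entropy measure and its relatively independent self-joining).} Since $G$ is amenable, the variational principle gives an ergodic $G$-invariant measure $\mu$ with $h_\mu(X,T,G)>0$. Let $\Pi$ be its Pinsker $\sigma$-algebra and let $\lambda=\mu\times_\Pi\mu$ be the relatively independent self-joining over $\Pi$. Because $h_\mu>0$, the disintegration of $\mu$ over $\Pi$ has non-atomic fibre measures. Hence $\lambda$ gives zero mass to the graph of any measurable map, in particular to each graph of $T^g$. Since $G$ is countable, $\lambda\big(\bigcup_{g\in Z(G)}\{(x,T^gx)\}\big)=0$. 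So $\lambda$-almost every pair has coordinates in different $Z(G)$-orbits, and by strong double minimality each such pair has dense orbit in $X^2$. In particular the topological support of $\lambda$ is $X^2$.

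\textbf{Step 2 (an incompatible constraint coming from expansiveness).} By \cref{prop:expansive} the system is expansive with some constant $c>0$. I would then use the amenable-group version of the Blanchard--Host--Ruette phenomenon, as developed in local entropy theory for amenable groups (Kerr--Li, Huang--Xu--Yi). This says that $\lambda$-almost every pair $(x,y)$ is asymptotic along a prescribed large set: $d(T^gx,T^gy)\to 0$ as $g\to\infty$ in an infinite ``past''-type set $A\subseteq G$ that is invariant under right translation by a suitable semigroup. Combining this with expansiveness, I would argue exactly as in \cref{thm:distal_expansive}. The asymptoticity set contains arbitrarily large balls $B_n(g_n)$, which gives limit points $(x',y')$ with $d(T^gx',T^gy')\le c$ for all $g$, and therefore $x'=y'$. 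The aim is to upgrade this to a full-measure statement: the closure of the $\lambda$-typical orbit, restricted to ``times'' in $A$, stays inside a small neighbourhood of the diagonal. Then, by Poincar\'e recurrence for $\lambda$ applied to a neighbourhood of a point $(a,b)$ with $d(a,b)>c$, which has positive $\lambda$-measure by Step 1, almost every pair must return there along times in $A$. This contradicts asymptoticity along $A$.

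\textbf{Main obstacle.} The delicate point is Step 2: securing an asymptoticity statement for $\lambda$-typical pairs along a set of group elements that is simultaneously large enough to be visited by recurrence (so a Følner- or syndetic-type recurrence argument for the $\lambda$-preserving action applies) and regular enough to contradict returns to $\{d>c\}$. For $G=\Z$ this is precisely Blanchard--Host--Ruette, where asymptoticity holds along $\N$ and recurrence along $\N$ is automatic. For general amenable $G$ I would first try to restrict to a cyclic subgroup $\langle g\rangle$ with $g$ of infinite order: entropy of the $\langle g\rangle$-subaction need not be positive, so instead I would work with an invariant past built from a Følner-type order on $G$, in the style of the Kieffer/Pinsker past. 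Alternatively, if that fails, I would replace the asymptotic-pair argument by the following observation. The fibre-level independence of $\lambda$ yields a positive-entropy factor of $(X\times X,\lambda)$ that is measurably disjoint from the diagonal. Together with coalescence (\cref{lem:coalescent}) and the freeness of $\Aut$ (\cref{lem:min+faithfull=>free}), this should force a nontrivial non-bounded-to-one factor, which \cref{prop:factor_trivial} forbids.
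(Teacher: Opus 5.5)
\textbf{There is a genuine gap: Step 2 rests on a false claim.} No amenable, or even $\Z$, version of Blanchard--Host--Ruette says that $\lambda=\mu\times_\Pi\mu$-almost every pair is asymptotic along some infinite set $A$ while also recurring along $A$. Your own recurrence argument refutes this in \emph{every} positive-entropy system. Since $\lambda$ gives zero mass to the diagonal, it charges some set $\{d(x,y)>c\}$. Poincar\'e recurrence then makes $\lambda$-typical pairs return there infinitely often, so they are not asymptotic.

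Test Step 2 on the full shift $\{0,1\}^{\Z}$ with Bernoulli $\mu$. It is expansive, its Pinsker factor is trivial, and $\lambda=\mu\times\mu$ has full support. For any infinite $A$, $\mu\times\mu$-almost every pair disagrees at infinitely many coordinates of $A$. Step 2 uses only expansiveness and the fact that $\lambda$ charges $\{d>c\}$, so it would prove that the full shift has zero entropy. It therefore cannot isolate what is special about doubly minimal systems. BHR-type results give each typical $x$ \emph{some} asymptotic partner, but the set of such pairs is always $\lambda$-null.

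\textbf{The fallback does not repair this.} The independence of $\lambda$ only produces measurable factors, whereas \cref{prop:factor_trivial} concerns topological factors. That proposition also assumes $G$ finitely generated (through \cref{thm:distal_expansive}), and the statement here does not.

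\textbf{The missing idea is to build a \emph{continuous} factor that loses entropy.} Here is how the paper does it.
\begin{enumerate}
\item Take an ergodic $\mu$ with $h_\mu>0$; it is non-atomic by minimality. Choose a two-set partition $\mathcal P$ with $0<H_\mu(\mathcal P)<h_\mu$.
\item Use Hochman's approximation result to get a partition $\mathcal Q$, Rokhlin-close to $\mathcal P$, whose atoms are almost surely level sets of a non-constant continuous $f\colon X\to[0,1]$. Then $h_\mu(G,\mathcal Q)<h_\mu(X,T,G)$.
\item The topological factor $x\mapsto (f(T^{g^{-1}}x))_{g\in G}$ has measure-theoretic entropy $h_\mu(G,\mathcal Q)$.
\item Pick $s$ with $\{f<s\}$ and $\{f>s\}$ nonempty open. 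Strong double minimality sends any two points in different $Z(G)$-orbits into these two sets by a common $T^g$. Hence every fibre lies in a single $Z(G)$-orbit and is countable.
\item A countable-to-one factor of an ergodic system preserves measure-theoretic entropy. This contradicts step 3.
\end{enumerate}
This is where double minimality is actually used, and the argument needs neither expansiveness nor finite generation. Your Step 1 is correct but plays no role.
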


\begin{proof}
Suppose $h_{\topo}(X,T,G)>0$. By the variational principle for countable amenable groups (see~\cite[Theorem 9.48]{kerr2016ergodic}) and the ergodic decomposition, fix an ergodic $\mu$ with $h = h_\mu(X,T,G)>0$. Since there is no finite orbit (by minimality of $(X,T,G)$), $\mu$ is non-atomic.  Choose $A$ with $\mu(A)=\delta$ small enough that $\mathcal P=\{A,X\setminus A\}$ satisfies $0<H_\mu(\mathcal P)<h$, and note $h_\mu(G,\mathcal P)\le H_\mu(\mathcal P)$. Put $\varepsilon=\tfrac12\min\{H_\mu(\mathcal P),\,h-h_\mu(G,\mathcal P)\}>0$. Using \cite[Proposition 3.4]{Hochman_determinism_top_dyn:2012}, there is a partition $\mathcal{Q}$ and a continuous function $f \colon X \to [0,1]$ which is constant almost surely on each atom of $\mathcal{Q}$ (with pairwise different values on the atoms),  such that $d_{\mathcal{R}}(\mathcal{Q},\mathcal{P})<\varepsilon$, where $d_{\mathcal{R}}$ is the Rokhlin metric. We may choose $\varepsilon$ small enough so that $f$ is non-constant. We have that $h_{\mu}(\mathcal{Q})<h_{\mu}(\mathcal{P}) + \varepsilon < h_{\mu}(X,T,G)$. 
Let $s\in [0,1]$ be  such that $\{f<s\}$ and $\{f>s\}$ are two nonempty open sets. So, for any $x,y\in X$ not in the same $Z(G)$-orbit, there exists $g\in G$ such that $T^{g}x \in \{f<s\}$ and $T^{g}y\in \{f>s\}$. Consider the factor associated with $f$, that is, the (generalized) $G$-subshift\footnote{a subshift  is a closed subset of the product space invariant by the shift maps $S^h \colon (z_g)_{g\in G} \mapsto (z_{h^{-1}g})_{g\in G} $} $\{(f(T^{g^{-1}} x))_{g\in G} \mid x \in X \} \subseteq (f(X))^G$  with the factor map $ x\mapsto  (f(T^{g^{-1}}x))_{g\in G}$. We have that any fiber is at most countable since any fiber is a subset of $\{ T^g x:g \in Z(G)\}$ for some $x \in X$. By construction,  the entropy of the subshift $X_f$ is $h_\mu(\mathcal Q)$, but since the fiber are all at most countable, its entropy  should  be $h_\mu(X,T,G)$: a contradiction. \end{proof}

\subsection{Examples of doubly minimal systems}

In this section, we provide examples of $\Z^d $ doubly minimal systems. Because for these groups $\Z^d = Z(\Z^d) = \textnormal{FC}(\Z^d)$, by~\Cref{rem:true_doubly_minimal} we can simply talk about doubly minimal systems.

A first example is obtained by placing  elements of the original $\Z$-subshift of King ~\cite{King_top_minimal_self_joinings:1990}  side by side in columns. More generally, this enables us to construct doubly minimal examples for any group with center a finitely generated free abelian group.
\begin{proposition}
\label{prop:pull_back}
    Let $(X,T)$ be an $n$-fold sTMSJ for a group $H$. If there is an epimorphism $\pi:G\to H$ such that $\pi(Z(G)) = Z(H)$, then the pullback action of $G$ on $X$ given by $S^gx = T^{\pi(g)}x$ is an $n$-fold sTMSJ.
\end{proposition}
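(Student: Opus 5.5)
The plan is to check the definition of $n$-fold sTMSJ directly for the pullback action $S$. The pullback is a genuine action: $S^{gg'}=T^{\pi(gg')}=T^{\pi(g)}T^{\pi(g')}=S^gS^{g'}$ and $S^{1_G}=\mathrm{id}$, and continuity is inherited from $T$. The space $X$ is the same, so it is infinite.

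The key observation is that, because $\pi$ is surjective, the $G$-orbit under $S$ of any point of $X^n$ equals its $H$-orbit under $T$:
\[
\{(S^gx_1,\dots,S^gx_n)\mid g\in G\}=\{(T^{\pi(g)}x_1,\dots,T^{\pi(g)}x_n)\mid g\in G\}=\{(T^{h}x_1,\dots,T^{h}x_n)\mid h\in H\}.
\]
In the same way, the hypothesis $\pi(Z(G))=Z(H)$ gives, for each $x\in X$,
\[
\{S^gx\mid g\in Z(G)\}=\{T^{h}x\mid h\in Z(H)\}.
\]
So the $Z(G)$-orbits for $S$ are exactly the $Z(H)$-orbits for $T$.

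Now take $n$ distinct points $x_1,\dots,x_n$ lying in pairwise different $Z(G)$-orbits for $S$. By the second identity they lie in pairwise different $Z(H)$-orbits for $T$. The $n$-fold sTMSJ property of $(X,T,H)$ then says that the $H$-orbit of $(x_1,\dots,x_n)$ is dense in $X^n$. By the first identity this $H$-orbit is the $G$-orbit under $S$, so the latter is dense as well, which is what we need.

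There is no real obstacle here. The one point to watch is that only the inclusion $\pi(Z(G))\supseteq Z(H)$ is actually used. It guarantees that distinct $Z(G)$-orbits are distinct $Z(H)$-orbits, because each $Z(H)$-orbit is contained in (indeed equals) a $Z(G)$-orbit. Note also that $\pi(Z(G))\subseteq Z(H)$ holds automatically for an epimorphism.
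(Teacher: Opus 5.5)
Your proof is correct and follows essentially the same route as the paper. Surjectivity of $\pi$ identifies $S$-orbits of tuples with $T$-orbits, and $\pi(Z(G))=Z(H)$ identifies the central orbits, so density transfers directly. Your side remarks are also accurate and make the paper's short argument slightly more precise: only $Z(H)\subseteq\pi(Z(G))$ is actually used, and the reverse inclusion holds automatically for an epimorphism.
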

\begin{proof}
    By the definition of the action, the $S$-orbits and the $T$-orbits are the same. Since $\pi:G\to H$ is onto and the $T$-action is minimal, the $S$-action is also minimal. Next, consider $n$ points $x_1, \ldots, x_n$ on different $Z(G)$-orbits for $S$. Because $\pi(Z(G)) = Z(H)$, the points also lie on different $Z(H)$-orbits for $T$. Therefore, the denseness of the orbit of the $n$-tuple in $X^n$ for the $S$ action follows from that of the $T$-action.
\end{proof}
Nevertheless, the systems obtained through the previous proposition are not faithful, indeed every element of the kernel  $\ker(\pi)$ acts trivially. \\

The next lemma enables us to construct another example of an $n$-fold sTMSJ, starting from a single example. The proof is direct.
\begin{lemma}
    Let $X$ be an $n$-fold sTMSJ and $\phi\in\Aut(G)$. Then, the action $S^{g}(x) = T^{\phi(g)}(x)$ is an $n$-fold sTMSJ.
\end{lemma}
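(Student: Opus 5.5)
The plan is to check directly that the twisted action $S$ inherits each defining property of $n$-fold sTMSJ from $T$. The only inputs needed are that $\phi$ is a bijection of $G$ and that it preserves the center.

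First, $S$ is a group action: $S^{1_G}=T^{\phi(1_G)}=T^{1_G}=\mathrm{id}_X$, and $S^gS^{g'}=T^{\phi(g)}T^{\phi(g')}=T^{\phi(gg')}=S^{gg'}$. Continuity of $(g,x)\mapsto S^gx$ is immediate because $G$ is discrete. Since $X$ is unchanged, it is still infinite.

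Next, because $\phi$ is a bijection of $G$, for every $x\in X$ we have $\{S^gx \mid g\in G\}=\{T^{k}x \mid k\in G\}$. The same holds for the diagonal action on $X^n$, so the $S$-orbit of any tuple $(x_1,\dots,x_n)$ equals its $T$-orbit.

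The one point that needs an argument is that the hypothesis ``lying on different $Z(G)$-orbits'' means the same thing for $S$ and for $T$. Every automorphism maps the center onto itself: if $z\in Z(G)$ and $g\in G$, write $g=\phi(g')$; then $\phi(z)g=\phi(zg')=\phi(g'z)=g\phi(z)$, so $\phi(Z(G))\subseteq Z(G)$. Applying the same argument to $\phi^{-1}$ gives equality. Hence
\[\{S^zx \mid z\in Z(G)\}=\{T^{\phi(z)}x \mid z\in Z(G)\}=\{T^{z}x \mid z\in Z(G)\},\]
so the $Z(G)$-orbits for $S$ and for $T$ coincide.

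To conclude, take distinct $x_1,\dots,x_n$ lying on different $Z(G)$-orbits for $S$. By the previous step they also lie on different $Z(G)$-orbits for $T$. The $n$-fold sTMSJ property of $T$ then says their diagonal $T$-orbit is dense in $X^n$. Since this orbit equals the diagonal $S$-orbit, the $S$-orbit is dense as well, which proves the lemma.

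There is no real obstacle here; the step most worth writing out carefully is the invariance $\phi(Z(G))=Z(G)$. Alternatively, the whole lemma follows from \cref{prop:pull_back} applied to the epimorphism $\phi\colon G\to G$, for which the condition $\phi(Z(G))=Z(G)$ is exactly what was just checked.
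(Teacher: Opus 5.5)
Your proof is correct and is the direct verification the paper has in mind; the paper gives no argument beyond stating that ``the proof is direct.'' Your remark that the lemma is the special case $\pi=\phi$ of \cref{prop:pull_back} is also valid, and in both routes the only point requiring an argument is $\phi(Z(G))=Z(G)$, which you checked correctly.
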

The next example is less trivial, it provides doubly minimal faithful $\Z^d$-actions. 
\begin{theorem}\label{theo:KingExample}
    For any dimension $d \ge 2$, there exists a faithful doubly minimal action on the Cantor set for $\Z^d$.
\end{theorem}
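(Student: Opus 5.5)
Starting from King's strongly doubly minimal $\Z$-subshift $(Y,\sigma)$, which is totally minimal and has no nontrivial automorphisms besides powers of $\sigma$, I will build a faithful $\Z^d$-action. The naive pullback of \cref{prop:pull_back} along $\Z^d\to\Z$ is not faithful. So instead I will take $d$ commuting homeomorphisms of $Y$ whose joint action is faithful and which still has the same orbits as $\sigma$ on pairs. Concretely, pick $d$ rationally independent... but $Y$ is a $\Z$-system, so its only automorphisms are powers of $\sigma$. This forces a genuinely new construction. I will therefore construct a $\Z^d$-subshift $X\subseteq\{0,1\}^{\Z^d}$ directly by a cutting-and-stacking / hierarchical block construction in the style of King and Weiss. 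The goal is for $X$ to be minimal, faithful (which will follow from aperiodicity in every direction), and to satisfy the 2-fold sTMSJ property: for $x,y$ not on the same $\Z^d$-orbit, $\overline{\{(T^gx,T^gy)\}}=X^2$.

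The construction is inductive. At stage $n$ I have a finite family $\mathcal B_n$ of square $\Z^d$-blocks of side $\ell_n$. Stage-$(n+1)$ blocks are concatenations of stage-$n$ blocks arranged in a large $N_n^d$ grid. The arrangement has two features. First, every block in $\mathcal B_n$ appears, giving minimality. Second, spacers (blocks of a special form) are inserted with varying offsets in every coordinate direction. Then for any two stage-$(n+1)$ blocks $B,B'$ and any relative shift $s$ that is not a multiple of the stage-$(n+1)$ period structure, the pair $(B,B')$ realises every pair $(C,C')\in\mathcal B_n^2$ aligned at some common position. This is the $d$-dimensional analogue of King's "every pair of words occurs against each other" condition. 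I will use independent pseudo-random offsets along each of the $d$ axes, making shifts in all directions behave like King's one-dimensional spacer shifts.

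To verify double minimality, take $x,y\in X$ on different orbits and fix a target pair of cylinders $([C],[C'])$ at level $n$. Decompose $x$ and $y$ into their level-$m$ skeletons for $m>n$. If the relative displacement of the skeletons of $x$ and $y$ is eventually unbounded modulo the block grid, then at some level the hypothesis of the pair property applies, and $C$ opposite $C'$ appears. If the relative displacement is eventually constant $s$ at all levels, a standard recognizability argument (unique decomposition of points into blocks) gives $y=T^sx$, contradicting the assumption that $x,y$ lie on different orbits. Faithfulness then follows: a nonzero $g$ with $T^g=\mathrm{id}$ would make every point periodic, and this is excluded by the varying spacers in each direction. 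By \cref{lem:min+faithfull=>free} the action is in fact free.

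I expect the main obstacle to be the combinatorial pair-covering step. I must design the level-$(n+1)$ arrangement so that for all relative shifts simultaneously, across all $d$ directions, every ordered pair of level-$n$ blocks appears aligned, while recognizability is still guaranteed. My first attempt will be to take the level-$(n+1)$ blocks as products of King's one-dimensional construction in each coordinate. Concretely, I would index the placement of level-$n$ blocks by a tuple of one-dimensional King words, one per axis, with spacer lengths chosen independently per axis, and then use a counting argument with $N_n$ growing fast to show that all pairs occur for every non-aligned shift.
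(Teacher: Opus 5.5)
Your outline has the same overall shape as the paper's proof: a hierarchical construction of square blocks with spacers, a combinatorial lemma about shifted blocks, and a dichotomy on the relative displacement of the skeletons of $x$ and $y$. But you never actually define the subshift, and the pair-covering property you postpone is the entire content of the theorem.

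\textbf{The concrete construction you suggest fails.} Suppose the level-$n$ block at grid position $(i_1,\dots,i_d)$ is read off coordinatewise from independent one-dimensional King words, with spacers chosen independently per axis. Then level-$n$ blocks are indexed by $d$-tuples of one-dimensional types. Two points with the same first-axis data see, at every aligned position, blocks with the same first index. So pairs $(C,C')$ with different first indices never occur aligned. Yet these two points lie on different orbits once their second-axis data are not shifts of each other. In a plain product $Y_1\times Y_2$, the orbit closure of $((y_1,y_2),(y_1,y_2'))$ stays inside $\{((a,b),(a,c))\}$. Hence your pair property breaks exactly for shifts that are aligned in some coordinates and not in others.

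\textbf{What is missing is a coupling between directions.} The paper obtains it from Oprocha's words $H_n^{(0)},H_n^{(1)}$ in two steps. First it concatenates level-$(n-1)$ squares along $\b e_1$, in the order of the one-dimensional words, to form rows $\b H_n^{(0,\b e_1)},\b H_n^{(1,\b e_1)}$. Then it stacks these rows along $\b e_2$ in the same order. Thus the horizontal arrangement depends on the row type chosen vertically, and the resulting squares feed the next level. The two-dimensional coincidence lemma (\cref{lem:Oprocha2D}) follows by applying \cref{lem:Oprocha} twice. The first application, along $\b e_2$, finds a height where the two blocks show rows of different types. The second, along $\b e_1$, works inside those rows. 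A separate item handles shifts whose two coordinates sit at different scales.

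\textbf{Your verification has a gap independent of the construction.} Suppose the skeletons of $x$ and $y$ stay at bounded (hence, along a subsequence, constant) offset $s$. Recognizability only tells you that, after translating by $s$, the block boundaries agree at every level. It does not give $y=T^sx$, because the two points may carry different block types at the same positions. Excluding this requires the combinatorial property precisely for aligned, or slightly shifted, blocks of \emph{different} types. Your formulation, restricted to shifts ``not a multiple of the period structure'', leaves this case out. In the paper it is item (i) of \cref{lem:Oprocha2D}, valid for all $\|k\|_\infty\le h_{n-1}/2$. It forces a non-proximal pair at bounded offset to have the same block types at all large levels, hence to lie on one orbit.

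Note also that the paper only needs coincidences, that is, proximality of every off-orbit pair, and then invokes King's criterion. Your stronger target, that every pair $(C,C')$ of level-$n$ blocks occurs aligned, would need a finer combinatorial lemma than anything you indicate.
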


The proof is based on Oprocha's version~\cite{Oprocha_double_minimal:2019} of King's original construction. We  focus on the  construction of a doubly minimal system on $\Z^2$, and then explain how the construction works for higher dimensions.

\begin{proof}
    Consider the three letter alphabet $A=\{\tt 0, \tt 1, \tt a\}$, where $\tt a$ is a special spacer. We will use the construction of \cite{Oprocha_double_minimal:2019}, we recall the construction. We inductively define a sequence of words $H_n^{(0)}$, $H_n^{(1)} \in A^*$ of the same length. Suppose that $H_{n-1}^{(0)}$, $H_{n-1}^{(1)}$ have already been defined and have the same length. Let $k_n$ be an even number such that $k_n > |H_{n-1}^{(0)}|(2  |H_{n-1}^{(0)}| +1)$. Denote  $U_{n-1}^{(k)}= H_{n-1}^{(1)}\tt a(H_{n-1}^{(0)})^kH_{n-1}^{(1)}\tt a$ for $k= 3,4, 5$. Then  define 
   \begin{align*}\label{eq:Oprocha}
       H_n^{(0)} &= (H_{n-1}^{(0)})^{k_n}U_{n-1}^{(3)}(H_{n-1}^{(1)}\tt a)^{k_n}U_{n-1}^{(4)}(H_{n-1}^{(0)})^{k_n}U_{n-1}^{(5)}(H_{n-1}^{(1)}\tt a)^{k_n}H_{n-1}^{(0)} \numberthis \\
       H_n^{(1)} &= (H_{n-1}^{(0)}H_{n-1}^{(1)}\tt a)^{k_n/2}U_{n-1}^{(3)}(H_{n-1}^{(0)}H_{n-1}^{(1)}\tt a)^{k_n/2}U_{n-1}^{(4)}(H_{n-1}^{(0)}H_{n-1}^{(1)}\tt a)^{k_n/2}U_{n-1}^{(5)}(H_{n-1}^{(0)}H_{n-1}^{(1)}\tt a)^{k_n/2}H_{n-1}^{(1)}.
   \end{align*}
For a suitable $k_n$, the following noticeable facts holds: the two words have the same length $|H_n^{(0)}| = |H_n^{(1)}| = h_n$ and each word $H_n^{(0)}$ and $ H_n^{(1)}\tt a$ are concatenations of words $H_{n-1}^{(0)}$ and  $ H_{n-1}^{(1)}\tt a$. Moreover, Oprocha proved that shifted  words $H_n^{(0)}$, $H_n^{(1)}$ have coincidence as detailed in the next lemma.
\begin{lemma}{{\cite[Lemma 4.5]{Oprocha_double_minimal:2019}}}\label{lem:Oprocha} 
Let $\tt b$ be an auxiliary symbol $\tt b \notin \{\tt 0,\tt 1,\tt a\}$ and let $k\in \N^*$ be an integer.  

If $ k \le h_{n}/2$,  then for any  $\alpha \in \{0,1\}$, the words  $u= H_{n}^{(\alpha)}\tt b^k$
 and $w=  \tt b^kH_{n}^{(1-\alpha)}$ have a coincidence: there exists $k \le i \le  h_{n}-h_{n-1}$ such that $u_{[i,i+h_{n-1})}= w_{[i,i+h_{n-1} )}= H_{n-1}^{(0)}$.
\end{lemma}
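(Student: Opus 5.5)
The plan is a direct combinatorial argument on the block structure of \eqref{eq:Oprocha}. Write $h=h_{n-1}$. By construction, $H_n^{(0)}$ and $H_n^{(1)}$ are concatenations of the two blocks $H_{n-1}^{(0)}$ (length $h$) and $H_{n-1}^{(1)}\tt a$ (length $h+1$).

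The two words decompose as follows.
\begin{itemize}
\item $H_n^{(0)}$ contains two long runs $(H_{n-1}^{(0)})^{k_n}$, at the start and in the middle. On such a run, occurrences of $H_{n-1}^{(0)}$ start at all positions congruent to the run's starting point modulo $h$.
\item $H_n^{(1)}$ is, outside the three short insertions $U_{n-1}^{(3)},U_{n-1}^{(4)},U_{n-1}^{(5)}$ and the final block, periodic of period $2h+1$ with pattern $H_{n-1}^{(0)}H_{n-1}^{(1)}\tt a$. There, occurrences of $H_{n-1}^{(0)}$ start at positions in a single residue class modulo $2h+1$ on each periodic stretch.
\end{itemize}

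A coincidence at position $i$ means two things. First, $H_{n-1}^{(0)}$ occurs in $H_n^{(\alpha)}$ at position $i$. Second, it occurs in $H_n^{(1-\alpha)}$ at position $i-k$. In both cases $\alpha=0,1$ the problem reduces to the same task: find a position $p$ lying inside a run $(H_{n-1}^{(0)})^{k_n}$ of $H_n^{(0)}$ (with room for one full block) such that $p\pm k$ lies in a periodic stretch of $H_n^{(1)}$ (again with room for one full period). Here the sign is $-$ if $\alpha=0$ and $+$ if $\alpha=1$. We then need the two arithmetic progressions to meet: $p\equiv r_0 \pmod h$ and $p\pm k\equiv r_1 \pmod{2h+1}$.

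Since $\gcd(h,2h+1)=1$, the Chinese remainder theorem gives a solution in every window of $h(2h+1)$ consecutive integers. The hypothesis $k_n>h(2h+1)$ is precisely what guarantees that a run $(H_{n-1}^{(0)})^{k_n}$ contains such a window, shortened by a margin of a few blocks at each end. So the key steps in order are:
\begin{enumerate}
\item Record the positions of the runs and periodic stretches, with their residues modulo $h$ and $2h+1$.
\item For the given $k\le h_n/2$, choose one of the two runs of $H_n^{(0)}$ such that its translate by $\pm k$ falls, up to a bounded number of blocks, inside the periodic part of $H_n^{(1)}$.
\item Apply the Chinese remainder theorem inside that window to get $i$.
\item Check that $k\le i\le h_n-h$.
\end{enumerate}

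The main obstacle is step 2: ensuring the shifted window avoids the $U$ insertions and the boundaries. The non-periodic pieces of $H_n^{(1)}$ have length $O(h)$, which is negligible compared to the run length $k_nh$. So I would first try a case split according to whether $k$ is small or large, say $k\le k_nh/2$ versus larger, and use the first run or the middle run of $H_n^{(0)}$ accordingly. The bound $k\le h_n/2$ ensures that one of these choices keeps $i$ in $[k,h_n-h]$. If a translated window straddles a $U$ block, I would shrink it to the longer of its two pieces; that piece still has length at least $h(2h+1)$ once the bounded losses are subtracted from $k_nh$. The remaining work is routine index bookkeeping.
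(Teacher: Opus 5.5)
The paper gives no proof of this lemma; it is quoted from \cite[Lemma 4.5]{Oprocha_double_minimal:2019}. Your approach is the natural self-contained proof: with $h=h_{n-1}$, overlap a run $(H_{n-1}^{(0)})^{k_n}$ of $H_n^{(0)}$ with a translated stretch $(H_{n-1}^{(0)}H_{n-1}^{(1)}\mathtt{a})^{k_n/2}$ of $H_n^{(1)}$, and use the Chinese remainder theorem for the coprime periods $h$ and $2h+1$. Your reduction to an occurrence at $i$ in $H_n^{(\alpha)}$ and at $i-k$ in $H_n^{(1-\alpha)}$ is correct.

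The bookkeeping is simpler than your case split suggests. Set $K=k_n$; then $h_n=4Kh+2K+19h+6$.
\begin{itemize}
\item For $\alpha=0$, use the middle run of $H_n^{(0)}$. It starts at $2Kh+K+11h+4=h_n/2+\tfrac32h+1$, so its translate by $-k$ stays in $[\tfrac32h+1,\,3Kh+K+11h+4)$ for every $1\le k\le h_n/2$.
\item For $\alpha=1$, use the first run. Its translate by $+k$ stays in $[1,\,3Kh+K+10h+3)$.
\end{itemize}
Both intervals lie inside the first three stretches of $H_n^{(1)}$ and the $U$-blocks between them. Consecutive $U$-blocks of $H_n^{(1)}$ are $K(2h+1)/2>Kh$ apart, so each translate meets at most one $U$-block. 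The bounds $k\le i\le h_n-h$ are then automatic.

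The step that needs repair is the size estimate. A run has $Kh$ positions, not $K$, so $K>h(2h+1)$ is not ``precisely'' the right condition. What you need is this: after removing the excluded gap around one $U$-block, the longer remaining piece of the $(K-1)h+1$ admissible run positions must still contain $h(2h+1)$ consecutive integers. That gap has at most about $9h+3$ integers (the block, of length $\le 7h+2$, plus one period of the adjacent stretch). This requires roughly $K\ge 4h+13$, which follows from $K>h(2h+1)$ only when $h\ge 4$. That covers every $n\ge 2$, since $h_1\ge 49$, but not necessarily $n=1$.

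Here is a concrete failure at $n=1$. Start from the one-letter words $\mathtt 0,\mathtt 1$ with $k_1=4$, and take $\alpha=0$. For each $k\in\{4,\dots,10\}\cup\{18,\dots,23\}$, no aligned $\mathtt 0$ in a run of $H_1^{(0)}$ coincides with an aligned $\mathtt 0$ in a periodic stretch of $H_1^{(1)}$. The lemma still holds there, but only through the copies of $H_0^{(0)}$ inside the $U$-blocks. For example, for $k=4$ take $i=34$, which lies in $U_0^{(5)}$ of $H_1^{(0)}$; then $i-k=30$ lies in the third stretch of $H_1^{(1)}$.

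So you should either require $k_1$ (or $h_0$) to be large enough, which the paper's ``for a suitable $k_n$'' permits, or check the case $n=1$ directly.
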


We construct now a doubly minimal  $\Z^2$-subshift. We first need to introduce some notions. \par 
A {\it block} will be a coloration of some rectangle, i.e. an element of  $A^{[0,  h)\times [0,h')}$ for some $h,h'> 0$. Let $\b e_1$, $\b e_2$ denote the canonical base. A {\it concatenation} of two blocks $\b H \in A^{[0,  h_1)\times [0,h)} $ and $\b K \in  A^{[0,  k_1)\times [0,h)}$ {\it in the direction} $\b e_1$ is the block denoted $\b H \b K \in A^{[0,  h_1+k_1)\times [0,h)}$ defined as 
\[ \b H\b K|_{[0,  h_1)\times [0,h) } = \b H \textrm{ and } \b H\b K ( j ) = \b K(j-h_1 \b e_1) \textrm{ for } j \in {[h_1,  h_1+k_1)\times [0,h) }.
\]
One defines in a similar way the concatenation of blocks in the direction $\b e_2$.  

In the same way as in the one dimensional case, we construct inductively blocks on $\Z^2$ in two steps. We will concatenate block in the $\b e_1$ direction according to the order in \eqref{eq:Oprocha}. Then we will concatenate the obtained blocks in the $\b e_2$ direction according to the same order.  
Assuming $\b H_{n-1}^{(0)}$ and $\b H_{n-1}^{(1)}$ are blocks of $A^{[0,h_{n-1})^2}$, then  $\b H_{n}^{(0, \b e_1)}$ and $\b H_{n}^{(1, \b e_1)}$  will be concatenations of  blocks in the direction $\b e_1$, in the same order as in \eqref{eq:Oprocha}, that is
\begin{align*}\label{eq:Oprochae1}
       \b H_n^{(0, \b e_1)} &= (\b H_{n-1}^{(0)})^{k_n} \b U_{n-1}^{(3, \b e_1)}(\b H_{n-1}^{(1)}\tt a_{n-1}^{\b e_1})^{k_n}\b U_{n-1}^{(4, \b e_1)}(\b H_{n-1}^{(0)})^{k_n}\b U_{n-1}^{(5, \b e_1)}(\b H_{n-1}^{(1)}\tt a_{n-1}^{\b e_1})^{k_n}\b H_{n-1}^{(0)} \\
       \b H_n^{(1, \b e_1)} &= (\b H_{n-1}^{(0)}\b H_{n-1}^{(1)}\tt a_{n-1}^{\b e_1})^{k_n/2}\b U_{n-1}^{(3, \b e_1)}(\b H_{n-1}^{(0)}\b H_{n-1}^{(1)}\tt a_{n-1}^{\b e_1})^{k_n/2}\b U_{n-1}^{(4, \b e_1)}(\b H_{n-1}^{(0)}\b H_{n-1}^{(1)}\tt a_{n-1}^{\b e_1})^{k_n/2} \numberthis\\
        & \hspace{8cm}\b U_{n-1}^{(5, \b e_1)}(\b H_{n-1}^{(0)}\b H_{n-1}^{(1)}\tt a_{n-1}^{\b e_1})^{k_n/2}\b H_{n-1}^{(1)},
   \end{align*}
where $k_n$ is an even number such that $k_n > h_{n-1}(2  h_{n-1} +1)$; 
$\tt a_{n-1}^{\b e_1}\in A^{\{0\} \times [0, h_{n-1})}$ denotes the coloration with the single color $\tt a$   and  $\b U_{n-1}^{(k, \b e_1)}= \b H_{n-1}^{(1)}\tt a_{n-1}^{\b e_1}(\b H_{n-1}^{(0)})^k\b H_{n-1}^{(1)}\tt a_{n-1}^{\b e_1}$ for $k=3,4, 5$.
It is worth noting, from the properties of $H_n^{(0)}, H_n^{(1)}$ that $\b H_n^{(0,\b e_1)}$ and $\b H_n^{(1,\b e_1)}$ are colorations of the same rectangle $[0, h_n) \times [0, h_{n-1})$ and each block  $\b H_n^{(0, \b e_1)}$ and $\b  H_n^{(1, \b e_1)}\tt a_{n-1}^{\b e_1}$ are concatenations in the direction $\b e_1$ of blocks $\b H_{n-1}^{(0)}$ and  $\b H_{n-1}^{(1)}\tt a_{n-1}^{\b e_1}$,  $\b H_n^{(0, \b e_1)}$ is always followed by   $\b H_n^{(0, \b e_1)}$ or  $\b H_n^{(1, \b e_1)}$.

\begin{figure}[ht!]
\centering
    \begin{tikzpicture}[x=20pt,y=20pt]
\definecolor{azul}{rgb}{0.29,0.56,0.89}
\definecolor{rojo}{rgb}{0.82,0.01,0.11}
\definecolor{orange}{rgb}{0.96,0.65,0.14}
\definecolor{verde}{RGB}{126,211,33}

    
        \draw (-0.5,1) node {$\b H_{n}^{(0,e_1)} =$};

        \draw (1,2) -- (3,2) -- (3,0) -- (1,0) -- cycle;
        \draw (2,1) node {$\b H_{n-1}^{(0)}$};
        \draw [<->] (1,-0.2) -- (3,-0.2);
        \draw (2,-0.6) node {$h_{n-1}$};

        \draw (3.5,1) node {$\dots$};

        \draw (4,2) -- (6,2) -- (6,0) -- (4,0) -- cycle;
        \draw (5,1) node {$\b H_{n-1}^{(0)}$};
        \draw [decorate, decoration={brace}] (1,2.2) -- (6,2.2);
        \draw (3.5, 2.6) node {$k_n \textnormal{ times}$};

        \draw (6,2) -- (8,2) -- (8,0) -- (6,0) -- cycle;
        \draw (7,1) node {$\b H_{n-1}^{(1)}$};

        \draw [draw opacity = 0, fill = rojo] (8,2) -- (8.5,2) -- (8.5,0) -- (8,0) -- cycle;

        \foreach \x in {0,1,2}{
        \draw (8.5 + 2*\x,2) -- (10.5 + 2*\x,2) -- (10.5 + 2*\x,0) -- (8.5 + 2*\x,0) -- cycle;
        \draw (9.5 + 2*\x,1) node {$\b H_{n-1}^{(0)}$};
        }

        \draw (14.5,2) -- (16.5,2) -- (16.5,0) -- (14.5,0) -- cycle;
        \draw (15.5,1) node {$\b H_{n-1}^{(1)}$};

        \draw [draw opacity = 0, fill = rojo] (16.5,2) -- (17,2) -- (17,0) -- (16.5,0) -- cycle;
        
        \draw [decorate, decoration={brace, mirror}] (6,-0.2) -- (17,-0.2);
        \draw (11.5,-0.8) node {$\b U^{(3,e_1)}_{n-1}$};

        \draw (17.5,1) node {$\dots$};

        \draw (18,2) -- (20,2) -- (20,0) -- (18,0) -- cycle;
        \draw (19,1) node {$\b H_{n-1}^{(0)}$};

        \draw [<->] (20.2,2) -- (20.2, 0);
        \draw (21, 1) node {$h_{n-1}$};

    
        \draw (-0.5,-4) node {$\b H_{n}^{(1,e_1)} =$};

        \draw (1,-3) -- (3,-3) -- (3,-5) -- (1,-5) -- cycle;
        \draw (2,-4) node {$\b H_{n-1}^{(0)}$};
        \draw [<->] (1,-5.2) -- (3,-5.2);
        \draw (2,-5.6) node {$h_{n-1}$};

        \draw (3,-3) -- (5,-3) -- (5,-5) -- (3,-5) -- cycle;
        \draw (4,-4) node {$\b H_{n-1}^{(1)}$};

        \draw [draw opacity = 0, fill = rojo] (5,-3) -- (5.5,-3) -- (5.5,-5) -- (5,-5) -- cycle;

        \draw (6,-4) node {$\dots$};

        \draw (6.5,-3) -- (8.5,-3) -- (8.5,-5) -- (6.5,-5) -- cycle;
        \draw (7.5,-4) node {$\b H_{n-1}^{(0)}$};
        \draw (10.5,-3) -- (8.5,-3) -- (8.5,-5) -- (10.5,-5) -- cycle;
        \draw (9.5,-4) node {$\b H_{n-1}^{(1)}$};
        
        \draw [draw opacity = 0, fill = rojo] (10.5,-3) -- (11,-3) -- (11,-5) -- (10.5,-5) -- cycle;

         \draw [decorate, decoration={brace}] (1,-2.8) -- (11,-2.8);
         \draw (6, -2.4) node {$k_n/2 \textnormal{ times}$};

         \draw (11,-3) -- (13,-3) -- (13,-5) -- (11,-5) -- cycle;
        \draw (12,-4) node {$\b H_{n-1}^{(1)}$};
        \draw [draw opacity = 0, fill = rojo] (13,-3) -- (13.5,-3) -- (13.5,-5) -- (13,-5) -- cycle;

        \draw (15.5,-3) -- (13.5,-3) -- (13.5,-5) -- (15.5,-5) -- cycle;
        \draw (14.5,-4) node {$\b H_{n-1}^{(0)}$};

       \draw (16.8,-4) node {$\dots$};

       \draw (18,-3) -- (20,-3) -- (20,-5) -- (18,-5) -- cycle;
        \draw (19,-4) node {$\b H_{n-1}^{(1)}$};

        \draw [<->] (20.2,-3) -- (20.2, -5);
        \draw (21, -4) node {$h_{n-1}$};
    
\end{tikzpicture}
    \label{fig:op_hor}
    \caption{For the first step, the square blocks of the previous iteration, namely $\b H_{n-1}^{(0)}$ and $\b H_{n-1}^{(1)}$, are arranged following equation~\ref{eq:Oprochae1} where the blocks $\tt a^{e_1}_{n-1}$ are represented as red blocks.}
\end{figure}
In a second step, we proceed similarly in the direction $\b e_2$ but starting from the blocks $\b H_n^{(0,\b e_1)}, \b H_n^{(1,\b e_1)}$ instead of  $\b H_{n-1}^{(0)}, \b H_{n-1}^{(1)}$ and by concatenating them in the $\b  e_2$ direction, still in the same order as \eqref{eq:Oprocha}. Formally, the   blocks $\b H_n^{(0,\b e_2)}$ and $\b H_n^{(1,\b e_2)}$ are given by formulas \eqref{eq:Oprochae1} with  $\b H_n^{(i,\b e_1)}$ instead of $\b H_{n-1}^{(i)}$, $i=0,1$, 
$\b e_2$ instead of $\b e_1$ and where $\tt a_{n-1}^{\b e_2}\in A^{[0, h_{n}) \times \{0\}}$ denotes the coloration with the single color $\tt a$. Thanks to the properties of $H_n^{(j)}$, $j=0,1$, it is direct to check that the blocks $\b H_n^{(i,\b e_2)}$, $i= 0,1$ are colorations of the square $[0, h_n)^2$. Moreover $\b H_n^{(0,\b e_2)}$,  $\b  H_n^{(1, \b e_2)}\tt a_{n-1}^{\b e_2}$ are concatenations in the direction $\b e_2$ of blocks $\b H_{n-1}^{(0, \b e_2)}$ and  $\b H_{n-1}^{(1, \b e_2)}\tt a_{n-1}^{\b e_2}$. The block $\b H_{n}^{(0, \b e_2)}$ is always followed, in the direction $\b e_2$, by  $\b H_{n}^{(0, \b e_2)}$ or  $\b H_{n}^{(1, \b e_2)}$. 

\begin{figure}[H]
\centering
    \begin{tikzpicture}[x=22pt,y=20pt]
\definecolor{azul}{rgb}{0.29,0.56,0.89}
\definecolor{rojo}{rgb}{0.82,0.01,0.11}
\definecolor{orange}{rgb}{0.96,0.65,0.14}
\definecolor{verde}{RGB}{126,211,33}

\draw (-0.2,5.25) node {$\b H_{n}^{(0,e_2)} = $};

\draw (1,0) -- (11,0) -- (11,1) -- (1,1) -- cycle;
\draw (6,0.5) node {$\b H_{n}^{(0,e_1)}$};

        \draw [<->] (1,-0.2) -- (11,-0.2);
        \draw (6,-0.7) node {$h_{n}$};

        \draw [<->] (11.2,1) -- (11.2,0);
        \draw (11.9,0.5) node {$h_{n-1}$};

\draw (6,1.6) node {$\vdots$};

\draw (1,2) -- (11,2) -- (11,3) -- (1,3) -- cycle;
\draw (6,2.5) node {$\b H_{n}^{(0,e_1)}$};


\draw (1,3) -- (11,3) -- (11,4) -- (1,4) -- cycle;
\draw (6,3.5) node {$\b H_{n}^{(1,e_1)}$};

\draw [draw opacity=0, fill=rojo] (1,4) -- (11,4) -- (11,4.25) -- (1,4.25) -- cycle;

\draw (1,4.25) -- (11,4.25) -- (11,5.25) -- (1,5.25) -- cycle;
\draw (6,4.75) node {$\b H_{n}^{(0,e_1)}$};
\draw (1,5.25) -- (11,5.25) -- (11,6.25) -- (1,6.25) -- cycle;
\draw (6,5.75) node {$\b H_{n}^{(0,e_1)}$};

\draw (1,6.25) -- (11,6.25) -- (11,7.25) -- (1,7.25) -- cycle;
\draw (6,6.75) node {$\b H_{n}^{(0,e_1)}$};

\draw (1,7.25) -- (11,7.25) -- (11,8.25) -- (1,8.25) -- cycle;
\draw (6,7.75) node {$\b H_{n}^{(1,e_1)}$};

\draw [draw opacity=0, fill=rojo] (1,8.25) -- (11,8.25) -- (11,8.5) -- (1,8.5) -- cycle;

\draw (6,9.1) node {$\vdots$};

\draw (1,9.5) -- (11,9.5) -- (11,10.5) -- (1,10.5) -- cycle;
\draw (6,10) node {$\b H_{n}^{(0,e_1)}$};

    
\draw (12.8,5.25) node {$\b H_{n}^{(1,e_2)} = $};

\draw (14,0) -- (24,0) -- (24,1) -- (14,1) -- cycle;
\draw (19,0.5) node {$\b H_{n}^{(0,e_1)}$};

\draw (14,1) -- (24,1) -- (24,2) -- (14,2) -- cycle;
\draw (19,1.5) node {$\b H_{n}^{(1,e_1)}$};

\draw [draw opacity=0, fill=rojo] (14,2) -- (24,2) -- (24,2.25) -- (14,2.25) -- cycle;

\draw (19,2.85) node {$\vdots$};

\draw (14,3.25) -- (24,3.25) -- (24,4.25) -- (14,4.25) -- cycle;
\draw (19,3.75) node {$\b H_{n}^{(0,e_1)}$};

\draw (14,4.25) -- (24,4.25) -- (24,5.25) -- (14,5.25) -- cycle;
\draw (19,4.75) node {$\b H_{n}^{(1,e_1)}$};

\draw [draw opacity=0, fill=rojo] (14,5.25) -- (24,5.25) -- (24,5.5) -- (14,5.5) -- cycle;

\draw (14,5.5) -- (24,5.5) -- (24,6.5) -- (14,6.5) -- cycle;
\draw (19,6) node {$\b H_{n}^{(1,e_1)}$};

\draw [draw opacity=0, fill=rojo] (14,6.5) -- (24,6.5) -- (24,6.75) -- (14,6.75) -- cycle;

\draw (14,6.75) -- (24,6.75) -- (24,7.75) -- (14,7.75) -- cycle;
\draw (19,7.25) node {$\b H_{n}^{(0,e_1)}$};

\draw (19,8.75) node {$\vdots$};

\draw (14,9.5) -- (24,9.5) -- (24,10.5) -- (14,10.5) -- cycle;
\draw (19,10) node {$\b H_{n}^{(1,e_1)}$};
    
\end{tikzpicture}
    \label{fig:op_ver}
    \caption{For the second step, we obtain new square blocks by concatenating the rectangles $\b H_{n}^{(i,e_1)}$ following an analog of equation~\ref{eq:Oprocha}, where once again the blocks $\tt a^{e_2}_{n-1}$ are represented as red blocks.}
\end{figure}
\noindent Finally, we set $\b H_n^{(0)} = \b H_n^{(0, \b e_2)}$ and  $\b H_n^{(1)} = \b H_n^{(1, \b e_2)}$.

Since the blocks are obtained by concatenating blocks in the same order as in the one dimensional case (see Equation \eqref{eq:Oprocha}), we get similar combinatorial properties as in Lemma \ref{lem:Oprocha}.   
\begin{lemma}\label{lem:Oprocha2D}
Let  $n \ge t, v >2$ be  integers and  $k= (k_{1}, k_{2}) \in \Z^2$  be a vector.
\begin{enumerate}[label=(\roman*)]
	\item     If  $\|k\|_\infty = \max(|k_{1}|, |k_{2}|)  \le h_{n-1}/2$, then the blocks $\b H_n^{(\alpha)}$ and  $T^{-k}\b  H_n^{(1-\alpha)}$, $\alpha= 0,1$, have a coincidence outside the ball of radius $\|k\|_\infty$. More precisely, there  is a position $i \in \Z^2$, $\|k\|_\infty \le \|i\|_\infty \le h_n-h_{n-1} $  such that
     $\b H_n^{(\alpha)}|_{i+ [0,h_{n-1})^2} = \b H_n^{(1-\alpha)}|_{i+k +[0,h_{n-1})^2} = \b H_{n-1}^{(0)}$.

  \item If $h_{t-1}/2 <k_{1} \le  h_{t}/2$  and  $h_{v-1}/2 <k_{2} \le  h_{v}/2$, with $t,v \le n-2$ then the blocks $\b  H_{n}^{\alpha}$ and $T^{-k} \b  H_{n}^{\beta}$, $\alpha, \beta \in \{0,1\}$ have a coincidence. More precisely, denoting $u= \max (t, v)$, there is a position $q\in \Z^{2}$ such that $q+ [0, h_{u})^{2}$ and   $q+k+ [0, h_{u})^{2}$ are subset of $[0, h_{n})^{2}$ and 
  $\b H_{n}^{(\alpha)}|_{q+ [0, h_{u})^{2}}=     \b H_{n}^{(\beta)}|_{q+k+ [0, h_{u})^{2}} = \b H_{{u}}^{(0)}$.
\end{enumerate}    
\end{lemma}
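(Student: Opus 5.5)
The plan is to reduce both parts to the one–dimensional coincidence statement, \cref{lem:Oprocha}, using the product structure of the blocks. By construction, each row of $\b H_n^{(\alpha)}$ at height $j$ lies inside a horizontal strip coming from some $\b H_n^{(\beta,\b e_1)}$. The strips are stacked in the $\b e_2$ direction according to the word $H_n^{(\alpha)}$, with the symbol $\tt 0$ replaced by $\b H_n^{(0,\b e_1)}$ and $\tt 1\tt a$ replaced by $\b H_n^{(1,\b e_1)}\tt a^{\b e_2}_{n-1}$. Inside each strip, the squares $\b H_{n-1}^{(\gamma)}$ are arranged horizontally according to $H_n^{(\beta)}$. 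So I will first record a ``coding'' observation: a square $i+[0,h_{n-1})^2$ carries $\b H_{n-1}^{(0)}$ in $\b H_n^{(\alpha)}$ exactly when two conditions hold. The vertical coordinate of $i$ must be the start of a strip of type $\b H_n^{(\beta,\b e_1)}$ in the $\b e_2$-word. The horizontal coordinate must be the start of an occurrence of $H_{n-1}^{(0)}$ in the $\b e_1$-word $H_n^{(\beta)}$.

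For (i), I will split $k=(k_1,k_2)$ and treat the two directions successively.

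\begin{itemize}
\item \textbf{Vertical direction.} Apply \cref{lem:Oprocha} to the words $H_n^{(\alpha)}$ and $H_n^{(1-\alpha)}$ shifted by $|k_2|$, at the level of the strips. This yields a vertical position where both blocks display the same strip type, namely the one coding $H_{n-1}^{(0)}$, which is $\b H_{n}^{(0,\b e_1)}$ after unwinding the substitution.
\item \textbf{Horizontal direction.} Within these two aligned copies of $\b H_n^{(0,\b e_1)}$, shifted horizontally by $k_1$, apply \cref{lem:Oprocha} again, or its trivial self-coincidence version if the strips are of equal type. This gives a horizontal coincidence of a full $\b H_{n-1}^{(0)}$.
\end{itemize}

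The bound $\|k\|_\infty\le h_{n-1}/2\le h_n/2$ ensures the hypotheses. The localization $\|k\|_\infty\le\|i\|_\infty\le h_n-h_{n-1}$ is inherited from the index bounds $k\le i\le h_n-h_{n-1}$ in \cref{lem:Oprocha}, taking the coordinate of the direction realizing $\|k\|_\infty$. A technical point is that the one-dimensional lemma is stated for positive shifts. I will handle negative components by symmetry: swap the roles of $u$ and $w$, and of $\alpha$ and $1-\alpha$.

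For (ii), I will use induction down the hierarchy. Since $k_1\le h_t/2$ and $k_2\le h_v/2$ with $u=\max(t,v)\le n-2$, the block $\b H_n^{(\alpha)}$ decomposes into level-$(u+1)$ blocks, all of which are $\b H_{u+1}^{(0)}$ or $\b H_{u+1}^{(1)}$ (plus spacers). These contain long runs of identical blocks: the $k_n$ repetitions, and the patterns $U^{(3)},U^{(4)},U^{(5)}$ appearing in both $H^{(0)}$ and $H^{(1)}$. I will look for a level-$(u+1)$ block in $\b H_n^{(\alpha)}$ and one in $T^{-k}\b H_n^{(\beta)}$ whose offset is $k$ modulo the grid. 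The one-dimensional fact used here is Oprocha's: because $k_n$ is huge compared with $h_{n-1}$ and the $U^{(j)}$ insertions desynchronize the two codings, any two shifted level-$n$ words contain aligned occurrences of level-$(n-1)$ words of prescribed types. Iterating this down to level $u+1$ in each coordinate gives overlapping copies of $\b H_{u+1}^{(\gamma)}$ and $\b H_{u+1}^{(\gamma')}$ with relative shift exactly $k$. Part (i) at level $u+1$ then produces the coincidence $\b H_u^{(0)}$, which is legitimate since $\|k\|_\infty\le h_u/2$.

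The main obstacle is the unequal-scale case $t\neq v$. There, a single level has to absorb a shift that is small in one coordinate and large in the other, and the spacer columns $\tt a^{\b e_1}_{n-1}$ and rows $\tt a^{\b e_2}_{n-1}$ shift the grids in the two directions independently. To handle this, I would first descend in the direction with the larger scale alone until the scales match, using the one-dimensional lemma coordinatewise. Only then would I combine the two directions as in (i).
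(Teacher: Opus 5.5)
\textbf{Part (i) breaks at the horizontal step.} \cref{lem:Oprocha} only gives a coincidence between $H_n^{(\gamma)}$ and a shifted $H_n^{(1-\gamma)}$, that is, between words of \emph{opposite} types. Your vertical step, however, produces two aligned strips of the \emph{same} type $\b H_n^{(0,\b e_1)}$, so the horizontal comparison becomes $H_n^{(0)}$ against its own shift by $k_1$. The ``trivial self-coincidence'' you fall back on exists only for $k_1=0$. For $0<|k_1|\le h_{n-1}/2$ it would require two occurrences of $\b H_{n-1}^{(0)}$ in the same strip at overlapping positions. Neither the one-dimensional lemma nor the hierarchical decomposition provides this, since the occurrences coming from the decomposition never overlap.

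The missing idea, which is how the paper argues, is to use the vertical coincidence only to \emph{synchronize} the two vertical codings. From heights $i_2'$ and $i_2'+k_2$ on, both codings are concatenations of the units $\b H_n^{(0,\b e_1)}$ and $\b H_n^{(1,\b e_1)}\tt a^{\b e_2}_{n-1}$. They therefore stay aligned as long as they agree. You then climb to the first height $i_2\ge i_2'$ where the aligned strips are $\b H_n^{(\beta,\b e_1)}$ and $\b H_n^{(1-\beta,\b e_1)}$ for some $\beta$. Only there do you apply \cref{lem:Oprocha} horizontally, to $H_n^{(\beta)}$ and $H_n^{(1-\beta)}$ with shift $k_1$.

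\textbf{The same issue undermines (ii)}, since (i) is again only usable on blocks of opposite types.
\begin{itemize}
\item For $\alpha\neq\beta$ no descent is needed. Since $\|k\|_\infty\le h_{n-2}/2\le h_{n-1}/2$, part (i) at level $n$ yields a common $\b H_{n-1}^{(0)}$, whose lower-left corner is $\b H_u^{(0)}$.
\item For $\alpha=\beta$ your scheme cannot work as described. Both copies carry the same level-$(u+1)$ grid, one translated by $k$. A level-$(u+1)$ block of one copy therefore sits at relative shift exactly $k$ only from its own translate in the other copy. This forces $\gamma=\gamma'$, which is precisely the case (i) does not cover.
\end{itemize}

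The paper instead finds, inside the single block $\b H_n^{(\alpha)}$, two neighbouring level-$(u+1)$ blocks $\b H_{u+1}^{(0)}$ and $\b H_{u+1}^{(1)}$ of opposite types, and applies (i) at level $u+1$ to that pair. The paper leaves the offset bookkeeping implicit there, and the two neighbours occupy different positions in $\b H_n^{(\alpha)}$. If you follow this route, you must check that the shift you obtain really is $k$.

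Finally, your worry about the unequal-scale case $t\neq v$ is unnecessary. Working at level $u+1$ and using only $\|k\|_\infty\le h_u/2$ treats both coordinates at once.
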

\begin{proof} Since the blocks are concatenations in the $\b e_2$ direction of  blocks $\b H_n^{(0, \b e_1)}$ and $\b H_n^{(1, \b e_1)}\tt a^{\b e_2}_{n-1}$ ordered as the words $H_n^{(0)}$, $H_n^{(1)}$, by Lemma \ref{lem:Oprocha} (i), there is an integer $i'_2 \ge |k_2|$ such that $\b H_n^{(\alpha)}|_{(0,i'_2) + [0,h_{n}) \times [0,h_{n-1})} = \b H_n^{(1-\alpha)}|_{(k_1,k_2+i'_2) + [0,h_{n}) \times [0,h_{n-1})} = \b H_n^{(0, \b e_1)}$. 
Notice that this level do not correspond to the top row of the block: precisely $i'_2 + k_2 +h_{n-1} < k_2 + h_n$, because the condition $\|k\|_\infty  \le h_{n-1}/2$ would impose  on the top rows of $\b H_n^{(0)} $ and $\b H_n^{(1)} $ to be equal.

\noindent Since the block $\b H_n^{(0, \b e_1)}$ is always followed, in the $\b e_2$ direction, by $\b H_n^{(0, \b e_1)}$ or  $\b H_n^{(1, \b e_1)}$,   
there is an integer  $i_2 \ge i'_2$ such that $\b H_n^{(\alpha)}|_{(0,i_2) + [0,h_{n}) \times [0,h_{n-1})} =\b H_n^{(\beta, \b e_1)}$  and  \[\b H_n^{(1-\alpha)}|_{(k_1,i_2+k_2) + [0,h_{n}) \times [0,h_{n-1})} = \b H_n^{(1-\beta, \b e_1)},\] for some $\beta \in \{0,1\}$.
Again,  Lemma \ref{lem:Oprocha} (i) applied to $\b H_n^{(\beta, \b e_1)}$ and $T^{-(k_{1},0)}\b H_{n}^{(1-\beta, \b e_{1})}$, gives the existence of an integer $i_1 \ge |k_1|$ such that $i =(i_1, i_2)$ satisfies the statement of Item (\textit{i}).

To prove Item (\textit{ii}), let us consider the case where  $v \ge t$. The other case is similar. The situation $\alpha\neq \beta$ is treated in item (\textit{i}), so we assume $\alpha=\beta$.
Recall that the left bottom block of $\b H_{n}^{(\alpha)}$ is of the form $\b H_{v+1}^{(0)}$. 
Since this block is always followed, in the $\b e_1$ direction, by $\b H_{v+1}^{(0)}$ or  $\b H_{v+1}^{(1)}$,   
there is an integer  $i_2$ such that $\b H_n^{(\alpha)}|_{(0,i_2h_{v+1}) + [0,h_{v+1})^{2}} =\b H_{v+1}^{(1)}$  and  $\b H_{n}^{(\alpha)}|_{(0,(i_{2}-1)h_{v+1}) + [0,h_{v+1})^{2}} = \b H_{v+1}^{(0)}$.
By Item (\textit{i}), the blocks  $T^{-k} \b H_{v+1}^{(0)}$ and $\b H_{v+1}^{(1)}$ have a coincidence.  
\end{proof}

Let $x\in \{\tt 0,\tt  1,\tt  a \}^{\Z^2}$ be a point  such that for each $n>0$ $x|_{[-h_n, h_n)^2} (j)   = \b H_{n+1}^{(0)}|_{[0, 2h_n)^2} ( j+ h_n \b e_1 +h_n \b e_2)$, for $ j \in [-h_n, h_n)^2$.  This means that $ x|_{[-h_n, h_n)^2}$ is the double concatenation in the direction $\b e_2$ of the   double concatenation  in the direction $\b e_1$ of $\b H_n^{(0)}$. This block is actually a sub-block of the one defined at the level $n+1$ so that the blocks appearing in $x$ are the same as the ones appearing in $\b H_n^{(\alpha)}$, $\alpha =0,1$.

\begin{lemma}
The $\Z^2$-subshift $X= \overline{\textrm{orb}_T (x)}$ that is the orbit closure of $x$ is minimal and aperiodic, i.e the stabilizer of any point is trivial. 
\end{lemma}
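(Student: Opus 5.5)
We prove minimality via uniform recurrence of $x$, and aperiodicity from the coincidence properties of \cref{lem:Oprocha2D} together with the hierarchical structure of $x$.

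\emph{Minimality.} Recall that the orbit closure of $x$ is minimal as soon as every pattern occurring in $x$ occurs syndetically in $x$. A finite pattern $P$ of $x$ is contained in $x|_{[-h_n,h_n)^2}$ for some $n$, hence in a $2\times 2$ concatenation of blocks $\b H_n^{(0)}$, which is itself a sub-block of $\b H_{n+1}^{(0)}$. So it suffices to show that $\b H_{n+1}^{(0)}$ occurs syndetically in $x$.

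By construction, every level-$(m+1)$ block ($m\ge n+1$) is a concatenation, first in direction $\b e_1$ and then in direction $\b e_2$, of level-$m$ blocks and spacer strips. The spacer strips $\tt a^{\b e_1}_{m}$ and $\tt a^{\b e_2}_{m}$ have width one. Each of $\b H_{m+1}^{(0)}$ and $\b H_{m+1}^{(1)}$ contains a copy of $\b H_m^{(0)}$ within every window of side roughly $3h_m$: in both words of \eqref{eq:Oprocha}, $H_{m}^{(0)}$ occurs with gaps of at most two level-$m$ words, and the same holds in the two-step concatenation.

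Iterating down from level $m$ to level $n+1$, the occurrences of $\b H_{n+1}^{(0)}$ inside any level-$m$ block have gaps bounded by roughly $3h_{n+1}+1$. This bound is independent of $m$, because at every level the blocks tile the plane with gaps of width at most one. Since $x|_{[-h_m,h_m)^2}$ consists of four level-$m$ blocks for every $m$, every point of $\Z^2$ lies within bounded distance of an occurrence of $\b H_{n+1}^{(0)}$. This gives syndeticity, hence minimality of $X$.

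\emph{Aperiodicity.} By minimality, if some $y\in X$ had a nontrivial period $p$, then every pattern of $X$ would be $p$-periodic on the region where both the pattern and its $p$-translate are defined. To see this, note that the set of points with period $p$ is closed and invariant, hence it is all of $X$. So it suffices to show that $x$ itself is not $p$-periodic for any $p\neq 0$.

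Fix $p=(p_1,p_2)\neq 0$ and choose $n$ large with $\|p\|_\infty\le h_{n-1}/2$. Then $x$ contains a copy of $\b H_{n+1}^{(0)}$, which contains both $\b H_n^{(0)}$ and $\b H_n^{(1)}$ at known positions. I would look at a position where the structure forces a difference under translation by $p$:
\begin{itemize}
\item If $p_1\neq 0$, compare the one-dimensional words read along a row through level-$(n-1)$ spacers. The spacer columns $\tt a^{\b e_1}$ appear after $\b H^{(1)}$ blocks but never inside runs of $(\b H^{(0)})^{k_n}$. A horizontal shift of size $|p_1|\le h_{n-1}/2$ would therefore move a spacer column onto a non-spacer column inside a long run of $\b H_{n-1}^{(0)}$. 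Here $k_n$ is large compared with $h_{n-1}$.
\item If $p_1=0$, use the same argument in direction $\b e_2$ with the spacer rows.
\end{itemize}
Concretely, the letter $\tt a$ occupies an entire column segment immediately to the right of an $\b H^{(1)}_{n-1}$ block. Because of the long runs of $\b H^{(0)}_{n-1}$, the translated column segment does not consist entirely of $\tt a$'s, contradicting periodicity.

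I expect the main obstacle to be this last bookkeeping step. It requires checking that the spacer columns are not themselves periodic with period $p_1$, which comes down to the irregular placement produced by the exponents $3,4,5$ in $\b U^{(k)}$. If that becomes messy, I would instead use the one-dimensional aperiodicity of Oprocha's subshift applied to the sequence of $\b e_1$-level block types read along a row.
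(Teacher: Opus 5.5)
Your reduction of aperiodicity to showing that $x$ itself has no period $p\neq 0$ is correct, and the paper starts the same way. The decisive step, however, is missing.

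With $|p_1|\le h_{n-1}/2$, the translate of a spacer column $\tt{a}^{\b e_1}_{n-1}$ never reaches a long run of $\b H^{(0)}_{n-1}$. It lands on column $p_1-1$ of the level-$(n-1)$ block right after the spacer, or on column $h_{n-1}+p_1$ of the preceding $\b H^{(1)}_{n-1}$. For $p_2\neq 0$ it also partly leaves its row band. So you need that one column of a single neighbouring block is not entirely $\tt{a}$, and the construction does not guarantee this.

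Suppose the top-level spacers of $H^{(0)}_j$ and $H^{(1)}_j$ (the $\tt{a}$'s appended after copies of $H^{(1)}_{j-1}$) share a position $c$. Then column $c$ of both $\b H^{(0)}_j$ and $\b H^{(1)}_j$ consists only of $\tt{a}$'s. Since every row band of $\b H^{(\gamma)}_{m+1}$ begins with $\b H^{(0)}_m$, induction gives the same for column $c$ of every $\b H^{(\gamma)}_m$ with $m\ge j$. The stated constraints on $k_j$ allow this. For $H^{(0)}_0=\tt{0}$, $H^{(1)}_0=\tt{1}$ and $k_1=4$, one gets $H^{(0)}_1=\tt{00001a0001a}\cdots$ and $H^{(1)}_1=\tt{01a01a}\cdots$, both with $\tt{a}$ at position $5$. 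For $p=(6,0)$, the shift you describe (into the following block) then carries every level-$(n-1)$ spacer column onto an all-$\tt{a}$ column, and no contradiction arises.

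What is missing is a mechanism that does not depend on a single local witness. The paper gets it from \cref{lem:Oprocha2D}. Periodicity yields a translated lattice $q+(v_1\Z\times v_2\Z)$ of $\tt{a}$'s. The coincidences between $\b H^{(0)}_m$ and its small translates $T^{-k\b e_i}\b H^{(0)}_m$ on copies of $\b H^{(0)}_n$ then spread these $\tt{a}$'s over all cosets, forcing $\b H^{(0)}_n$ to consist only of $\tt{a}$'s, which is absurd. Your fallback does not close the gap as stated either: a period with $p_2\neq 0$ does not map rows to rows, and reading block types off the symbols of $x$ requires a recognizability property you have not proved.

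Your minimality argument follows the paper's route (syndetic recurrence through the hierarchy), but the intermediate claim is false. The word $H^{(0)}_{m+1}$ contains the run $(H^{(1)}_m\tt{a})^{k_{m+1}}$, which has no $H^{(0)}_m$. Moreover, the first $k_{m+1}$ row bands of $\b H^{(0)}_{m+1}$ are stacked copies of $\b H^{(0,\b e_1)}_{m+1}$. So $\b H^{(0)}_{m+1}$ contains a square of side about $k_{m+1}h_m$ whose decomposition uses only $\b H^{(1)}_m$ and spacers, and there is no gap bound like $3h_{n+1}+1$.

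The repair is the paper's argument. Each of $\b H^{(0)}_{n+2}$ and $\b H^{(1)}_{n+2}$ has $\b H^{(0)}_{n+1}$ in its bottom-left corner. Also, $x$ is tiled by level-$(n+2)$ blocks separated by spacers one cell thick. Hence $\b H^{(0)}_{n+1}$ recurs with gaps $O(h_{n+2})$.
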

\begin{proof}
The block  $x|_{[-h_n, h_n)^2}$ appears in the left bottom of each block $\b H_{n+1}^{(0)}$ which appears in each block  $\b H_m^{(0)}$ and  $\b H_m^{(1)}$, $m>n+1$. Hence $x|_{[-h_n, h_n)^2}$ occurs syndetically. The  orbit closure  $X$  for the shift map $T$ of $x$ is then a minimal subshift. 

The subshift being minimal, any potential period $v= (v_{1}, v_{2})\in \Z^2\setminus\{\bf 0\}$ is a period for $x$: $T^v x = x$.  Take $n$ large enough so that $h_{n-2}/2>\|v\|_\infty$. In particular this means that for each  integers $k\in \Z$, $m\in \N^{*}$, the patterns $T^{kv}\tt a_{m}^{\b e_{1}}$ and $T^{kv}\tt a_{m}^{\b e_{2}}$ occur in $x$. This imposes that there is a translated of a sublattice on which the restriction of $x$ is $\tt a$: there is a position $q \in \Z^{2}$ such that for any $(\ell^{1}, \ell^{2}) \in \Z^{2}$, $x_{q+(v_{1}\ell^{1}, v_{2} \ell^{2}) } = \tt a$.   So, this is also the case for the pattern $\b H_{m}^{(0)}$. Since the patterns   $\b H_{m}^{(0)}$ and $T^{-\b ke_{i}} \b H_{m}^{(0)}$, for  any  $m> n+2$, $ i= 1,2$ and $0 \le k \le \| v \|_{\infty}$, coincide on a pattern $\b H_{n}^{(0)}$ (Lemma \ref{lem:Oprocha2D}), the pattern $\b H_{n}^{(0)}$ is covered by all the translations of the lattice $(v_{1},0) \Z \oplus (0,v_{2})\Z$ where the coloration is constant to $\tt a$. Hence the coloration $x$ is constant, a contradiction.  
\end{proof}

\begin{lemma}
The system  $(X, T, \Z^{2})$ is doubly minimal.
\end{lemma}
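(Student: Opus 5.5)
We must show that for any $x,y\in X$ lying in different $\Z^2$-orbits, the orbit of $(x,y)$ under the diagonal action is dense in $X\times X$. For $\Z^2$ the center is the whole group, so this is the right notion.

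Since $X$ is minimal, every point has the same language as $x$. Density of the orbit of $(x,y)$ therefore amounts to the following: for any $n$ and any pair of blocks $P,Q$ of size $[0,h_n)^2$ occurring in $X$, there is $g\in\Z^2$ with $T^gx|_{[0,h_n)^2}=P$ and $T^gy|_{[0,h_n)^2}=Q$. It is enough to realise every pair $(\b H_{n}^{(0)},\b H_{n}^{(0)})$ simultaneously for arbitrarily large $n$. Indeed, by the construction every block of size $h_m$ with $m<n$ appears inside $\b H_n^{(0)}$, and in fact at the same relative position in both copies.

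The plan is as follows.

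\textbf{Step 1: recognizability.} Every point $z\in X$ admits, for each $n$, a unique decomposition of $\Z^2$ into $n$-level tiles. These are translates of $\b H_n^{(\alpha)}$, together with the spacer rows and columns $\tt a$. This is the standard two-dimensional analogue of recognizability for Oprocha's words. It follows from the product structure: rows are concatenations in direction $\b e_1$ ordered as the words $H_n^{(\cdot)}$, and there is an analogous structure in direction $\b e_2$. Note that the one-dimensional unique parsing holds because of the spacers $\tt a$ and the large exponents $k_n$.

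\textbf{Step 2: relative offset.} Let $k_n(x,y)\in\Z^2$ be the offset between the $n$-level tilings of $x$ and $y$, taken modulo the local tile grid. If $\|k_n\|_\infty$ stayed bounded along a subsequence of $n$, we would obtain a common shift $g$ with $y$ and $T^gx$ agreeing on arbitrarily large tiles around a fixed region. By a compactness argument this gives $y=T^gx$ after passing to the limit, which is excluded by hypothesis. So there are levels $n$ with $h_{t-1}/2<|k_{1}|$ or $h_{v-1}/2<|k_2|$ (up to signs) for the various scales, with $t,v\le n-2$ for suitably large $n$. Taking $n$ larger, we can arrange that the offsets fall into the regimes of Lemma \ref{lem:Oprocha2D}.

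\textbf{Step 3: applying Lemma \ref{lem:Oprocha2D}.} In the region where a level-$n$ tile of $x$ overlaps a level-$n$ tile of $y$ shifted by $k$, one of two cases applies:
\begin{enumerate}[label=(\roman*)]
\item if $\|k\|_\infty\le h_{n-1}/2$ and the tiles have different types, we use item (i);
\item if the components of $k$ are in the intermediate ranges, we use item (ii), which handles any types $\alpha,\beta$.
\end{enumerate}
Either way we find $g$ at which both $T^gx$ and $T^gy$ display $\b H_u^{(0)}$ on a common window, with $u\to\infty$ as $n\to\infty$. The remaining case is a small offset with equal types. Then we move along the $\b e_1$ or $\b e_2$ direction to the next tile: since $\b H^{(0)}$ is followed by $\b H^{(0)}$ or $\b H^{(1)}$, and the parsing sequences of $x$ and $y$ must eventually differ (otherwise the offset would be a genuine shift), we reach a pair of different types, as in the proof of Lemma \ref{lem:Oprocha2D}.

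\textbf{Main obstacle.} I expect Step 2 to be the hardest: controlling the offset vector coordinatewise across levels. In two dimensions one coordinate of $k$ may be small while the other is large, which is why item (ii) with separate $t$ and $v$ is needed, and why mixed cases must be combined with the row-by-row argument of (i). I would first treat each coordinate using the one-dimensional Lemma \ref{lem:Oprocha} on rows, and then on columns of $n$-level row-blocks. This mirrors the two-step construction.
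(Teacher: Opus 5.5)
\paragraph{The gap: your opening reduction.} If $T^gx$ and $T^gy$ both display $\b H_n^{(0)}$ on the same window, they \emph{agree} on that window. Every sub-block then sits at the same position in both coordinates, so you only ever realise diagonal pairs $(P,P)$. What Steps 1--3 can deliver is therefore that $E=\overline{\{(T^gx,T^gy)\mid g\in\Z^2\}}$ meets, and hence contains, the diagonal $\Delta_X$, i.e.\ that $(x,y)$ is proximal. The ``same relative position'' you invoke is exactly what prevents you from obtaining $(P,Q)$ with $P\neq Q$, so $E=X\times X$ does not follow.

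The missing idea is the reduction the paper imports from King. Your argument applies verbatim to every pair $(x,T^my)$ with $m\in\Z^2$, because these pairs also lie in different orbits. Proximality of $(x,T^my)$ gives $g_j$ with $(T^{g_j}x,T^{g_j}T^my)\to(w,w)$, hence $(w,T^{-m}w)\in E$. By minimality the whole graph $\{(u,T^{-m}u)\mid u\in X\}$ then lies in $E$. The union of these graphs over $m\in\Z^2$ is dense in $X\times X$: given $(a,b)$, pick $m$ with $T^{m}a$ close to $b$. So $E=X\times X$. In block language: to realise $(P,Q)$ you need a coincidence of $x$ with a \emph{prescribed} shift of $y$, on a block containing $P$ and $Q$ at the matching offset. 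A coincidence of $x$ and $y$ at an uncontrolled position with zero offset does not give this.

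\paragraph{The rest of your plan.} Once this is repaired, your plan is the paper's proof. You show that any two points in different orbits share windows $\b H_u^{(0)}$ with $u$ unbounded, using Lemma~\ref{lem:Oprocha2D}(ii) when the offsets grow and (i) when they stay bounded but the tile types differ.

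Your Step~2 is not right as stated, though. Bounded offsets alone do not make $y$ and a shift of $x$ agree on large tiles, since the tile types at the two positions may differ. As in the paper, what gives $y=T^gx$ is bounded offsets \emph{together with} non-proximality. Non-proximality, through item (i), forces equal types at all large levels and on neighbouring tiles.

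Also, uniqueness of the tile decomposition in Step~1 is more than you need; the paper only uses its existence.
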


\begin{proof}
To prove double minimality, it suffices to show that any pair of points $y,z$  in different orbits are proximal, i.e. the  distance   between $T^n y $  and $T^n z$ goes to zero for an infinite sequence of $n\in \Z^2$ \cite[p.747]{King_top_minimal_self_joinings:1990}.

Indeed, since $y$ is decomposed into blocks of the form
$\b H_n^{(0)} $ and $\b H_n^{(1)}$,  
for each integer $n>0$, there are integers $ - h_n/2 \le i_1(n), i_2(n) \le h_n/2$  such that $y_{(i_1,i_2)+ [0, h_n)^2} = \b H_n^{(\alpha_{n}, \b e_2)}$, for some $\alpha_{n} \in \{0, 1\}$. Similarly, there are integers $ i_\ell - h_{n-2}/2 \le j_\ell(n) \le i_\ell+h_{n-2}/2$, $\ell= 1,2$ such that $z_{(j_1,j_2)+ [0, h_{n-2})^2} = \b H_{n-2}^{(\beta_{n}, \b e_2)}$  with  $\beta_{n} \in \{ 0,1\}$.  In particular, the indices $i_\ell$ and $j_\ell$ are close in the following sense $|i_\ell-j_\ell| \le h_{n-2}/2$. 
Let $v_n$ and $t_n$ be the smallest integers such that  $(h_{v_{n}-1}/2 <)$ $|i_2-j_2| \le h_{v_n}/2$ and  $(h_{t_{n}-1}/2 <)$ $|i_1-j_1| \le h_{t_n}/2$.

Setting $u_{n} = \max (v_{n}, t_{n})$, Lemma \ref{lem:Oprocha2D} (\textit{ii}) provides the existence of one  position $q_{n} \in \Z^{2}$ such that
\begin{align}\label{eq:Oprocha2}
y|_{q_{n}+ [0, h_{u_n})^{2} } = z|_{q_{n}+ [0, h_{u_n})^{2}} = \b H_{u_{n}}^{(0)}.
\end{align}
In particular,  the elements $y$ and $z$ have coincidence.

Doing the former step for each integer $n$ provides integer sequences $(v_n)_{n\ge 0}$ and $(t_n)_{n \ge 0}$.  If  one of the two sequences is unbounded, the elements $y$ and $z$ coincide  on larger and larger set (of size the same size as $\b H_{\max (v_n, t_n)}^{(0)}$ by equation \eqref{eq:Oprocha2}). Hence the two elements are proximal.

In contrast, let us consider the case where the two sequences $(v_n)_n$ and $(t_n)_n$ are bounded, Then, the positions $i(n) = (i_{1}, i_{2})$ and $j(n)= (j_{1}, j_{2})$ belong  to a bounded set. So, up to taking a subsequence of $n$, we can assume that the positions $i$ and $j$ are independent of $n$. 
We claim that the points $y$ and $z$ are proximal or either in the same orbit. To prove it, let us assume they are not proximal. This means they can not coincide on sufficiently large ball. So, there is some integer $N_1 >0$ such that $y$ and $z$ cannot coincide on any square of size $h_n$ for $n >N_1$. Lemma \ref{lem:Oprocha2D} (\textit{i}) then ensures that $y|_{i+[0, h_n)^2} = z|_{j+[0, h_n)^2} = \b H_n^{(\alpha_n)}$ for the same $\alpha_n \in \{0,1\}$ for any $n>N_1+1$. Also, the same lemma gives that $y|_{i+ (\epsilon_1  h_n, \epsilon_2  h_n)+[0, h_n)^2} = z|_{j+(\epsilon_1  h_n, \epsilon_2  h_n)+[0, h_n)^2}$ for any $\epsilon_1, \epsilon_2 \in \{-1, 0, 1\}$. Since it is true for infinitely many $n$, this provides that $T^{-i}y= T^{-j}z$ meaning that $y$ and $z$ are in the same orbit. 
\end{proof}

The construction of a $\Z^d$ subshift that is doubly minimal follows the same strategy as in dimension $2$. By concatenating blocks iteratively in each direction $\b e_{2}, \ldots, \b e_{d}$ according to the same order as \eqref{eq:Oprocha}.  The properties and the proofs are the same. 

\end{proof}

\subsection{Limits to $n$-folding}\label{sec:LimitnFolding}

As mentioned in the introduction, in~\cite{King_top_minimal_self_joinings:1990} King showed that no $\Z$-system can have a 4-fold sTMSJ. Our goal in this section is to generalize his result to other groups. We will first establish a general theorem that bounds the number of folds if the group allows a certain decomposition. Next, we move towards a generalization of King's proof strategy that involves the use of asymptotic pairs.

We begin with the following preliminary lemma, which will inform how the general proof works. We say that $(x,y)$ is a $\varepsilon$-bi-asymptotic pair if $x\neq y$ and $d(T^gx,T^gy)\leq \ve$ for all $g\in G$ except for a finite set. Furthermore, we introduce $\|\phi\| = \sup_{x\in X}d(\phi(x), x)$ for $\phi\in\Aut(X,T,G)$.

\begin{lemma}
    Let $(X,T,G)$ be a topological dynamical system that contains a $\varepsilon$-bi-asymptotic pair for every $\varepsilon>0$. Then $(X,T,G)$ is not strong doubly minimal.
\end{lemma}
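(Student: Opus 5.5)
Argue by contradiction: assume $(X,T,G)$ is strong doubly minimal and contains an $\varepsilon$-bi-asymptotic pair for every $\varepsilon>0$. We will manufacture automorphisms arbitrarily close to the identity, which \cref{prop:expansive} (or rather the argument behind it) forbids.

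\emph{Step 1: the pairs lie on central orbits.} Fix $\varepsilon>0$ and let $(x,y)$ be an $\varepsilon$-bi-asymptotic pair, with exceptional finite set $F\subseteq G$. Suppose $x$ and $y$ lie on different $Z(G)$-orbits. Then, by strong double minimality, the orbit of $(x,y)$ is dense in $X^2$. Pick two points $p,q$ with $d(p,q)>2\varepsilon$ (possible for small $\varepsilon$ since $X$ is infinite) together with small neighbourhoods. The set of $g$ with $(T^gx,T^gy)$ in the product neighbourhood is infinite; indeed it is nonempty and stays nonempty after removing finitely many $g$, because $X$ has no isolated points and the orbit accumulates. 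So some $g\notin F$ satisfies $d(T^gx,T^gy)>\varepsilon$, a contradiction. Hence $y=T^{c}x$ for some $c\in Z(G)$, and $c\neq 1_G$ since $x\neq y$.

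\emph{Step 2: the resulting automorphism is small.} Set $\phi=T^{c}$, which is an automorphism. We have $d(T^g x,\phi(T^gx))=d(T^gx,T^gy)\le\varepsilon$ for all $g\notin F$. Since $(X,T,G)$ is minimal and $G\setminus F$ is still thick (removing a finite set from $G$ preserves thickness), \cref{lem:minimal_por_horobola} gives that $\{T^gx:g\in G\setminus F\}$ is dense in $X$. By continuity, $\|\phi\|\le\varepsilon$. Moreover $\phi\neq{\rm Id}$ because $\phi(x)=y\neq x$.

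\emph{Step 3: conclude.} Running Steps 1 and 2 for $\varepsilon=1/n$ yields automorphisms $P_n=T^{c_n}$ with $c_n\in Z(G)$, $P_n\neq{\rm Id}$ and $\|P_n\|\to0$. These commute with each other because $Z(G)$ is abelian. By \cref{lem:uncountable_aut}, small balls in $\Aut(X,T,G)$ are then uncountable. On the other hand, \cref{lem:coalescent} says every automorphism is some $T^g$ with $g\in Z(G)$, and $G$ is countable, so $\Aut(X,T,G)$ is countable. This is a contradiction. Alternatively, one can contradict \cref{prop:expansive} directly: $P_n(x)$ and $x$ are distinct yet $1/n$-close along the whole orbit.

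The main obstacle is the density claim in Step 1, namely that the orbit of $(x,y)$ visits the far-apart neighbourhood for some $g$ outside $F$. I expect to handle it through perfectness of $X$: a minimal infinite system has no isolated points, so $X^2$ has none either. Consequently, a point with dense orbit visits each open set infinitely often, and in particular at some $g\notin F$.
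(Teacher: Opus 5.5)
Your proof is correct and follows the same route as the paper. A bi-asymptotic pair must lie on a single $Z(G)$-orbit, which gives a nontrivial central automorphism $T^{c}$ with $\|T^{c}\|\le\varepsilon$; this contradicts the countability of $\Aut(X,T,G)$ via \cref{lem:coalescent} and \cref{lem:uncountable_aut}. The paper fixes the ``no small nontrivial automorphism'' constant first and uses a single pair, whereas you take $\varepsilon=1/n$ and build the sequence $P_n$, and you justify more carefully, via perfectness of $X$, why the orbit closure of $(x,y)$ cannot be all of $X^2$.
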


\begin{proof}
 Assume that $(X,T,G)$ is strong doubly minimal. As before, thanks to  \cref{lem:uncountable_aut,lem:coalescent}, take $\epsilon>0$ such that if $\phi \in \Aut(X,T,G)$ is such that $\|\phi\|\leq \epsilon$, then $\phi={\rm id}$. Let $(x,y)$ be a $\epsilon$-bi-asymptotic pair. It is clear that the orbit of $(x,y)$ is not $X\times X$, hence $y=T^g x$ for some $g\in Z(G)$. But this implies that $\|T^g\|\leq \epsilon$, and therefore $T^g={\rm id}$, contradicting that $x\neq y$.  
\end{proof}

\begin{lemma}
\label{lem:diff_orbits}
    Let $(X,T,G)$ be a minimal topological dynamical system and assume that $(G,\rho)$ has thick horoballs. Suppose that for every $\ve>0$ there exists $h\in\partial G$ with an $(h,\ve)$-asymptotic pair. Then, for all $\ve>0$ there exists $h\in\partial G$ and an $(h,\ve)$-asymptotic pair $(x,y)$ such that $x$ and $y$ lie on different $Z(G)$ orbits. 
\end{lemma}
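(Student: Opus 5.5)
Given $\ve>0$, we start from an $(h,\ve)$-asymptotic pair $(x,y)$ supplied by the hypothesis. If $x$ and $y$ already lie on different $Z(G)$-orbits, there is nothing to prove. Otherwise $y=T^{c}x$ for some $c\in Z(G)$, and $c$ acts nontrivially on $x$ since $x\neq y$. The plan in this case is to transport the asymptoticity from the pair to the single map $T^c$. We then use minimality together with the thickness of $\{h<0\}$ to show that $T^c$ moves every point of $X$ by at most $\ve$. Finally, we let $\ve\to0$ and obtain either a contradiction or a genuinely different pair.

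\textbf{Step 1: $T^c$ is uniformly $\ve$-close to the identity.} Since $c$ is central, $T^gy=T^gT^cx=T^c(T^gx)$. Hence $d(T^gx,T^c(T^gx))\le\ve$ for every $g\in\{h<0\}$. Because $\{h<0\}$ is thick and $(X,T,G)$ is minimal, \cref{lem:minimal_por_horobola} gives $\omega_{\{h<0\}}(x)=X$. By continuity of $T^c$ we conclude that $\|T^c\|=\sup_{z}d(z,T^cz)\le\ve$. The map $T^c$ is a nontrivial element of $\Aut(X,T,G)$: it is nontrivial because $T^cx=y\neq x$. By \cref{lem:min+faithfull=>free}, $T^c$ has no fixed points.

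\textbf{Step 2: producing a pair on different orbits.} Suppose that for some $\ve_0>0$ every $(h,\ve_0)$-asymptotic pair lies on a single $Z(G)$-orbit. Then for each $\ve\le\ve_0$ the pair given by the hypothesis produces a central $c_\ve$ with $0<\|T^{c_\ve}\|\le\ve$. These automorphisms commute, tend to the identity uniformly, and are distinct from the identity. We then try to manufacture, for a fixed small $\ve$, a new asymptotic pair off the $Z(G)$-orbit. For this we replace $y=T^cx$ by a point $y'$ that agrees with $y$ asymptotically along $\{h<0\}$ but is not of the form $T^{c'}x$. The natural tool is to compose two such central maps, or to apply \cref{lem:uncountable_aut}, which gives uncountably many automorphisms of norm at most $\ve$.

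If the system is in addition strong doubly minimal (the intended application), then \cref{lem:coalescent} says every automorphism is some $T^{c}$ with $c\in Z(G)$. Then \cref{lem:uncountable_aut} forces $Z(G)$ to be uncountable, contradicting countability of $G$. So for small $\ve$ some asymptotic pair lies on different $Z(G)$-orbits. Monotonicity in $\ve$ (an $(h,\ve')$-pair with $\ve'\le\ve$ is an $(h,\ve)$-pair) then gives the statement for all $\ve$.

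The main obstacle is Step 2 for a merely minimal system. Without coalescence, uncountably many small automorphisms need not yield an off-orbit asymptotic pair directly. I would first try taking $P$ to be a small automorphism not of the form $T^c$, and using the pair $(x,Px)$. If no such $P$ exists, the automorphism group consists of central translations, and the countability argument above applies.
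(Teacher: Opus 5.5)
Your structure matches the paper's. Step 1 is exactly its first case. In Step 2 you correctly get commuting central maps $T^{c_\ve}\neq\mathrm{id}$ with $\|T^{c_\ve}\|\to 0$. Then \cref{lem:uncountable_aut}, together with countability of $Z(G)$, gives an automorphism $P$ with $\|P\|\le\ve_0$ that is not of the form $T^{c}$ for any $c\in Z(G)$. The gap is that you stop there. You assert that without coalescence such a $P$ ``need not yield an off-orbit asymptotic pair directly.'' You then finish only under the extra hypothesis of strong double minimality, which is not part of the statement. As written, the proposal does not prove the lemma for a merely minimal system.

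That obstacle is not real. The missing step is one line, using the lemma you already invoked in Step 1. Suppose $Px=T^{c}x$ for some $x\in X$ and $c\in Z(G)$. Then $T^{c^{-1}}\circ P\in\Aut(X,T,G)$ fixes $x$, so by \cref{lem:min+faithfull=>free} it is the identity. Thus $P=T^{c}$, a contradiction. Hence $Px\notin Z(G)x$ for every $x$, and in particular $Px\neq x$. Since $P$ commutes with the action, $d(T^gx,T^gPx)=d(T^gx,P(T^gx))\le\|P\|\le\ve_0$ for all $g\in G$. So $(x,Px)$ is an $(h,\ve_0)$-asymptotic pair for every $h\in\partial G$, contradicting your standing assumption. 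Add this and drop the detour through composing central maps, which only produces more central maps. Your argument then coincides with the paper's.
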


\begin{proof}
Suppose there exists $\ve_0>0$ such that $\|T^g\|> \ve_0$ for all $g\in Z(G)\setminus\{{\rm Id}\}$. Let $0<\ve\leq \ve_0$. Consider $h\in\partial G$, and an $(h,\ve)$-asymptotic pair $(x,y)$ with $x\neq y$. Suppose there exists $g_0\in Z(G)$ such that $y = T^{g_0}x$. Then, $d(T^gx,T^{g_0g}x)\leq \varepsilon$ for all $g\in\{h<0\}$. Because $\{h<0\}$ is thick, and the action is minimal, by~\cref{lem:minimal_por_horobola} $\{T^gx \mid g\in\{h<0\}\}$ is dense. By continuity, $\|T^{g_0}\|\leq\ve\leq\ve_0$. This implies $g_0 = 1_G$, which in turn implies $x = y$. Therefore, $x$ and $y$ lie in different $Z(G)$ orbits.

So, we may assume that there exists $g_n \in Z(G)$ with $\|T^{g_n}\|\to0$. By \cref{lem:uncountable_aut}, there exists $\phi\notin\{T^g \mid g\in Z(G)\}$ with $\sup_{x\in X} d(\phi(x),x) \leq \ve$. In particular, for any $x \in X$ we have $\phi(x)\notin Z(G)x$. Then $(x,\phi(x))$ are $\ve$-asymptotic (for any $h$). 
\end{proof}

A collection of subsets $\Omega_1,\ldots,\Omega_d$ is thickly full if each $\Omega_i$ is thick and the set $G\setminus \bigcup \Omega_i$ is finite.

\begin{theorem} \label{thm:king_general}
Let $(X,T,G)$ be a topological dynamical system. Assume that there exists $N\in \N$ such that for every $\varepsilon>0$ there exist points $x_i,y_i\in X$ for $i\in\{1,\ldots,N\}$ and sets $\Omega_i^{(\varepsilon)}\subseteq G$ such that 
\begin{itemize}
    \item The points $x_i$ and $y_i$ are distinct, and $d(T^gx_i,T^gy_i)\leq \ve$ for all $g\in \Omega_i^{(\varepsilon)}$.
    \item The collection $\Omega_1^{(\varepsilon)},\ldots,\Omega_N^{(\varepsilon)}$ is thickly full.
\end{itemize}

Then $(X,T,G)$ does not have $2N$-fold sTMSJ.
\end{theorem}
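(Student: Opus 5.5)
We argue by contradiction. Assume $(X,T,G)$ has $2N$-fold sTMSJ, and consider the $2N$-tuple $(x_1,y_1,\ldots,x_N,y_N)$ given by the hypothesis for a small $\varepsilon>0$ fixed below. The idea is King's: if these $2N$ points lay on pairwise distinct $Z(G)$-orbits, their diagonal orbit would be dense in $X^{2N}$. Since the sets $\Omega_i^{(\varepsilon)}$ cover $G$ up to a finite set, for all but finitely many $g$ some pair $(T^gx_i,T^gy_i)$ is $\varepsilon$-close. That forbids the orbit from approaching a point of $X^{2N}$ whose coordinates $x_i',y_i'$ satisfy $d(x_i',y_i')>3\varepsilon$ for every $i$. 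Such points form an open set $W$, nonempty once $\varepsilon<\operatorname{diam}(X)/3$ because $X$ is infinite. The finitely many exceptional $g$ contribute only a finite set, which is not dense, so the orbit cannot be dense: it misses $W$ except at finitely many times. This part is routine.

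The main obstacle is therefore to ensure that the $2N$ points lie on pairwise distinct $Z(G)$-orbits. First we fix $\varepsilon$ small as in the proof of \cref{lem:diff_orbits}. By \cref{prop:totally_minimal} the system is minimal. By \cref{lem:coalescent} and \cref{lem:uncountable_aut}, as in \cref{prop:expansive}, there is $\varepsilon_0>0$ such that $\|T^g\|\le\varepsilon_0$ with $g\in Z(G)$ forces $T^g=\mathrm{id}$.

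Second, we show that $y_i\notin Z(G)x_i$ for $\varepsilon\le\varepsilon_0$. If $y_i=T^{g_0}x_i$ with $g_0\in Z(G)$, then $d(T^gx_i,T^{g_0}T^gx_i)\le\varepsilon$ for $g\in\Omega_i^{(\varepsilon)}$. Since $\Omega_i^{(\varepsilon)}$ is thick, \cref{lem:minimal_por_horobola} shows that $\{T^gx_i : g\in\Omega_i^{(\varepsilon)}\}$ is dense, so $\|T^{g_0}\|\le\varepsilon$. Hence $T^{g_0}=\mathrm{id}$ and $x_i=y_i$, a contradiction.

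Third, we handle coincidences between different pairs, which is the delicate step. We replace each pair by a translate $(T^{a_i}x_i,T^{a_i}y_i)$. Such a translate is $\varepsilon$-close on $\Omega_i^{(\varepsilon)}a_i^{-1}$. This set is still thick, but the family may no longer be thickly full after translation. To avoid this, I would instead use the freedom in the dense-orbit argument itself. The $2N$-fold property gives density for every choice of points on distinct $Z(G)$-orbits. If some of the $2N$ points share a $Z(G)$-orbit, we can drop the duplicates and use $k$-fold sTMSJ for $k<2N$ (\cref{rem:+=>-}) on the remaining representatives. Suppose, say, that $x_j=T^cx_i$ with $c\in Z(G)$. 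Density of the reduced tuple, together with continuity of $T^c$, yields density of the orbit of the full tuple inside the closed set of configurations $\{(\ldots,x_j',\ldots): x_j'=T^cx_i'\}$. It then suffices to find a point of this constraint set with all pairs $3\varepsilon$-apart, open relative to the constraint set. Since each constraint fixes a coordinate as $T^c$ applied to another, and since $y_i\notin Z(G)x_i$, we can still choose the free coordinates so that each $x_i'$ is far from $y_i'$. Here we use that $X$ is perfect, being infinite and minimal, and that the maps $T^c$ are homeomorphisms. The contradiction then follows exactly as in the first paragraph.
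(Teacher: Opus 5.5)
Your first two steps, and the reduction to $Z(G)$-orbit representatives (using centrality to transfer density from the reduced tuple to the full one), agree with the paper. The third step, however, has a genuine gap. The sentence ``we can still choose the free coordinates so that each $x_i'$ is far from $y_i'$'' is exactly the nontrivial point, and perfectness of $X$ together with the $T^c$ being homeomorphisms does not give it.

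Write $x_t=T^{c_t}z_{a(t)}$ and $y_t=T^{c_t'}z_{b(t)}$ with $c_t,c_t'\in Z(G)$ and $a(t)\neq b(t)$. You need $(w_1,\ldots,w_k)\in X^k$ with $d(T^{c_t}w_{a(t)},T^{c_t'}w_{b(t)})>3\ve$ for every $t$. Once $w_1,\ldots,w_l$ are fixed, the forbidden region for $w_{l+1}$ is a finite union of sets $T^{c}\overline{B}(v,3\ve)$ with $c\in Z(G)$. For general homeomorphisms such a union can be all of $X$: on a Cantor set, a homeomorphism can send one small clopen ball onto the complement of another. Nor can you compensate by taking $\ve$ small relative to the central elements. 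The $c_t,c_t'$ are determined by the points $x_i,y_i$, which the hypothesis supplies only after $\ve$ is fixed, so they may escape to infinity as $\ve\to0$. Meanwhile $\{T^c\}_{c\in Z(G)}$ is in general far from equicontinuous, already for $G=\Z^d$.

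The missing ingredient is a non-covering estimate uniform in the central elements. For fixed $L$ and all sufficiently small $\ve$, no union of $L$ sets $T^{c}B(w,\ve)$ with $c\in Z(G)$ and $w\in X$ covers $X$. This is the Claim inside the paper's proof.

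If $Z(G)$ acts through finitely many maps, the Claim follows from equicontinuity. Otherwise, argue by contradiction along $\ve_n\to0$:
\begin{enumerate}
\item Choose $c^{(1)},\ldots,c^{(L+1)}\in Z(G)$ acting by distinct maps, and fix $x\in X$.
\item By pigeonhole, and after passing to a subsequence, two fixed points $T^{c^{(p)}}x$ and $T^{c^{(q)}}x$ lie in a common set $T^{c_n}B(w_n,\ve_n)$ of the $n$-th cover.
\item With $x_n=T^{c_n^{-1}}x$, centrality gives $d(T^{c^{(p)}}x_n,T^{c^{(q)}}x_n)<2\ve_n$.
\item A limit point $x'$ then satisfies $T^{c^{(p)}}x'=T^{c^{(q)}}x'$. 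This contradicts the freeness of the $\Aut(X,T,G)$-action on a minimal system (\cref{lem:min+faithfull=>free}).
\end{enumerate}

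With this Claim for $L=2kN$, choose $w_1,\ldots,w_k$ inductively, each $w_{l+1}$ avoiding the at most $2kN$ forbidden sets produced by $w_1,\ldots,w_l$. Your density argument then closes.

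Alternatively, take a $Z(G)$-invariant probability measure $\mu$. It is non-atomic unless $Z(G)$ acts through finitely many maps, by the same freeness. Then $\mu(T^cB(w,\ve))=\mu(B(w,\ve))$ and $\sup_w\mu(B(w,\ve))\to0$ give the same uniform bound.
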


\begin{proof}
Note that the assumptions imply that $X$ is infinite. Assume that $(X,T,G)$ does have $2N$-fold sTMSJ. Let $\ve_0>0$ be such that $\|T^g\|> \ve_0$ for all $g\in Z(G)\setminus\{1_G\}$. For $0<\ve\leq \ve_0$, let $(x_i,y_i)$, and $\Omega_i$, $i=1,\ldots,N$ given as in the statement of \cref{thm:king_general}. Then we must have that $x_i$ and $y_i$ lie in different $Z(G)$ orbits. Indeed, if $y_i=T^{g}x_i$ for some $g\in Z(G)$, by minimality of $(X,T,G)$ and thickness of $\Omega_i$, we would get $\|T^g\|\leq \ve$, and hence $T^g={\rm id}$, contradicting that $x_i\neq y_i$. 
Let $A=\{z_1,\ldots,z_k\}$ be a maximal subset of $\{x_1,y_1,\ldots,x_N,y_N\}$ with the property that all of its elements lie in different $Z(G)$-orbits. Since $x_i,y_i$ do not lie in the same $Z(G)$-orbit,  we have $2\leq k \leq 2N$. 
For $i=1,\ldots,N$, set $h_i,k_i\in Z(G)$ such that $T^{h_i}x_i, T^{k_i}y_i\in A$ (these exist by maximality of $A$) and write $T^{h_i}x_i=z_{a(i)}$, $T^{k_i}y_i=z_{b(i)}$, for $a(i),b(i)\in \{1,\ldots,k\}$. Note that for all $1\leq i\leq N$, $a(i)\neq b(i)$ since $x_i$ and $y_i$ lie in different $Z(G)$-orbits.  
Consider the tuple $(z_1,\ldots,z_k)$. By construction, all coordinates of this tuple lie in different $Z(G)$-orbits, and since $(X,T,G)$ has $2N$-fold sTMSJ (and in particular has $k$-fold sTMSJ), the orbit of $(z_1,\ldots,z_k)$ is dense in $X^k$. Let $(w_1,\ldots,w_k) \in X^k$, and let $(g_j)_{j\in \N}$ be a sequence in $G$ such that $T^{g_j}z_i\to w_i$ as $j\to \infty$, for all $i=1,\ldots,k$. Taking a subsequence if needed, we may assume that $g_j\in \Omega_t$ for some $t\in \{1,\ldots,N\}$. This implies that  $d(T^{g_j}x_t,T^{g_j}y_t)\leq \varepsilon$ for all $j$ and $(T^{g_j}T^{h_t}x_t,T^{g_j}T^{k_t}y_t)=(T^{g_j}z_{a(t)}, T^{g_j}z_{b(t)})\to (w_{a(t)},w_{b(t)})$.  We conclude that $w_{b(t)} \in T^{k_t}B(T^{h_t^{-1}}w_{a(t)},\ve)$. Recalling that $a(t)\neq b(t)$ we obtain:
 \begin{equation}
  \tag{P}\label{eq:property_asympt}
  \parbox{\dimexpr\linewidth-7em}{
  For any $(w_1,\ldots,w_k) \in X^k$, there exist $1\leq a<b\leq k$ such that \[w_b\in \bigcup_{1\leq t\leq N} T^{k_t}B(T^{h_t^{-1}}w_a,\ve) \cup T^{h_t}B(T^{k_t^{-1}}w_{a},\ve)\]
  }
\end{equation}

\begin{claim}
Fix $L \in \N$. For any small enough $\ve>0$, any finite set of points $w_1,\ldots,w_L \in X$ and $h_1,\ldots,h_{L}\in Z(G)$, the set 
$\bigcup_{1\leq i\leq L} T^{h_i^{-1}}B(w_i,\ve)$ is not equal to $X$. 
\end{claim}
\begin{proof}[Proof of the claim]
Assume for the sake of contradiction that the conclusion is not true. Take $\ve_n\to 0$,  $w_{1,n},\ldots,w_{L,n} \in X$, $h_{1,n},\ldots,h_{L,n}\in Z(G)$ such that 
\[ X=\bigcup_{1\leq i\leq L} T^{h_{i,n}^{-1}}B(w_{i,n},\ve_n)\]
Note that this implies that $Z(G)$ is infinite, otherwise the equicontinuity of the family $\{T^{h}\}_{h\in Z(G)}$ would imply that $X$ is finite.
As $Z(G)$ is infinite, choose $h^{(1)},...,h^{(L+1)}\in Z(G)$ such that the corresponding maps $T^{h^{(1)}},...,T^{h^{(L+1)}}$ are distinct. Fix $x\in X$. Then, for every $n\in\N$ by the pigeonhole principle, there exists $q\neq p$ and $i_n$ such that $T^{h^{(p)}}x$ and $T^{h^{(q)}}x$ belong to $T^{h_{i_n,n}^{-1}}B(w_{i_n,n},\ve_n)$. Up to a subsequence, we suppose $p$ and $q$ do not depend on $n$. Then, if we denote $h = h^{(p)}$ and $h'=h^{(q)}$, we have that $h, h'\in Z(G)$. Then, $d(T^{h}T^{h_{i,n}}x,T^{h'}T^{h_{i,n}}x) \leq 2\ve_n$. Again, taking a subsequence if needed, we may assume $T^{h_{i_n,n}}x\to x'$ for some $x'\in X$, and so $T^{h}x'=T^{h'}x'$. By~\cref{lem:min+faithfull=>free}, this implies $T^{h}=T^{h'}$, which contradicts the choice of elements $h^{(j)}$.
\end{proof}

Given $\ve>0$ so that the claim above holds for $L=2kN$, we can construct a point $(w_1,\ldots,w_k)\in X^k$ for which $\eqref{eq:property_asympt}$ does not hold. Indeed, start with any $w_1$ and choose $w_2$ so that $\eqref{eq:property_asympt}$ does not hold for $w_b=w_2$ and $w_a=w_1$. Such $w_2$ exists thanks to the previous claim. Inductively, if we have chosen $w_1,\ldots,w_l$, we pick $w_{l+1}$ such that it is not in $ \bigcup_{i\leq l}\bigcup_{1\leq t\leq N} T^{k_t}B(T^{h_t^{-1}}w_i,\ve) \cup T^{h_t}B(T^{k_t^{-1}}w_{i},\ve)$. Thanks to the claim, such a $w_{l+1}$ always exists. Finishing with $l+1=k$, we find the announced point. Hence $(X,T,G)$ does not have $k$-fold (and hence $2N$-fold) sTMSJ. 
\end{proof}

\subsubsection{Limits to folding through horoballs} \label{sec:limits_fold_tmsj}
This section is devoted to the proofs on the limits of foldings for finitely generated abelian groups and discrete Heisenberg groups, namely the proofs to \Cref{thm:no-tmjs-Zd} and \cref{thm:no-tmjs-Heisenberg}, generalizing King's result for $\Z$. We begin by deriving an immediate consequence of~\cref{thm:king_general}. 
\begin{corollary}
Let $(X,T,G)$ be a topological dynamical system, where $G$ is endowed with a right-invariant and proper metric. Assume that there exist a finite set $K\subseteq G$ and $N\in \N$ such that for any $\ve>0$, there exist $h_1,\ldots,h_N\in \ND_{\ve}(X)$, with $\{h_i<0\}$ thick, and $g_{i,k}\in Z(G)$, for $1\leq i\leq N$, $k\in K$, such that 
\[\bigcup_{k\in K} \bigcup_{1\leq i\leq N} kg_{i,k}\{h_i<0\} =G\]
Then, $(X,T,G)$ has no $2|K|N$ sTMSJ.
\end{corollary}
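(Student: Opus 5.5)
The idea is to reduce the corollary to \cref{thm:king_general} with $N' = |K|N$ asymptotic pairs, indexed by $(i,k)\in\{1,\dots,N\}\times K$. Fix $\ve>0$. By uniform continuity of the finitely many maps $T^{k}$, $k\in K$, choose $\ve'>0$ such that $d(u,v)\le\ve'$ implies $d(T^{k}u,T^{k}v)\le\ve$ for every $k\in K$. Apply the hypothesis with $\ve'$ to obtain $h_1,\dots,h_N\in\ND_{\ve'}(X)$ with thick horoballs, central elements $g_{i,k}$, and $(h_i,\ve')$-asymptotic pairs $(x_i,y_i)$.

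For $(i,k)$ we translate the pair so that its asymptotic region becomes $\Omega_{i,k}:=kg_{i,k}\{h_i<0\}$. Take $g\in\Omega_{i,k}$ and write $g=kg_{i,k}f$ with $h_i(f)<0$. Because $g_{i,k}$ is central, $T^{g}=T^{k}T^{f}T^{g_{i,k}}$. We therefore set
\[
x_{i,k}=T^{g_{i,k}^{-1}}x_i,\qquad y_{i,k}=T^{g_{i,k}^{-1}}y_i .
\]
These points are distinct. Moreover $T^{g}x_{i,k}=T^{k}T^{f}x_i$ and $T^{g}y_{i,k}=T^{k}T^{f}y_i$. Since $d(T^fx_i,T^fy_i)\le\ve'$, the choice of $\ve'$ gives $d(T^gx_{i,k},T^gy_{i,k})\le\ve$ for all $g\in\Omega_{i,k}$. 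This is the first bullet of \cref{thm:king_general}.

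For the second bullet, each $\Omega_{i,k}$ must be thick. A left translate $a\{h_i<0\}$ of a thick set is thick, because the metric is right invariant and proper and left multiplication by a fixed $a$ moves balls boundedly. Indeed, $\rho(ax,ay)$ and $\rho(x,y)$ differ by at most $2\rho(a,1_G)$-type terms, so $a B_{R}(g)\supseteq B_{R'}(ag)$ with $R'\to\infty$ as $R\to\infty$. I expect this to be the only point needing care. For a cleaner route, one can note that $B_{R'}(ag)\subseteq aB_R(g)$ whenever $a^{-1}B_{R'}(ag)\subseteq B_R(g)$, and use that the finitely many maps $x\mapsto a^{-1}x$ send bounded sets near $ag$ to bounded sets near $g$ uniformly. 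The covering hypothesis $\bigcup_{k,i}\Omega_{i,k}=G$ gives in particular that the complement is finite (indeed empty), so the collection is thickly full.

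Since $N'=|K|N$ does not depend on $\ve$, \cref{thm:king_general} applies and shows that $(X,T,G)$ has no $2|K|N$-fold sTMSJ.
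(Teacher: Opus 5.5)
Your proposal is correct and follows the paper's proof essentially step for step. It uses the same choice of $\ve'$ via the finitely many $T^k$, the same translated pairs $T^{g_{i,k}^{-1}}x_i,\,T^{g_{i,k}^{-1}}y_i$ on $\Omega_{i,k}=kg_{i,k}\{h_i<0\}$, and the same appeal to \cref{thm:king_general} with $|K|N$ pairs. The only variation is the thickness of left translates: the paper bounds the finitely many conjugates $k^{-1}f'k$ with $f'\in B_r(1_G)$, whereas your displacement argument works once made explicit, since right invariance gives $\rho(ax,x)=\rho(a,1_G)$ and hence $B_{R-2\rho(a,1_G)}(ag)\subseteq aB_R(g)$; your second, vaguer ``cleaner route'' is therefore unnecessary.
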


\begin{proof}
Consider $\ve>0$. Because $K$ is finite, there exists $0<\delta\leq \ve$ such that $d(x,y)\leq \delta$ implies $d(T^kx,T^ky)\leq \ve$ for all $k\in K$. By applying the hypothesis to $\delta$, there exists $h_1,\ldots,h_N\in \ND_{\delta}(X)$, with $\{h_i<0\}$ thick, and $g_{i,k}\in Z(G)$, $1\leq i\leq N$, $k\in K$ such that 
\[\bigcup_{k\in K} \bigcup_{1\leq i\leq N} kg_{i,k}\{h_i<0\} =G.\]

Since $h_i\in\ND_{\delta}(X)$ for every $i$, there exist $x_i\neq y_i$ in $X$ with $d(T^gx_i,T^gy_i)\leq \delta$ for all $g\in\{h_i<0\}$. For each pair $(i,k)$ we define $\Omega_{i,k} = k g_{i,k}\{h_i<0\}$, and the set of distinct points $x_{i,k} =T^{g_{i,k}^{-1}}x_i$ and $y_{i,k} = T^{g_{i,k}^{-1}}y_i$. Take $f = kg_{i,k}g\in\Omega_{i,k}$. Because $g$ belongs to the center $f g_{i,k}^{-1} = kg$, and therefore $T^f x_{i,k} = T^{kg} x_i$ and $T^f y_{i,k} = T^{kg} y_i$. Thus, $d(T^fx_{i,k}, T^fy_{i,k})\leq\ve$.

The $|K|N$ sets $\Omega_{i,k}$ cover $G$. Since the elements $g_{i,k}$ belong to the center, we have $kg_{i,k}\{h_i<0\} = k\{h_i<0\}g_{i,k}$. Because right translation preserves thickness, it suffices to see that $k\{h_i<0\}$ is thick. Given $r > 0$, the ball $B_{r}(1_G)$ is finite, so $R = \max\{\rho(k^{-1}f'k,1_G) \mid f'\in B_r(1_G)\}$ is finite. Consider $g$ such that $B_R(g)\subseteq \{h_i<0\}$. If $\rho(f,kg) \le r$, then $f' = f(kg)^{-1}$ satisfies $f'\in B_r(1_G)$, and $k^{-1}fg^{-1} = k^{-1}f'k$. Then, $\rho(k^{-1}f, g) \le R$ and $f \in k\{h_i<0\}$. Thus $B_{r}(kg)\subseteq k\{h_i<0\}$ for every $r$. This implies the sets and points satisfy the hypothesis of~\Cref{thm:king_general}, and thus $X$ has no $2|K|N$-fold sTMSJ.

\end{proof}

\begin{proof}[Proof of \cref{thm:no-tmjs-Zd}]
    First, assume that the torsion subgroup of $G$, $Tor(G)$, is trivial, so that $G \simeq \Z^d$. 
    By Proposition~\ref{prop:descomp_Z} there exists $\{v_i\}_{i=1}^k\subseteq\ND(X)$ with $k\leq d+1$ such that
    \[\bigcup_{i=1}^k\{g\in\Z^d \mid \langle v_i, g\rangle\leq 0\} = \Z^d.\]
    Denote $\Omega_i=\{g\in \Z^d \mid \langle v_i, g\rangle \leq 0\}$ and $\Omega_i'=\{g\in \Z^d \mid \langle v_i, g\rangle < 0\}$. For each $i$, we can find $g_i\in \Z^d$ such that $\Omega_i+g_i\subseteq \Omega_i'$. Note that for $x,y\in X$ such that $d(T^gx,T^gy)\leq \ve$ for all $g\in \Omega_i'$, we have that $d(T^{g}(T^{g_i}x),T^{g}(T^{g_i}y))\leq \ve$ for all $g\in \Omega_i$. 
    
    Noting that the collection $\Omega_1,\ldots,\Omega_k$ is thickly full, we get that it fulfills the conditions of \cref{thm:king_general}, and we obtain the desired conclusion. 

   For the general case, as the action of $Tor(G)$ is equicontinuous, it is not hard to check that, for the $v_i$ as before, $\tilde{\Omega}_i=\Omega_i\oplus Tor(G)$  satisfies the hypothesis of \cref{thm:king_general}. The conclusion follows. 
\end{proof}

M. Hochman informed us of a more elementary proof of \cref{thm:no-tmjs-Zd}, though it does not provide explicit control of the $2(d+1)$-fold.

The following result can be proved using a similar idea. 
\begin{proof}[Proof of \cref{thm:no-tmjs-Heisenberg}]
    By Proposition~\ref{prop:descomp_H} there exists $\{(\vec{a}_i,\vec{b}_i,c_i)\}_{i=1}^k\subseteq\ND(X)$ with $k\leq 2d+1$ such that
    \[\bigcup_{i=1}^k\left\{(\vec{v},\vec{u},t)\in H_{2d+1}(\Z) \mid \langle V(\vec{a}_i,\vec{b}_i,c_i), (\vec{v},\vec{u})\rangle\leq 0\right\} = H_{2d+1}(\Z).\]
    The conclusion follows from \cref{thm:king_general}, using the same arguments as in the proof of \cref{thm:no-tmjs-Zd}.
\end{proof}

\subsubsection{Groups admitting infinite-fold sTMSJs}

Recall that a group $G$ admits an $\infty$-fold sTMSJ if there exists an action $G\act X$ that is an $n$-fold sTMSJ for all $n\in \N$. It is not hard to construct such groups. Let $X=\{0,1\}^{\Z}$. For any integer $n \ge 1$, let $W_n=\{0,1\}^{[-n, n]}$ be the set of all finite words of length $2n+1$ and let $\sigma$ be any permutation of $W_n$. Define a homeomorphism $\phi_{\sigma} \colon X \to X$ that permutes the central block as follows
$$
(\phi_{\sigma}(x))_j =
\begin{cases}
(\sigma(x|_{[-n, n]}))_j & \text{if } j \in [-n, n] \\
x_j & \text{if } j \notin [-n, n]
\end{cases}
$$
Let $G$ be the group generated by all such transformations for all possible block sizes and all possible permutations, i.e., $G = \langle \{ \phi_{\sigma} \mid n \ge 1, \sigma \in \text{Sym}(W_n) \} \rangle$. Clearly, the action $(X,T,G)$ has $n$-fold sTMSJ for any $n \ge 1$.

This example belongs to the class of groups that admit the recently introduced property of \emph{deeply transitive actions}~\cite{Kra_Schmieding_invariant_random_compact:2026}. These actions permute different leveled partitions of a Cantor space $X$ in a way analogous to the previous example (see~\cite[Definition 6.3]{Kra_Schmieding_invariant_random_compact:2026}). A short computation shows every such action has $\infty$-fold sTMSJ. Examples of such actions are the action of the stabilized automorphism group of the full-shift $\Aut^{(\infty)}(\{0,...,n-1\}^\Z, \sigma) = \bigcup_{k\geq 1}\Aut(\{0,...,n-1\}^\Z, \sigma^k)$ on the full-shift, and Thompson's $V$ group action on $\{0,1\}^\N$.\\

The more challenging question is to find an abelian infinite joining order group. According to the results in the previous section, this group cannot be finitely generated. In particular, we do not know if $\bigoplus_{i\in \N}\Z/2\Z$ or $\bigoplus_{i\in \N}\Z$ have $\infty$-fold sTMSJ.


\bibliographystyle{abbrv}
\bibliography{refs,alexandria}

\end{document}